\title{A nonparametric test for rough volatility\thanks{We would like to thank  Torben G.\ Andersen, Federico M.\ Bandi, Markus Bibinger, Tim Bollerslev, Dachuan Chen,  Rama Cont, Yi Ding, Masaaki Fukasawa, Jean Jacod, Z.\ Merrick Li, Nour Meddahi, Per Mykland, Yingying Li, Mark Podolskij, Eric Renault, Roberto Ren\`o (discussant), Mathieu Rosenbaum, Neil Shephard, Shuping Shi, George Tauchen, Jun Yu, Xinghua Zheng and participants at various conferences and seminars for their helpful comments and suggestions. An earlier version of the manuscript was circulated under the title ``The Fine Structure of Volatility Dynamics.''}}
\author{Carsten H.\ Chong\thanks{Department of Information Systems, Business Statistics and Operations Management,
		The Hong Kong University of Science and Technology, e-mail: carstenchong@ust.hk}	\and Viktor Todorov\thanks{Department of Finance, Northwestern University, e-mail: v-todorov@kellogg.northwestern.edu}}
\date{}
\newcommand{\R}{\mathbb{R}}
\newcommand{\N}{\mathbb{N}}
\newcommand{\E}{\mathbb{E}}
\newcommand{\F}{\mathbb{F}}
\renewcommand{\P}{\mathbb{P}}
\newcommand{\Z}{\mathbb{Z}}
\newcommand{\Var}{\mathbf{Var}}
\newcommand{\Cov}{\mathbf{Cov}}
\newcommand{\bone}{\mathbf 1}
\theoremstyle{plain}
\newtheorem{theorem}{Theorem}[section]
\newtheorem{lemma}[theorem]{Lemma}
\newtheorem{proposition}[theorem]{Proposition}
\newtheorem{corollary}[theorem]{Corollary}
\newtheorem{condition}{Condition}
\newtheorem{Assumption}{Assumption}
\renewcommand{\theAssumption}{\Alph{Assumption}}
\newcommand{\settheoremtag}[1]{% \settheoremtag{<tag>}
	\let\oldtheAssumption\theAssumption% Store \thetheorem
	\renewcommand{\theAssumption}{#1}% Redefine it to a fixed value
	\g@addto@macro\endAssumption{% At \end{theorem}, ...
		%\addtocounter{Assumption}{-1}% ...restore theorem counter value and...
		\global\let\theAssumption\oldtheAssumption}% ...restore \thetheorem
}
\theoremstyle{remark}
\newtheorem{definition}{Definition}
\newtheorem{remark}{Remark}
\newtheorem{Example}{Example}
\newcommand{\calc}{{\cal C}}
\newcommand{\calh}{{\cal H}}
\newcommand{\calu}{{\cal U}}
\newcommand{\calf}{{\cal F}}
\newcommand{\call}{{\cal L}}
\newcommand{\calg}{{\cal G}}
\newcommand{\calj}{{\cal J}}
\newcommand{\cf}{{\frak c}}
\newcommand{\Lf}{{\frak L}}
\newcommand{\dsone}{\mathds{1}}
\newcommand{\al}{{\alpha}}
\newcommand{\la}{{\lambda}}
\newcommand{\La}{{\Lambda}}
\newcommand{\eps}{{\epsilon}}
\newcommand{\vareps}{{\varepsilon}}
\newcommand{\ga}{{\gamma}}
\newcommand{\Ga}{{\Gamma}}
\newcommand{\vp}{{\varphi}}
\newcommand{\si}{{\sigma}}
\newcommand{\om}{{\omega}}
\newcommand{\Om}{{\Omega}}
\newcommand{\ov}{\overline}
\newcommand{\un}{\underline}
\newcommand{\wh}{\widehat}
\newcommand{\wt}{\widetilde}
\newcommand{\Den}{\Delta_n}
\newcommand{\limst}{\stackrel{\mathcal{L}-s}{\longrightarrow}}
\newcommand{\bthm}{\begin{theorem}}
	\newcommand{\ethm}{\end{theorem}}
\newcommand{\bcor}{\begin{corollary}}
	\newcommand{\ecor}{\end{corollary}}
\newcommand{\blem}{\begin{lemma}}
	\newcommand{\elem}{\end{lemma}}
\newcommand{\bprop}{\begin{proposition}}
	\newcommand{\eprop}{\end{proposition}}
\newcommand{\bcond}{\begin{condition}}
	\newcommand{\econd}{\end{condition}}
\newcommand{\bdf}{\begin{definition}}
	\newcommand{\edf}{\end{definition}}
\newcommand{\bex}{\begin{example}}
	\newcommand{\eex}{\end{example}}
\newcommand{\brem}{\begin{remark}}
	\newcommand{\erem}{\end{remark}}
\newcommand{\bpr}{\begin{proof}}
	\newcommand{\epr}{\end{proof}}
\newcommand{\benu}{\begin{enumerate}}
	\newcommand{\eenu}{\end{enumerate}}
\newcommand{\beq}{\begin{equation}}
	\newcommand{\eeq}{\end{equation}}
\newcommand{\bit}{\begin{itemize}}
	\newcommand{\eit}{\end{itemize}}
\newcommand{\bass}{\begin{Assumption}}
	\newcommand{\eass}{\end{Assumption}}
\numberwithin{equation}{section}
\DeclareMathOperator\Log{Log}
\begin{document}

	\maketitle

\begin{abstract}
	We develop a nonparametric test for deciding whether volatility of an asset follows a standard semimartingale process, with paths of finite quadratic variation, or a rough process with paths of infinite quadratic variation. The test utilizes the fact that volatility is rough if and only if volatility increments are negatively autocorrelated at high frequencies. It is based on the sample autocovariance of increments of spot volatility estimates computed from high-frequency asset return data. By showing a feasible CLT for this statistic under the null hypothesis of semimartingale volatility paths, we construct a test with fixed asymptotic size and an asymptotic power equal to one. The test is derived under very general conditions for the data-generating process. In particular, it is robust to jumps with arbitrary activity and
	to the presence of market microstructure noise. In an application of the test to SPY high-frequency data, we find evidence for rough volatility. 
\end{abstract}
	
\bigskip

\noindent {\em AMS Classification:}  Primary: 62G10, 62G20, 62G35;  secondary: 62M99, 91B84
	
%\noindent {\em JEL Classification:}  C14, C22, C58, G12.
	
\bigskip
	
\noindent {\em Keywords:} fractional Brownian motion; high-frequency data; It\^o semimartingale; characteristic function; nonparametric test; rough volatility.

\section{Introduction}

Most economic and financial time series exhibit time-varying volatility. The standard way of modeling volatility in continuous time is via stochastic integrals driven by Brownian motions and/or L\'evy jumps, see e.g., the review articles of \citet{GHYSELS1996119} and \citet{shephard2009stochastic}. This way of modeling volatility implies that,  while volatility paths can exhibit discontinuities, they nevertheless remain smooth in squared mean 
and have finite quadratic variations in particular. An alternative way of modeling volatility, which has gained significant popularity recently, is via stochastic integrals driven by a fractional Brownian motion, see \citet{comte1996long,comte1998long} and   the more recent work of \citet{gatheral2018volatility}. In this case,   volatility paths can be very rough, with a lot of oscillations at short time scales leading to an explosive quadratic variation. 

Naturally, the degree of roughness of volatility (controlled by the Hurst parameter of the driving fractional Brownian motion) determines the optimal rate of convergence of nonparametric spot estimators of volatility, with the latter dropping to zero as the degree of roughness increases. In fact, many of the tools developed for the analysis of high-frequency data,  see e.g., \citet{JP12}, depend critically on volatility being a semimartingale process. The goal of this paper, therefore, is to develop a general nonparametric test  to decide whether  volatility is a standard semimartingale with a finite quadratic variation or is a rough process with infinite quadratic variation.   

If volatility were directly observable, it would be relatively easy to design such a test based on variance ratios. 
For example, if one were to observe volatility at high frequency, then one way of testing for roughness of volatility would be by assessing the scaling of the quadratic variation of the discretized volatility process computed at different frequencies, see e.g., \citet{barndorff2011multipower}. 
Volatility is of course not directly observable, and testing for rough volatility is significantly more challenging when using high-frequency observations of the underlying process only. This is mainly for three reasons. 

First, spot volatility estimates contain  nontrivial estimation errors. As noted by \citet{cont2022rough}, this error can make the volatility estimates appear rough even if the true volatility process is not. Therefore, rough volatility, that is, the roughness of unobserved spot volatility, has to be distinguished from the roughness of volatility estimates such as realized variance (as studied by e.g., \cite{WANG2021}). Second,
rough volatility will result in larger increments of spot volatility estimates, but bigger in size increments can be also due to jumps in volatility,   which are well documented in practice; see e.g., \cite{JT10}, \cite{TT11} and \cite{BR16}. Third, infinite activity jumps in the price process, see e.g., \cite{AJ11}, and microstructure noise in the price observations, see e.g., \cite{Hansen06}, further make it difficult to estimate volatility in the first place.   
In fact, as shown in \citet{JR14}, the optimal nonparametric rate for estimating volatility from high-frequency data in the presence of jumps depends on the degree of jump activity, with the rate becoming significantly worse (and approaching zero) for higher degree of jump activity. At the same time, the presence of market microstructure noise in observed prices further slows down the rate at which volatility can be estimated, making it difficult to evaluate its behavior over small time scales. For example, the optimal rate of convergence for estimating integrated volatility from $n$ noisy observations is $n^{1/4}$ compared to $n^{1/2}$ in the noise-free setting (\citet{reiss2011asymptotic}).  While several recent works have proposed solutions to the latency of volatility when estimating volatility roughness (see e.g.,   \citet{Bennedsen22}, \citet{FTW22}, \citet{bolko2020GMM}, \citet{chong2022statistical, chong2022statistical-b} and \cite{S24}), a robust statistical  theory for rough volatility that takes into account price jumps, volatility jumps and microstructure noise on top of estimation errors has been notably absent.

In this paper we show that, in spite of the poor rate of estimating volatility in the presence of jumps with high jump activity and market microstructure noise, one can nevertheless test for volatility roughness, with the properties  of the test unaffected by the degree of jump activity and the presence of microstructure noise. We achieve this by relying on a second---equivalent---characterization of rough volatility in terms of the autocorrelation of its increments. Mainly, volatility is rough if and only if changes in volatility are negatively correlated at high frequency (Theorem~\ref{thm:equiv}). By contrast, high-frequency volatility increments in  semimartingale volatility models are asymptotically uncorrelated, as they are locally dominated by the martingale component of volatility. Note that rough volatility  concerns the negative correlation of volatility moves on short time scales only and has no implication for the long range behavior of volatility. Therefore, rough volatility models are compatible with the well-documented long memory of volatility (\citet{ABDL03}), see Remark~\ref{rem:longmem}.

More specifically, we propose a test based on the sample autocovariance of increments of spot volatility estimates. The spot volatility estimates are constructed from the empirical characteristic function of price increments in local blocks of high-frequency data. By choosing values of the characteristic exponent away from zero, we mitigate the impact of finite variation jumps on the spot volatility estimators. While the latter still contain non-negligible biases due to the infinite variation jumps and microstructure noise, these biases  have increments that are asymptotically uncorrelated across different blocks. 

Therefore, even in the presence of jumps, noise and estimation errors, the first-order autocovariance of increments of spot volatility estimates---computed over non-overlapping blocks---should be zero asymptotically if volatility follows a standard semimartingale process. On the other hand, after appropriately scaling down the sample autocovariance, it converges to a strictly negative number when volatility is rough. The resulting test statistic is a self-normalized quantity which converges to a standard normal random variable under the null hypothesis of semimartingale volatility and  diverges to negative infinity under the rough volatility alternative hypothesis (Theorems~\ref{thm:H0} and \ref{thm:test-noise}).

The rest of the paper is organized as follows. We start with introducing our setup in Section~\ref{sec:setting}. The theoretical development of the test is given in Section~\ref{sec:test} and its finite sample properties are evaluated in Section~\ref{sec:mc}. Section~\ref{sec:emp} contains our empirical application. Section~\ref{sec:concl} concludes. The proofs are given in Appendix~\ref{sec:proof} with technical details deferred to Appendix~\ref{sec:proof-app} in the supplement (\citet{supp}).

\section{Model Setup}\label{sec:setting}

We denote the logarithmic asset price by $x$. We assume that $x$ is defined on a filtered probability space $(\Om,\calf,\F=(\calf_t)_{t\geq0},\P)$, with the following It\^o semimartingale dynamics:
\begin{equation}\label{eq:x0}  \begin{split}
		x_t&=x_0+\int_0^t\al_s ds +\int_0^t \si_s dW_s +\int_0^t\int_E \ga(s,z)(\mu-\nu)(ds,dz) \\
		&\quad+ \int_0^t\int_E \Ga(s,z) \mu(ds,dz),
	\end{split}
\end{equation}
where $x_0$ is an $\calf_0$-measurable random variable, $W$ is a standard $\F$-Brownian motion, $\mu$ is an $\F$-Poisson random measure on $[0,\infty)\times E$ with intensity measure $\nu(ds,dz)= \la(dz)ds$, and $\la$ is a $\si$-finite measure on an auxiliary space $E$. The drift $\al$ is a locally bounded predictable process, while  $\ga$ and $\Ga$ are predictable functions such that the integrals in \eqref{eq:x0} are well defined. We think of $\ga$ and $\Ga$  as modeling infinite variation and finite variation jumps, respectively, rather than modeling small and big jumps. This distinction is important because our assumptions on the two below will differ. In particular, if $x$ has only finite variation jumps, we can and should take $\ga\equiv0$. 

The goal of this paper is to develop a statistical test for the fine structure of the spot (diffusive) variance, $c_t=\si^2_t$. More precisely, our goal is to develop a nonparametric test to decide whether the realized path $c_t(\om)$ is the path of a semimartingale process or whether it is rough in the following sense:
\begin{definition}\label{def}
	We say that a   function $Z=(Z_t)_{t\in[0,T]}$    is \emph{rough} if 
	\begin{equation}\label{eq:ratio} 
		\frac{\mathit{RV}(Z)^{n,2}_T}{\mathit{RV}(Z)^n_T}= \frac{  \frac12\sum_{i=0}^n \bigl(Z_{\frac{i+1}{n}T}-Z_{\frac{i-1}{n}T}\bigr)^2}{ \sum_{i=1}^n \bigl(Z_{\frac in T}-Z_{\frac{i-1}{n} T}\bigr)^2} \stackrel{p}{\longrightarrow} \tau \quad \text{as } n\to\infty,
	\end{equation}
	for some  $\tau<1$. We say that a sequence $(a_n)_{n\in\N}$ converges \emph{subsequentially} to a limit $a$, denoted by $a_n\stackrel{p}{\longrightarrow} a$, if for every subsequence $(n_k)_{k\in\N}$  there is another subsequence $(n_{k_\ell})_{\ell\in\N}$ such that $a_{n_{k_\ell}}\to a$ as $\ell\to\infty$. Moreover, in \eqref{eq:ratio}, we use the convention $0/0=2$ and define $Z_t=Z_0$ for $t<0$ and $Z_t=Z_T$ for $t>T$.
\end{definition}
To paraphrase, a function $Z=(Z_t)_{t\in[0,T]}$ is rough if
the realized variance of $Z$ computed with twice the original step size, $\mathit{RV}(Z)^{n,2}_T$, is only a fraction of the realized variance of $Z$ computed with the original step size, $\mathit{RV}(Z)^{n}_T$. This notion of roughness is \emph{model-free}, with a smaller value of $\tau$ indicating less regular sample paths of $Z$. The reason we use a subsequential criterion in \eqref{eq:ratio}, instead ordinary convergence to $\tau$, is to make this definition more suitable for examining the roughness of a path of a \emph{stochastic process} $Z$. Indeed, in this case, proving convergence in probability in \eqref{eq:ratio} is sufficient to deduce the almost sure roughness of the sample paths of $Z$. With ordinary convergence in \eqref{eq:ratio}, one would have to show almost sure convergence of the realized variance of $Z$, which is usually quite involved or even unknown (e.g., if $Z$ is a general continuous martingale). 

It is easy to see that the limit $\tau$ in \eqref{eq:ratio}, if it exists, must satisfy $\tau\in[0,2]$ in all cases, and that we have $\tau=2$  if $Z$ is continuously differentiable. If $Z$ has a finite and non-zero quadratic variation (e.g., if $Z$ is the path of a semimartingale with a non-zero local martingale part), we have $\tau=1$. If $Z$ is the path of a pure-jump process, then $\tau=2$ if there is no jump on $[0,T]$ (in which case, $Z$ is a constant); otherwise,   there is at least one jump on $[0,T]$ and we have $\tau=1$. Therefore, the paths of a pure-jump process are almost surely not rough according to our definition. If $Z$ is the path of a fractional Brownian motion with Hurst parameter $H\in(0,1)$, then we have $\tau=2^{2H-1}$ and   $Z$ is rough precisely when $H\in(0,\frac12)$. %In general, if $Z$ is a rough process, its realized variance explodes as $\Den\to0$ and thus, its paths  will be of infinite quadratic variation. 
%In the aforementioned examples, $\tau$ is a constant, but in general $\tau$ can be random, of course. For example, if $Z$ is a compound Poisson process, we have $\tau=2$ on the set where there is no jump until time $T$ and $\tau=1$ on the set where there is at least one jump until time $T$.

With this definition, we can now introduce  the null and alternative hypotheses we wish to test:
\begin{equation}\label{eq:hypo}
	\begin{split} 
		H_0:	&~\om\in\Om_0= \{\om:\text{$(c_t(\om))_{t\in[0,T]}$ is the path of a semimartingale process}\},  \\
		H_1:	&~ \om\in\Om_1=\{\om:\text{$(c_t(\om))_{t\in[0,T]}$ is rough in the sense of Definition~\ref{def}}\},
	\end{split}
\end{equation}
where $T$ is some fixed number. % In this paper, we adopt the following \emph{model-free} notion of roughness:

\begin{remark}\label{rem:alt}
	There are other notions of roughness in the literature; see e.g., \cite{cont2022rough} and \cite{HS24}. These definitions are based on $p$-variations for different values of $p$ and produce the same  ranking as Definition~\ref{def} when used to compare the roughness of  continuous processes (for discontinuous processes, there are some differences). The main reason we use Definition~\ref{def} in this paper is because, as we show in Theorems~\ref{thm:equiv}--\ref{thm:test-noise} below, Definition~\ref{def} admits a statistical implementation that is robust to estimation errors, jumps and microstructure noise. %In order to obtain this robustness property, it is crucial that our definition does not use realized $p$-variations for values of $p$ other than   $p=2$.
\end{remark}

\begin{remark}\label{rem:longmem}
	Roughness in the sense of Definition~\ref{def} is a local property and is independent of the behavior of the underlying process as $T\to\infty$. In particular, if $(Z_t)_{t\geq0}$ is a stationary process, roughness is unrelated to the short- or long-memory properties of $Z$, which are usually defined in terms of the decay of its autocorrelation function $\rho_T = \Cov(Z_0,Z_T)$ as $T\to\infty$. As we show in Theorem~\ref{thm:equiv} below, roughness in fact is related to the autocorrelation  of the \emph{increments} of $Z$ as the sampling frequency increases but with the length of the time interval kept fixed. Of course, in   parametric models, it can happen that roughness and long/short memory properties are parametrized by a single parameter. This is, for instance, the case with models based on  fractional Brownian motion, where depending on whether the Hurst parameter $H$ is smaller or bigger than $\frac12$, one has either roughness and short memory or no roughness and long memory. The need to separate   roughness on the one hand and short versus long memory on the other hand has been recognized in previous work already; see \citet{Bennedsen22} and \citet{LSY20}.
\end{remark}

For our theoretical analysis, we need to impose some mild structural assumptions on $c_t$ (and other coefficients in \eqref{eq:x0}) under both $H_0$ and $H_1$, as detailed in the following two subsections. We note that $H_0$ and $H_1$ do not exhaust the whole  model space; e.g., $c_t$ could have a finite (possibly zero) quadratic variation without being a semimartingale process. For such a model for $c_t$, we have $\tau>1$ in \eqref{eq:ratio} and in that sense such a specification implies smoother volatility paths than our null hypothesis. As we explain in Remark~\ref{rem:smooth} below, such very smooth volatility models are not going to be rejected by our test.

Also, to keep the exposition simple, in our detailed formulation of the null and alternative hypotheses, we assume that $c_t$ is either a semimartingale process (with paths that are almost surely not rough) or a rough process (with paths that are rough almost surely). These hypotheses, and the subsequent results, can be easily extended to the case where $c_t(\om)$ is a semimartingale path on one subset of $\Om$ and rough on another.

\subsection{The Null Hypothesis}
Under $H_0$, we further assume that  $c_t$ is an It\^o semimartingale process given by
\begin{equation}\label{eq:c}\begin{split}
		c_t	&=c_0+\int_0^t\al^c_s ds + \int_0^t (\si^c_s dW_s +   \ov\si^c_s d\ov W_s) + \int_0^t\int_E \ga^c(s,z)(\mu-\nu)(ds,dz)\\&\quad + \int_0^t\int_E \Ga^c(s,z)\mu(ds,dz),
	\end{split}
\end{equation}
where $\ov W$ is a standard $\F$-Brownian motion that is independent of $W$, $c_0$ is $\calf_0$-measurable and the requirements for the coefficients of $c$ will be given later. All continuous-time stochastic volatility models that are solutions to stochastic differential equations are of this form, and \eqref{eq:c} is a frequent assumption in the financial econometrics literature, see e.g., Assumption (K-$r$) in \citet{AJ14} and \citet{JP12}. 
In particular,  $c_t$ can have jumps and is nowhere differentiable in the presence of a diffusive component. Nevertheless, volatility is smooth in squared mean in the following sense: There exists a sequence of stopping times $\tau_n$ increasing to infinity such that for all $t\geq0$,
\begin{equation}\label{eq:smooth_H0}
	\delta\mapsto \E[(c_{(t+\delta)\wedge \tau_n} - c_{t\wedge \tau_n})^2]~\textrm{is differentiable (including at $\delta=0$).}
\end{equation}
%Moreover, as a semimartingale process, the sample paths of $c$ are of   finite quadratic variation.

In addition, we need similar  structural assumptions on the infinite variation jumps of $x$. More precisely, we assume that  $\vp(u)_t= \int_E (e^{iu\ga(t,z)}-1-iu\ga(t,z))\la(dz) $, for every $u\in\R$, is a complex-valued It\^o semimartingale of the form
\begin{align}\nonumber
	\vp(u)_t&=\vp(u)_0+\int_0^t\al^\vp(u)_s ds + \int_0^t (\si^\vp(u)_s dW^\vp_s +   \ov\si^\vp(u)_s d\ov W^\vp_s) \\
	&\quad+ \int_0^t\int_E \ga^\vp(u;s,z)(\mu-\nu)(ds,dz) + \int_0^t\int_E \Ga^\vp(u;s,z)\mu(ds,dz),
	\label{eq:vp} \end{align}
where $W^\vp$ and $\ov W^\vp$ are independent standard $\F$-Brownian motions (jointly Gaussian with and possibly dependent on  $W$ and $\ov W$) and the coefficients of $\vp(u)$ may be complex-valued.

If $\ga$ is a deterministic function, $\vp(u)_t$ is simply the spot log-characteristic function of the infinite variation jump part of $x$. Condition \eqref{eq:vp} is a rather mild condition and is satisfied, for example, if $x$ has the same infinite variation jumps as $\int_0^t K_{s-} dL_s$, where $K$ is an It\^o semimartingale and $L$ is a time-changed L\'evy process with the time change being also an It\^o semimartingale, see Example~\ref{ex:1} below. This situation covers the vast majority of parametric jump models considered in the literature. A condition like \eqref{eq:vp} is needed in order to safeguard our test against the worst-case  scenario (which is possible in theory but perhaps less relevant in practice) where price jumps are of infinite variation  \textit{and} their intensity is much rougher than  diffusive volatility. The null hypothesis does cover situations where $x$ has infinite variation jumps with non-rough intensity and/or finite variation jumps with arbitrary degree of roughness.

Our assumption for the process $x$ under the null hypothesis is given by:

\settheoremtag{H$_0$}
\begin{Assumption}\label{ass:H0} We assume   the following conditions   under the null hypothesis:
	\begin{enumerate}
		\item[(i)] We have \eqref{eq:x0}--\eqref{eq:vp}, where $\al$, $\ga$, $\Ga$, $\al^c$, $\si^c$, $\ov \si^c$, $\ga^c$, $\Ga^c$ and $\al^\vp(u)$, $\si^\vp(u)$, $\ov\si^\vp(u)$, $\ga^\vp(u;\cdot)$ and $\Ga^\vp(u;\cdot)$ (for every $u\in\R$) are predictable and $\al^c$, $\si^c$ and $\ov\si^c$ 
		are locally bounded, %and $\al$ and $\si^c$ are continuous in probability.
		Moreover, $\inf\{c_s: 0\leq s\leq t\}>0$ for all $t>0$ almost surely. % and $c^\per$ is at least $\frac12$-H\"older continuous, bounded away from zero  and satisfies  $c^\per_0=c^\per_1$.
		\item[(ii)] There exist a sequence of stopping times $\tau_n$ increasing to infinity almost surely and   nonnegative measurable functions $J_n(z)$ satisfying $\int_E J_n(z)\la(dz)<\infty$ for all $n\in\N$ such that whenever   $t\leq \tau_n$, we have
		\begin{equation}\label{eq:prop1} 
			(	\lvert \ga(t,z)\rvert^2+\lvert \ga^c(t,z)\rvert^2+\lvert \Ga(t,z)\rvert+\lvert \Ga^c(t,z)\rvert)\wedge 1\leq J_n(z).
		\end{equation}
		%	and
		%	\begin{equation}\label{eq:prop5} 
		%	 \sup_{\Delta\in(0,1)} \sup_{u\in\calu} \bigl\{	\Delta\lvert\Ga^\vp(u/\Delta;t,z)\rvert\}\leq J_n(z).
		%	\end{equation}
		\item[(iii)] %For any $t\geq0$ and $z\in\R$, the derivatives $\frac{d}{du}\si^\vp(u)_t$, $\frac{d}{du}\ov\si^\vp(u)_t$, $\frac{d}{du}\wt\si^\vp(u)_t$, $\frac{d}{du}\wh\si^\vp(u)_t$ and $\frac{d}{du}\ga^\vp(u;t,z)$ exist. Moreover, 
		For any compact subset $\calu\subseteq\R$,	the process
		\begin{equation}\label{eq:prop2} 
			t\mapsto \sup_{\delta\in(0,1)} \sup_{u\in\calu} \bigl\{	\delta\lvert\al^\vp(u/\sqrt{\delta})_t\rvert\}
		\end{equation}
		is locally bounded, while
		the processes
		\begin{align}\label{eq:prop0-1} 
			&	t\mapsto\sup_{s\in[0, \delta]} \E[\lvert \al_{t+s}-\al_t\rvert\wedge1\mid\calf_t],\qquad t\mapsto\sup_{s\in[0, \delta]} \E[\lvert \si^c_{t+s}-\si^c_t\rvert\wedge1\mid\calf_t],\\
			&		t\mapsto  \sup_{u\in\calu} \bigl\{ \lvert\delta \si^\vp(u/\sqrt{\delta})_t\rvert+ \lvert\delta \ov\si^\vp(u/\sqrt{\delta})_t\rvert \bigr\} \label{eq:prop3} 
		\end{align}
		%\begin{equation}\label{eq:prop5} 
		%		t\mapsto  \sup_{u\in\calu} \bigl\{ (\lvert \sqrt{ \delta}\tfrac{d}{du}\si^\vp(u/\delta)_t\rvert+ \lvert \sqrt{ \delta}\tfrac{d}{du}\ov\si^\vp(u/\delta)_t\rvert+ \lvert \sqrt{ \delta}\tfrac{d}{du}\wt\si^\vp(u/\delta)_t\rvert+ \lvert \sqrt{ \delta}\tfrac{d}{du}\wh\si^\vp(u/\delta)_t\rvert)\bigr\} 
		%\end{equation}
		converge uniformly on compacts in probability to $0$
		as $\delta\to0$. Finally, for any $n\in\N$, there are constants $C^n(\delta)\in(0,\infty)$ such that $C^n(\delta)\to0$ as $\delta\to0$ and
		\begin{align}\label{eq:prop4-2} 
			\sup_{u\in\calu} \bigl\{	\lvert \delta\ga^\vp(u/\sqrt{\delta}; t,z)\rvert^2 + \lvert\delta\Ga^\vp(u/\sqrt{\delta}; t,z)\rvert\bigr\}\wedge 1 \leq C^n(\delta) J_n(z)%\\
			% \sup_{u\in\calu} \bigl\{	\lvert  \sqrt{ \delta}\tfrac{d}{du}\ga^\vp(u/\delta; t,z)\rvert^2\bigr\} \leq C^n(\delta) J_n(z)\label{eq:prop6}
			%\sup_{0\leq t\leq \tau_n} \sup_{u\in\calu} \bigl\{ +\delta\lvert\si^\vp(u/\delta)_t\rvert+\delta\lvert\ov\si^\vp(u/\delta)_t\rvert+\delta\lvert\wt\si^\vp(u/\delta)_t\rvert\bigr\} \stackrel{\P}{\longrightarrow} 0
		\end{align}
		for all $t\leq \tau_n$ and  $z\in E$.
	\end{enumerate}
\end{Assumption}
The conditions on the coefficients of $x$, $c$ and $\vp(u)_t$ are only slightly stronger than requiring them to be It\^o semimartingales. In particular, all three processes are allowed to have jumps of arbitrary activity. The following example shows that the assumptions on $\vp(u)_t$ are mild indeed.
\begin{Example}\label{ex:1}
	Suppose that $\int_0^t\int_E \ga(s,z)(\mu-\nu)(ds,dz)=\int_0^t K_{s-} dL_s$, where $L$ is a mean-zero purely discontinuous martingale whose jump measure has $\F$-compensator $\la_{t-} dt F(dz)$, for some Lévy measure $F$ on $\R$. Further, suppose that $K$ and $\la$ are It\^o semimartingales. Then 
	\begin{equation}\label{eq:vp-ex} 
		\vp(u)_t=\int_\R (e^{iu K_{t}z}-1-iuK_{t}z)\la_t F(dz),
	\end{equation}
	and \eqref{eq:prop2}, \eqref{eq:prop3} and \eqref{eq:prop4-2} are satisfied. The proof is given in Lemma~\ref{lem:ex1}.
\end{Example}

\subsection{The Alternative Hypothesis}

Under the alternative hypothesis, the stochastic volatility process  $c$  ceases to be a semimartingale and is given by a rough process with a low degree of regularity. More precisely, we assume under the alternative hypothesis that
\begin{equation}\label{eq:rough} 
	c_t=f(v_t),\quad\text{where}\quad	v_t=v_0+\int_0^t g(t-s)(\si^v_sdW_s+\ov\si^v_sd\ov W_s)+\wt v_t,
\end{equation} 
$v_0$ is $\calf_0$-measurable and $f$ is a  $C^2$-function. The kernel $g$ has the semiparametric form 
\begin{equation}\label{eq:g} 
	g(t)=K_H^{-1} t_+^{H-1/2}+g_0(t),\qquad K_H = \Ga(H+1/2)/\sqrt{\sin(\pi H)\Ga(2H+1)},
\end{equation} 
where $H\in(0,\frac12)$, $x^a_+=x^a$ if $x>0$ and $x^a_+=0$ otherwise, and $g_0\in C^1([0,\infty))$ with $g_0(0)=0$. The normalization constant $K_H$ is chosen such that in the case $\si^v=1$, $\ov\si^v=0$ and $\wt v=0$, we have $\E[(v_{t+\Den}-v_t)^2]/\Den^{2H}\to1$ as $\Den\to0$ for any $t\in(0,\infty)$. The specification in \eqref{eq:rough} contains the rough volatility models considered in \citet{gatheral2018volatility}, \citet{Bennedsen22} and \citet{WANG2021} as special cases.

Since $H<\frac12$ and $g$ has the same behavior as the fractional kernel $K_H^{-1}t^{H-1/2}$ at $t=0$, the process $v$ has the same small time scale behavior, and thus the same degree of roughness, as a fractional Brownian motion $B^H$ with Hurst parameter $H$. However, due to the presence of $g_0$, there are no restrictions on the asymptotic behavior of $g$ and hence of $v$ as $t\to\infty$. In particular, $v$ can have  short-memory behavior or long-memory behavior. As explained in Remark~\ref{rem:longmem}, the separation of roughness  and   memory properties of $v$ has been found important in practice; see \citet{Bennedsen22} and \citet{LSY20}. In addition to a fractional component, $v$ may also have a regular part $\wt v$, which can include a (possibly discontinuous) semimartingale component.

We can compare the smoothness of $c_t$ under the alternative hypothesis with that under the null hypothesis. We have the following result for $c_t$ in (\ref{eq:rough}): There exists a sequence of stopping times $\tau_n$ increasing to infinity such that for all $t\geq0$, 
\begin{equation}\label{eq:smooth_H1}
	\E[(c_{(t+\delta)\wedge \tau_n} - c_{t\wedge \tau_n})^2] \asymp \delta^{2H}~\textrm{as $\delta\to0$. }
\end{equation}
In particular, \eqref{eq:smooth_H1} is not differentiable at $\delta=0$. This leads to paths of $c$ of infinite quadratic variation. The parameter $H$ governs the roughness of the volatility path, with lower values corresponding to  rougher volatility dynamics. The limiting case $H=1/2$ corresponds to the standard semimartingale volatility model under the null hypothesis. Figure~\ref{fig:vol_path} below visualizes the difference between a rough volatility path and a semimartingale volatility path.

Our assumption for the process $x$ under the alternative hypothesis is given by:

\settheoremtag{H$_1$}
\begin{Assumption}\label{ass:H1} We have the following setup  under the alternative hypothesis:
	\begin{enumerate}
		\item[(i)]  We have \eqref{eq:x0}  and \eqref{eq:rough}, where $f$ and $g$ are as specified above and $\wt v$, $\al$, $\ga$, $\Ga$, $\si^v$ and $\ov \si^v$ are predictable processes. Moreover,  $\al$, $\wt v$, $\si^v$ and $\ov\si^v$ are locally bounded and $\inf\{(f'(v_s))^2[(\si^v_s)^2+(\ov \si^v_s)^2]: 0\leq s\leq t\}>0$ for all $t>0$ almost surely. % and $c^\per$ is at least $H$-H\"older continuous, bounded away from zero  and satisfies  $c^\per_0=c^\per_1$.
		%	\item The function $g$ has the form , where $H\in(0,\frac12)$, $K_H=\Ga(H+1/2)/\sqrt{\sin(\pi H)\Ga(2H+1)}$ is a normalization constant and $g_0\in C^1([0,\infty))$ with $g_0(0)=0$.
		\item[(ii)] 	  There exist  stopping times $\tau_n$ increasing to infinity almost surely and   nonnegative measurable functions $J_n(z)$ satisfying $\int_E J_n(z)\la(dz)<\infty$ for all $n\in\N$ such that whenever   $t\leq \tau_n$, we have
		\begin{equation}\label{eq:prop1-alt} 
			(	\lvert \ga(t,z)\rvert^2+\lvert \Ga(t,z)\rvert)\wedge 1\leq J_n(z).
		\end{equation}
		\item[(iii)]  For any compact subset $\calu$ of $\R$, the process %there is some constant $C$ that only depends on $\calu$ and $n$ such that 
		\begin{equation}\label{eq:prop3-alt} 
			t\mapsto \sup_{\delta\in(0,1)} \sup_{u\in\calu} \bigl\{	\delta\lvert\vp(u/\sqrt{\delta})_t\rvert\} 
		\end{equation}
		is locally bounded. Moreover, % There exist constants $C_n\in(0,\infty)$, a sequence of stopping times $\tau_n$ increasing to infinity almost surely and   nonnegative measurable functions $J_n(z)$ satisfying $\int_\R J_n(z)\la(dz)<\infty$ for all $n\in\N$ such that whenever   $t\leq \tau_n$, we have
		for any $n\in\N$, there is   $C_n\in(0,\infty)$ such that
		%		\begin{equation}\label{eq:prop1-alt} 
		%			(	\lvert \ga(t,z)\rvert^2+\lvert \Ga(t,z)\rvert)\wedge 1\leq J_n(z)
		%		\end{equation}
		%	and 
		\begin{align}\label{eq:prop2-alt} 
			&	\sup_{t\geq0}	\E[\lvert\wt v_{t+\delta}-\wt v_t\rvert^2\bone_{\{t+\delta\leq \tau_n\}}]^{1/2} \leq C_n \delta^H h(\delta),\\
			\label{eq:prop4-alt} 
			&	\sup_{t\geq0} \sup_{u\in\calu}\E\Bigl[ \lvert \delta\vp(u/\sqrt{\delta})_{t+\delta}-\delta\vp(u/\sqrt{\delta})_t\rvert^2\bone_{\{t+\delta\leq \tau_n\}} \Bigr]^{1/2} \leq C_n \delta^Hh(\delta),\\
			&\sup_{t\geq0}\E[ (\lvert\si^v_{t+\delta}-\si^v_t\rvert^2+\lvert\ov\si^v_{t+\delta}-\ov\si^v_t\rvert^2)\bone_{\{t+\delta\leq \tau_n\}}  ]^{1/2} \leq C_nh(\delta)\label{eq:prop5-alt}
		\end{align}
		for all $\delta\in(0,1)$ and some $h$ satisfying $h(t)\to0$ as $t\to0$.
	\end{enumerate}
\end{Assumption}

Due to Condition~\eqref{eq:prop2-alt}, $\wt v$ can be any predictable process that is marginally smoother than $v$. In particular, $\wt v$ can be another fractional process with $H'>H$ or an It\^o semimartingale (possibly with jumps) or a combination thereof.  
\begin{Example}\label{ex:2}
	Consider the same setting  as in Example~\ref{ex:1}, except that $K$ and $\la$ can be any predictable locally bounded process satisfying
	\begin{equation}\label{eq:cond} 
		\sup_{s,t\geq0 }	\E[\lvert K_t- K_s\rvert^2\bone_{\{s,t\leq \tau_n\}}]^{1/2}+\sup_{s,t\geq0 }	\E[\lvert \la_t- \la_s\rvert^2\bone_{\{s,t\leq \tau_n\}}]^{1/2} \leq C_n \lvert t-s\rvert^H,
	\end{equation}
	for all $n\in\N$ and where $\tau_n$ and $C_n$ are as in Assumption~\ref{ass:H1}. Then, we still have \eqref{eq:vp-ex} and both \eqref{eq:prop3-alt} and \eqref{eq:prop4-alt} are satisfied. This is shown in Lemma~\ref{lem:ex2}. In particular, this example covers the case where $\la_t= \ell(c_t)$ for some $C^1$-function $\ell$.
\end{Example}

\section{Testing for Rough Volatility}\label{sec:test}

If volatility were directly observable, testing for rough volatility would be a straightforward matter based on the variance ratio in \eqref{eq:ratio}; see \citet{barndorff2011multipower}. Due to the latency of volatility, however,    examining the realized variance of spot volatility estimates at different frequencies is problematic as a testing strategy, as estimation errors can produce spurious roughness and jumps in price and volatility as well as microstructure noise in the data can lead to significant biases. To avoid such complications, we base our test  on an equivalent---but more robust---characterization of rough functions. 

To this end, we expand the square in the numerator of \eqref{eq:ratio} and obtain
\begin{align*} 
	\frac{\mathit{RV}(Z)^{n,2}_T}{\mathit{RV}(Z)^n_T} &=\frac{ \frac12 \sum_{i=0}^{n} \bigl(Z_{ \frac{i+1}n T}-Z_{\frac in T}\bigr)^2+\frac12\sum_{i=0}^n  \bigl(Z_{\frac in T}-Z_{\frac {i-1}n T}\bigr)^2}{ \sum_{i=1}^n \bigl(Z_{\frac in T}-Z_{\frac {i-1}n T}\bigr)^2}\\
	&\quad+\frac{ \sum_{i=0}^n \bigl(Z_{ \frac{i+1}n T}-Z_{\frac in T}\bigr)\bigl(Z_{\frac in T}-Z_{\frac {i-1}n T}\bigr)}{ \sum_{i=1}^n \bigl(Z_{\frac in T}-Z_{\frac {i-1}n T}\bigr)^2}\\
	&=1+\frac{ \sum_{i=1}^{n-1} \bigl(Z_{ \frac{i+1}n T}-Z_{\frac in T}\bigr)\bigl(Z_{\frac in T}-Z_{\frac {i-1}n T}\bigr)}{ \sum_{i=1}^n \bigl(Z_{\frac in T}-Z_{\frac {i-1}n T}\bigr)^2}
\end{align*}
in the case where $\mathit{RV}(Z)^n_T>0$. %Denote the last term by $\rho(\Delta Z)^n_T$ and let $\rho(\Delta Z)^n_T=1$ if $\mathit{RV}(Z)^n_T=0$. Then we have $\frac{\mathit{RV}(Z)^{n,2}_T}{\mathit{RV}(Z)^n_T} = 1+\rho(\Delta Z)^n_T$ in all cases  because if $\mathit{RV}(Z)^n_T=0$, then both $\mathit{RV}(Z)^{n,2}_T$ and the numerator in the definition of $\rho(\Delta Z)^n_T$ are zero as well. 
Recognizing  the last term as  the  first-order sample autocorrelation of the increments of $Z$, we conclude that $Z$ is rough precisely when its increments are negatively correlated at high frequency.
\begin{theorem}\label{thm:equiv}
	A  function $Z=(Z_t)_{t\in[0,T]}$ is rough in the sense of Definition~\ref{def} if and only if  
	\begin{equation}\label{eq:neg} 
		\rho(\Delta Z)^n_T=\frac{\sum_{i=1}^{n-1} \bigl(Z_{ \frac{i+1}n T}-Z_{\frac in T}\bigr)\bigl(Z_{\frac in T}-Z_{\frac {i-1}n T}\bigr)}{\sum_{i=1}^n \bigl(Z_{\frac in T}-Z_{\frac {i-1}n T}\bigr)^2} \stackrel{p}{\longrightarrow} \rho 
	\end{equation}
	for some $ \rho<0$. In \eqref{eq:neg}, we let $0/0=1$. 
\end{theorem}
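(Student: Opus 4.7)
The plan is to leverage the algebraic identity already established in the preceding display, extend it to cover the degenerate case $RV(Z)^n_T = 0$ by consulting the two $0/0$ conventions, and then transfer subsequential convergence between the two statistics mechanically.

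First, I would verify that the identity
\begin{equation*}
\frac{\mathit{RV}(Z)^{n,2}_T}{\mathit{RV}(Z)^n_T} = 1+\rho(\Delta Z)^n_T
\end{equation*}
actually holds for \emph{every} $n$, not only when $\mathit{RV}(Z)^n_T>0$. On the event $\mathit{RV}(Z)^n_T>0$ this is exactly the computation carried out just before the theorem. In the remaining case $\mathit{RV}(Z)^n_T=0$, all increments $Z_{iT/n}-Z_{(i-1)T/n}$, $i=1,\dots,n$, vanish, so $Z$ is constant on the grid $\{0,T/n,\dots,T\}$; using the boundary conventions $Z_t=Z_0$ for $t<0$ and $Z_t=Z_T$ for $t>T$ together with $Z_0=Z_T$, one checks that the numerators of both $\mathit{RV}(Z)^{n,2}_T$ and $\rho(\Delta Z)^n_T$ are also zero. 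Applying the convention $0/0=2$ on the left and $0/0=1$ on the right yields $2=1+1$, so the identity is consistent and holds trivially. Thus the identity is valid throughout, with no exceptional cases.

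Once the identity holds unconditionally, the equivalence is essentially a shift by $1$. Specifically, if $Z$ is rough, so that $\mathit{RV}(Z)^{n,2}_T/\mathit{RV}(Z)^n_T\stackrel{p}{\longrightarrow}\tau$ subsequentially for some $\tau<1$, then by the identity $\rho(\Delta Z)^n_T=\mathit{RV}(Z)^{n,2}_T/\mathit{RV}(Z)^n_T-1\stackrel{p}{\longrightarrow}\tau-1$ subsequentially, and $\rho:=\tau-1<0$. Conversely, if $\rho(\Delta Z)^n_T\stackrel{p}{\longrightarrow}\rho<0$ subsequentially, then $\mathit{RV}(Z)^{n,2}_T/\mathit{RV}(Z)^n_T=1+\rho(\Delta Z)^n_T\stackrel{p}{\longrightarrow}1+\rho$ subsequentially with $1+\rho<1$. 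The only verification required is that the ``for every subsequence there is a sub-subsequence converging to the fixed limit'' structure is preserved by the affine transformation $x\mapsto x-1$, which is immediate from continuity.

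The main (and only) obstacle here is the bookkeeping with the two differing conventions $0/0=2$ and $0/0=1$. If these were chosen inconsistently (e.g.\ $0/0=0$ in one of them), the identity would fail on the degenerate event and the subsequential equivalence would require separately ruling out that event, using that the limits $\tau<1$ and $\rho<0$ exclude the convention-induced values. With the conventions as stated in Definition~\ref{def} and in \eqref{eq:neg}, they are precisely calibrated so that $2=1+1$ and no further work is needed.
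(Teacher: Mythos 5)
Your proposal is correct and follows essentially the same route as the paper, which establishes the identity $\mathit{RV}(Z)^{n,2}_T/\mathit{RV}(Z)^n_T=1+\rho(\Delta Z)^n_T$ by the expansion displayed just before the theorem and then reads the equivalence off by shifting the (subsequential) limit by $1$. Your additional check that the conventions $0/0=2$ and $0/0=1$ make the identity hold even when $\mathit{RV}(Z)^n_T=0$ is exactly the right bookkeeping and matches the intent of the paper's definitions.
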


With this insight, we construct a rough volatility test based on the sample autocovariance of increments of spot volatility estimates. Such a covariance-based test has one important advantage over tests relying on variances or $p$-variations with different values of $p$. Mainly, while statistical errors in estimating spot volatility and biases from jumps and microstructure noise can heavily distort variance or $p$-variation estimates, they remain largely uncorrelated across non-overlapping blocks of data and, as a result, have a much smaller impact on autocovariances. For example, our test  achieves robustness to jumps and microstructure noise without  any procedure to remove them.

%In order to distinguish semimartingale volatility from rough volatility, our strategy is to test for autocorrelation in the increments of local spot volatility estimates. If $c_t$ follows  an It\^o semimartingale process, it behaves locally like a Lévy process and therefore has asymptotically uncorrelated increments over short time intervals. By contrast, if $c_t$ is given by a rough fractional process, its increments will be negatively correlated at high frequency. This is also intuitive}

\subsection{Formulation of the Test}\label{sec:form}

We assume that we have high-frequency observations of the log-asset price process $x$ at  time points $0,\Delta_n,2\Delta_n,\dots$ up to time $T$, where $\Delta_n\rightarrow 0$ as $n\rightarrow\infty$ and $T\in(0,\infty)$, the number of trading days, is fixed.   %Let $\phi(u)_t=\int_\R e^{iu\ga(s,z)}(e^{iu\Ga(s,z)}-1)\la(dz)$.
We split the data into blocks of $p_n$ increments, where $p_n$ is a positive integer increasing to infinity. Our test statistic is based on estimating   spot log-variance (i.e., $\log \si_t^2$) on each of the blocks and then computing  the first-order autocovariance of the increments of these volatility estimates. More specifically, following \citet{JT14} and \cite{LLL18}, we estimate the spot variance using the local empirical characteristic function of the price increments within each block. These local volatility estimators are given by 
\begin{equation}\label{eq:est}
	\wh c^n_j(u)=-\frac{2}{u^2}\log \lvert\wh L^n_j(u)\rvert, \quad\text{where}\quad \widehat{L}_j^n(u) = \frac{1}{k_n}\sum_{i=(j-1)p_n+1}^{(j-1)p_n+k_n}e^{iu  {\Delta_i^nx}/{\sqrt{\Den}}},
\end{equation}
for $j=1,\dots,  \lfloor T/(p_n\Den)\rfloor $, some $1<k_n\leq p_n$ and some $u\in\R\setminus\{0\}$. For a generic process $X$, we write $\Delta^n_i X=  X_{i\Den}-X_{(i-1)\Den}$. If $k_n<p_n$, we have time gaps between the blocks used in computing successive $\widehat{L}_j^n(u)$. This is done  to minimize the impact from potential dependent market microstructure noise in the observed price on the statistic, see Section~\ref{sec:noise} below. 

Denoting $\wh\cf^n_j(u)=\log \wh c^n_j(u)$, we next form increments between blocks, that is, $\Delta^n_j \wh \cf(u)=\wh \cf^n_j(u)-\wh \cf^n_{j-1}(u)$. We further difference $\Delta^n_j \wh \cf(u)$ between consecutive days and denote this difference by 
\begin{equation}\label{eq:c_diff_diff}
	\un\Delta^n_j \wh \cf(u,u')=\Delta^n_j \wh \cf(u)-\Delta^n_{j-\mathds{1}} \wh \cf(u'),~\textrm{for $u,u'\in\R\setminus\{0\}$}.
\end{equation} 
To simplify notation, we assume that $T$  and  $\mathds{1} =(p_n\Den)^{-1}$ are both even integers. In this case, $\mathds{1} $ is the number of blocks within one trading day, which is asymptotically increasing. The differencing across days  is done to mitigate the effect on the statistic from the presence of a potentially erratic periodic intraday volatility pattern, which can be quite pronounced as previous studies have found (see e.g., \citet{WMO85, H86, ASTZ24}). Indeed, if volatility is a product of a deterministic function of time of the trading day and a regular stochastic semimartingale, then the intraday periodic component of volatility gets canceled out in the asymptotic limit of $\un\Delta^n_j \wh \cf(u,u')$. Importantly, the daily differencing in (\ref{eq:c_diff_diff}) has no impact on the power of the test.

Our test statistic is then based on the sample autocovariance of $\un\Delta^n_j \wh \cf(u,u')$. More specifically, it is given by
%\begin{equation}\label{eq:T3}
%	\widehat{T}^n = \frac{\sum_{j=2}^{\lfloor T/(2p_n\Den)\rfloor}\Delta^n_{2j} \wh \cf( {u}^n_j) \Delta^n_{2j-2} \wh \cf(u^n_{j-1})}{\sqrt{\sum_{j=2}^{\lfloor T/(2p_n\Den)\rfloor}\bigl(\Delta^n_{2j} \wh \cf( {u}^n_j) \Delta^n_{2j-2} \wh \cf(u^n_{j-1})\bigr)^2}},
%\end{equation} 
\begin{equation}\label{eq:T3}
	\widehat{T}^n = \frac{\textstyle\sum_{j\in\calj_n}\un\Delta^n_{2j}\widehat{\cf}( {u}^n_j, {u}^n_{j-\dsone/2})\un\Delta^n_{2j-2}\widehat{\cf}( {u}^n_{j-1},{u}^n_{j-1-\dsone/2})}{\textstyle\sqrt{\sum_{j\in\calj_n}(\un\Delta^n_{2j}\widehat{\cf}( {u}^n_j, {u}^n_{j-\dsone/2})\un\Delta^n_{2j-2}\widehat{\cf}( {u}^n_{j-1},{u}^n_{j-1-\dsone/2}))^2}},
\end{equation} 
where $\calj_n=\bigcup_{k=1}^{T/2} \{2+ (2k-1)/(2p_n\Den) ,\dots,  2k/(2p_n\Den) \}$. This choice of $\calj_n$ avoids including the covariance between the last volatility increment of a trading day and the first one on the next trading day. This way, the summands in the numerator of $\widehat{T}^n$ are asymptotically uncorrelated under the null hypothesis and this makes the construction of a feasible estimator of the asymptotic variance straightforward. 

We note that one can easily derive the asymptotic distribution for higher-order sample autocovariances and include them in the formulation of the test as they should be all asymptotically zero under the null hypothesis and all asymptotically negative under the alternative hypothesis. We do not do this here because one can show that the first-order autocovariance is typically much higher in magnitude than the higher-order ones under the alternative hypothesis.  

The arguments $u^n_j$ of the empirical characteristic functions  used in $\widehat{T}^n$ are chosen in a data-driven way using information from preceding blocks. Their specification is given in the following assumption: 
%\begin{equation}
%{u}^n_j = \frac{u_0}{\sqrt{(l_2-l_1+1)^{-1}\sum_{l=l_1}^{l_2}\widehat{c}_{2j-l}^n}},~~\textrm{for some fixed $3<l_1<l_2$ and $j=1,\ldots,\lfloor n/2p_n\rfloor$}.
%\end{equation}
\settheoremtag{U}
\begin{Assumption} \label{ass:U} We have
	\begin{equation}\label{eq:unj}
		{u}^n_j =  {\theta}/{\sqrt{\eta^n_j}},\qquad j=1,\dots,\lfloor T/(2p_n\Den)\rfloor,
	\end{equation}
	where  $\theta>0$  
	and $\eta^n_j$ is an $\calf_{(2j-2)p_n\Den}$-measurable random variable with the following properties: There is an adapted and stochastically continuous process $\eta_t$  such that $\inf\{ \eta_s:0\leq s\leq t\}>0$ almost surely for all $t>0$ and, for every $m\in\N$,
	\begin{equation}\label{eq:U-1} 
		\sup_{j=1,\dots,\lfloor T/(2p_n\Den)\rfloor}\E[\lvert\eta^n_j-\eta_{(2j-2)p_n\Den}\rvert\bone_{\{(2j-2)p_n\Den\leq \tau_m\}}] \to 0
	\end{equation}
	as $n\to\infty$,
	where $(\tau_m)_{m\in\N}$ is the sequence of stopping times from Assumption~\ref{ass:H0} or \ref{ass:H1}.
	Moreover, for every $m\in\N$, there is are   constants $0<\eta_m^-<\eta_m^+<\infty$ such that 
	\begin{equation}\label{eq:U-2} 
		\lim_{n\to\infty}	\P\Bigl( \eta_m^-\leq \eta^n_j \leq \eta_m^+\text{ for all }  j=1,\ldots,\lfloor T/(2p_n\Den)\rfloor\text{ with } (2j-2)p_n\Den\leq \tau_m\Bigr) = 1.
	\end{equation}
\end{Assumption}

\begin{Example}\label{ex:3} As we show in Lemma~\ref{lem:ex3}, a natural choice is  $\eta_t =\si^2_t$ together with
	\begin{equation}\label{eq:ups} 
		\eta^n_j= {\frac1{\ell_2-\ell_1+1}\sum_{\ell=\ell_1}^{\ell_2}\widehat{c}_{2j-\ell}^n},\qquad j=1,\ldots,\lfloor T/(2p_n\Den)\rfloor,
	\end{equation} for some fixed $3\leq \ell_1\leq\ell_2$ and where 
	\begin{equation}\label{eq:bp} 
		\wh c^n_j=\frac{\pi}{2k_n\Den}\sum_{i=(j-1)p_n+1}^{(j-1)p_n+k_n} \lvert\Delta^n_i x\Delta^n_{i-1} x\rvert
	\end{equation}
	is the bipower spot volatility estimator of \citet{barndorff2004power}. 
\end{Example}

\subsection{Asymptotic Behavior of the Test}

Let $\calf_\infty=\bigvee_{t\geq0}\calf_t$ and use $\stackrel{\call-s}{\longrightarrow}$ to denote $\calf_\infty$-stable convergence in law, see page 512 of \citet{JS03} for the   definition. The following theorem characterizes the asymptotic behavior of $\widehat{T}^n$ under the null and alternative hypotheses. 

\begin{theorem}\label{thm:H0}
	Suppose that
	\begin{equation}\label{eq:rates-0} 
		% k_n^2\Den\to0,\quad p_n^2\Den=O(1),\quad k_np_n\Den\to0,\quad  1/\sqrt{k_n}=o((p_n\Den)^H)
		{k_n\sqrt{\Den}}\to0,\quad k_n\Den^{1/2-\iota}\to\infty\quad \text{for all } \iota>0,\quad  %\sim\theta(\Den\log\Den^{-1})^{-1/2}\quad\text{and}\quad 
		p_n/k_n \to \kappa \in[1,\infty),
	\end{equation}
	and consider the test statistic $\wh T^n$ as defined in \eqref{eq:T3}, where $u^n_j$ satisfies Assumption~\ref{ass:U}.
	\begin{enumerate}			\item[(i)] Under the null hypothesis set forth in Assumption \ref{ass:H0}, we have
		\begin{equation}\label{eq:T-H0}
			\widehat{T}^n\stackrel{\call-s}{\longrightarrow}N(0,1).
		\end{equation}
		\item[(ii)]  Under the alternative hypothesis set forth in Assumption \ref{ass:H1}, we have
		\begin{equation}\label{eq:T-alt}
			\widehat{T}^n\stackrel{\P}{\longrightarrow}-\infty \quad\text{at a rate of } \sqrt{ 1/{(p_n\Den)}}. %\quad\textrm{and}\quad \widehat{T}^n = O_p\biggl(\sqrt{\frac{n}{p_n}}\biggr).
		\end{equation}
	\end{enumerate}
	In particular, a test with critical  region
	\begin{equation}\label{eq:crit} 
		\calc^n = \{\wh T^n < \Phi^{-1}(\al)\},~~\alpha\in(0,1),
	\end{equation}
	where $\Phi$ is the  cumulative distribution function of the standard normal distribution, has asymptotic size $\al$ under Assumption~\ref{ass:H0} and is   consistent under Assumption~\ref{ass:H1}.
\end{theorem}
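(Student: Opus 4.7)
The plan is to treat the numerator and self-normalizer of $\widehat T^n$ separately and combine via Slutsky. The starting point is a uniform expansion of the block estimator obtained by Taylor-expanding $\log\lvert\wh L^n_j(u)\rvert$ around $-\tfrac12 u^2 c_{(j-1)p_n\Den}$:
$$\wh\cf^n_j(u^n_j) = \log c_{(j-1)p_n\Den} + \zeta^n_j + b^n_j + r^n_j,$$
where $\zeta^n_j$ is a centered empirical-characteristic-function error of order $k_n^{-1/2}$, $b^n_j$ is a bias coming from the infinite-variation jump component (through $\vp(u/\sqrt\Den)$), and $r^n_j$ collects higher-order remainders. The key feature, ensured by Assumption~\ref{ass:H0}(iii) and the $1/\sqrt\Den$-scaling of $u^n_j$, is that block-to-block increments of $b^n_j$ behave like approximate martingale differences, so their sample first-order autocovariance is negligible compared with the noise from $\zeta^n_j$ and the martingale part of $\log c$.

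For part (i), I would substitute the above expansion into $\un\Delta^n_j\wh\cf(u,u')$. The across-day differencing eliminates any deterministic intraday periodic pattern in $c$ and renders the surviving summands of the numerator of $\wh T^n$ a triangular array of approximate martingale differences with respect to the block-wise filtration $\calg^n_j=\calf_{2jp_n\Den}$; the gap structure of $\calj_n$ guarantees that consecutive summands use disjoint block data. I would then replace the random $u^n_j$ by its deterministic surrogate $\theta/\sqrt{\eta_{(2j-2)p_n\Den}}$ using Assumption~\ref{ass:U} and \eqref{eq:U-1}, and apply a stable CLT for triangular arrays in the style of Theorem~IX.7.28 of \citet{JS03} to obtain $\calf_\infty$-stable convergence of the numerator to a centered mixed normal. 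The denominator $\sqrt{D^n}$ is shown to consistently estimate the asymptotic conditional variance of the numerator by a law of large numbers on the squared summands, after which Slutsky yields $\wh T^n\limst N(0,1)$.

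For part (ii), the expansion simplifies because the signal $\Delta^n_j\log c$ now dominates the estimation error. Writing $c_t=f(v_t)$ with $v$ given by the fractional integral \eqref{eq:rough}, a direct computation based on the kernel \eqref{eq:g} gives, for the leading order,
$$\E[\un\Delta^n_{2j}\wh\cf\cdot\un\Delta^n_{2j-2}\wh\cf\mid\calf_{(2j-2)p_n\Den}] = \chi_j(p_n\Den)^{2H}+o_p((p_n\Den)^{2H}),$$
with a strictly negative $\chi_j$ essentially equal to $(f'(v_{\cdot})/c_{\cdot})^2((\si^v_{\cdot})^2+(\ov\si^v_{\cdot})^2)$ evaluated at the corresponding time point times the autocovariance of unit increments of a fractional Brownian motion with Hurst index $H<\tfrac12$; this quantifies Theorem~\ref{thm:equiv} for the rough component. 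Conditional second moments of the summands are of order $(p_n\Den)^{4H}$, so the numerator is of order $(p_n\Den)^{2H-1}$ (strictly negative), the self-normalizer of order $(p_n\Den)^{2H-1/2}$, and $\wh T^n$ diverges to $-\infty$ at rate $\sqrt{1/(p_n\Den)}$, matching \eqref{eq:T-alt}. Asymptotic size and consistency of the test with critical region \eqref{eq:crit} follow immediately.

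The main obstacle is controlling the jump-induced bias of $\wh\cf^n_j$ simultaneously with the martingale error and proving that the between-block differences of the bias are negligible in the sample autocovariance. The $1/\sqrt\Den$-scaling of $u^n_j$ together with \eqref{eq:prop4-2} (under $H_0$) and \eqref{eq:prop4-alt} (under $H_1$) is engineered precisely for this purpose, turning the bias into an object whose block increments are dominated in second moment by the empirical-CF noise. A secondary technical issue is the data-dependent $u^n_j$: since $\eta^n_j$ is only $\calf_{(2j-2)p_n\Den}$-measurable rather than deterministic, a replacement argument based on \eqref{eq:U-1}--\eqref{eq:U-2} and stochastic continuity of $\eta_t$ is needed so that all expansions can be carried out with the deterministic surrogate, after which the stable convergence carries through and the self-normalization cancels the remaining random normalization constants.
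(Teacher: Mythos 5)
Your proposal is correct and follows essentially the same route as the paper's proof: a decomposition of the local characteristic-function estimator into signal, empirical-CF noise and jump-induced bias, a replacement of the data-driven $u^n_j$ by its surrogate via Assumption~\ref{ass:U}, a stable martingale CLT for the numerator with the self-normalizer consistently estimating the conditional variance, and, under the alternative, a strictly negative conditional autocovariance of order $(p_n\Den)^{2H}$ driven by the negatively correlated fractional-kernel increments, giving divergence at rate $\sqrt{1/(p_n\Den)}$. The only presentational difference is that the paper derives Theorem~\ref{thm:H0} as the noise-free special case of Theorem~\ref{thm:test-noise} and carries out the linearization to second order (Lemmas~\ref{lem:linearize}--\ref{lem:V-alt}), details your remainder terms $r^n_j$ absorb.
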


The requirements for the number of elements in a block, $k_n$, in (\ref{eq:rates-0}) is a standard condition that essentially balances asymptotic bias and variance in the spot volatility estimation. The limit behavior of the statistic under the null hypothesis is driven by the diffusive component of $x$. While infinite variation jumps are of higher asymptotic order, they can nevertheless have nontrivial effect in finite samples. The estimates of the asymptotic standard error of the autocovariance used in $\widehat{T}^n$ should automatically account for such higher order terms. Therefore, we expect good finite sample behavior of the test statistic even in situations with high jump activity. We confirm this in the Monte Carlo study in Section~\ref{sec:mc}. This is not the case for nonparametric estimates of diffusive volatility, which are known to have poor properties  in the presence of infinite variation jumps. Finally, the rate of divergence of the statistic under the alternative hypothesis is determined by the length of the interval over which local volatility estimates are formed. 

\subsection{Robustness to Microstructure Noise}\label{sec:noise}

Volatility estimators such as realized variance can be severely impacted by microstructure noise in  observed asset prices. By contrast, we show in this section that the test statistic $\wh T^n$ from \eqref{eq:T3} is \emph{naturally} robust to many forms of microstructure noise considered in the literature, without the need to employ classical noise-reduction techniques such as two-scale estimation (\citet{Zhang05}), realized kernels (\citet{barndorff2008designing}) or pre-averaging (\citet{Jacod09}). The reason for this is similar to that for the robustness towards infinite variation jumps. Mainly, the contribution of the noise to the empirical characteristic function gets canceled out in the differencing of the consecutive volatility estimates in $\Delta^n_j \wh \cf(u)$. 

We start with stating the counterparts to Assumptions~\ref{ass:H0}, \ref{ass:H1} and \ref{ass:U} in the noisy setting.

\settheoremtag{H$'_0$}
\begin{Assumption}\label{ass:H0-noise} The observed log-prices are given by
	\begin{equation}\label{eq:noise} 
		y_{i\Den}=x_{i\Den} + \eps^n_i,\qquad i=0,\dots,\lfloor T/\Den\rfloor,
	\end{equation}	
	where   the latent efficient price $x_{t}$ follows \eqref{eq:x0} and satisfies Assumption~\ref{ass:H0}, while the noise variables $\eps^n_i$ are of the form
	\begin{equation}\label{eq:eps} 
		\eps^n_i = \Den^\varpi \rho_{i\Den} \chi_i.
	\end{equation}
	Here, $\varpi\in[0,\infty)$ determines the size of the noise, $\rho$ is an adapted locally bounded process and $(\chi_i)_{i\in\Z}$ is a strictly stationary $m$-dependent
	%\footnote{That is, for all $j\in\Z$, the sequences $(\chi_i)_{i\leq j}$ and $(\chi_i)_{i\geq j+m+1}$ are independent. %We say that $(\Delta\chi_i)_{i\in\Z}$ is \emph{$\iota$-polynomially $\rho$-mixing} if there is $C>0$ such that $\rho_k(\Delta\chi)\leq C/k^\iota$ for all $k\geq1$, where $\rho_k(\Delta\chi)=\sup\{\lvert \E[UV]\rvert: \E[U]=\E[V]=0,\E[U^2]\vee \E[V^2]\leq 1, \text{$U$ is $\calg_0$-measurable}, \text{$V$ is $\calg^k$-measurable}\}$ and $\calg_j=\si(\Delta\chi_i:i\leq j)$ and $\calg^j = \si(\Delta\chi_i: i\geq j)$ for all $j\geq0$. While we only need $\chi_i$ for $i\geq0$, there is no loss of generality to assume that $\chi_i$ is defined for all $i\in\Z$.} 
	sequence of random variables (for some $m\geq0$),  independent of $\calf_\infty$  and with mean $0$, variance $1$ and finite moments of all orders. 
	\begin{enumerate}			\item[(i)]  If $\varpi\leq \frac34$, we further assume that $\rho$ is an It\^o semimartingale of the form
		\begin{align} 
			\rho_t	&=\rho_0+\int_0^t\al^\rho_s ds + \int_0^t (\si^\rho_s dW_s +   \wt\si^\rho_s d\wt W_s) + \int_0^t\int_E \ga^\rho(s,z)(\mu-\nu)(ds,dz)\nonumber\\
			&\quad + \int_0^t\int_E \Ga^\rho(s,z)\mu(ds,dz),\label{eq:pi}
		\end{align}
		where $\wt W$ is a standard $\F$-Brownian motion that is independent of $W$ (and jointly Gaussian and potentially correlated with $\ov W$, $W^\vp$ and $\ov W^\vp$), $\rho_0$ is an $\calf_0$-measurable random variable, $\al^\rho$, $\si^\rho$ and $\wt\si^\rho$ are locally bounded adapted processes and $\ga^\rho$ and $\Ga^\rho$ are predictable functions. Moreover, as $\delta\to0$, the process $
		t\mapsto \sup_{s\in[0,\delta]}\E[\lvert \si^\rho_{t+s}-\si^\rho_t\rvert\wedge 1 \mid \calf_t]$
		converges uniformly on compacts in probability to $0$ and we have $(\lvert\ga^\rho(t,z)\rvert^2 + \lvert \Ga^\rho(t,z)\rvert) \wedge1\leq J_n(z)$ whenever $t\leq \tau_n$ (where $\tau_n$ is the localizing sequence from Assumption~\ref{ass:H0}).  
		\item[(ii)] If $\varpi\leq \frac12$, we have   $\inf\{\rho_t: 0\leq s\leq t\}>0$ for all $t>0$ almost surely. If $\varpi\leq \frac12$ or if $\theta_0=0$ in Assumption~\ref{ass:U-noise} below, there is $\varepsilon>0$ such that \eqref{eq:prop1} can be strengthened to 
		\begin{equation}\label{eq:prop1-noise} 
			(	\lvert \ga(t,z)\rvert^{2-\varepsilon}+\lvert \ga^c(t,z)\rvert^{2-\varepsilon}+\lvert \Ga(t,z)\rvert^{1-\varepsilon}+\lvert \Ga^c(t,z)\rvert^{1-\varepsilon})\wedge 1\leq J_n(z)
		\end{equation}
		and the processes in \eqref{eq:prop0-1} and \eqref{eq:prop3} as well as the constant $C^n(\delta)$ from \eqref{eq:prop4-2} still converge to $0$ if divided by $\delta^\varepsilon$.
	\end{enumerate}
\end{Assumption}

\settheoremtag{H$'_1$}
\begin{Assumption}\label{ass:H1-noise} We have \eqref{eq:noise}, where $x_t$ satisfies Assumption~\ref{ass:H1} and the noise variables $\eps^n_i$ satisfy \eqref{eq:eps}, where $\varpi$, $\rho$  and $(\chi_i)_{i\in\Z}$ have the properties listed in the sentence immediately after \eqref{eq:eps}.
	\begin{enumerate}
		\item[(i)] If $\varpi\leq\frac12+\frac{1}{2}H$,   we further assume that  
		\begin{equation}\label{eq:v-pi} 
			\rho_t=F(w_t),\quad	w_t = w_0 + \int_0^t G(t-s)(\si^{w}_sdW_s+\ov \si^w_sd\ov W_s+\wh\si^{w}_s d\wh W_s)+ \wt w_t,
		\end{equation} 
		where $F$ is a $C^2$-function, $w_0$ is   $\calf_0$-measurable and $G(t)=K_{H_\rho}^{-1} t_+^{H_\rho-1/2}+G_0(t)$ for some
		$H_\rho\in(0,\frac12)$ and $G_0\in C^1([0,\infty))$ with $G_0(0)=0$. Moreover, $\wh W$ is a standard $\F$-Brownian motion that is independent of $W$ and $\ov W$ and jointly Gaussian (and possibly correlated) with $W^\vp$, $\ov W^\vp$ and $\wt W$. The processes $\wt w$, $\si^w$, $\ov\si^w$ and $\wh\si^w$   are adapted and locally bounded,  and for any $n\in\N$ (and with $\tau_n$ as in \ref{ass:H0} and $C_n$ and $h$ as in \ref{ass:H1}), we have
		\begin{align}\label{eq:wt-v-pi} 
			&	\sup_{t\geq0 }	\E[\lvert\wt w_{t+\delta}-\wt w_t\rvert^2\bone_{\{t+\delta\leq \tau_n\}} ]^{1/2} \leq C_n\delta^{H_\rho}h(\delta),\\
			&	\sup_{t\geq0}\E[ (\lvert\si^w_{t+\delta}-\si^w_t\rvert^2+\lvert\ov\si^w_{t+\delta}-\ov\si^w_t\rvert^2+\lvert\wh\si^w_{t+\delta}-\wh\si^w_t\rvert^2)\bone_{\{t+\delta\leq \tau_n\}} ]^{1/2} \leq C_nh(\delta).\label{eq:wt-v-pi2}  
		\end{align}
		\item[(ii)]  If $\varpi\leq \frac12$ or if $\theta_0=0$ in Assumption~\ref{ass:U-noise} below, we have $h(t)=O(t^\vareps)$ as $t\to0$ for some $\varepsilon>0$. %Moreover, the processes $t\mapsto \sup_{s\in[0,\delta]} \E[\lvert \si^2_{t+s}-\si
		\item[(iii)] If $\varpi<\frac12+\frac12H$ and $(p_n\Den)^H \Den^{(1-2\varpi)_+} = o((p_n\Den)^{H_\rho}\Den^{(2\varpi-1)_+})$,  then almost surely $\inf\{(\rho_sF'(w_s))^2[ (\si^w_s)^2+(\ov\si^w_s)^2+(\wh\si^w_s)^2]: 0\leq s\leq t\}>0$ for all $t>0$. If  $\varpi=\frac12$ and $H=H_\rho$, then almost surely $\inf\{z_s: 0\leq s\leq t\}>0$ for all $t>0$, where 
		\begin{align*}
			z_t&=(\tfrac12 f'(v_t)\si^v_t + \E[(\Delta\chi_1)^2]\rho_tF'(w_t)\si^w_t)^2\\
			&\quad+(\tfrac12 f'(v_t)\ov\si^v_t + \E[(\Delta\chi_1)^2]\rho_tF'(w_t)\ov\si^w_t)^2+( \E[(\Delta\chi_1)^2]\rho_tF'(w_t)\wh\si^w_t)^2
		\end{align*}
		and $\Delta\chi_i = \chi_i-\chi_{i-1}$.
	\end{enumerate}
	% If $\frac5{16}+\frac14H<\varpi<\frac12+\frac{1}{4}H$, we assume that $\pi_t=f^\pi(v^\pi_t)$ is a fractional process as described before or an It\^o semimartingale of the form \eqref{eq:pi} (in which case we define $H^\pi=\frac12$). If $\varpi\geq \frac12+\frac{1}{4}H$, we only assume that $\pi$ is a locally bounded adapted process. \red{Pi does not hit zero.}
\end{Assumption}

We consider the case when the noise is asymptotically shrinking ($\varpi>0$) and when this is not the case ($\varpi=0$). Naturally, the requirements for the process $\rho$ are somewhat stronger for lower values of $\varpi$ (which correspond to asymptotically bigger noise).  A typical example where the conditions on $\rho$ are met, irrespective of  the size of the noise, is when $\rho$ is a function of volatility, in which case we have $w=v$ and $H_\rho=H$. Given prior evidence for a strong relationship between noise intensity and volatility, this is also the empirically relevant case.

To test \ref{ass:H0-noise} against \ref{ass:H1-noise}, we use the  same test statistic $\wh T^n$ from \eqref{eq:T3} as before, except that now
\begin{equation}\label{eq:est-2}
	\widehat{L}_j^n(u) = \frac{1}{k_n}\sum_{i=(j-1)p_n+1}^{(j-1)p_n+k_n}e^{iu \Delta_i^ny/\sqrt{\Den}},
\end{equation}
which uses the observed prices $y$ instead of the latent efficient price $x$.  We do not change the notation of $\wh T^n$, $\wh \cf^n_j(u)$, $\wh c^n_j(u)$ and $\widehat{L}_j^n(u)$ to reflect the fact that \eqref{eq:est} is a special case of \eqref{eq:est-2}, namely when  microstructure noise is absent. More importantly, in practice, one does not have to know or decide whether noise is present or not. We do have to adjust Assumption~\ref{ass:U} in the presence of noise:
\settheoremtag{U$'$}
\begin{Assumption}\label{ass:U-noise}
	We have Assumption~\ref{ass:U} with the following modifications: 
	\begin{enumerate}			\item[(i)] In \eqref{eq:unj}, $\theta=\theta_n$  is a sequence of positive numbers such that $\theta_n\to\theta_0\in[0,\infty)$. If $\varpi\leq \frac12$, we have $\theta_0=0$. The variables  $\eta^n_j$ are $\calf_{((2j-2)p_n-m-1)\Den}$-measurable. 
		\item[(ii)]  In \eqref{eq:U-1} and \eqref{eq:U-2}, $\eta^n_j$ is replaced by $\eta^n_j/\Den^{(2\varpi-1)\wedge 0}$, and \eqref{eq:U-1} still holds if the left-hand side is divided by $\Delta_n^\varepsilon$. The numbers   $\varpi$ and $\vareps$ are the same numbers as in \eqref{eq:eps} and \eqref{eq:prop1-noise}.
		\item[(iii)]  The process $
		t\mapsto \sup_{s\in[0,\delta]}\E[\lvert \eta_{t+s}-\eta_t\rvert\wedge 1 ]/\delta^\vareps$ converges uniformly on compacts in probability to $0$ as $\delta\to0$.
	\end{enumerate} 
\end{Assumption}

\begin{theorem}\label{thm:test-noise}
	Suppose that Assumption~\ref{ass:U-noise} is satisfied. If 
	\begin{equation}\label{eq:rates} \begin{split}
			% k_n^2\Den\to0,\quad p_n^2\Den=O(1),\quad k_np_n\Den\to0,\quad  1/\sqrt{k_n}=o((p_n\Den)^H)
			&\frac{k_n\sqrt{\Den}}{\theta_n^4}\to0, \qquad k_n\Den^{1/2-\iota}\to\infty\quad \text{for all } \iota>0,\\  %\sim\theta(\Den\log\Den^{-1})^{-1/2}\quad\text{and}\quad 
			&p_n/k_n \to \kappa \in[1,\infty),\qquad k_n\leq p_n-m-1,\end{split}
	\end{equation}
	then the behavior of the test statistic $\wh T^n$ remains the same as described in \eqref{eq:T-H0} or \eqref{eq:T-alt} depending on whether we have, respectively, Assumption~\ref{ass:H0-noise} or \ref{ass:H1-noise}. A test based on the critical region \eqref{eq:crit} has asymptotic size $\al$ under Assumption \ref{ass:H0-noise} and is consistent under \ref{ass:H1-noise}.
\end{theorem}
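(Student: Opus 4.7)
The plan is to parallel the proof of Theorem~\ref{thm:H0} while tracking the microstructure-noise contribution at each step. The starting point is to condition on $\calf_\infty$ and decompose
\begin{equation*}
\wh L^n_j(u)=\wh L^{n,x}_j(u)\,\phi^n_j(u)+\wh R^n_j(u),
\end{equation*}
where $\wh L^{n,x}_j(u)$ is the efficient-price empirical characteristic function from \eqref{eq:est}, $\phi^n_j(u)$ is the block-average of the conditional noise characteristic function $\E[e^{iu\Delta^n_i\eps/\sqrt{\Den}}\mid\calf_\infty]$ (approximately constant on block $j$ because $\rho$ varies slowly), and $\wh R^n_j(u)$ collects the mean-zero conditional fluctuation of the noise part. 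The block-gap design $k_n\leq p_n-m-1$ combined with the $m$-dependence of $(\chi_i)$ guarantees that, conditionally on $\calf_\infty$, $\wh R^n_j$ and $\wh R^n_{j-1}$ are independent; this is the structural reason autocovariance-based statistics are robust to microstructure noise.

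The next step is a uniform expansion of $\wh\cf^n_j(u^n_j)=\log\wh c^n_j(u^n_j)$. Assumption~\ref{ass:U-noise} ensures that $u^n_j\Delta^n_i\eps/\sqrt{\Den}$ is uniformly bounded --- in the heavy-noise regime $\theta_n\to 0$ cooperates with $\eta^n_j\asymp\Den^{2\varpi-1}$ to enforce this --- so both the efficient-price expansion of Theorem~\ref{thm:H0} and a Taylor expansion of the noise characteristic function are justified. The leading noise bias contributes a smooth functional $b(\theta_n,\rho_{\tau^n_j})$ at the block time $\tau^n_j$; after forming $\un\Delta^n_{2j}\wh\cf( {u}^n_j,{u}^n_{j-\dsone/2})$, this bias reduces to a smooth transform of the daily-differenced increments of $\rho^2$, with the intraday-periodic part canceled by construction of the daily differencing in \eqref{eq:c_diff_diff}.

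The asymptotics are then assembled as in Theorem~\ref{thm:H0}. Under $H_0'$, $\rho^2$ is an It\^o semimartingale via \eqref{eq:pi}, so its contribution to the numerator of $\wh T^n$ yields asymptotically centred, uncorrelated increments; combined with the conditional independence of $\wh R^n_j$ across $j$, the numerator is a sum of asymptotic martingale differences and the stable CLT of Theorem~\ref{thm:H0} extends, with the self-normalizing denominator absorbing both the signal and the noise-bias variance contributions. Under $H_1'$, the volatility autocovariance is of order $(p_n\Den)^{2H}$ and strictly negative by Theorem~\ref{thm:equiv} and condition~(iii) in Assumption~\ref{ass:H1-noise}, while the potentially rough contribution from $\rho^2$ scales like $(p_n\Den)^{2H_\rho}\Den^{(2\varpi-1)_+-(1-2\varpi)_+}$, which the rate hypothesis in~(iii) ensures is of strictly lower order; hence $\wh T^n\to-\infty$ at rate $\sqrt{1/(p_n\Den)}$. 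In the boundary case $\varpi=\frac12$ and $H=H_\rho$, condition~(iii) of Assumption~\ref{ass:H1-noise} directly imposes the non-degeneracy $z_t>0$ that prevents the signal and the noise-driven rough component from cancelling.

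The principal obstacle is the heavy-noise regime $\varpi\leq\frac12$: there $\theta_n\to 0$ shrinks the efficient-price signal inside $\log\lvert\wh L^n_j(u^n_j)\rvert$ to the quadratic term in $u^n_j$, which must be extracted from a Taylor remainder competing with cubic-and-higher moment terms of the noise characteristic function and with the stochastic error in $\wh R^n_j$. Controlling these remainders uniformly in $j$ is precisely what the strengthened assumptions purchase: the $(2-\vareps)$- and $(1-\vareps)$-integrability of the jump coefficients in \eqref{eq:prop1-noise}, the $\delta^\vareps$-margin in the smoothness of $\al$, $\si^c$ and $\vp$, and the uniform $\Den^\vareps$-approximation rate in Assumption~\ref{ass:U-noise}(ii)--(iii) together supply the $\Den^\vareps$ surplus needed to beat the $\theta_n^{-4}$ factor in $k_n\sqrt\Den/\theta_n^4\to 0$.
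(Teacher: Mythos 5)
Your outline for the null hypothesis is broadly in the spirit of the paper's argument: the paper also conditions on the noise being independent of $\calf_\infty$, splits the block-level empirical characteristic function into variance, bias and signal pieces (with the noise characteristic function $\Psi$ entering the signal), uses the $m$-dependence together with $k_n\leq p_n-m-1$ and an enlarged filtration to run martingale arguments across blocks, and shows that the signal part --- an increment of an It\^o semimartingale functional of $(c,\rho)$ when $\varpi\leq\tfrac34$ --- is asymptotically negligible relative to the variance part, with the rate condition $k_n\sqrt{\Den}/\theta_n^4\to0$ and the $\vareps$-strengthened smoothness assumptions absorbing the higher-order Taylor remainders in the heavy-noise regime. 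That part of your plan, while only a sketch of what is in fact the most delicate step (the linearization showing all bias and quadratic remainder terms are $o(\sqrt{\Den}\,\theta_n^2)$ uniformly in $j$), points in the right direction.

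The genuine gap is in your treatment of the alternative. You claim that the rough contribution from $\rho^2$ ``scales like $(p_n\Den)^{2H_\rho}\Den^{(2\varpi-1)_+-(1-2\varpi)_+}$, which the rate hypothesis in (iii) ensures is of strictly lower order'' than the volatility contribution. Assumption~\ref{ass:H1-noise}(iii) says the opposite: its first clause applies precisely when $(p_n\Den)^H\Den^{(1-2\varpi)_+}=o((p_n\Den)^{H_\rho}\Den^{(2\varpi-1)_+})$, i.e.\ when the \emph{noise-intensity} term dominates the volatility term, and in that regime it imposes non-degeneracy of the rough component of $\rho$ (which is itself a rough process with Hurst index $H_\rho\in(0,\tfrac12)$ by \eqref{eq:v-pi} whenever $\varpi\leq\tfrac12+\tfrac12H$). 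The paper's proof therefore normalizes by $\pi_n=(p_n\Den)^H\Den^{(1-2\varpi)_+}+(p_n\Den)^{H_\rho}\Den^{(2\varpi-1)_+}$ and handles three regimes according to $\La=\lim (p_n\Den)^H\Den^{(1-2\varpi)_+}/\pi_n\in\{0,\tfrac12,1\}$; the limit of the normalized autocovariance is strictly negative in all three because both $C_{\kappa,H}<0$ and $C_{\kappa,H_\rho}<0$ (negative autocorrelation of non-overlapping fractional Brownian increments with index below $\tfrac12$). In the noise-dominant case $\La=0$ your argument breaks down: the negativity of the limit does not come from the roughness of $c$ at all, but from the roughness of the noise intensity $\rho$, which your proposal never invokes. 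Without this case analysis the consistency claim under \ref{ass:H1-noise} is not established, and the boundary case $\varpi=\tfrac12$, $H=H_\rho$ (where the two rough components can interfere and the quantity $z_t$ guards against cancellation) is the balanced instance of this same trichotomy rather than an isolated exception as your sketch treats it.
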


The rate conditions in (\ref{eq:rates}) are slightly stronger than those in (\ref{eq:rates-0}) corresponding to the case of no market microstructure noise. In particular, the last condition in (\ref{eq:rates}) is necessary to avoid the noise having an asymptotic effect on the statistic. Since one does not typically know a priori the degree of dependence in the noise, it is better to pick $\kappa$ slightly above one.  

In the noisy case, we need $\theta=\theta_n\to0$ if $\varpi\leq \frac12$. The consequence of that is that we only need to assess the characteristic function of the noise for values of the exponent around zero. Setting $\rho\equiv0$ in Theorem~\ref{thm:test-noise}, we can verify that Theorem~\ref{thm:H0} remains true even if we choose $\theta=\theta_n\to0$ (subject to \eqref{eq:rates} and the strengthened conditions for $\theta_0=0$). Similarly, as shown in Lemma~\ref{lem:ex3}, Example~\ref{ex:3} can be extended to the noisy case with $\eta_t=\si^2_t\bone_{\{\varpi\geq\frac12\}}+K(\chi)\rho_t^2\bone_{\{\varpi\leq \frac12\}}$, where $K(\chi)=\frac\pi2 \E[\lvert \Delta\chi_2\Delta\chi_1\rvert]$. Importantly from an applied point of view, in order to implement the test, we do not need to decide  whether microstructure noise is present or not (or know the value of $\varpi$). 

Finally, the rate of convergence of the statistic is determined either by the noise or by the diffusive component of the price. This depends on the value of $\varpi$, i.e., on how big in asymptotic sense the noise is. The user does not need to know this in implementing the test. 

\begin{remark} We conjecture that in Theorem~\ref{thm:test-noise}, the $m$-dependence assumption on $\chi$  can be weakened by requiring $\chi$ to be $\iota$-polynomially $\rho$-mixing for some $\iota>1$ as in \citet{jacod2017statistical}, \citet{DX21} and \citet{li2022remedi}.
\end{remark}

\begin{remark}\label{rem:weak}
	As \citet{LPSY22} and \citet{SY23} show, there is a weak identification issue in estimating discrete-time rough volatility  models. For example, within the class of ARFIMA models, it is asymptotically impossible to distinguish a near-stationary long-memory model from a rough model with near unit root dynamics. As our theoretical analysis shows, this weak identification issue is a feature of discrete-time volatility models only. In fact, roughness of a function is intrinsically a continuous-time concept and one can achieve identification of the roughness of the volatility path by considering an infill asymptotic setting. Therefore, our test can identify rough volatility irrespective of whether volatility has short or  long memory behavior.
\end{remark}

\begin{remark}\label{rem:smooth}
	As we mentioned above, the hypotheses $H_0$ and $H_1$ in \eqref{eq:hypo} encompass most continuous-time volatility models considered in prior work. That said, there are specifications for $c_t$ that do not belong to  either $H_0$ or $H_1$. The most notable such specification is one in which $c_t$ is a non-semimartingale process with finite quadratic variation. This is for instance the case if $c_t$ is a fractional process of the form \eqref{eq:rough} but with $H>\frac12$ in \eqref{eq:g} (and  $H_\rho>\frac12$ if there is noise). In this case, the volatility paths will have zero quadratic variation and the high-frequency increments of volatility will have  positive autocorrelations.  In fact, denoting $H_\ast= (H+(1-2\varpi)_+)\wedge (H_\rho+(2\varpi-1)_+)$, one can show  that  if $H_\ast\in(\frac12,\frac34)$, then   $\wh T^n\stackrel{\P}{\longrightarrow}+\infty$. This is because the positive autocovariance of volatility (or noise intensity) increments dominates in this case. If $H_\ast\in(\frac34,1)$, then estimation errors  dominate and we have $\wh T^n\limst N(0,1)$. In summary, the asymptotic rejection rate of a one-sided test based on the critical region in \eqref{eq:crit} will not exceed  the nominal significance level under  such very smooth specification for $c_t$. 
	%In summary, conducting a one-sided test based on $\wh T^n$, we can distinguish the case $H<\frac12$ from the cases  $H=\frac12$ and $H>\frac12$. 
	As we do not observe any significant   positive values of the test statistic in the empirical application, we omit a formal proof of the aforementioned fact.
\end{remark}

%\begin{theorem}\label{thm:smooth}
%	Suppose that the conditions of Theorem~\ref{thm:test-noise} are met, except that now we have $H\in(\frac12,1)$ in \eqref{eq:g} and $H_\rho\in(\frac12,1)$ if $\varpi\leq \frac12+\frac12H$. Denoting $H_\ast= (H+(1-2\varpi)_+)\wedge (H_\rho+(2\varpi-1)_+)$, we then have
%	\begin{equation}\label{eq:smooth} 
%		\begin{cases} \wh T^n\stackrel{\P}{\longrightarrow}\infty &\text{if } H_\ast\in(\frac12,\frac34),\\
%		\wh T^n\limst N(0,1) &\text{if }	H_\ast\in[\frac34,1).
%		\end{cases}
%	\end{equation}
%\end{theorem}

%This shows that our test can asymptotically separate the null hypothesis and the smooth alternative  from the rough alternative. In particular, if we subsume the smooth alternative as considered above into the null hypothesis,  a test based on the critical region \eqref{eq:crit} will have asymptotic size $\al$ under the augmented null hypothesis and is consistent under the rough alternative.

\section{Monte Carlo Study}\label{sec:mc}

In this section, we evaluate the performance of the test on simulated data.
\subsection{Setup}
We use the following model for $x$:
\begin{equation}
	dx_t= \sqrt{V_t}dW_t+\int_{\mathbb{R}}x(\mu(dt,dx)-dt\nu_t(dx)),
\end{equation}
where $W_t$ is a standard Brownian motion, $\mu$ is an integer-valued random measure counting the jumps in $x$ with compensator measure $dt\nu_t(dx)$ and $\nu_t(dx)$ is given by
\begin{equation}
	\nu_t(dx) =  c\frac{e^{-\lambda|x|}}{|x|^{\alpha+1}}V_t dx.
\end{equation}
The volatility under the null hypothesis (corresponding to $H=1/2$) follows the standard Heston model:
\begin{equation}
	V_t = V_0 + \int_0^t\left(\kappa(\theta-V_s)ds + \nu\sqrt{V_s}dB_s\right),
\end{equation}
where $B_t$ is a Brownian motion with $\textrm{corr}(dW_t,dB_t) = \rho dt$. The volatility under the alternative hypothesis follows the rough Heston model of \citet{jaisson2016rough}: 
\begin{equation}
	V_t = V_0 + \frac{1}{\Gamma(H+1/2)}\int_0^t(t-s)^{H-1/2}\left(\kappa(\theta-V_s)ds + \nu\sqrt{V_s}dB_s\right),
\end{equation}
where again $B_t$ is a Brownian motion with $\textrm{corr}(dW_t,dB_t) = \rho dt$.

In the above specification of $x$, we allow both for stochastic volatility, which can exhibit rough dynamics, and jumps. The jumps are modeled as a time-changed tempered stable process, with the time change  determined by the diffusive volatility as in \citet{DPS00}. This is consistent with earlier empirical evidence for jump clustering. The parameters $\lambda$ and $\alpha$ control the behavior of the big and small jumps, respectively. In particular, $\alpha$ coincides with the Blumenthal--Getoor index of $x$ controlling the degree of jump activity. We fix the value of $\lambda$ throughout and consider different values of $\alpha$. For given $\lambda$ and $\alpha$, we set the value of the scale parameter $c$ to 
\begin{equation}
	c = 0.1\times \frac{\lambda^{2-\alpha}}{\Gamma(2-\alpha)}.
\end{equation} 
With this choice of $c$, we have $\int_{\mathbb{R}}x^2\nu_t(dx) = 0.2\times V_t$, which is roughly consistent with earlier nonparametric evidence regarding the contribution of jumps to asset price variance. 

Turning next to the specification of the volatility dynamics, we set the mean of the diffusive variance to $\theta = 0.02$ (unit of time corresponds to one year) and the mean reversion parameter to $\kappa = 8$, which corresponds approximately to half-life of a shock to volatility of one month. The volatility of volatility parameter in the Heston model is set to $\nu = 0.45$ (recall that the Feller condition puts an upper bound on $\nu$ of $\nu<\sqrt{2\kappa\theta}$). We then set the value of $\nu$ for the different rough specifications (i.e., for the different values of $H<1/2$) so that the unconditional second moment of $V_t$ is the same across all models. 

Finally, we assume that the observed log-price is contaminated with noise, i.e., instead of observing $x$, we observe
\begin{equation}
	y_{i\Delta_n} = x_{i\Delta_n}+\sigma_{\textrm{noise}}\sqrt{V_{i\Delta_n}}\varepsilon_i,\quad i=1,\dots,n,
\end{equation} 
where $\{\varepsilon_i\}_{i=1,\dots,n}$ is an i.i.d.\ sequence of standard normal random variables defined on a product extension of the sample probability space and independent from $\mathcal{F}$. We set $\sigma_{\textrm{noise}}^2 =  0.5\times \frac{1}{252} \times (1.0548-1)/(4620-77\times 1.0548) = 2.40\times 10^{-8}$. The value $1.0548$ corresponds to the median of the ratio of daily realized volatility from five-second returns over the daily realized volatility from five-minute returns in the data set used in the empirical application. The numbers $4620$ and $77$ correspond, respectively, to the number of five-second and five-minute returns for our daily estimation window, see the discussion below for our sampling scheme. The above calibration of the noise parameter is based on the fact that in our model, the ratio of daily realized variances at five seconds versus five minutes is approximately equal to $\frac{1/252+2\times 4620\times\sigma_{\textrm{noise}}^2}{1/252+2\times 77\times\sigma_{\textrm{noise}}^2}$, with a business time convention of $252$ trading days per year (which we use as our unit of time here).

The parameter values for all specifications used in the Monte Carlo are given in Table~\ref{table:pars}. We consider two different values of the Blumenthal--Getoor index: one corresponds to finite variation jumps ($\alpha = 1/2$) and one to infinite variation jumps ($\alpha = 3/2$). For the volatility specification, we consider three values of the roughness parameter: $H=0.5$ (this is the standard Heston model), $H=0.3$ and $H=0.1$, with the first value corresponding to the null hypothesis and the last two to the alternative hypothesis. Recall that lower values of $H$ imply rougher volatility paths and the rough volatility literature argues for value of $H$ around $0.1$ (\citet{gatheral2018volatility}).

\begin{table}[ht!]
	\setlength{\tabcolsep}{0.20cm}
	\begin{center}\small 
		\caption{{\bf Parameter Setting for the Monte Carlo}\label{table:pars}}
		\begin{tabularx}{0.95\textwidth}{Xccccccccccc}
			\toprule
			Case    & \multicolumn{5}{c}{Variance Parameters} & & \multicolumn{3}{c}{Jump Parameters} & & \multicolumn{1}{c}{Noise Parameter}\\ 
			\cline{2-6} \cline{8-10} 
			& $H$ & $\theta$ & $\kappa$ & $\nu$ & $\rho$ & & $\alpha$ & $\lambda$ & $c$  & & $\sigma_{\textrm{noise}}$\\
			\hline
			V1-J1 & $0.1$ & $0.02$ & $8$ & $0.10$ & $-0.7$ & &  $0.5$ & $500$ & $1262$ & & $1.55\times 10^{-4}$\\ 
			V1-J2 & $0.1$ & $0.02$ & $8$ & $0.10$ & $-0.7$ & &  $1.5$ & $500$ & $1.26$ & & $1.55\times 10^{-4}$\\[+0.5ex]	
			V2-J1 & $0.3$ & $0.02$ & $8$ & $0.22$ & $-0.7$ & & $0.5$ & $500$ & $1262$ & & $1.55\times 10^{-4}$\\
			V2-J2 & $0.3$ & $0.02$ & $8$ & $0.22$ & $-0.7$ & & $1.5$ & $500$ & $1.26$ & & $1.55\times 10^{-4}$\\[+0.5ex]	
			V3-J2 & $0.5$ & $0.02$ & $8$ & $0.45$ & $-0.7$ & & $0.5$ & $500$ & $1262$ & & $1.55\times 10^{-4}$\\					
			V3-J2 & $0.5$ & $0.02$ & $8$ & $0.45$ & $-0.7$ & & $1.5$ & $500$ & $1.26$ & & $1.55\times 10^{-4}$\\
			\bottomrule
		\end{tabularx}
	\end{center}
\end{table}

We simulate all models using a standard Euler scheme. Since the rough volatility models are non-Markovian and because the asymptotic distribution of the test statistic is determined by the Brownian motion $W_t$ driving the price $x$, we independently generate  blocks of seven days of daily high-frequency data in order to save computational time. The first five days are used for determining the value of the characteristic exponent $\eta_j^n$ on the last two days as will become clear below. For each independent block of data we then keep the products of the differenced volatility increments over the last day.  

The starting value of volatility is set to its unconditional mean. The sampling frequency is five seconds. %, which corresponds to $\Delta_n = (1/252)/4680$. 
As a result, on each day, we have $4680$ five-second price increments in a $6.5$ hour trading day. This matches exactly the number of high-frequency daily observations in our empirical application. To further match what we do in the application, we drop the first $5$ minutes on each day in the Monte Carlo. This leads to a total of $4620$ five-second return observations per day that we use in the construction of the test.       

In the top left panel of Figure~\ref{fig:vol_path}, we plot a simulated path from the rough volatility specification V1-J1 with $H=0.1$ over one day. One can clearly notice the sizable short-term oscillations of the spot volatility which is due to the roughness of the volatility path. In the top right panel of Figure~\ref{fig:vol_path}, we display the integrated volatility over intervals of  five minutes. Our local volatility estimators $\widehat{c}_j^n(u)$ can be thought of as estimators of such integrated volatilities over short windows.  The oscillations in the five-minute integrated volatility series are much smaller than in the spot volatility one. This is not surprising and illustrates  the difficulty of the testing problem at hand. Nevertheless, even when volatility is integrated to five-minute intervals, one can notice frequent  short-term volatility reversals. These reversals lead to negative autocorrelation in the first difference of the five-minute integrated volatility and will effectively provide the power of our test.    
\begin{figure}[t!]
	\begin{center}
		\includegraphics[width=\linewidth]{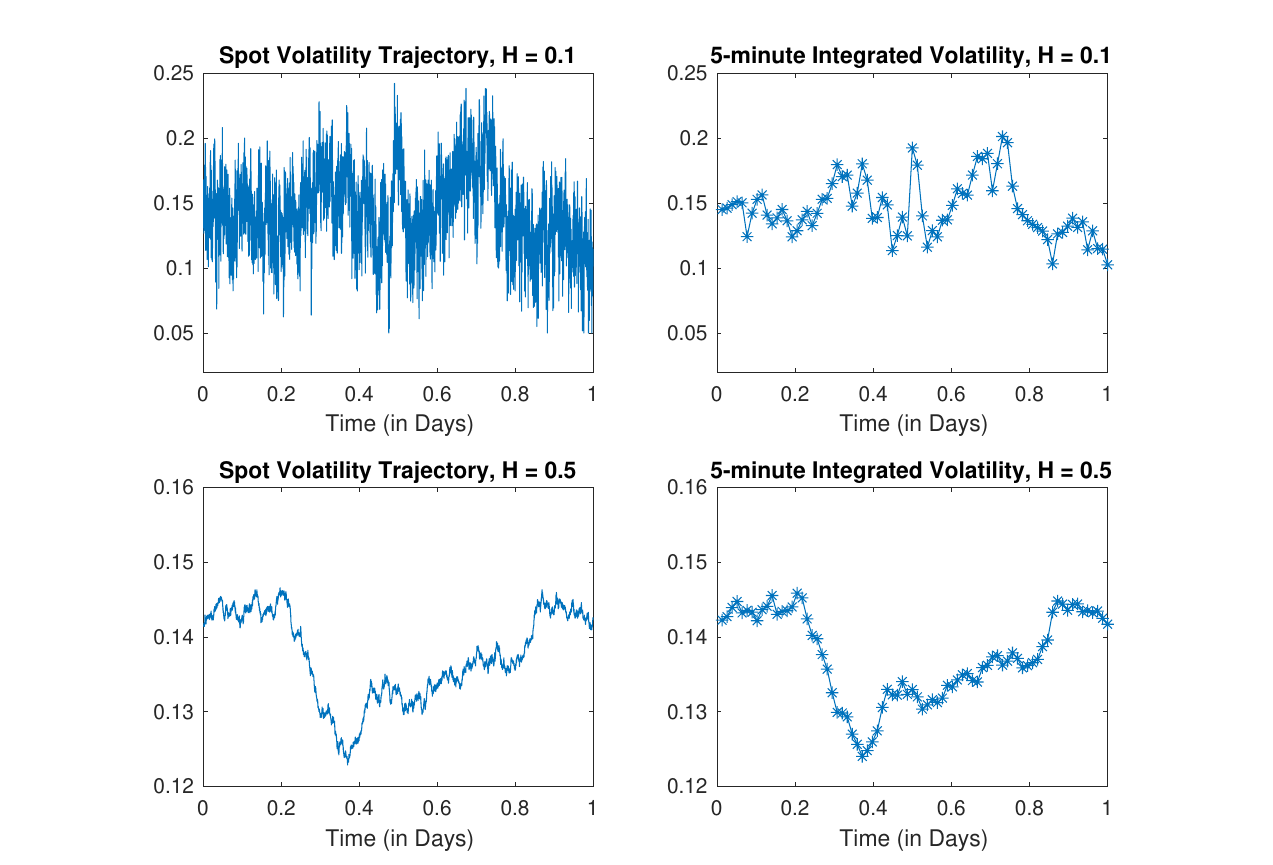}
		\caption{All time series are reported in annualized volatility units. The simulated volatility paths are from specification V1-J1 (left panels) and V3-J1 (right panels).}\label{fig:vol_path} 
	\end{center}
\end{figure}
In the bottom panels of Figure~\ref{fig:vol_path}, we plot the spot and five-minute integrated volatilities for a simulation from the specification V3-J1, which corresponds to the null hypothesis with $H = 0.5$. The oscillations in spot volatility are now orders of magnitude smaller. Furthermore, the five-minute integrated volatility from specification V3-J1 does not exhibit any distinguishable short-term reversals. 

The contrast between the spot volatility and the integrated volatility in the two top panels of Figure~\ref{fig:vol_path} further illustrate the difficulty of estimating spot volatility in a rough setting. For example, the relative absolute bias of five-minute integrated volatility as a proxy for the spot volatility at the beginning of the interval is around 20\% in scenario V1-J1 (corresponding to $H=0.1$) and only 1\% in scenario V3-J1 (corresponding to $H=0.5$). 

We finish this section with explaining our choice of tuning parameters. For computing the statistic, we use a block size of $p_n = 60$, which leads to $ 4620/p_n = 77$ blocks per day. On each local block, we use the first four minutes to estimate volatility, i.e., we set $k_n = 48$. This implies a one-minute gap between blocks of increments used in computing the empirical characteristic functions. Finally, for each day, the characteristic exponent parameter $\theta_n$ is set to 
\begin{equation}
	\theta_n = \sqrt{-2\log(\mathfrak{L}_n)},~~0<\mathfrak{L}_n<1,
\end{equation}
and the data dependent scale parameter $\eta_j^n$ is set to 
\begin{equation}
	\eta_{j}^n = \frac{1}{5}\sum_{k=1}^5\widehat{c}_{l_{j,k}},
\end{equation}
where $l_{j,k}$ correspond to the indices of the blocks of increments over the past five trading days covering the same time of day as the $(2j-1)$-th and  $2j$-th blocks. (Extending Example~\ref{ex:3} to the case where gaps are permitted, one can easily show that this specification of $\eta^n_j$ satisfies Assumptions~\ref{ass:U} and \ref{ass:U-noise}.) With this choice of tuning parameters, $\widehat{L}_j^n(u_j^n)$ has a norm of approximately $\mathfrak{L}_n$. We experiment with three different values of $\mathfrak{L}_n$ of $0.95$, $0.75$ and $0.50$. These correspond to $\theta_n = 0.32$, $0.76$ and $1.18$, respectively.

\subsection{Results}

The Monte Carlo results are reported in Table~\ref{tb:mc}. We can draw several conclusions from them.  First, the test appears correctly sized with only minor deviations from the nominal size level across the different configurations. Second, the test has very good power against volatility specification with very rough paths, i.e., against the specification V1 corresponding to $H=0.1$. Earlier work arguing for presence of rough volatility has found the rough parameter $H$ to be close to zero and below $0.1$. As seen from the reported simulation results, our test can easily separate such an alternative hypothesis from the null hypothesis of smooth It\^o semimartingale volatility dynamics. Indeed, the empirical rejection rates of the test in scenarios V1-J1 and V1-J2 is 100\%. Third, the power of the test is significantly lower against rough volatility with $H=0.3$. This is not surprising because this case corresponds to significantly less roughness in the volatility paths and  illustrates the difficulty of the testing problem. Fourth, the power of the test decreases slightly with the decrease in $\mathfrak{L}_n$, with the performance for $\mathfrak{L}_n=0.95$ and $0.75$ being very similar. Naturally, the power of the test increases when including more data, i.e., when going from one to four years of data. Finally, we note that neither the size nor the power properties of the test are affected by the level of jump activity, which is consistent with our theoretical derivations.     

\begin{table}[h!]                                                                                                   
	\centering                                                                                                                              
	\caption{Monte Carlo Results}\label{tb:mc}                                                                                                                    
	\begin{tabularx}{0.9\textwidth}{X|ccc|ccc}                                                                                             
		\toprule                                                                                                                                
		Case 	& \multicolumn{3}{c}{\textbf{$n = 4680\times 250$}} & \multicolumn{3}{c}{\textbf{$n = 4680\times 1000$}}\\[+0.5ex]	
		\hline                                                                                                                             
		& $\mathfrak{L}_n = 0.95$ & $\mathfrak{L}_n = 0.75$ & $\mathfrak{L}_n = 0.50$ & $\mathfrak{L}_n = 0.95$ & $\mathfrak{L}_n = 0.75$ & $\mathfrak{L}_n = 0.50$ \\ 
		\hline  
		V1-J1 & 1.0000 & 1.0000 & 1.0000 & 1.0000 & 1.0000 & 1.0000 \\                             
		V1-J2 & 1.0000 & 1.0000 & 1.0000 & 1.0000 & 1.0000 & 1.0000  \\	 [+0.5ex]	   
		
		V2-J1 & 0.1640 & 0.1640 & 0.1240 & 0.3280 & 0.3160 & 0.2760 \\                             
		V2-J2 & 0.1360 & 0.1120 & 0.0800 & 0.3480 & 0.3360 & 0.2800  \\ [+0.5ex]

		V3-J1 & 0.0570 & 0.0600 & 0.0490 & 0.0510 & 0.0440 & 0.0430 \\                             
		V3-J2 & 0.0580 & 0.0550 & 0.0540 & 0.0480 & 0.0580 & 0.0680  \\
		\bottomrule                                                                                                                                                                                                                                                             
	\end{tabularx} 
	\smallskip
	\begin{minipage}{\textwidth}                                                                                                   
		\emph{Note}: Reported results are empirical rejection rates of the test with nominal size of $0.05$ based on $1{,}000$ Monte Carlo replications for the scenarios corresponding to the null hypothesis and on $250$ Monte Carlo replications for the scenarios corresponding to the alternative hypothesis. 
	\end{minipage}                                                                                                                          
\end{table}

\section{Empirical Application}\label{sec:emp}

We apply the test for volatility roughness to high-frequency data for the SPY exchange traded fund tracking the S\&P 500 market index over the period 2012--2022. On each trading day, we sample the SPY price at five-second frequency from 9:35 AM ET until 4:00 PM ET. We drop days with more than $20\%$ zero five-minute returns over the entire trading period of 6.5 hours. This removes from the analysis mostly half-trading days around holidays when liquidity tends to be lower. We further exclude days with FOMC announcements. The choice of $p_n$ and $k_n$ as well as of the characteristic exponents is  done  exactly as in the Monte Carlo.  

The test results are reported in Table~\ref{tb:emp}. They provide evidence for existence of rough volatility. Indeed, when conducting the test over the different calendar years in our sample, we reject the null hypothesis in 5 to 7 (depending on the level of $\mathfrak{L}_n$) out of a total of 11 years at a significance level of $5\%$. If we conduct the test over periods equal to or exceeding 3 years, then we always reject the null hypothesis at the same significance level of $5\%$. Comparing the performance of the test on the data and in the simulations, we see that the rejection rates on the data are lower than for the case $H=0.1$ in the simulations but higher than those for the case $H=0.3$.     

\begin{table}[h!]                                                                                                   
	\centering                                                                                                                              
	\caption{Empirical Test Results}\label{tb:emp}                                                                                                                    
	\begin{tabularx}{0.97\textwidth}{c|ccc||c|ccc}                                                                                             
		\toprule                                                                                                                                
		Year 	& \multicolumn{3}{c||}{Test Statistic} & Year & \multicolumn{3}{c}{Test Statistic} \\[+0.5ex]	
		\hline                                                                                                                             
		&  $\mathfrak{L}_n = 0.95$ & $\mathfrak{L}_n = 0.75$ & $\mathfrak{L}_n  = 0.50$ & & $\mathfrak{L}_n = 0.95$ & $\mathfrak{L}_n = 0.75$ & $\mathfrak{L}_n = 0.50$\\ 
		\hline  
		2012 & ~0.26 & ~0.05 & -0.12 & \multirow{4}{*}{2012--2015} & \multirow{4}{*}{-2.34} & \multirow{4}{*}{-1.81} & \multirow{4}{*}{-1.45}\\ 
		2013 & -2.88 & -1.93 & -1.63 &  &  & & \\ 
		2014 & -1.62 & -1.24 & -0.82 &  &  &  & \\    
		2015 & -0.17 & -0.59 & -0.95 &  &  &  & \\ [+0.75ex]
		2016 & -1.04 & -1.66 & -1.75 & \multirow{4}{*}{2016--2019} & \multirow{4}{*}{-3.67} & \multirow{4}{*}{-3.18} & \multirow{4}{*}{-2.59}\\ 
		2017 & -2.02 & -2.08 & -2.61 &  &  &  & \\ 
		2018 & -1.61 & -1.40 & -1.87 &  &  &  & \\    
		2019 & -3.10 & -2.31 & -0.79 &  & &  & \\[+0.75ex]
		2020 & -1.92 & -2.07 & -2.03& \multirow{3}{*}{2020--2022} & \multirow{3}{*}{-2.17} & \multirow{3}{*}{-3.01} & \multirow{3}{*}{-4.06} \\ 
		2021 & -1.74 & -2.47 & -3.44 &  &  &  & \\ 
		2022 & -1.17 &-1.74 & -1.84 &  & &  & \\[+0.75ex]  
		&             &             &             & \multirow{1}{*}{2012--2022} & -5.27 & -5.04 & -4.57\\                 
		\bottomrule                                                                                                                                                                                                                                                             
	\end{tabularx}                                                                                                        
\end{table}  

In Figure~\ref{fig:auto}, we plot the autocorrelation of the differenced volatility increments over the entire sample period 2012--2022. Under the null hypothesis of no rough volatility, the autocorrelations should be all zero asymptotically. Under the alternative of rough volatility, these autocorrelations should be negative asymptotically, with the highest in magnitude being the first one. The reported autocorrelations up to lag 7 are all negative, with the highest in magnitude being the first one. This is consistent with existence of rough volatility. We note that the reported autocorrelations are small in absolute value. This is at least in part due to the nontrivial measurement error in the volatility estimates.  

\begin{figure}[h!]
	\begin{center}
		\includegraphics[scale=0.6]{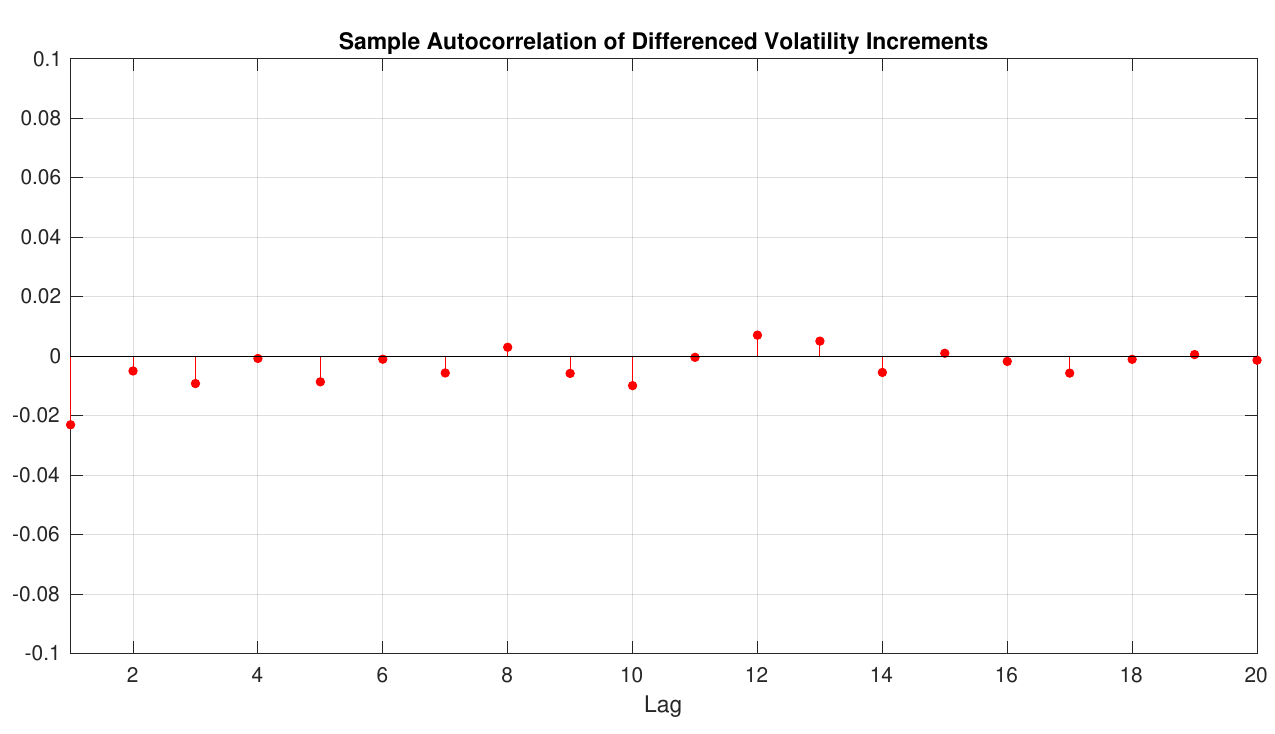}
		\caption{The plot displays autocorrelation in $\un\Delta^n_{2j}\widehat{\cf}( {u}^n_j, {u}^n_{j-\dsone/2})$ for the S\&P 500 index.}\label{fig:auto} 
	\end{center}
\end{figure}

\section{Concluding Remarks}\label{sec:concl}

We develop a nonparametric test for rough volatility based on
high-frequency observations of the underlying asset price process
over an interval of fixed length. The test is based on the sample autocovariance of increments of local volatility estimates formed from blocks of high-frequency observations with asymptotically shrinking time span. The autocovariance, after suitable normalization, converges to a standard normal random variable 
under the null hypothesis if volatility is not rough and to negative infinity
in the case of rough volatility. The proposed test is robust to the presence of price and volatility jumps with arbitrary degree of jump activity and to observation errors in the underlying process. 

Implementing the test on SPY transaction data, we find  evidence  in support of rough volatility throughout the past eleven years. As a consequence of this finding,  nonparametric estimation of spot volatility is much more difficult in reality than what is implied by standard models  with It\^o semimartingale volatility dynamics used in economics and finance. Indeed, the rate of convergence for estimating  spot variance $c_t$ at a fixed time point $t$ becomes arbitrarily slow  as volatility roughness increases. In spite of this observation, classical spot variance estimators   based on sums of squared increments retain their usual  rates of convergence   in a rough volatility setting    when viewed as estimators of \emph{local averages} $\frac1{\Delta}\int_t^{t+\Delta} c_s ds$ of spot variance, where $\Delta$ is the length of the estimation window. Therefore, regardless of how rough volatility is, practically feasible inference is possible for local averages of spot variance rather than spot variance itself. %, a situation which bears similarities with that of \citet{IA94} in a different econometric setting.

That said, many estimators in high-frequency financial econometrics, other than the standard realized variance, rely on volatility being a semimartingale process, see e.g., the book of \citet{JP12}. We leave a detailed investigation of the implications of rough volatility on the properties of such estimators for future work.

\begin{appendix}
	\section{Proof of Main Results}\label{sec:proof}
	
	\subsection{Proof of Theorems~\ref{thm:H0} and \ref{thm:test-noise}}
	
	Since the noise-free case is obtained from the noisy one by choosing $\varpi=1$, $\rho=0$ and $m=-1$ (which we formally allow in the proof below), Theorem~\ref{thm:H0} is just a special case of  Theorem~\ref{thm:test-noise}.
	We sketch the main steps in the proof of the latter, with technical details deferred to Appendix~\ref{sec:proof-app} in the supplement (\citet{supp}). 
	By a classical localization procedure (see e.g., \citet[Chapter 4.4.1]{JP12}), it suffices to prove Theorem~\ref{thm:test-noise} under the following Assumptions \ref{ass:H0-2}, \ref{ass:H1-2} and \ref{ass:U-2}, which are strengthened versions of Assumptions \ref{ass:H0-noise}, \ref{ass:H1-noise} and \ref{ass:U-noise}, respectively. If $\theta_0=0$ in Assumption~\ref{ass:U-noise}, we  write $a_n=o'(b_n)$ to denote $a_n/(b_n\Delta_n^\iota)\to0$ for some $\iota>0$ (that can vary from one place to another); if $\theta_0>0$, $a_n=o'(b_n)$  shall mean the same as $a_n=o(b_n)$. 
	
	\settheoremtag{SH$'_0$}
	\begin{Assumption}\label{ass:H0-2} For any compact subset $\calu$ of $\R$ and with the notation $0/0=0$, the following holds in addition to Assumption~\ref{ass:H0-noise} (the assumptions on $\al^\rho$, $\si^\rho$, $\wt\si^\rho$, $\ga^\rho$ and $\Ga^\rho$  only apply if $\varpi\leq \frac34$):
		\begin{enumerate}		\item[(i)] We have $\Ga^c\equiv \Ga^\rho\equiv  0$ and  the processes $x$, $\al$, $c$, $\al^c$, $\si^c$, $\ov\si^c$,  $\rho$, $\al^\rho$, $\si^\rho$, $\wt \si^\rho$ and
			\begin{equation}\label{eq:prop2-2} 
				t\mapsto\sup_{\delta\in(0,1)} \sup_{u\in\calu} \bigl\{	\delta\lvert\al^\vp(u/\sqrt{\delta})_t\rvert\} 
			\end{equation}
			are uniformly bounded by a constant $K\in(0,\infty)$. %Moreover, $\al$ and $\si^c$ are uniformly continuous in squared mean.
			Moreover,  $c_t\geq K^{-1}$ and, if $\varpi\leq \frac12$, we have $\lvert\rho_t\rvert\geq K^{-1}$ for all $t>0$ almost surely.
			\item[(ii)] There is a nonnegative measurable function $J(z)$ satisfying $J(z)\leq K$ as well as $\int_E J(z)\la(dz)\leq K$ such that for all $z\in E$,
			\begin{equation}\label{eq:prop1-2} 
				\sup_{\om\in\Om}	\sup_{t\in[0,\infty)}	\bigl\{	\lvert \ga(t,z)\rvert^2+\lvert \ga^c(t,z)\rvert^2+\lvert \ga^\rho(t,z)\rvert^2 + \lvert \Ga(t,z)\rvert\bigr\}\leq J(z).
			\end{equation}
			\item[(iii)]   %there is some constant $C$ that only depends on $\calu$ and $n$ such that Moreover, a
			As $\delta\to0$, the following   sequences all converge to $0$:
			\begin{align}\label{eq:prop0-2}
				w(\delta)&=\begin{aligned}[t]
					&\sup_{\om\in\Om,0\leq s\leq t,\lvert t-s\rvert \leq \delta} \bigl\{\E[\lvert \al_t-\al_s\rvert\mid\calf_s]\\
					&\qquad\qquad\qquad\qquad+ \E[\lvert \si^c_t-\si^c_s\rvert\mid\calf_s]+ \E[\lvert \si^\rho_t-\si^\rho_s\rvert\mid\calf_s]  \bigr\},\end{aligned}\\
				%		C_1(\delta)&=\sup_{\om\in\Om}	\sup_{t\in[0,\infty)}  \sup_{u\in\calu} \bigl\{ \delta(\lvert \tfrac{d}{du} \si^\vp(u/\sqrt{\delta})_t\rvert^2+\lvert\tfrac{d}{du}\ov\si^\vp(u/\sqrt{\delta})_t\rvert^2\\
				%		&\qquad\qquad\qquad\qquad\qquad\qquad +\lvert\tfrac{d}{du}\wt\si^\vp(u/\sqrt{\delta})_t\rvert^2+\lvert\tfrac{d}{du}\wh\si^\vp(u/\sqrt{\delta})_t\rvert^2)\bigr\}\to0.
				C_1(\delta)&=\sup_{\om\in\Om}	\sup_{t\in[0,\infty)}  \sup_{u\in\calu} \bigl\{ \lvert \delta \si^\vp(u/\sqrt{\delta})_t\rvert^2+\lvert \delta\ov\si^\vp(u/\sqrt{\delta})_t\rvert^2  \bigr\},	\label{eq:prop3-2} \\
				\label{eq:prop4} 
				%	C_2(\delta)=	\sup_{t\in[0,\infty)}\sup_{z\in\R}	 \sup_{u\in\calu}  \bigl\{\delta	\lvert \tfrac{d}{du}\ga^\vp(u/\sqrt{\delta}; t,z)\rvert^2/J(z) \bigr\} \to 0.
				C_2(\delta)&=\sup_{\om\in\Om}	\sup_{t\in[0,\infty)}\sup_{z\in E}	 \sup_{u\in\calu}  \bigl\{	\lvert \delta \ga^\vp(u/\sqrt{\delta}; t,z)\rvert^2/J(z) \bigr\},\\
				C_3(\delta)&=	\sup_{\om\in\Om}\sup_{t\in[0,\infty)} \sup_{z\in E} \sup_{u\in\calu} \bigl\{ \lvert \delta\Ga^\vp(u/\sqrt{\delta};t,z)\rvert / J(z)\bigr\}. \label{eq:prop7-2} 
			\end{align}
			\item[(iv)] If $\varpi\leq\frac12$ or if $\theta_0=0$,  we have
			\begin{equation}\label{eq:prop1-2-noise} 
				\sup_{\om\in\Om}	\sup_{t\in[0,\infty)}	\bigl\{	\lvert \ga(t,z)\rvert^{2-\varepsilon}+\lvert \ga^c(t,z)\rvert^{2-\varepsilon}  + \lvert \Ga(t,z)\rvert^{1-\varepsilon}\bigr\}\leq J(z),\qquad z\in E,
			\end{equation}
			and   the constants in \eqref{eq:prop0-2}--\eqref{eq:prop7-2} still converge to $0$ after division by $\delta^\varepsilon$.
		\end{enumerate}
	\end{Assumption} 
	%that $\Ga\equiv 0$ and that 
	%	\[ \sup_{(\om,t)} \biggl\{ \lvert \al_t\rvert + \lvert \si_t\rvert + \int_\R \lvert \ga(t,z)\rvert^r\la(dz) + \frac{\ga(t,z)}{J(z)} \biggr\} + \sup_{s<t}\biggl\{  \frac{\E[(\si_t-\si_s)^2]}{t-s} + \frac{\E[\lvert \vp_t(u)-\vp_s(u)\rvert]}{\lvert u\rvert^r(t-s)^{H_\vp}} \biggr\} \leq K \]
	%	for some constant $K\in(0,\infty)$.
	
	\settheoremtag{SH$'_1$}
	\begin{Assumption}\label{ass:H1-2}  The following holds in addition to Assumption~\ref{ass:H1-noise} (the assumptions on $\wt w$, $\si^w$, $\ov\si^w$, $\wh\si^w$, $F$ and $G_0$ only apply if $\varpi\leq \frac12+\frac12H$):
		\begin{enumerate}		\item[(i)] We have     $g_0\equiv G_0\equiv0$. 
			The functions $f$ and $F$ are bounded and twice differentiable with  uniformly continuous and bounded derivatives and the processes $x$, $\al$, $v$,  $\si^v$, $\ov\si^v$, $\wt v$, $w$, $\si^w$, $\ov \si^w$ and $\wh \si^w$ and $\wt w$ are uniformly bounded by a constant $K\in(0,\infty)$. 	%Moreover,  $(f'(v_t))^2\geq K^{-1}$, $(\si^v_t)^2+(\ov \si^v_t)^2\geq K$ and if $\varpi\leq \frac12$,   $\lvert\rho_t\rvert\geq K^{-1}$ for all $t>0$ almost surely.
			\item[(ii)] We have  $\lvert \ga(t,z)\rvert^2 + \lvert \Ga(t,z)\rvert\leq J(z)$, and if $\varpi\leq\frac12$ or if $\theta_0=0$, we have $\lvert \ga(t,z)\rvert^{2-\varepsilon}+ \lvert \Ga(t,z)\rvert^{1-\varepsilon}\leq J(z)$	for all $t\geq0$ and $z\in E$ (with $\vareps$ and $J$ as in Assumptions~\ref{ass:H0-noise} and \ref{ass:H0-2}).
			\item[(iii)] For any compact subset $\calu$ of $\R$, we have  %there is some constant $C$ that only depends on $\calu$ and $n$ such that 
			\begin{equation}\label{eq:prop3-alt-2} 
				\sup_{t\in(0,\infty)}\sup_{\delta\in(0,1)} \sup_{u\in\calu} \bigl\{	\delta\lvert\vp(u/\sqrt{\delta})_t\rvert\} <\infty.
			\end{equation}
			With the same $h$ as in Assumption~\ref{ass:H1-noise} and for all $t\geq0$ and $\delta>0$,  we have
			%		\begin{equation}\label{eq:prop1-alt} 
			%			(	\lvert \ga(t,z)\rvert^2+\lvert \Ga(t,z)\rvert)\wedge 1\leq J_n(z)
			%		\end{equation}
			%	and 
			\begin{align}\label{eq:prop2-alt-2} 
				&	\E[\lvert\wt v_{t+\delta}-\wt v_t\rvert^2 ]^{1/2}\leq \delta^H h(\delta),\quad \E[\lvert\wt w_{t+\delta}-\wt w_t\rvert^2 ]^{1/2}\leq \delta^{H_\rho} h(\delta), \\
				\label{eq:prop4-alt-2} 
				&\sup_{u\in\calu}  \E [\lvert \delta\vp(u/\sqrt{\delta})_{t+\delta}-\delta\vp(u/\sqrt{\delta})_t\rvert^2  ]^{1/2} \leq \delta^Hh(\delta),\\
				\begin{split}&\E[\lvert\si^v_{t+\delta}-\si^v_t\rvert^2+\lvert\ov\si^v_{t+\delta}-\ov\si^v_t\rvert^2\\
					&\qquad\qquad 	 +\lvert\si^w_{t+\delta}-\si^w_t\rvert^2+\lvert\ov\si^w_{t+\delta}-\ov\si^w_t\rvert^2+\lvert\wh\si^w_{t+\delta}-\wh\si^w_t\rvert^2 ]^{1/2}\leq h(\delta).\end{split} \label{eq:prop5-alt-2}
			\end{align}  
		\end{enumerate}
	\end{Assumption} 
	
	We can assume $g_0\equiv0$ because $\int_0^t g_0(t-s)( \si^v_sdW_s+\ov \si^vd\ov W_s)=\int_0^t\int_0^s g'_0(r-s)(\si^v_sdW_s+\ov\si^v_sd\ov W_s)dr$ is just a drift that can be absorbed into $\wt v$. A similar argument shows that $G_0$ only contributes a drift that can be absorbed into $\wt w$, if $\varpi\leq \frac12+\frac12 H$.
	
	\settheoremtag{SU$'$}
	\begin{Assumption}\label{ass:U-2} In addition to Assumption~\ref{ass:U-noise}, the following conditions are satisfied:
		\begin{enumerate}
			\item[(i)] There is $K\in(0,\infty)$ such that $\eta_t\geq K^{-1}$ for all $t\geq0$.
			\item[(ii)] We have
			\begin{equation}\label{eq:U-1'} 
				\sup_{j=1,\dots,\lfloor T/(2p_n\Den)\rfloor}\E[\lvert\eta^n_j/\Den^{(2\varpi-1)\wedge0}-\eta_{(2j-2)p_n\Den}\rvert] = o'(1)
			\end{equation}
			\item[(iii)] We have 
			\begin{equation}\label{eq:eta-cont} 
				\sup_{t\geq0} \E[\lvert \eta_{t+\delta}-\eta_t\rvert] = o'(1).
			\end{equation}
			\item[(iv)] There are constants $0<\eta_0^-<\eta_0^+<\infty$ such that 
			\begin{equation}\label{eq:U-3} 
				\lim_{n\to\infty}	\P\Bigl(\eta_0^-\leq \eta^n_j /\Den^{(2\varpi-1)\wedge0}\leq \eta_0^+\text{ for all }  j=1,\ldots,\lfloor T/(2p_n\Den)\rfloor\Bigr) = 1.
			\end{equation}
		\end{enumerate}
	\end{Assumption}

	We first consider the behavior of the test statistic under the null hypothesis and suppose that Assumptions~\ref{ass:H0-2} and \ref{ass:U-2} are satisfied.
	Using the notations
	\begin{align*}
		&\wt u^n_j=\frac{\theta_n}{ \sqrt{\wt\eta^n_j}}, \quad \wt \eta^n_j =\frac{\eta^n_j}{\Den^{(2\varpi-1)\wedge0}},\quad \wt L_j^n(u) = \frac{1}{k_n}\sum_{i=(j-1)p_n+1}^{(j-1)p_n+k_n}e^{iu \Delta_i^ny/\Den^{\varpi\wedge1/2}}, \\
		%	&\wt c^n_j(u)=- \log \lvert\wt L^n_j(u)\rvert,\quad\Delta^n_j \wt c(u)=\wt c^n_j(u)-\wt c^n_{j-1}(u), \quad\un\Delta^n_j \wt c(u,u')= \Delta^n_j \wt c(u)- \Delta^n_{j-\dsone} \wt c(u'),\\
		&\wt c^n_j(u)=- \log \lvert\wt L^n_j(u)\rvert,\quad \wt \cf^n_j(u)=\log \wt c^n_j(u),\quad \Delta^n_j \wt \cf(u)=\wt \cf^n_j(u)-\wt \cf^n_{j-1}(u),\\
		&  \un\Delta^n_j \wt \cf(u,u')= \Delta^n_j \wt \cf(u)- \Delta^n_{j-\dsone} \wt \cf(u'),
	\end{align*}
	we can write the test statistic $\wh T^n$ as
	%\begin{equation}\label{eq:T3-2}
	%	\widehat{T}^n = \frac{\sum_{j=2}^{\lfloor T/(2p_n\Den)\rfloor}\Delta^n_{2j} \wt \cf( \wt{u}^n_j) \Delta^n_{2j-2} \wt \cf(\wt u^n_{j-1})}{\sqrt{\sum_{j=2}^{\lfloor T/(2p_n\Den)\rfloor}\bigl(\Delta^n_{2j} \wt \cf( \wt{u}^n_j) \Delta^n_{2j-2} \wt \cf(\wt u^n_{j-1})\bigr)^2}}=\frac{  V^n}{\sqrt{  W^n}},
	%\end{equation} 
	\begin{equation}\label{eq:T3-2}
		\widehat{T}^n = \frac{\textstyle\sum_{j\in\calj_n}\un\Delta^n_{2j} \wt \cf( \wt{u}^n_j,\wt{u}^n_{j-\dsone/2}) \un\Delta^n_{2j-2} \wt \cf(\wt u^n_{j-1},\wt u^n_{j-1-\dsone/2})}{\textstyle\sqrt{\sum_{j\in\calj_n}\bigl(\un\Delta^n_{2j} \wt \cf( \wt{u}^n_j,\wt{u}^n_{j-\dsone/2}) \un\Delta^n_{2j-2} \wt \cf(\wt u^n_{j-1},\wt u^n_{j-1-\dsone/2})\bigr)^2}}=\frac{  V^n}{\sqrt{  W^n}},
	\end{equation} 
	where 
	\begin{align}
		\label{eq:V} 
		V^n&=\frac{ k_n }{\sqrt{\lvert\calj_n\rvert }}\sum_{j\in\calj_n}\un\Delta^n_{2j} \wt \cf( \wt{u}^n_j,\wt{u}^n_{j-\dsone/2}) \un\Delta^n_{2j-2} \wt \cf(\wt u^n_{j-1},\wt u^n_{j-1-\dsone/2}),\\
		\label{eq:denom} 
		W^n&= \frac{ k_n^2 }{\lvert\calj_n\rvert }\sum_{j\in\calj_n}\bigl(\un\Delta^n_{2j} \wt \cf( \wt{u}^n_j,\wt{u}^n_{j-\dsone/2}) \un\Delta^n_{2j-2} \wt \cf(\wt u^n_{j-1},\wt u^n_{j-1-\dsone/2})\bigr)^2.
	\end{align}
	Since $\log \lvert z\rvert =\Re(\Log z)$, where $\Log$ is the principal branch of the complex logarithm, a 
	first-order expansion gives 
	\begin{equation}\label{eq:firstorder} 
		\Delta^n_{2j}\wt \cf(\wt u^n_j)=\Re\bigl\{\Delta^n_{2j} \wt L(\wt u^n_j)/\Lf(\wt L^n_{2j-1}(\wt u^n_j))\bigr\} + \text{higher-order terms},
	\end{equation} 
	where $\Delta^n_i \wt L(u)= \wt L^n_j(u)-\wt L^n_{j-1}(u)$ and $\Lf(z)=z\log \lvert z\rvert$ for $z\in\mathbb{C}$. As  $\wt L^n_j(u)$   is a local estimator of the conditional characteristic function of $\Delta^n_i y/\Den^{\varpi\wedge 1/2}$, we can decompose
	\begin{equation}\label{eq:split} 
		\wt{L}_j^n(u) = \underbrace{\wt{L}_j^{n,\mathrm{v}}(u)}_{\text{variance}} + \underbrace{\wt{L}_j^{n,\mathrm{b}}(u)}_{\text{bias}}+\underbrace{\wt{L}_j^{n,\mathrm{s}}(u)}_{\text{signal}},
	\end{equation} 
	where
	\begin{align*}
		\wt{L}_j^{n,\mathrm{v}}(u) &= \frac{1}{k_n}\sum_{i=(j-1)p_n+1}^{(j-1)p_n+k_n} \bigl\{e^{i u_n \Delta_i^ny}  -\E [e^{i u_n\Delta_i^ny} \mid\calf_{(i-1)\Den} ]\bigr \},\\
		\wt{L}_j^{n,\mathrm{b}}(u) &=\frac{1}{k_n}\sum_{i=(j-1)p_n+1}^{(j-1)p_n+k_n}\E [e^{i u_n\Delta_i^ny}-e^{i u_n\Delta_{i,i-1}^ny}\mid\calf_{(i-1)\Den} ],\\%=\frac{1}{k_n}\sum_{i=(j-1)p_n+1}^{(j-1)p_n+k_n}\Re\bigl\{\E[e^{iu_n \Delta_i^nx}\mid\calf_{(i-1)\Den} ]\bigr\}
		\wt{L}_j^{n,\mathrm{s}}(u) &=\frac{1}{k_n}\sum_{i=(j-1)p_n+1}^{(j-1)p_n+k_n}\E [e^{i u_n\Delta_{i,i-1}^ny}\mid\calf_{(i-1)\Den} ],\qquad u_n=u/\Den^{\varpi\wedge1/2},
	\end{align*}
	and $\Delta^n_{i,j} y=\Delta^n_{i,j} x+\Delta^n_{i,j} \eps$ with
	\begin{align*} 
		\Delta^n_{i,j} x&= \al_{j\Den}\Den + \si_{j\Den}\Delta^n_i W + \int_{(i-1)\Den}^{i\Den}\int_E \ga(j\Den,z) (\mu-\nu)(ds,dz)\\
		&	\quad+\int_{(i-1)\Den}^{i\Den} \int_E\Ga(j\Den,z) \mu(ds,dz)
	\end{align*} 
	and $	\Delta^n_{i,j}\eps= \Den^\varpi\rho_{j\Den} \Delta \chi_i$.
	The next lemma is
	a key technical step in the proofs and  shows that in the limit as $n\to\infty$, only the variance term in $\Delta^n_{2j}\wt L(\wt u^n_j)$ and the signal term in $\wt L^{n}_{2j-1}(\wt u^n_j)$ have to be retained in \eqref{eq:firstorder} for the asymptotic analysis of $V^n$. In particular, the signal part of $\Delta^n_{2j}\wt L(\wt u^n_j)$ has no asymptotic contribution to $V^n$. To understand the intuition behind, consider the noise-free case where the signal part of $\Delta^n_{2j}\wt L(\wt u^n_j)$ is essentially given by an increment of $\exp(-\frac12(\wt u^n_j)^2c_t+\Den \vp(\wt u^n_j/\Den)_t)$. Since this is an It\^o semimartingale in $t$ under the null hypothesis, it has asymptotically uncorrelated increments with variances dominated by $\Delta^n_{2j}\wt L^{\rm v}(\wt u^n_j)$ because of \eqref{eq:rates}.

	\begin{lemma}\label{lem:linearize}
		Under Assumptions~\ref{ass:H0-2} and \ref{ass:U-2}, we have  
		$      V^n -\wt V^n  \stackrel{\P}{\longrightarrow} 0$ as $n\to\infty$, where
		\begin{equation}\label{eq:Vn} \begin{split}
				\wt V^n&=\frac{k_n}{\sqrt{\lvert \calj_n\rvert}}\sum_{j\in\calj_n}\Re\biggl\{\frac{ \Delta^n_{2j-2} \wt L^{\mathrm{v}} (\wt u^n_{j-1})}{\Lf(\call^n_{j-1}(\wt u^n_{j-1}))}-\frac{ \Delta^n_{2j-2-\dsone} \wt L^{\mathrm{v}} (\wt u^n_{j-1-\dsone/2})}{\Lf(\call^n_{j-1-\dsone/2}(\wt u^n_{j-1-\dsone/2}))}\biggr\}\\
				&\qquad\qquad\qquad\qquad\qquad\qquad\times\Re\biggl\{\frac{ \Delta^n_{2j} \wt L^{\mathrm{v}} (\wt u^n_{j})}{\Lf(\call^n_j(\wt u^n_j))}-\frac{ \Delta^n_{2j-\dsone} \wt L^{\mathrm{v}} (\wt u^n_{j-\dsone/2})}{\Lf(\call^n_{j-\dsone/2}(\wt u^n_{j-\dsone/2}))}\biggr\}
			\end{split}
		\end{equation}
		and $\call^n_j(u)=\exp(-\frac12u^2c_{((2j-2)p_n-m-1)\Den}\Den^{(1-2\varpi)_+})\Psi(u\Den^{(\varpi-1/2)_+}\rho_{((2j-2)p_n-m-1)\Den})$.
	\end{lemma}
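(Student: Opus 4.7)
My starting point is to apply the first-order expansion \eqref{eq:firstorder} around the reference point $\call^n_j(\wt u^n_j)$, rather than around the noisy quantity $\wt L^n_{2j-1}(\wt u^n_j)$. Setting $\Lambda(z)=\log(-\log|z|)$, a Taylor expansion yields
\[
\Delta^n_{2j}\wt\cf(u)=\Re\!\left\{\frac{\wt L^n_{2j}(u)-\wt L^n_{2j-1}(u)}{\Lf(\call^n_j(u))}\right\}+R^n_j(u),
\]
where the remainder $R^n_j(u)$ is controlled by $|\wt L^n_{2j}(u)-\call^n_j(u)|^2+|\wt L^n_{2j-1}(u)-\call^n_j(u)|^2$, uniformly in $u$ on the asymptotically bounded range of $\wt u^n_j$. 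Assumption~\ref{ass:U-2} together with the strict positivity of $c$ and $\rho$ in Assumption~\ref{ass:H0-2}(i) guarantees that $|\call^n_j(u)|$ stays bounded away from $0$ and from $1$, so $\Lf(\call^n_j(u))$ is bounded away from $0$ (of order $\theta_n^2$ when $\theta_n\to 0$, which is what induces the factor $\theta_n^4$ in \eqref{eq:rates}). Squaring the remainder and rescaling by $k_n/\sqrt{|\calj_n|}$, its contribution to $V^n$ is $o_p(1)$ under \eqref{eq:rates}.

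Next, use the decomposition \eqref{eq:split} inside the linearized increment and treat the three pieces separately. The \emph{bias piece} $\wt L^{n,\mathrm b}_j(u)$ measures the freezing error in the conditional expectation and is bounded in sup-norm by a modulus of continuity of the coefficients $\alpha$, $\sigma^c$, $\sigma^\rho$, $\sigma^\varphi$, $\gamma^\varphi$ and $\Gamma^\varphi$ on intervals of length $p_n\Den$; by \eqref{eq:prop0-2}--\eqref{eq:prop7-2} these moduli vanish, and after daily differencing in $\un\Delta^n_{2j}\wt\cf(\wt u^n_j,\wt u^n_{j-\dsone/2})$ the bias contribution to $V^n$ is $o_p(1)$. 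The \emph{signal piece} $\wt L^{n,\mathrm s}_j(u)$, as a function of the block index, is essentially a sample of the It\^o semimartingale $t\mapsto \exp(-\tfrac12 u^2 c_t\Den^{(1-2\varpi)_+}+\Den\,\varphi(u\Den^{(\varpi-1/2)_+})_t)\Psi(u\Den^{(\varpi-1/2)_+}\rho_t)$, and its predictable drift part is killed by the daily differencing. The remaining martingale-difference products (signal--signal and signal--variance) are controlled by Burkholder--Davis--Gundy: their contributions to $V^n$ after the $k_n/\sqrt{|\calj_n|}$ rescaling are of order $O_p(k_n^2\Den)$ and $O_p(k_n\sqrt{\Den})$, respectively, both of which are $o_p(1)$ under \eqref{eq:rates}.

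It remains to identify the variance--variance piece with $\wt V^n$. Here I would verify that $\Lf(\call^n_j(u))$ may be replaced in the denominator by the analogous quantities appearing in $\wt V^n$ (namely $\Lf(\call^n_j(\wt u^n_j))$, $\Lf(\call^n_{j-\dsone/2}(\wt u^n_{j-\dsone/2}))$, etc.), exploiting the stochastic continuity of $c_t$, $\rho_t$ and $\eta_t$ provided by Assumptions~\ref{ass:H0-2}(iii) and \ref{ass:U-2}(ii)--(iii). The critical measurability condition $\wt u^n_j\in\calf_{((2j-2)p_n-m-1)\Den}$ from Assumption~\ref{ass:U-noise}(i), together with the $m$-dependence of $(\chi_i)$, ensures that $\wt L^{n,\mathrm v}_{2j}(\wt u^n_j)$ is a genuine conditional martingale difference in the block-level filtration; this legitimates the substitutions and completes the identification $V^n-\wt V^n=o_p(1)$.

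The main obstacle is controlling the signal--signal cross term. Unlike the bias, the signal part of $\Delta^n_{2j}\wt L$ is not small pointwise---it scales like a semimartingale increment of size $\sqrt{p_n\Den}$---so the cancellation must come entirely from the summation rather than from term-by-term smallness. Rigorously separating the martingale part of the signal (for which BDG yields the requisite convergence rate) from its predictable drift (which cancels in the daily differencing) requires a careful It\^o expansion in $t$ of $\log\call^n_t(u)$, using the semimartingale structures of $c$, $\rho$ and $\varphi(u/\sqrt{\Den})$. Conditions \eqref{eq:prop2-2} on $\alpha^\varphi$ and \eqref{eq:prop3-2}--\eqref{eq:prop7-2} on the volatility and jump coefficients of $\varphi$ are essential here, since without them the jump component of the signal could fail to behave as a semimartingale increment at the relevant asymptotic scale.
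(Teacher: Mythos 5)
There is a genuine gap, and it sits exactly where the real difficulty of this lemma lies: your claim that the remainder of the \emph{first-order} expansion around $\call^n_j(\wt u^n_j)$, bounded by $\lvert\wt L^n_{2j}(u)-\call^n_j(u)\rvert^2+\lvert\wt L^n_{2j-1}(u)-\call^n_j(u)\rvert^2$, is $o_p(1)$ "after squaring and rescaling" is false under \eqref{eq:rates}. That remainder contains the squared estimation noise $\lvert\wt L^{n,\mathrm v}_{2j}(\wt u^n_j)\rvert^2/\lvert\Lf(\call^n_j(\wt u^n_j))\rvert^2$, whose conditional mean is of order $1/(k_n\theta_n^2)$; pairing it with the leading part of the adjacent factor (of order $1/(\sqrt{k_n}\,\theta_n)$) and summing with the normalization $k_n/\sqrt{\lvert\calj_n\rvert}$ over $\lvert\calj_n\rvert\asymp(p_n\Den)^{-1}$ blocks gives an $L^1$ bound of order $1/(k_n\sqrt{\Den})$ up to powers of $\theta_n$, which \emph{diverges} because $k_n\sqrt{\Den}\to0$. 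Likewise the signal--noise cross term hidden in your remainder (size $\sqrt{p_n\Den/k_n}$ after dividing by $\Lf$) contributes only $O(1/\theta_n^2)$ by brute force, not $o_p(1)$. This is why the paper works with the \emph{second-order} expansion $\ov\cf^n_j(u)$ in \eqref{eq:cbar} and keeps the quadratic terms explicitly: their negligibility is not a size estimate but a cancellation, namely (i) the conditional second moments of $\wt L^{n,\mathrm v}_{2j}$ and $\wt L^{n,\mathrm v}_{2j-1}$ agree up to $O(\sqrt{\Den/k_n})$ (the difference-of-squares estimates \eqref{eq:diff}--\eqref{eq:diff-2}, used for the terms III and V), and (ii) the conditional covariance of $\wt L^{n,\mathrm v}_{2j}(\wt u^n_j)$ with $\Delta^n_{2j}\wt L^{\mathrm s}(\wt u^n_j)$ (term IV) must be computed explicitly by It\^o calculus, isolating a leverage-type term which is then removed by a further centering/martingale step. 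Your absolute-value bound on the remainder destroys precisely these sign structures, so the proposal as written cannot establish $V^n-\wt V^n\stackrel{\P}{\longrightarrow}0$.

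Two further points. First, the predictable drift of the signal part is \emph{not} "killed by the daily differencing": drifts of blocks one trading day apart do not cancel each other. It is negligible because, under Assumption~\ref{ass:H0-2}, $\E^n_{(2j-2)p_n}[\Delta^n_{2j}\wt L^{\mathrm s}(\wt u^n_j)]=O_p(p_n\Den)=o'(\sqrt{\Den})$ (each of $c$, $\rho$ and $\Den\vp(u_n)$ has drift of that order over a block), and the remaining fluctuation is removed by a martingale argument across blocks; each of the four cross products produced by the daily differencing is handled this way separately, with no cancellation between days. Second, replacing $\Lf(\wt L^{n,\mathrm s}_{2j-1}(\wt u^n_j))$ by $\Lf(\call^n_j(\wt u^n_j))$ in the denominators requires more than stochastic continuity of $c$, $\rho$, $\eta$: the first-order error of $z\mapsto1/\Lf(z)$ is of the same order as the terms you must retain, so the paper again expands to second order and disposes of the correction terms ($D^n_2$--$D^n_4$) by martingale arguments. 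In short, your skeleton (linearize, split into variance/bias/signal, exploit block-level martingale structure and \eqref{eq:rates}) matches the paper's, but the decisive quantitative steps—the second-order terms and their between-block cancellations—are missing, and the step where you declare the remainder negligible would fail.
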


	The asymptotic distribution of $\wt V^n$ can now be established by an application of the martingale central limit theorem. Recall that by assumption, $\theta_0=0$ if $\varpi\leq \frac12$, while for $\varpi>\frac12$, both $\theta_0=0$ and $\theta_0>0$ are allowed.
	
	\begin{lemma}\label{lem:CLT}
		Under Assumptions~\ref{ass:H0-2} and \ref{ass:U-2}, we have
		\begin{equation}\label{eq:lim} 
			\wt	V^n\limst	Q^{1/2} Z, %4\sinh^2(\tfrac12u_0^2) Z,
		\end{equation}
		where $Z$ is a standard normal random variable that is defined on a very good extension of the original probability space and independent of $\calf_\infty$. The asymptotic $\calf_\infty$-conditional variance $Q$ is given by
		\begin{equation}\label{eq:Q-formula} 
			Q=\frac{1}{T} \int_{I_T} (q_t+q_{t-1})^2 dt ,
		\end{equation}
		where $I_T=\bigcup_{k=1}^{T/2} [(2k-1),2k]$,
		\begin{equation}\label{eq:Q-limit} 
			q_t=\begin{dcases} %\frac{16}{T\theta_0^8} \int_0^{T}  \sinh^4(\tfrac12 \theta^2_0c_t/\eta_t) dt 
				\frac{16\eta_t^2}{\theta_0^4c_t^2}\sinh^2(\tfrac12 \theta_0^2c_t/\eta_t)
				&\text{if } \theta_0>0,\\ %			\frac{1}{4T}\int_0^{T}  q^2(c_t\bone_{\{\varpi\geq\frac12\}},\rho_t\bone_{\{\varpi\leq \frac12\}})/\eta_t^4 dt 
				\frac{4q_1(c_t\bone_{\{\varpi\geq\frac12\}},\rho_t\bone_{\{\varpi\leq \frac12\}})}{q_2(c_t\bone_{\{\varpi\geq\frac12\}},\rho_t\bone_{\{\varpi\leq \frac12\}})}
				&\text{if } \theta_0=0\end{dcases} 
		\end{equation}
		and 
		\begin{align*}
			q_1(c,\rho)&= c^2+2c\rho^2\E[(\Delta\chi_1)^2]+\rho^4\biggl(\frac12\Var((\Delta\chi_1)^2)+\sum_{r=1}^{m+1}\Cov((\Delta\chi_{r+1})^2,(\Delta\chi_1)^2)\biggr),\\
			q_2(c,\rho)&= (c+\rho^2\E[(\Delta\chi_1)^2])^2.
		\end{align*}
		%where  $Z$ is a stochastic process indexed by $u,u'\in\calu$ and defined on a very good extension of the original probability space such that $Z$ is an $\calf$-conditionally Gaussian process with mean zero and covariance function
		%\begin{equation}\label{eq:cov} 
		%	\E[V^n(u_1,u'_1)V^n(u_2,u'_2)\mid\calf] = \frac{16}{T}\int_0^{T}  \sinh^2(\tfrac12 u_1u_2c_s)\sinh^2(\tfrac12 u'_1u'_2  c_s) ds.
		%\end{equation}
		% The convergence in \eqref{eq:lim} is uniform in $u,u'\in\calu$.
	\end{lemma}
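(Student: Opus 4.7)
My plan is to treat $\wt V^n = \sum_{j\in\calj_n} Y^n_j$ as a sum of martingale differences with respect to a suitable discrete filtration and apply a stable martingale central limit theorem. Concretely, I would introduce
\begin{equation*}
X^n_j = \Re\biggl\{\frac{\Delta^n_{2j}\wt L^{\mathrm{v}}(\wt u^n_j)}{\Lf(\call^n_j(\wt u^n_j))} - \frac{\Delta^n_{2j-\dsone}\wt L^{\mathrm{v}}(\wt u^n_{j-\dsone/2})}{\Lf(\call^n_{j-\dsone/2}(\wt u^n_{j-\dsone/2}))}\biggr\},\quad Y^n_j=\frac{k_n}{\sqrt{\lvert\calj_n\rvert}} X^n_{j-1} X^n_j,
\end{equation*}
and a discrete filtration $\calg^n_j$ essentially equal to $\calf_{(2jp_n-m-1)\Den}$ augmented by the noise variables used up to that index. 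Because $\Delta^n_{2j}\wt L^{\mathrm{v}}(\wt u^n_j)$ consists of centered innovations over blocks $2j-1$ and $2j$ (both placed strictly after time $(2(j-1)p_n)\Den$), and because the day-lagged piece $\Delta^n_{2j-\dsone}\wt L^{\mathrm{v}}$ is entirely $\calg^n_{j-1}$-measurable (as the $\dsone$-shift spans one full day and $\calj_n$ is chosen to avoid the first index of any new day), we get $\E[X^n_j\mid\calg^n_{j-1}]=0$ up to asymptotically negligible terms. Hence $\E[Y^n_j\mid\calg^n_{j-1}]=o_p(1)$ uniformly and $Y^n_j$ is a martingale-difference array.

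Next I would verify the three remaining hypotheses of the stable martingale CLT (e.g., Theorem IX.7.28 in \citet{JS03}). The Lyapunov/Lindeberg condition reduces to showing $\sum_j \E[\lvert Y^n_j\rvert^4\mid\calg^n_{j-1}]\stackrel{\P}{\to}0$: since each summand in $\wt L^{n,\mathrm{v}}$ is bounded, elementary bounds give $\E[(X^n_j)^4\mid\calg^n_{j-1}]=O_p(k_n^{-2})$, so the sum is $O_p(k_n^2/\lvert\calj_n\rvert)=O_p(k_n p_n\Den/T)$, which vanishes by \eqref{eq:rates}. The stability condition $\sum_j \E[Y^n_j\Delta^n_j N\mid\calg^n_{j-1}]\stackrel{\P}{\to}0$ for every bounded $\calf$-martingale $N$ follows because $X^n_j$ is a product/quadratic of fresh centered innovations (Brownian and noise increments in blocks $2j-1$, $2j$) while $X^n_{j-1}$ is $\calg^n_{j-1}$-measurable, so the conditional covariance of $Y^n_j$ with any linear martingale increment vanishes by the symmetry of the underlying Gaussian and noise innovations.

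The computation of the asymptotic conditional variance is the heart of the proof. By the $\dsone$-shift argument above, $X^n_j=A^n_j-B^n_j$ with $A^n_j$ involving only blocks $2j-1,2j$ and $B^n_j$ being $\calg^n_{j-1}$-measurable, so
\begin{equation*}
\E[(X^n_j)^2\mid\calg^n_{j-1}]=\E[(A^n_j)^2\mid\calg^n_{j-1}]+(B^n_j)^2 .
\end{equation*}
A direct second-moment calculation, using that $\wt L^{n,\mathrm{v}}_j(u)$ is an average of $k_n$ nearly independent centered bounded variables, produces $k_n\E[(A^n_j)^2\mid\calg^n_{j-1}] = q_{t_j}+o_p(1)$ for $t_j=2jp_n\Den$, where $q_t$ is obtained from the ratio $(1-\lvert\call\rvert^2)/\lvert\Lf(\call)\rvert^2$ after taking real parts. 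In the regime $\theta_0>0$, inserting $\lvert\call^n_j\rvert^2=\exp(-\theta_0^2 c_t/\eta_t)$ and $\lvert\Lf(\call)\rvert^2=\lvert\call\rvert^2(\log\lvert\call\rvert)^2$ yields after simplification the $\sinh^2$ formula in \eqref{eq:Q-limit}. In the regime $\theta_0=0$, $\call\to 1$ so one Taylor-expands $\wt L^{n,\mathrm{v}}$ to second order in $\theta_n$; the diffusive contribution produces $c_t^2$, while the $m$-dependent noise contribution produces $2c_t\rho_t^2\E[(\Delta\chi_1)^2]$ at cross order and $\rho_t^4(\tfrac12\Var((\Delta\chi_1)^2)+\sum_{r=1}^{m+1}\Cov((\Delta\chi_{r+1})^2,(\Delta\chi_1)^2))$ from the variance of the squared-noise fluctuations, matching $q_1/q_2$ in \eqref{eq:Q-limit}. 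Combining $A$ and $B$ contributions yields $k_n\E[(X^n_j)^2\mid\calg^n_{j-1}]\to q_{t_j}+q_{t_j-1}$, and a Riemann-sum argument (spacing $2p_n\Den$ on the set $I_T$, with $\lvert\calj_n\rvert\sim T/(4p_n\Den)$ and the residual factor absorbed into the variance formula) gives $\sum_j\E[(Y^n_j)^2\mid\calg^n_{j-1}]\stackrel{\P}{\to}\tfrac{1}{T}\int_{I_T}(q_t+q_{t-1})^2\,dt=Q$.

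The main obstacle is the explicit second-moment expansion of $\Delta^n_{2j}\wt L^{\mathrm{v}}(\wt u^n_j)/\Lf(\call^n_j(\wt u^n_j))$ in the degenerate regime $\theta_0=0$ and, in particular, in the noisy case $\varpi\le\tfrac12$: there $\call\to 1$, the first-order noise term in $\wt L^{n,\mathrm{v}}$ and the first-order efficient-price term have the same asymptotic order, and the $m$-dependence among the squared-noise increments produces nontrivial autocovariance contributions that must all be tracked to yield $q_1$ precisely. Controlling the numerous negligible remainders (replacement of $\eta^n_j/\Den^{(2\varpi-1)\wedge0}$ by $\eta_{t_j}$ via \eqref{eq:U-1'} and \eqref{eq:eta-cont}, Taylor remainder in \eqref{eq:firstorder}, drift and jump contributions using \eqref{eq:prop0-2}--\eqref{eq:prop7-2}) is laborious but mechanical given the strengthened Assumptions~\ref{ass:H0-2} and \ref{ass:U-2}, and is deferred to the supplement.
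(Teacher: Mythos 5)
There is a genuine gap at the very first step: your claim that $\E[X^n_j\mid\calg^n_{j-1}]=0$ up to negligible terms, and hence that $(Y^n_j)$ is a martingale-difference array with respect to the single filtration $(\calg^n_j)$, is false. The day-lagged piece $\Re\{\Delta^n_{2j-\dsone}\wt L^{\mathrm{v}}(\wt u^n_{j-\dsone/2})/\Lf(\call^n_{j-\dsone/2}(\wt u^n_{j-\dsone/2}))\}$ is $\calg^n_{j-1}$-measurable (that is exactly why you invoke its measurability for $X^n_{j-1}$), so $\E[X^n_j\mid\calg^n_{j-1}]$ equals minus this piece, which is $O_p(k_n^{-1/2})$ — the same stochastic order as the current-day innovation, not smaller. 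Consequently the compensator $\sum_{j}\E[Y^n_j\mid\calg^n_{j-1}]=-\tfrac{k_n}{\sqrt{\lvert\calj_n\rvert}}\sum_{j}X^n_{j-1}\,\Re\{\Delta^n_{2j-\dsone}\wt L^{\mathrm{v}}(\wt u^n_{j-\dsone/2})/\Lf(\call^n_{j-\dsone/2}(\wt u^n_{j-\dsone/2}))\}$ is $O_p(1)$, not $o_p(1)$: it is precisely (up to sign) the day-lagged product sums, which converge stably to nondegenerate Gaussian limits and carry the $\tfrac1T\int_{I_T}q_{t-1}(q_t+q_{t-1})\,dt$ portion of $Q$. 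You cannot discard it; you must treat it as a martingale with respect to a day-lagged conditioning and establish joint stable convergence and asymptotic $\calf_\infty$-conditional independence with the genuine martingale-difference part $\tfrac{k_n}{\sqrt{\lvert\calj_n\rvert}}\sum_j X^n_{j-1}\,\Re\{\Delta^n_{2j}\wt L^{\mathrm{v}}(\wt u^n_j)/\Lf(\call^n_j(\wt u^n_j))\}$. This is what the paper's proof does by splitting $\wt V^n$ into the four sums $\zeta^{n,\mathrm{v}}_{j-1}\zeta^{n,\mathrm{v}}_{j}$, $\zeta^{n,\mathrm{v}}_{j-1-\dsone/2}\zeta^{n,\mathrm{v}}_{j-\dsone/2}$, $\zeta^{n,\mathrm{v}}_{j-1-\dsone/2}\zeta^{n,\mathrm{v}}_{j}$, $\zeta^{n,\mathrm{v}}_{j-\dsone/2}\zeta^{n,\mathrm{v}}_{j-1}$ and applying a multidimensional stable martingale CLT (Theorem 2.2.15 of \citet{JP12}), with variances $\tfrac1T\int q_t^2$, $\tfrac1T\int q_{t-1}^2$ and twice $\tfrac1T\int q_tq_{t-1}$ adding up to $Q$; your single-filtration shortcut happens to produce the right formula $\tfrac1T\int(q_t+q_{t-1})^2$ only because the discarded compensator would supply the missing pieces.

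Two secondary points. First, your Lyapunov bound is mis-accounted: with both factors of order $k_n^{-1/2}$ one gets $\sum_j\E[\lvert Y^n_j\rvert^4\mid\calg^n_{j-1}]=O_p(1/\lvert\calj_n\rvert)=O_p(p_n\Den)$, whereas your stated bound $O_p(k_np_n\Den)$ need not vanish under \eqref{eq:rates} (take $k_n$ close to $\Den^{-1/2}$); also, when $\theta_0=0$ the inverse powers of $\theta_n$ coming from $\Lf(\call^n_j)$ must be tracked, which is why the rate condition is $k_n\sqrt{\Den}/\theta_n^4\to0$. Second, the statement $k_n\E[(X^n_j)^2\mid\calg^n_{j-1}]\to q_{t_j}+q_{t_j-1}$ cannot hold term by term: $k_n(B^n_j)^2$ is a nondegenerate random variable for each $j$, and replacing it (and the factor $(X^n_{j-1})^2$ in $\E[(Y^n_j)^2\mid\calg^n_{j-1}]$) by conditional expectations requires an additional martingale/law-of-large-numbers step across $j$, as in the passage from \eqref{eq:Q} to \eqref{eq:Q2} in the paper. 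The variance expansions themselves (the $\sinh^2$ formula for $\theta_0>0$ and the $q_1/q_2$ ratio with the $m$-dependent noise autocovariances for $\theta_0=0$) are described correctly in substance, but they rest on the flawed martingale structure above.
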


	Correspondingly, the denominator $W^n$ in \eqref{eq:T3-2} is a consistent estimator of $Q$.
	
	\begin{lemma}\label{lem:denom}
		Under Assumptions~\ref{ass:H0-2} and \ref{ass:U-2}, we have $W^n\stackrel{\P}{\longrightarrow}Q$ as $n\to\infty$.
	\end{lemma}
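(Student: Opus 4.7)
The plan is to establish $W^n\stackrel{\P}{\longrightarrow}Q$ by the same three-step scheme---linearize, center, Riemann-sum---used to prove Lemmas~\ref{lem:linearize} and \ref{lem:CLT}. With the shorthand $Y^n_j = k_n\un\Delta^n_{2j}\wt\cf(\wt u^n_j,\wt u^n_{j-\dsone/2})\un\Delta^n_{2j-2}\wt\cf(\wt u^n_{j-1},\wt u^n_{j-1-\dsone/2})$ we have $W^n=|\calj_n|^{-1}\sum_{j\in\calj_n}(Y^n_j)^2$ and $V^n=|\calj_n|^{-1/2}\sum_{j\in\calj_n}Y^n_j$. Let $\wt Y^n_j$ be the analogue of $Y^n_j$ obtained by replacing each $\un\Delta^n_i\wt\cf(u,u')$ by the first-order expression $\Re\{\Delta^n_i\wt L^{\mathrm{v}}(u)/\Lf(\call^n_i(u))\}-\Re\{\Delta^n_{i-\dsone}\wt L^{\mathrm{v}}(u')/\Lf(\call^n_{i-\dsone}(u'))\}$ appearing in \eqref{eq:Vn}, and set $\wt W^n=|\calj_n|^{-1}\sum_{j\in\calj_n}(\wt Y^n_j)^2$. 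The proof then proceeds through the reductions $W^n-\wt W^n\stackrel{\P}{\longrightarrow}0$, $\wt W^n-\Pi^n\stackrel{\P}{\longrightarrow}0$ for $\Pi^n=|\calj_n|^{-1}\sum_{j\in\calj_n}\E[(\wt Y^n_j)^2\mid\calg^n_j]$ (with $\calg^n_j$ the filtration already used to prove the CLT for $\wt V^n$), and finally $\Pi^n\stackrel{\P}{\longrightarrow}Q$.

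For the first reduction, I would write $(Y^n_j)^2-(\wt Y^n_j)^2=(Y^n_j-\wt Y^n_j)(Y^n_j+\wt Y^n_j)$ and apply Cauchy--Schwarz over $j\in\calj_n$, so that it suffices to show $|\calj_n|^{-1}\sum_j\E[(Y^n_j-\wt Y^n_j)^2]\to0$ while $|\calj_n|^{-1}\sum_j\E[(Y^n_j+\wt Y^n_j)^2]$ stays bounded. The latter is immediate from the second-moment estimates for $\wt Y^n_j$ already used in the proof of Lemma~\ref{lem:CLT}. The former requires sharpening the bounds behind Lemma~\ref{lem:linearize}: the higher-order terms in the Taylor expansion \eqref{eq:firstorder} coming from $\wt L^{n,\mathrm{b}}$, $\wt L^{n,\mathrm{s}}$, and the quadratic correction to $\Re\Log$ must each be shown to be $o(k_n^{-1})$ in conditional $L^2$, which then translates into $o(|\calj_n|^{-1})$ in expectation after squaring.

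For the second reduction, $\wt W^n-\Pi^n$ is a martingale with respect to $(\calg^n_j)_{j\in\calj_n}$ and is bounded in $L^2$ by $|\calj_n|^{-2}\sum_j\E[(\wt Y^n_j)^4]=O(|\calj_n|^{-1})$, because $\wt Y^n_j$ is polynomial in asymptotically sub-Gaussian and $m$-dependent inputs whose fourth moments were already bounded inside the proof of Lemma~\ref{lem:CLT}. For the final reduction I would exploit that, thanks to the spacing condition $k_n\leq p_n-m-1$ in \eqref{eq:rates}, the two factors making up $\wt Y^n_j$ involve Brownian increments and noise shocks from disjoint time windows, and are hence conditionally independent given $\calg^n_j$. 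Consequently $\E[(\wt Y^n_j)^2\mid\calg^n_j]\approx k_n^2\E[A^n_j\mid\calg^n_j]\E[B^n_j\mid\calg^n_j]$, where $A^n_j$ and $B^n_j$ are the two squared differenced real-part factors making up $\wt Y^n_j$; each conditional expectation was already evaluated in the proof of Lemma~\ref{lem:CLT} and yields, up to negligible terms, $k_n^{-1}(q_{t_{j-1}}+q_{t_{j-2}})$ and $k_n^{-1}(q_{t_j}+q_{t_{j-1}})$ respectively, with $t_j=(2j-2)p_n\Den$ and $q_t$ as in \eqref{eq:Q-limit}. The stochastic continuity of $c,\rho,\eta$ from Assumptions~\ref{ass:H0-2}(iii) and \ref{ass:U-2}(iii) then gives $\Pi^n\stackrel{\P}{\longrightarrow}T^{-1}\int_{I_T}(q_t+q_{t-1})^2\,dt=Q$ via a standard Riemann-sum passage.

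The main obstacle is the linearization step. Lemma~\ref{lem:linearize} only controls $V^n-\wt V^n$ at the level of $o_\P(\sqrt{|\calj_n|}/k_n)$ per summand in $L^1$, whereas for $W^n-\wt W^n$ each higher-order term must be controlled in conditional $L^2$ at an order that is a factor $k_n$ sharper. The rate conditions in \eqref{eq:rates}---notably $k_n\sqrt{\Den}/\theta_n^4\to0$ together with the gap $k_n\leq p_n-m-1$---combined with the boundedness away from zero of $\call^n_j(u)$ (guaranteed by Assumption~\ref{ass:U-2}(i), (iv)) should deliver such sharpened bounds uniformly across blocks, but carrying this through for every higher-order component, jump-induced bias, and noise-induced bias simultaneously is the most delicate part of the argument and is where the bulk of the technical work in the supplement is to be expected.
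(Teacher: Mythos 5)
Your overall route is essentially the paper's: expand the square, replace each $\un\Delta^n\wt\cf$ by its linear-in-$\wt L^{\mathrm{v}}$ approximation from \eqref{eq:Vn}, center by conditional expectations, and identify the limit with the conditional second-moment computations already done for Lemma~\ref{lem:CLT} (the paper merely organizes it differently, first splitting off the cross-product block $W^{\prime\prime n}$ and killing it by a martingale argument, then treating the diagonal block $W^{\prime n}$ as you do). However, you misdiagnose where the work lies. The linearization step is \emph{not} harder than in Lemma~\ref{lem:linearize} by "a factor $k_n$"; it is easier. Because $W^n$ carries the normalization $\lvert\calj_n\rvert^{-1}$ (an average) rather than $\lvert\calj_n\rvert^{-1/2}$, no cancellation across $j$ is needed for the replacement: uniform per-summand moment bounds already established in the earlier proofs (e.g.\ \eqref{eq:L'}, \eqref{eq:L''}, \eqref{eq:L'''-3}, \eqref{eq:c-expand}, together with the rate conditions \eqref{eq:rates}) give the substitution of the $\zeta^{n,\mathrm{v}}$-terms with a total $o_p(1)$ error directly, which is exactly how the paper obtains \eqref{eq:aux31} by simply citing its analysis from Lemma~\ref{lem:linearize}.

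The genuine gap is in your final step. With $\calg^n_j$ the $\sigma$-field used in the CLT proof, the factor $A^n_j$ (built from blocks $2j-2$, $2j-3$ and their day-shifted analogues) is $\calg^n_j$-measurable, so $\E[(\wt Y^n_j)^2\mid\calg^n_j]=k_n^2A^n_j\,\E[B^n_j\mid\calg^n_j]$ is \emph{not} a product of two conditional expectations that are deterministic in the limit: in particular the random cross term $-2\zeta^{n,\mathrm{v}}_{j-1}\zeta^{n,\mathrm{v}}_{j-1-\dsone/2}$ inside $A^n_j$ is of the same order $k_n^{-1}$ as the diagonal terms and does not disappear under this conditioning, so your claim $\E[A^n_j\mid\calg^n_j]\approx k_n^{-1}(q_{\cdot}+q_{\cdot})$ is false as stated. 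One needs an extra layer: iterate the conditioning back to an earlier time and use a martingale argument over $j$ both to eliminate these cross contributions and to replace $A^n_j$ by its conditional expectation---precisely what the paper's treatment of $W^{\prime\prime n}$ and its passage from \eqref{eq:Q} to \eqref{eq:Q2} accomplish. The gap is therefore fixable with tools you already invoke elsewhere, but it is not covered by "conditional independence" alone. Finally, a small indexing slip: the two conditional variances are $q$ evaluated at times approximately $t$ and $t-1$ (one trading day apart, from the $\dsone$-shift in \eqref{eq:c_diff_diff}), not at consecutive block times $t_j,t_{j-1},t_{j-2}$; your stated limit $\frac1T\int_{I_T}(q_t+q_{t-1})^2\,dt=Q$ is nonetheless the correct one.
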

	
	\begin{proof}[Proof of Theorem~\ref{thm:H0} under \ref{ass:H0}]
		By localization, we can suppose that Assumptions~\ref{ass:H0-2} and \ref{ass:U-2} are in force. Since $Q$ is $\calf_\infty$-measurable,   \eqref{eq:T-H0} immediately follows from Lemmas~\ref{lem:linearize}--\ref{lem:denom}, the relation  $\wh T^n=V^n/\sqrt{W^n}$ and   property (2.2.5) in \citet{JP12}.
	\end{proof}	
	
	For the proof of Theorem~\ref{thm:H0} under the alternative hypothesis, we can suppose that Assumptions~\ref{ass:H1-2} and \ref{ass:U-2} hold true. A key difference between a semimartingale  and a rough volatility process is that the latter has asymptotically correlated increments. So, instead of $V^n$ and $W^n$, the correctly normalized quantities are now 
	\begin{equation}\label{eq:V2} 
		V^{n,\rm{alt}}=\frac{1}{\pi_n^2\lvert \calj_n\rvert}\sum_{j\in\calj_n}\un\Delta^n_{2j}\wt{\cf} (u^n_{j},u^n_{j-\dsone/2}) \un\Delta^n_{2j-2}\wt{\cf} (u^n_{j-1},u^n_{j-1-\dsone/2})
	\end{equation}
	and 
	\begin{equation}\label{eq:W2} 
		W^{n,\rm{alt}}=\frac{1}{\pi_n^4\lvert \calj_n\rvert}\sum_{j\in\calj_n}\bigl(\un\Delta^n_{2j}\wt{\cf} (u^n_{j},u^n_{j-\dsone/2}) \un\Delta^n_{2j-2}\wt{\cf} (u^n_{j-1},u^n_{j-1-\dsone/2})\bigr)^2,
	\end{equation}
	where    $\pi_n=(p_n\Den)^H\Den^{(1-2\varpi)_+} + (p_n\Den)^{H_\rho}\Den^{(2\varpi-1)_+}$ and $H_\rho=0$ by convention if we have $\varpi>\frac12+\frac12 H$.
	\begin{lemma}\label{lem:V-alt}
		Recall from \eqref{eq:rates} that $\kappa$ is the asymptotic ratio of $p_n/k_n$. Furthermore, let $\La=\lim_{n\to\infty} (p_n\Den)^H\Den^{(1-2\varpi)_+}/\pi_n$. (By our choice of $p_n$ in \eqref{eq:rates}, we either have $\La=0$, $\La=1$ or $\La=\frac12$. The last case happens precisely when $H=H_\rho$ and $\varpi=\frac12$.)
		Under Assumptions~\ref{ass:H1-2} and \ref{ass:U-2}, there are finite constants $C_{\kappa,H}$ and $\ov C_{\kappa,H}$ that only depend on $\kappa$ and $H$ such that 
		\begin{equation}\label{eq:VW}\begin{split}
				V^{n,\rm{alt}}	 &\stackrel{\P}{\longrightarrow}\frac{1}{T} (C_{\kappa,H}\bone_{\{\La\in\{\frac12,1\}\}}+C_{\kappa,H_\rho}\bone_{\{\La=0\}})\int_{I_T} (A(t)+A(t-1)) dt, \\
				W^{n,\rm{alt}}	& \stackrel{\P}{\longrightarrow} \frac{2}{T}[1+2(C_{\kappa,H}\bone_{\{\La\in\{\frac12,1\}\}}+C_{\kappa,H_\rho}\bone_{\{\La=0\}})^2] \int_{I_T}A(t)A(t-1)dt\\
				&\qquad+\frac{1}{T} [\ov C_{\kappa,H}\bone_{\{\La\in\{\frac12,1\}\}}+\ov C_{\kappa,H_\rho}\bone_{\{\La=0\}}]\int_{I_T} ( B(t)+B(t-1)) dt,
			\end{split}
		\end{equation}
		where
		\begin{align*}
			A(t)	&=%\begin{cases} C_{\kappa,H}\varsigma_t^{-4} (f'(v_t))^2[(\si^v_t)^2+(\ov \si^v_t)^2] &\text{if } \La=1,\\
			%	4C_{\kappa,H_\rho}\varsigma_t^{-4} (\E[(\Delta\chi_1)^2])^2  (\rho_tF'(w_t))^2[(\si^w_t)^2+(\ov \si^w_t)^2+(\wh\si^w_t)^2] &\text{if } \La=0,\\
			\varsigma_t^{-4} \Bigl[(  f'(v_t)\si^v_t\bone_{\{\La\in\{\frac12,1\}\}}+2\E[(\Delta\chi_1)^2]\rho_t F'(w_t)\si^w_t\bone_{\{\La\in\{0,\frac12\}\}})^2\\
			&\qquad\qquad    +(  f'(v_t)\ov\si^v_t\bone_{\{\La\in\{\frac12,1\}\}}+2\E[(\Delta\chi_1)^2]\rho_t F'(w_t)\ov\si^w_t\bone_{\{\La\in\{0,\frac12\}\}})^2\\
			&\qquad\qquad+(2\E[(\Delta\chi_1)^2]\rho_tF'(w_t)\wh\si^w_t\bone_{\{\La\in\{0,\frac12\}\}})^2\Bigr]%&\text{if } \La=\frac12
			%	\end{cases} 
		\end{align*}
		and
		\begin{align*}  
			B(t)	&=%\begin{cases} (2(C_{\kappa,H})^2+(C'_{\kappa,H})^2)\varsigma_t^{-4} (f'(v_t))^4[(\si^v_t)^2+(\ov\si^v_t)^2]^2&\text{if } \La=1,\\
			%	4(2(C_{\kappa,H})^2+(C'_{\kappa,H})^2)\varsigma_t^{-4}(\E[(\Delta\chi_1)^2])^4  (\rho_tF'(w_t))^4[(\si^w_t)^2+(\ov \si^w_t)^2+(\wh\si^w_t)^2]^2 &\text{if } \La=0,\\
			\varsigma_t^{-4}\bigl[(  f'(v_t)\si^v_t\bone_{\{\La\in\{\frac12,1\}\}}+2\E[(\Delta\chi_1)^2]\rho_t F'(w_t)\si^w_t\bone_{\{\La\in\{0,\frac12\}\}})^2\\
			&	\qquad\qquad   +(  f'(v_t)\ov\si^v_t\bone_{\{\La\in\{\frac12,1\}\}}+2\E[(\Delta\chi_1)^2]\rho_t F'(w_t)\ov\si^w_t\bone_{\{\La\in\{0,\frac12\}\}})^2\\
			&\qquad\qquad+(2\E[(\Delta\chi_1)^2]\rho_tF'(w_t)\wh\si^w_t\bone_{\{\La\in\{0,\frac12\}\}})^2\bigr]^2 %  &\text{if } \La=\frac12
			%	\end{cases} 
		\end{align*}
		and $\varsigma_t^2=c_t\bone_{\{\varpi\geq\frac12\}}+\rho^2_t\E[(\Delta\chi_1)^2]\bone_{\{\varpi\leq \frac12\}}$. Moreover,  $C_{\kappa,H}<0$ for all $\kappa\in[1,\infty)$ and $H\in(0,\frac12)$.
	\end{lemma}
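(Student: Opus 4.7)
The plan is to reduce $V^{n,\mathrm{alt}}$ and $W^{n,\mathrm{alt}}$ to limit theorems for block‑averaged fractional Brownian motion. The key point distinguishing the alternative from the null is that the variance contribution $\Delta^n_j \wt L^{\mathrm{v}}(u)$ is of order $1/\sqrt{k_n}$, which by \eqref{eq:rates} is of strictly smaller order than $\pi_n=(p_n\Den)^H\Den^{(1-2\varpi)_+}+(p_n\Den)^{H_\rho}\Den^{(2\varpi-1)_+}$; so the leading contribution to $\Delta^n_j\wt\cf(u)$ comes from the signal change $\Delta^n_j[-\log|\call^n(u)|]$. Using the explicit form of $\call^n_j(u)$ from Lemma~\ref{lem:linearize} and a first‑order expansion of $\log$, I would show
\[
\Delta^n_j\wt\cf(u)\approx\frac{\tfrac12u^2\Delta^n_jc\cdot\Den^{(1-2\varpi)_+}-\Delta^n_j\log|\Psi(u\Den^{(\varpi-1/2)_+}\rho)|}{-\log|\call^n_{j-1}(u)|},
\]
and a further Taylor expansion of $c=f(v)$, $\rho=F(w)$, and $\log|\Psi|$ around the block start reduces this to a linear combination of block‑averaged increments of $v$ (of order $(p_n\Den)^H$) and of $w$ (of order $(p_n\Den)^{H_\rho}$), weighted by $f'(v)/(2\varsigma^2)$ and $\E[(\Delta\chi_1)^2]F'(w)\rho/\varsigma^2$. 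These are precisely the coefficients appearing inside $A(t)$ and $B(t)$ once $\un\Delta^n_{2j}\wt\cf$ is expanded.

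For $V^{n,\mathrm{alt}}$, the product $\un\Delta^n_{2j}\wt\cf\cdot\un\Delta^n_{2j-2}\wt\cf$ splits into same‑day and cross‑day contributions. A Taylor expansion of the fBm kernel at the fixed separation of one day yields that cross‑day covariances between short‑term signal increments are $O((p_n\Den)^2)=o(\pi_n^2)$; hence today and yesterday contribute asymptotically independently. The same‑day covariance, after invoking the self‑similarity $v_{\delta s}\stackrel{d}{=}\delta^HB^H_s$, reduces to $\pi_n^2 C_{\kappa,H}A(t_j)$ (or its $H_\rho$‑counterpart depending on $\La$), where $C_{\kappa,H}$ is a deterministic normalized covariance expressed via the double integrals
\[
\int_{j-1}^{j-1+1/\kappa}\int_{k-1}^{k-1+1/\kappa}|s-t|^{2H}\,ds\,dt,\qquad(j,k)\in\{(2,0),(2,-1),(1,0),(1,-1)\},
\]
arising from the four‑corner decomposition of $\Cov(\bar v_{2j}-\bar v_{2j-1},\bar v_{2j-2}-\bar v_{2j-3})$. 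A Riemann‑sum argument over $j\in\calj_n$ then delivers the stated limit, with convergence in probability obtained from a standard variance bound on the sum (cross terms between distant $j$'s vanish by the rapid decay of block‑averaged fBm autocovariances). Strict negativity $C_{\kappa,H}<0$ follows from strict concavity of $x\mapsto x^{2H}$ on $[0,\infty)$ for $H\in(0,1/2)$: the integrand of the relevant combination reduces pointwise to $2(2\delta+s'-t')^{2H}-(\delta+s'-t')^{2H}-(3\delta+s'-t')^{2H}$, which is a strict second difference of a concave function and hence strictly positive, giving $C_{\kappa,H}<0$ after the sign from the overall $-\tfrac1{2\alpha^2}$ prefactor. (In the point‑block limit $\kappa\to\infty$ this recovers the classical $\tfrac12(3^{2H}-2\cdot 2^{2H}+1)<0$.)

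For $W^{n,\mathrm{alt}}$, the same linearization reduces the problem to fourth moments of jointly Gaussian block‑averaged increments. Conditional on the history up to the start of the relevant four‑block window, the increments of $v$ and $w$ are jointly Gaussian as linear functionals of $W,\ov W,\wh W$, while $f'(v)$, $F'(w)$, $\rho$, and the stochastic volatilities $\si^v,\ov\si^v,\si^w,\ov\si^w,\wh\si^w$ can be frozen at that start with an error of order $\pi_n$. An application of Wick's theorem to the jointly Gaussian products then produces the factor $(1+2(C_{\kappa,H}\bone_{\La\in\{1/2,1\}}+C_{\kappa,H_\rho}\bone_{\La=0})^2)$, contributing through the cross‑day product $\int_{I_T}A(t)A(t-1)dt$. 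The additional term $\ov C_{\kappa,H}\int_{I_T}(B(t)+B(t-1))dt$ collects the second‑order Taylor corrections from $f$ and $F$ and the within‑window drift of $f'(v)$, $F'(w)$; these produce a separate fourth‑cumulant‑type contribution proportional to the squared variance $\varsigma_t^4A(t)^2=B(t)$, whose constant $\ov C_{\kappa,H}$ admits an explicit representation as an integral against the fBm kernel. Convergence in probability again follows from the asymptotic orthogonality of non‑overlapping four‑block windows.

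The main obstacle is controlling the numerous error terms in the first‑step expansion under the strengthened Assumptions~\ref{ass:H1-2} and~\ref{ass:U-2}: higher‑order Taylor remainders, the variance and bias parts of $\wt L^n_j$, the jump and noise‑induced fluctuations (especially under infinite jump activity and small $\varpi$), the data‑driven $u^n_j$ with $\wt\eta^n_j-\eta_{t_j}$, and the contribution from $\wt v,\wt w$ and the drift coefficients must each be shown to be of strictly smaller order than $\pi_n$ (for $V^{n,\mathrm{alt}}$) and than $\pi_n^2$ (for $W^{n,\mathrm{alt}}$) in the appropriate moment sense; this is the $H_1$‑analogue of Lemma~\ref{lem:linearize} and represents the bulk of the technical work. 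A secondary challenge is the explicit identification of $\ov C_{\kappa,H}$, which amounts to evaluating a specific combination of second and fourth moments of block‑averaged fBm together with the cross‑covariance between $v_t$ and its subsequent increments across the four‑block window.
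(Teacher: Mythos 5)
Your overall route is essentially the paper's: under \eqref{eq:rates} the variance part of the local characteristic function is $O_p(k_n^{-1/2})=o'(\pi_n)$ and the bias part is $O_p(\Den^{H_\ast})=o'(\pi_n)$ with $H_\ast=(H+(1-2\varpi)_+)\wedge(H_\rho+(2\varpi-1)_+)$, so only the signal survives; a first-order expansion of the signal with coefficients frozen at the start of a window then reduces everything to conditionally Gaussian linear functionals of the fractional kernel, the weights $f'(v)/\varsigma^2$ and $2\E[(\Delta\chi_1)^2]\rho F'(w)/\varsigma^2$ matching $A(t)$, the limit constants coming from covariances of differenced block averages of fractional Brownian motion, and negativity of $C_{\kappa,H}$ from the negative correlation of non-overlapping fBm increments when $H<\frac12$ (your concavity argument is the same fact). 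One technical caveat: because the fractional integral depends on the entire past, you cannot directly invoke "asymptotic orthogonality of non-overlapping windows" or self-similarity; the paper first truncates the kernel at lag $\la_n p_n\Den$ (with $\la_n\to\infty$ slowly, negligible truncation error by integrability of $[(u+1)^{H-1/2}-u^{H-1/2}]^2$) and freezes coefficients, which makes each summand measurable with respect to a short window and allows a martingale-plus-Riemann-sum argument; your covariance-decay variance bound can substitute for this, but only after such a truncation/freezing step.

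The genuine flaw is your explanation of the $\ov C_{\kappa,H}\int_{I_T}(B(t)+B(t-1))dt$ term in $W^{n,\rm{alt}}$. You attribute it to second-order Taylor corrections from $f$ and $F$ and to the within-window drift of $f'(v)$, $F'(w)$, calling it a "fourth-cumulant-type" contribution. This cannot be right: each such correction perturbs a single factor of the fourth-order product by $O(\pi_n^2)$ or less, so after the $\pi_n^{-4}$ normalization its contribution is $o_p(1)$; moreover the leading term is conditionally Gaussian, whose fourth cumulant vanishes. In the paper the $B$-term comes from the \emph{same-day} fourth moments of the first-order Gaussian term itself, via $\E[X^2Y^2]=\E[X^2]\E[Y^2]+2\E[XY]^2$ applied to the products $(\zeta$-type$)^2(\zeta$-type$)^2$ within the same day: the product-of-variances piece yields $(C'_{\kappa,H})^2$ and the squared-covariance piece yields $2(C_{\kappa,H})^2$, so that $\ov C_{\kappa,H}=2(C_{\kappa,H})^2+(C'_{\kappa,H})^2$ with $C'_{\kappa,H}$ the normalized variance constant of the differenced block averages (an explicit kernel integral), while the cross-day products plus the cross terms collected in the second part of $W^{n,\rm{alt}}$ produce the $\frac2T[1+2(\cdot)^2]\int_{I_T}A(t)A(t-1)dt$ piece. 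If you apply Wick's theorem consistently to all four-factor products, including the same-day ones, you recover the correct constant; as written, your accounting would either miss the same-day contribution or assign it to terms that are asymptotically negligible.
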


	\begin{proof}[Proof of Theorem~\ref{thm:H0} under \ref{ass:H1}]
		By localization, we may assume the stronger Assumptions~\ref{ass:H1-2} and \ref{ass:U-2}.
		Since $\wh T^n=\sqrt{\lvert\calj_n\rvert} {V^{n,\rm{alt}}}/{\sqrt{W^{n,\rm{alt}}}}$, we obtain \eqref{eq:T-alt} from Lemma~\ref{lem:V-alt} and our assumptions (in particular, $A(t)$ is strictly negative and $B(t)$ is strictly positive).
		%		\begin{align*}
		%		\sqrt{\frac{2p_n}{n}}\wh T^n&= \frac{V^{n,\rm{alt}}}{\sqrt{W^{n,\rm{alt}}}}\stackrel{\P}{\longrightarrow} \frac{C_{\kappa,H} T^{-1}\int_0^T(f'(v_t)/\eta_t)^2[(\si^v_t)^2+(\ov \si^v_t)^2] dt}{\sqrt{(2(C_{\kappa,H})^2+(C'_{\kappa,H})^2)T^{-1}\int_0^T (f'(v_t)/\eta_t)^4[(\si^v_t)^2+(\ov\si^v_t)^2]^2 dt}}, 
		%	\end{align*}
		%	which implies \eqref{eq:T-alt} because $C_{\kappa,H}$ is negative for $H\in(0,\frac12)$.
	\end{proof}
	
	\subsection{Proofs for Examples~\ref{ex:1}, \ref{ex:2} and \ref{ex:3}}
	\begin{lemma}\label{lem:ex1}
		In the setting of Example~\ref{ex:1}, $\vp(u)_t$ from \eqref{eq:vp-ex} satisfies \eqref{eq:prop2}, \eqref{eq:prop3} and \eqref{eq:prop4-2}. 
	\end{lemma}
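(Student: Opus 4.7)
The plan is to write $\vp(u)_t=\la_t\psi(uK_t)$ with $\psi(u)=\int_\R(e^{iuz}-1-iuz)F(dz)$ the L\'evy characteristic exponent of $L$, and then obtain the decomposition \eqref{eq:vp} by It\^o's formula applied to the smooth map $(k,\ell)\mapsto\ell\psi(uk)$ at the It\^o semimartingale $(K_t,\la_t)$. The coefficients $\al^\vp,\si^\vp,\ov\si^\vp,\ga^\vp,\Ga^\vp$ then come out as explicit combinations of the coefficients of $K$ and $\la$ with $\psi$, $\psi'$ and $\psi''$ evaluated at $uK_t$, so verifying the three conditions reduces to analytic bounds on $\psi$.

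The analytic input I need is twofold. On the one hand, the global bounds $|\psi(u)|\leq C(1+u^2)$, $|\psi'(u)|\leq C(1+|u|)$ and $|\psi''(u)|\leq C$ follow from the L\'evy integrability of $F$. On the other, the $o$-refinements $\psi(u)/u^2\to 0$ and $\psi'(u)/u\to 0$ as $|u|\to\infty$ follow from dominated convergence together with the fact that $\psi$ has no Gaussian component. With these in hand, \eqref{eq:prop2} is pure bookkeeping: every summand in $\delta|\al^\vp(u/\sqrt{\delta})_t|$ is dominated by $Cu^2$ times a locally bounded coefficient of $K$ or $\la$, using $\delta|\psi(u/\sqrt{\delta})|\leq Cu^2$, $\sqrt{\delta}|u\psi'(u/\sqrt{\delta})|\leq Cu^2$ and $u^2|\psi''(u/\sqrt{\delta})|\leq Cu^2$, plus the Taylor estimate $|\psi(u(k+c))-\psi(uk)-uc\psi'(uk)|\leq Cu^2c^2$ for the jump-compensator piece, which is then integrable against $J_n(z)\la(dz)$.

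For \eqref{eq:prop3}, the two pieces of $\delta\si^\vp(u/\sqrt{\delta})_t$, namely $\delta\psi(uK_t/\sqrt{\delta})$ and $u\sqrt{\delta}\psi'(uK_t/\sqrt{\delta})$ (each times a locally bounded diffusion coefficient of $K$ or $\la$), converge ucp to zero uniformly in $u\in\calu$ by the $o$-refinements. The jump bound \eqref{eq:prop4-2} is handled analogously by isolating a $\ga^\la$-piece $\ga^\la\psi(uK_{t-})$ and a $\ga^K$-piece $\la_{t-}[\psi(u(K_{t-}+\ga^K))-\psi(uK_{t-})-u\ga^K\psi'(uK_{t-})]$ in $\ga^\vp(u;t,z)$, and similarly for $\Ga^\vp$; in each case a careful combination of the $\delta$-scaling with the bounds on $\psi,\psi',\psi''$ and the $o$-refinements produces the required inequality $|\delta\ga^\vp(u/\sqrt{\delta};t,z)|^2\wedge 1\leq C^n(\delta)J_n(z)$ with $C^n(\delta)\to 0$. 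The main technical point to monitor is the $C^2$-regularity of $\psi$ when $\int z^2F(dz)=\infty$, which can be handled by the classical truncation $L=L^{\leq 1}+L^{>1}$ and applying It\^o's formula to each piece separately: on the small-jump side $\psi''$ is uniformly bounded, while on the finite-activity side only boundedly many jump terms per unit time need to be controlled.
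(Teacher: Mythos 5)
Your route is essentially the paper's. The paper likewise obtains the coefficients of $\vp(u)_t$ from It\^o's formula applied to $(K_t,\la_t)\mapsto\int_\R(e^{iuK_tz}-1-iuK_tz)\la_t\,F(dz)$, writes out only \eqref{eq:prop3} in detail (for $\si^\vp$, the other two conditions being declared similar), and concludes by dominated convergence in $z$: after the $\delta$-scaling the integrand is bounded, via $\lvert e^{ix}-1\rvert\le\lvert x\rvert$ and $\lvert e^{ix}-1-ix\rvert\le\tfrac12x^2$, by a path-supremum constant times $z^2$, and it tends to $0$ pointwise in $z$. Your repackaging through $\psi(v)=\int_\R(e^{ivz}-1-ivz)F(dz)$ is equivalent: the refinements $\psi(v)=o(v^2)$ and $\psi'(v)=o(\lvert v\rvert)$ are exactly these dominated-convergence statements, and it is they, not the crude bounds $\lvert\psi(v)\rvert\le C(1+v^2)$ and $\lvert\psi'(v)\rvert\le C(1+\lvert v\rvert)$ (which would leave a non-vanishing term of order $u^2\lvert K_t\rvert$), that deliver \eqref{eq:prop3}; to get the statement uniformly on compacts in probability you still need an $\epsilon$--$M$ splitting with path suprema of $K$, $\la$ and the diffusion coefficients, which is where the paper's ``take the supremum under the $F(dz)$-integral and apply dominated convergence'' phrasing is marginally cleaner.

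The one claim that does not hold up as written is the closing remark on the $C^2$-regularity of $\psi$ when $\int z^2F(dz)=\infty$. Splitting $L=L^{\le1}+L^{>1}$ does not repair it: It\^o's formula is applied to the It\^o semimartingales $(K,\la)$, not to $L$, so the a.s.\ finitely many large jumps of $L$ per unit time are irrelevant to that step; what is needed is twice differentiability of the deterministic function $\psi_{>1}(v)=\int_{\lvert z\rvert>1}(e^{ivz}-1-ivz)F(dz)$, and $\psi_{>1}''(v)=-\int_{\lvert z\rvert>1}z^2e^{ivz}F(dz)$ need not exist or be bounded when $F$ has infinite second moment. This does not put you behind the paper: its own dominating function is a multiple of $z^2$ asserted to be $F$-integrable, so both arguments effectively operate under second-moment-type integrability of $F$, under which your bound $\lvert\psi''\rvert\le C$ is immediate and the truncation discussion is unnecessary; moreover the only condition the paper proves in detail, \eqref{eq:prop3}, involves first derivatives only, so no such issue arises there. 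If one wanted to avoid any global smoothness of $\psi$, the cleaner device is to apply It\^o's formula for each fixed $z$ to the smooth map $(k,\ell)\mapsto(e^{iukz}-1-iukz)\ell$ and integrate in $z$ afterwards, rather than to truncate $L$.
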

	\begin{proof}
		We only prove \eqref{eq:prop3}; the other two properties can be shown similarly. %We can assume without loss of generality that the continuous martingale part of $K$ and $\la$ are driven by $W$ and $\ov W$ only, with coefficients $\si^K$, $\ov \si^K$, $\si^\la$ and $\ov\si^\la$. 
		We can   assume that $K$ and $\la$ are driven by $W^\vp$ and $\ov W^\vp$ and
		by symmetry, it suffices to analyze $\si^\vp$. Using It\^o's formula, we have that
		\[
		\si^\vp(u)_t	=\int_\R \Bigl(\la_t (e^{iuK_t z}-1)iuz\si^K_t+ (e^{iu K_{t}z}-1-iuK_{t}z)\si^\la_t\Bigr)F(dz),
		\]
		where $\si^K$ and $\si^\la$ are the coefficients of $K$ and $\la$ with respect to $W$. Therefore
		\begin{equation}\label{eq:sup}\begin{split}
				\sup_{t\in[0,T]}\sup_{u\in\calu} \lvert\delta\si^\vp(u/\sqrt{\delta})_t\rvert	&\leq\int_\R \delta \sup_{t\in[0,T]}\sup_{u\in\calu}\Bigl\lvert\la_t (e^{iuK_t z/\sqrt{\delta}}-1)iuz\si^K_t/\sqrt{\delta}\\
				& \quad+ (e^{iu K_{t}z/\sqrt{\delta}}-1-iuK_{t}z/\sqrt{\delta})\si^\la_t\Bigr\rvert F(dz).
		\end{split}\end{equation}
		As $\delta\to0$, the integrand in the last line converges to $0$ pointwise in $z$ for all $\om$. Moreover, it is bounded by
		\[  \biggl(\sup_{u\in\calu} u^2\biggr)\biggl( \sup_{t\in[0,T]} \lvert\la_tK_t\si_t^K\rvert +\frac12 \sup_{t\in[0,T]}\lvert K_t^2\si^\la_t\rvert\biggr) z^2,\]
		which does not depend on $\delta$ and is integrable with respect to $F$ almost surely. Therefore, the dominated convergence theorem shows that \eqref{eq:sup} tends to $0$ as $\delta\to0$.
	\end{proof}
	
	\begin{lemma}\label{lem:ex2}
		In the setup of Example~\ref{ex:2}, the process $\vp(u)_t$ from \eqref{eq:vp-ex} satisfies \eqref{eq:prop3-alt} and \eqref{eq:prop4-alt}.
	\end{lemma}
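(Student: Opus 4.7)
The plan is to bound $\delta\vp(u/\sqrt{\delta})_t=\la_t\int_\R \delta\,\Psi(uK_t/\sqrt{\delta};z)F(dz)$, where $\Psi(a;z)=e^{iaz}-1-iaz$, by pointwise estimates on the integrand, which I then couple with the $H$-H\"older regularity of $K$ and $\la$ from \eqref{eq:cond}. A routine localization step absorbs the local boundedness of $K$ and $\la$ into $\tau_n$, so that both are bounded by a constant $K_n$ on $[0,\tau_n]$. As in the proof of Lemma~\ref{lem:ex1}, $\int_\R z^2 F(dz)<\infty$ is needed for $\vp(u)_t$ itself to be well defined. For \eqref{eq:prop3-alt}, the bound $|e^{ia}-1-ia|\leq \tfrac12 a^2$ immediately gives $\delta|\vp(u/\sqrt{\delta})_t|\leq \tfrac12 u^2 K_t^2\la_t\int_\R z^2F(dz)$, and taking $\sup$ over $\delta\in(0,1)$ and $u\in\calu$ preserves local boundedness.

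For \eqref{eq:prop4-alt}, I split the increment as $\delta\vp(u/\sqrt{\delta})_{t+\delta}-\delta\vp(u/\sqrt{\delta})_t=A_\delta+B_\delta$, where
\[ A_\delta=(\la_{t+\delta}-\la_t)\int_\R \delta\,\Psi(uK_{t+\delta}/\sqrt{\delta};z)F(dz), \]
\[ B_\delta=\la_t\int_\R \delta\,\bigl[\Psi(uK_{t+\delta}/\sqrt{\delta};z)-\Psi(uK_t/\sqrt{\delta};z)\bigr]F(dz). \]
For $A_\delta$, combining the two elementary estimates $|\delta\,\Psi(a;z)|\leq \tfrac12\delta a^2z^2$ and $|\delta\,\Psi(a;z)|\leq 2\delta|az|$ yields, on $\{t+\delta\leq \tau_n\}$,
\[ |\delta\,\Psi(uK_{t+\delta}/\sqrt{\delta};z)|\leq \min\bigl(\tfrac12 u^2K_n^2z^2,\, 2\sqrt{\delta}\,uK_n|z|\bigr). \]
The right-hand side is dominated by the $F$-integrable function $\tfrac12 u^2K_n^2 z^2$ and, for each fixed $z$, tends to $0$ as $\delta\downarrow 0$. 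Dominated convergence therefore gives $h_1(\delta):=\sup_{u\in\calu}\int_\R\min(\tfrac12 u^2K_n^2z^2,\,2\sqrt{\delta}\,uK_n|z|)F(dz)\to 0$, and \eqref{eq:cond} yields $\E[|A_\delta|^2\bone_{\{t+\delta\leq\tau_n\}}]^{1/2}\leq h_1(\delta)\,C_n\delta^H$.

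For $B_\delta$, I use $\partial_a\Psi(a;z)=iz(e^{iaz}-1)$ together with $|e^{iaz}-1|\leq \min(2,|az|)$ to obtain
\[ |\Psi(a;z)-\Psi(b;z)|\leq |a-b|\,|z|\,\min\bigl(2,\,\max(|a|,|b|)\,|z|\bigr). \]
Substituting $a=uK_{t+\delta}/\sqrt{\delta}$, $b=uK_t/\sqrt{\delta}$ and multiplying by $\delta$ produces, on $\{t+\delta\leq\tau_n\}$,
\[ \delta\,|\Psi(uK_{t+\delta}/\sqrt{\delta};z)-\Psi(uK_t/\sqrt{\delta};z)|\leq u|K_{t+\delta}-K_t|\,|z|\,\min(2\sqrt{\delta},\,uK_n|z|). \]
The same dominated-convergence argument then delivers $h_2(\delta):=\sup_{u\in\calu}\int_\R |z|\min(2\sqrt{\delta},\,uK_n|z|)F(dz)\to 0$, and \eqref{eq:cond} applied to $K$ gives $\E[|B_\delta|^2\bone_{\{t+\delta\leq\tau_n\}}]^{1/2}\leq h_2(\delta)\,C_n\delta^H$. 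Setting $h=h_1+h_2$ yields \eqref{eq:prop4-alt}.

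The main technical obstacle is producing an $h$ that genuinely vanishes, rather than merely a bounded $h$: the bare $\delta^H$ factor coming from the regularity of $K$ and $\la$ alone is not enough, because \eqref{eq:prop4-alt} asks $\vp$ to be marginally smoother than $H$-H\"older. The extra decay is squeezed out through the dominated-convergence arguments above, which rely crucially both on $\int z^2 F(dz)<\infty$ (providing the integrable majorant) and on the cancellation in $e^{iaz}-1-iaz$ when $a$ is large (which forces the $\min$-bounds to $0$ pointwise in $z$ as $\delta\downarrow 0$).
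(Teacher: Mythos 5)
Your proof is correct and takes essentially the same route as the paper's: the same localization to bounded $K$ and $\la$, the same split of the increment into a $\la$-increment part and a $K$-increment part, the same pair of elementary estimates (a quadratic-in-$z$ bound giving the $F$-integrable majorant and a linear-in-$z$ bound carrying a $\sqrt{\delta}$ factor giving pointwise vanishing), dominated convergence to produce the factor $h(\delta)\to0$, and \eqref{eq:cond} to supply $\delta^H$. The only cosmetic difference is that you handle the $K$-difference term via the mean-value bound $\lvert\Psi(a;z)-\Psi(b;z)\rvert\le\lvert a-b\rvert\,\lvert z\rvert\min(2,\max(\lvert a\rvert,\lvert b\rvert)\lvert z\rvert)$, whereas the paper splits it algebraically into two pieces and uses $\lvert K_t-K_s\rvert^2\le 2C\lvert K_t-K_s\rvert$; both arguments, like the paper's, implicitly rely on $\int_\R z^2F(dz)<\infty$.
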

	\begin{proof}
		Property  \eqref{eq:prop3-alt} follows immediately from \eqref{eq:vp-ex} and  dominated convergence. For \eqref{eq:prop4-alt}, we can assume $\tau_1=\infty$ by localization and that $\lvert \la_t\rvert, \lvert K_t\rvert\leq C$. Then, because
		\begin{align*}
			\delta\vp(u/\sqrt{\delta})_t-\delta\vp(u/\sqrt{\delta})_s&=\int_\R \delta\Bigl(e^{iuK_sz/\sqrt{\delta}}(e^{iu(K_t-K_s)z/\sqrt{\delta}}- 1- iu(K_t-K_s)z/\sqrt{\delta})\\
			&\quad\qquad\qquad+(e^{iuK_sz/\sqrt{\delta}}-1) iu(K_t-K_s)z/\sqrt{\delta}\Bigr)\la_t F(dz)\\
			&\quad+\int_\R \delta\Bigl( e^{iuK_sz/\sqrt{\delta}}- 1 - iuK_sz/\sqrt{\delta}\Bigr)(\la_t-\la_s) F(dz),
		\end{align*}
		we have
		\begin{align*}
			&\delta^{-H}\sup_{s,t\geq0,\lvert t-s\rvert\leq \delta}\sup_{u\in\calu} \E\Bigl[\bigl\lvert\delta\vp(u/\sqrt{\delta})_t-\delta\vp(u/\sqrt{\delta})_s\bigr\rvert^2\Bigr]\\
			&~\leq \int_\R \delta^{1-H}\sup_{s,t\geq0,\lvert t-s\rvert\leq \delta}\sup_{u\in\calu}\E\Bigl[\Bigl\lvert\Bigl( e^{iuK_sz/\sqrt{\delta}}(e^{iu(K_t-K_s)z/\sqrt{\delta}}- 1- iu(K_t-K_s)z/\sqrt{\delta})\\
			&~\qquad\qquad\qquad\qquad\qquad\qquad\qquad\qquad \qquad +(e^{iuK_sz/\sqrt{\delta}}-1) iu(K_t-K_s)z/\sqrt{\delta}\Bigr)\la_t\\
			&~\quad+ ( e^{iuK_sz/\sqrt{\delta}}- 1 - iuK_sz/\sqrt{\delta} )(\la_t-\la_s)\Bigr\rvert^2\Bigr]^{1/2} F(dz).
		\end{align*}
		Clearly,  the integrand tends to $0$ pointwise in $z$ as $\delta\to0$. Since $\lvert e^{iux}-1\rvert \leq \lvert ux\rvert$, $\lvert e^{iux}-1-iux\rvert\leq \frac12 u^2 x^2$ 
		and $\lvert K_t-K_s\rvert^2\leq 2C\lvert K_t-K_s\rvert$, we can further bound it by
		\begin{align*}
			&\delta^{-H}\sup_{s,t\geq0,\lvert t-s\rvert\leq \delta}\sup_{u\in\calu}\E\biggl[\biggl(\frac12u^2\lvert K_t-K_s\rvert^2 \lvert \la_t\rvert z^2 + u^2\lvert K_s\la_t\rvert \lvert K_t-K_s\rvert z^2\\
			&\qquad\qquad\qquad\qquad\qquad\qquad\qquad\qquad \qquad \qquad\qquad+\frac12u^2K_s^2z^2\lvert \la_t-\la_s\rvert\biggr)^2\biggr]^{1/2}	\\
			&\quad\leq \delta^{-H}\sup_{s,t\geq0,\lvert t-s\rvert\leq \delta}\biggl(2 C^2\E[\lvert K_t-K_s\rvert^2]^{1/2} +\frac12 C^2\E[\lvert \la_t-\la_s\rvert^2]^{1/2} \biggr)\biggl(\sup_{u\in\calu} u^2\biggr)z^2\\
			&\quad\leq \frac52 C_1C^2\biggl(\sup_{u\in\calu} u^2\biggr)z^2,
		\end{align*}
		which is $F$-integrable. The claim now follows by dominated convergence.
	\end{proof}
	
	\begin{lemma}\label{lem:ex3}
		If $\eta^n_j$ is chosen according to \eqref{eq:ups} (with $\wh c^n_j$ from \eqref{eq:bp}   computed using the observed prices $y$ in the noisy case), then Assumption~\ref{ass:U} is satisfied under both \ref{ass:H0} and \ref{ass:H1}, while Assumption~\ref{ass:U-noise} is  satisfied under both \ref{ass:H0-noise} and \ref{ass:H1-noise}.
	\end{lemma}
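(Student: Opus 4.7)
The plan proceeds in three steps: measurability, $L^1$-consistency of the bipower spot estimator, and verification of the positivity and stochastic continuity conditions.

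First I verify the measurability requirement. In \eqref{eq:ups} the block indices range over $2j-\ell$ with $\ell\in\{\ell_1,\ldots,\ell_2\}$ and $\ell_1\geq 3$, so the largest index is $2j-\ell_1\leq 2j-3$. The estimator $\wh c^n_{2j-\ell}$ uses price increments indexed by $\{(2j-\ell-1)p_n+1,\ldots,(2j-\ell-1)p_n+k_n\}$ and is thus measurable with respect to $\calf_{((2j-\ell-1)p_n+k_n)\Den}$. Since $k_n\leq p_n$ (respectively $k_n\leq p_n-m-1$ under Assumption~\ref{ass:U-noise}), this time is bounded by $(2j-2)p_n\Den$ (respectively $((2j-2)p_n-m-1)\Den$), delivering the required filtration property.

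The central step is to establish the uniform $L^1$-convergence of $\wh c^n_{2j-\ell}/\Den^{(2\varpi-1)\wedge 0}$ to $\eta_{(2j-\ell-1)p_n\Den}$ over the polynomially many relevant blocks. In the noise-free setting, this is a direct adaptation of classical bipower spot variance asymptotics: the jump-robustness of $\lvert\Delta^n_i x\Delta^n_{i-1}x\rvert$ (two consecutive jump-containing intervals have joint probability $O(\Den)$ thanks to the $\wedge 1$ truncation of $\ga,\Ga$ by $J_n$) combined with the identity $\E[\lvert\Delta^n_i W\Delta^n_{i-1}W\rvert]=(2/\pi)\Den$ yields $\wh c^n_j\to c_{(j-1)p_n\Den}$ in $L^1$, uniformly in $j$, as in \citet{barndorff2004power} and \citet{JP12}. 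Under the rough alternative \ref{ass:H1}, the argument still applies because $c_t=f(v_t)$ inherits an $L^2$-H\"older modulus of order $H$ from $v_t$, so the block average of $c_s$ remains close to $c_{(j-1)p_n\Den}$ with negligible error; averaging over $\ell$ in \eqref{eq:ups} preserves the convergence, and uniformity follows from Markov's inequality combined with the polynomial growth of the number of blocks.

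In the noisy case, decomposing $\Delta^n_i y=\Delta^n_i x+\Den^\varpi(\rho_{i\Den}\chi_i-\rho_{(i-1)\Den}\chi_{i-1})$ and expanding $\lvert\Delta^n_i y\Delta^n_{i-1}y\rvert$, the signal-signal contribution to $\wh c^n_j$ is of order $\Den$ while the noise-noise contribution is of order $\Den^{2\varpi}$; the cross terms vanish in conditional expectation because $\chi$ is centered and independent of $\calf_\infty$. After dividing by $\Den^{(2\varpi-1)\wedge 0}$, the conditional mean converges to $c_t\bone_{\{\varpi\geq 1/2\}}+K(\chi)\rho^2_t\bone_{\{\varpi\leq 1/2\}}$ via the identity $\E[\lvert\Delta\chi_i\Delta\chi_{i-1}\rvert]=(2/\pi)K(\chi)$, while the $m$-dependence of $\chi$ keeps the block variance at order $1/k_n$, so the rate conditions in \eqref{eq:rates-0} and \eqref{eq:rates} deliver the uniform $L^1$-convergence needed for \eqref{eq:U-1} and \eqref{eq:U-1'}. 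The positivity $\eta_t\geq K^{-1}$ and the boundedness \eqref{eq:U-2}, \eqref{eq:U-3} follow from the lower bounds on $c$ and $\rho$ assumed in \ref{ass:H0}, \ref{ass:H1}, \ref{ass:H0-noise} and \ref{ass:H1-noise}, while the stochastic continuity \eqref{eq:eta-cont} follows from the semimartingale smoothness of $c$ and $\rho$ under the null and the fractional modulus of $v$ and $w$ under the alternative. The main obstacle is obtaining the strengthened $o'(1)$ rate rather than merely $o(1)$ whenever $\theta_0=0$: this relies on the $\delta^\vareps$-decay of $w(\delta)$ and $C_1(\delta),C_2(\delta),C_3(\delta)$ built into the strengthened parts of Assumptions~\ref{ass:H0-noise}(ii) and \ref{ass:H1-noise}(ii), which propagates through the bipower error bounds to give the requisite polynomial gain in $\Den$.
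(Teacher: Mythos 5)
Your Steps 1–2 (measurability; uniform $L^1$-approximation of $\wh c^n_j/\Den^{(2\varpi-1)\wedge 0}$ by $\eta_t=\si^2_t\bone_{\{\varpi\geq\frac12\}}+K(\chi)\rho^2_t\bone_{\{\varpi\leq\frac12\}}$, after removing drift and jumps) follow the same route as the paper, and your handling of the $o'(1)$ strengthening via the $\vareps$-H\"older conditions is in the right spirit. But there is a genuine gap in your verification of \eqref{eq:U-2}/\eqref{eq:U-3}. These conditions are not about the limit process $\eta_t$: they require that, with probability tending to one, the \emph{estimators} $\eta^n_j/\Den^{(2\varpi-1)\wedge0}$ are bounded above and below by fixed positive constants \emph{simultaneously} over all $j=1,\dots,\lfloor T/(2p_n\Den)\rfloor$ blocks. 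This does not follow from ``the lower bounds on $c$ and $\rho$'' together with \eqref{eq:U-1}: the available uniform $L^1$-rate of the spot bipower estimator is of order $k_n^{-1/2}+(p_n\Den)^{H\wedge\frac12}$, which is far too slow for a Markov-plus-union bound over the $\sim T/(2p_n\Den)$ blocks. The paper's proof handles exactly this point by splitting $\wh c^n_j=\wt c^{n,1}_j+\wt c^{n,2}_j$ into a conditional-mean part and a martingale part: the martingale part is controlled by a fourth-moment BDG bound of order $k_n^{-p/2}$ so that the union bound closes under \eqref{eq:rates}, and the conditional-mean part is bounded \emph{pathwise}, uniformly in $\om$ and $j$ — the nontrivial piece being the lower bound, obtained from $\E[\lvert X\rvert]\geq \E[\lvert X\rvert^{3/2}]^2/\E[\lvert X\rvert^2]$ applied to $\E^n_{i-1}[\lvert\Delta^n_i y\rvert]$, a BDG/Doob lower bound $\E^n_{i-1}[\lvert\Delta^n_i x'\rvert^{3/2}]\geq C\Den^{3/4}$ that survives jumps, and a separate noise-dominated argument when $\varpi<\frac12$. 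None of this machinery appears in your proposal, and without it the two-sided high-probability bound in \eqref{eq:U-2}/\eqref{eq:U-3} is unproved.

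A secondary inaccuracy: in the noisy case you expand $\lvert\Delta^n_i y\,\Delta^n_{i-1}y\rvert$ into signal-signal, noise-noise and cross terms and claim the cross terms vanish in conditional expectation because $\chi$ is centered. The absolute value prevents any such linear decomposition; centeredness of $\chi$ does not annihilate cross contributions inside $\lvert\cdot\rvert$. The paper instead replaces the components of $\Delta^n_i y$ inside the absolute values (first dropping drift and jumps via the bound $\lvert\,\lvert a\rvert-\lvert b\rvert\,\rvert\leq\lvert a-b\rvert$ and Lemma~\ref{lem:o}, then freezing $\si$ and $\rho$), controlling the $L^1$-error of each substitution; this also explains why the identified limit interpolates as $\si^2_t\bone_{\{\varpi\geq\frac12\}}+K(\chi)\rho^2_t\bone_{\{\varpi\leq\frac12\}}$ rather than arising from a term-by-term expansion. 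Your regime heuristic ($\varpi>\frac12$ noise negligible, $\varpi<\frac12$ noise dominant) is fine, but the stated mechanism is not.
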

	\begin{proof} It suffices to show that Assumption~\ref{ass:U-noise} is satisfied, as Assumption~\ref{ass:U} follows by considering the special case $\varpi=1$ and $\rho\equiv0$. To simplify notation, we further restrict ourselves to the  case $\ell_1=\ell_2=2$ (i.e., $\eta^n_j=\wh c^n_{2j-2}$). The general case is similar but more cumbersome to write. With this choice, $\eta^n_j$ is clearly $\calf_{(2j-2)p_n\Den}$-measurable. In order to show \eqref{eq:U-1} and \eqref{eq:U-2}, we can work under the strengthened conditions of Assumption~\ref{ass:H0-2} and \ref{ass:H1-2}  by localization. In particular, we can assume $\tau_m=\infty$ for all $m\in\N$. 
		Introducing the notation $\ov c^n_j = (\pi/2k_n\Den)\sum_{i=(j-1)p_n+1}^{(j-1)p_n+k_n} \lvert \Delta^n_i x^c+\Delta^n_i \eps\rvert \lvert \Delta^n_{i-1} x^c+\Delta^n_{i-1} \eps\rvert$, where $x^c=\int_0^t\si_sdW_s$, we have
		%Introducing the notation $x^c=\int_0^t\si_sdW_s$ and defining $\wh c^{n,c}_j$ and $\wh c^{n,\eps}_j$ in the same way as $\wh c^n_j$ in \eqref{eq:bp} but with $x$ replaced by $x^c$ and $\eps$, respectively, we have
		\begin{align*}
			\wh c^n_j-\ov c^{n}_j	&=\frac{\pi}{2k_n\Den}\sum_{i=(j-1)p_n+1}^{(j-1)p_n+k_n}\Bigl[( \lvert\Delta^n_i y\rvert-\lvert\Delta^n_i x^c+\Delta^n_i \eps\rvert)\lvert\Delta^n_{i-1} y\rvert\\
			&\qquad\qquad\qquad\qquad\qquad + \lvert\Delta^n_i x^c+\Delta^n_i \eps\rvert(\lvert\Delta^n_{i-1} y\rvert -\lvert \Delta^n_{i-1}x^c+\Delta^n_{i-1} \eps\rvert)\Bigr].
		\end{align*}
		Therefore,
		\begin{equation}\label{eq:aux13}\begin{split} 
				&\E[\lvert \wh c^n_j-\ov c^{n}_j\rvert/\Den^{(2\varpi-1)\wedge0}]\\	
				&\quad\leq \frac{\pi}{2k_n\Den^{2\varpi\wedge1}}\sum_{i=(j-1)p_n+1}^{(j-1)p_n+k_n}\E\Bigl[\lvert\Delta^n_{i-1} y\rvert\E^n_{i-1}\bigl[\bigl\lvert \lvert\Delta^n_i y\rvert-\lvert\Delta^n_i x^c+\Delta^n_i \eps\rvert\bigr\rvert\bigr]  \\
				&\qquad+\E^n_{i-1}\bigl[ \lvert\Delta^n_i x^c+\Delta^n_i \eps\rvert\bigr]\bigl\lvert\lvert\Delta^n_{i-1} y\rvert -\lvert \Delta^n_{i-1}x^c+\Delta^n_{i-1} \eps\rvert\bigr\rvert\Bigr]. 
		\end{split}	\end{equation}
		As $\E^n_{i-1}\bigl[\bigl\lvert \lvert\Delta^n_i y\rvert-\lvert\Delta^n_i x^c+\Delta^n_i \eps\rvert\bigr\rvert\bigr]\leq \E[\lvert \Delta^n_i y-\Delta^n_i x^c-\Delta^n_i \eps\rvert]\leq (K+\int_E J(z)\la(dz))\Den+\sqrt{\Den}h_3(\Den)$ by Lemma~\ref{lem:o}, we   obtain   $\E[\lvert \wh c^n_j-\ov c^{n}_j\rvert/\Den^{(2\varpi-1)\wedge0}]\leq Ch_3(\Den)/\Den^{(2\varpi-1)\wedge0}$, which goes to $0$ uniformly in $j$. With  jumps and drift removed, it is now easy to show that we can    replace $\Delta^n_{i-\ell} x^c$ by $\si_{(i-2)\Den}\Delta^n_{i-\ell} W$ and $\Delta^n_{i-\ell}\eps$ by $\Den^{\varpi}\rho_{((i-2)\Den}\Delta^n_{i-\ell} \chi$ ($\ell=0,1$) in $\ov c^{n}_j/\Den^{(2\varpi-1)\wedge0}$, incurring only an $O_p(\sqrt{\Den})$- or $O_p(\Den^H)$-error (depending on whether we have \ref{ass:H0-noise} or \ref{ass:H1-noise}) that is uniform in $j$. Using a martingale argument and writing $K(\chi)=\frac\pi2 \E[\lvert\Delta \chi_2\Delta\chi_1\rvert]$, one can   prove that the resulting expression approaches, at a rate of $\sqrt{k_n}$, the term $k_n^{-1}\sum_{i=(j-1)p_n+1}^{(j-1)p_n+k_n} (\si^2_{(i-2)\Den}\Den^{(1-2\varpi)\vee 0}+K(\chi)\rho^2_{(i-2)\Den}\Den^{(1-2\varpi)\wedge0})$, which approximates $\si^2_{(j-1)p_n\Den+}\bone_{\{\varpi\geq\frac12\}}+K(\chi)\rho^2_{(j-1)p_n\Den}\bone_{\{\varpi\leq \frac12\}}$ with a uniform $O_p(\sqrt{p_n\Den})$- or $O_p((p_n\Den)^H )$-error. This shows \eqref{eq:U-1} and its extension mentioned in Assumption~\ref{ass:U-noise}.
		
		Next, we consider  \eqref{eq:U-2} and observe that $\wh c^n_j =  \wt c^{n,1}_j+\wt c^{n,2}_j$, where
		\begin{align*}
			\wt c^{n,1}_j &= \frac{\pi}{2k_n\Den} \sum_{i=(j-1)p_n+1}^{(j-1)p_n+k_n}  \E^n_{i-2}\bigl[\lvert \Delta^n_i y  \Delta^n_{i-1} y\rvert \bigr], \\
			\wt c^{n,2}_j &=\frac{\pi}{2k_n\Den} \sum_{i=(j-1)p_n+1}^{(j-1)p_n+k_n} \Bigl\{\lvert \Delta^n_i y  \Delta^n_{i-1} y\rvert - \E^n_{i-2}\bigl[\lvert \Delta^n_i y \Delta^n_{i-1} y\rvert \bigr]\Bigr\}.
		\end{align*}
		In $\wt c^{n,2}_j$, the $i$th term is $\calf_{i\Den}$-measurable and has a zero expectation conditionally on $\calf_{(i-2)\Den}$. So if we split the sum over $i$ into two, one taking only even and one only taken odd values of $i$, both will be martingale sums. Taking $p$th moments for $p\geq2$ and using the BDG inequality and (\ref{eq:L2}) or (\ref{eq:L2-2}), we obtain
		\begin{align*}
			\E[\lvert\wt c^{n,2}_j/\Den^{(2\varpi-1)\wedge0}\rvert^p]	&\leq \frac{C}{k_n^p\Den^{(2\varpi \wedge 1)p}}\E\Biggl[\Biggl(\sum_{i=(j-1)p_n+1}^{(j-1)p_n+k_n} \E^n_{i-2}[\lvert \Delta^n_i y  \Delta^n_{i-1} y\rvert^2]\Biggr)^{p/2}\Biggr]\\
			&=\frac{C}{k_n^p\Den^{(2\varpi \wedge 1)p}}\E\Biggl[\Biggl(\sum_{i=(j-1)p_n+1}^{(j-1)p_n+k_n} \E^n_{i-2}\Bigl[\E^n_{i-1}[\lvert \Delta^n_iy\rvert^2]\lvert  \Delta^n_{i-1} y\rvert^2\Bigr]\Biggr)^{p/2}\Biggr] \\
			&\leq \frac{C }{k_n^p\Den^{(\varpi \wedge {1/2})p}} \E\Biggl[\Biggl(\sum_{i=(j-1)p_n+1}^{(j-1)p_n+k_n}\E^n_{i-2} [\lvert  \Delta^n_{i-1} y\rvert^2]\Biggr)^{p/2}\Biggr] \leq \frac{C}{k_n^{p/2}}.
		\end{align*}
		Markov's inequality with $p=4$ implies that for any sequence $a_n\to0$,
		\begin{equation}\label{eq:aux12}
			\sum_{j=1}^{\lfloor T/(2p_n\Den)\rfloor} \P(\lvert \eta^n_j-\wt c^{n,1}_{(2j-2)p_n\Den} \rvert/\Den^{(2\varpi-1)\wedge0} \geq a_n)\leq \frac{Cn}{a_n^4k_n^2p_n}.
		\end{equation}
		By \eqref{eq:rates}, the last line tends to $0$ if $a_n\to0$ slowly enough. In this case, it follows that
		\[ \lim_{n\to\infty} \P(\lvert  \eta^n_j-\wt c^{n,1}_{(2j-2)p_n\Den}  \rvert /\Den^{(2\varpi-1)\wedge0}\leq a_n  \text{ for all } j=1,\dots,\lfloor T/(2p_n\Den)\rfloor ) =1. \]
		This implies \eqref{eq:U-2} (with $\eta^n_j/\Den^{(2\varpi-1)\wedge0}$ instead of $\eta^n_j$) provided that   $\wt c^{n,1}_{(2j-2)p_n\Den}/\Den^{(2\varpi-1)\wedge0}$ is  bounded from above and below, uniformly in $\om$ and $j$. Thanks to (\ref{eq:L2}) or (\ref{eq:L2-2}), we have $\E^n_{i-1}[\lvert \Delta^n_i y\rvert^2]\leq C\Den^{1\wedge 2\varpi}$, which readily gives the upper bound. For the lower bound, observe that for a general random variable $X$, we have $\E[\lvert X\rvert^{3/2}]=\E[\lvert X\rvert^{1/2} \lvert X\rvert]\leq \E[\lvert X\rvert]^{1/2}\E[\lvert X\rvert^2]^{1/2}$ by the Cauchy--Schwarz inequality, which implies 
		\begin{equation}\label{eq:PZ} 
			\E[\lvert X\rvert]\geq\frac{\E[\lvert X\rvert^{3/2}]^2}{\E[\lvert X\rvert^2]}.
		\end{equation}
		We want to apply this to $\E^n_{i-1}[\lvert \Delta^n_i y\rvert]$ in $\E^n_{i-2}\bigl[\lvert \Delta^n_i y  \Delta^n_{i-1} y\rvert \bigr]=\E^n_{i-2}\bigl[\lvert   \Delta^n_{i-1} y\rvert \E^n_{i-1}[\lvert \Delta^n_i y\rvert]\bigr]$. The denominator satisfies the bound  $\E^n_{i-1}[\lvert \Delta^n_i y\rvert^2]\leq C\Den^{1\wedge 2\varpi}$. For the numerator, we distinguish whether $\varpi\geq\frac12$ or $\varpi<\frac12$. In the first case,  Jensen's inequality applied to the function $x\mapsto \lvert x\rvert^{3/2}$ for the conditional expectation $\E[\cdot\mid\calf_\infty]$ yields $\E^n_{i-1}[\lvert \Delta^n_i y\rvert^{3/2}]=\E^n_{i-1}[\E[\lvert  \Delta^n_i x+ \Delta^n_i\eps\rvert^{3/2}\mid\calf_\infty]] \geq \E^n_{i-1}[\lvert \Delta^n_i x\rvert^{3/2}]$. Recalling
		(\ref{eq:x'}), we further have
		$\E^n_{i-1}[\lvert \Delta^n_i x\rvert^{3/2}]^{2/3}\geq \E^n_{i-1}[\lvert \Delta^n_i x'\rvert^{3/2}]^{2/3}-\E^n_{i-1}[\lvert \Delta^n_i x''\rvert^{3/2}]^{2/3}\geq \E^n_{i-1}[\lvert\Delta^n_i x'\rvert^{3/2}]^{2/3}-(K+\int_E J(z)\la(dz))\Den$ by the reverse triangle inequality. Moreover,  combining Doob's martingale inequality with the BDG inequality, we get
		\begin{align*}
			\E^n_{i-1}[\lvert \Delta^n_i x'\rvert^{3/2}]	&\geq C\E^n_{i-1}\biggl[\sup_{s\in[(i-1)\Den,i\Den]}\lvert x'_s-x'_{(i-1)\Den}\rvert^{3/2}\biggr]\\
			&\geq C\E\Biggl[\Biggl(\int_{(i-1)\Den}^{i\Den} \si^2_s ds + \iint_{(i-1)\Den}^{i\Den} (\ga(s,z)+\Ga(s,z))^2\mu(ds,dz)\Biggr)^{3/4}\Biggr] \\
			&\geq CK^{-3/4}\Den^{3/4}.
		\end{align*}
		Therefore, if $n$ is sufficiently large, $\E^n_{i-1}[\lvert \Delta^n_i x\rvert^{3/2}]\geq CK^{-3/4}\Den^{3/4}$ and consequently, by \eqref{eq:PZ}, $\E^n_{i-1}[\lvert \Delta^n_i y\rvert]\geq C\Den^{1/2}$, where $C$ does not depend on $i$ or $\om$. If $\varpi<\frac12$, we instead bound $\E^n_{i-1}[\lvert \Delta^n_i y\rvert]\geq \E^n_{i-1}[\lvert \Delta^n_i \eps\rvert]-\E^n_{i-1}[\lvert \Delta^n_i x\rvert]\geq \Den^{\varpi}\rho_{(i-1)\Den}\E[\lvert \Delta \chi_i\rvert] - C\Den^{\varpi+1/2}-C\Den^{1/2}\geq C\Den^{\varpi}$. So in both cases, it follows that  $\wt c^{n,1}_j /\Den^{(2\varpi-1)\wedge0}\geq C$ where $C$ is independent of $\om$ and $j$, which concludes the proof of \eqref{eq:U-2}.
	\end{proof}

\section{Proof of Auxiliary Results}\label{sec:proof-app}

Throughout this section, $C$ denotes a deterministic constant in $(0,\infty)$, whose value does not depend on any important parameters and may change from line to line.   Given random variables $X_n$ and $Y_n$ and a deterministic sequence $a_n$, we write $X_n =Y_n +O(a_n)$ if $\lvert X_n-Y_n\rvert\leq Ca_n$, where $C$ neither depends on $n$ nor   $\om$. If $X_n$ and $Y_n$ further depend on indices such as $i$ or $j$, then $C$ does not depend on $i$ or $j$, either. The notations $X_n=Y_n+o(a_n)$ and $X_n=Y_n+o'(a_n)$ are used analogously. We also use the abbreviations $\int_a^b\int_E=\iint_a^b$ and $\E^n_i = \E[\cdot \mid \calf_{i\Den}]$.

\subsection{Preliminary Estimates}

\begin{lemma}\label{lem:prelim} Let   $\phi(u)_t=\int_E e^{iu\ga(s,z)}(e^{iu\Ga(s,z)}-1)\la(dz)$ and  $\calu\subseteq\R$ be a compact set.
	\begin{enumerate}
		\item[(i)] Under Assumption~\ref{ass:H0-2}, there is  $C\in(0,\infty)$ such that for all $i$,
		\begin{equation}\label{eq:L2} \begin{split}
				&	\E^n_{i-1}[(\Delta^n_i x)^2] + \E^n_{i-1}[(\Delta^n_i c)^2]+ \E^n_{i-1}[(\Delta^n_i \rho)^2\bone_{\{\varpi\leq \frac34\}}]\leq C\Den,  \\ 	&\sup_{u\in\calu}\Den^2\E^n_{i-1}[(\Delta^n_i \vp(u/\sqrt{\Den}))^2]\leq C\Den h_1(\Den),\quad\E^n_{i-1}[(\Delta^n_i \eps)^2] \leq C\Den^{ 2\varpi}.
		\end{split}\end{equation}
		In the last line, $h_1$ is a function that satisfies $h_1(\Den)=o'(1)$.
		\item[(ii)]
		Under Assumption~\ref{ass:H1-2}, there is $C\in(0,\infty)$ such that for all $i$,
		\begin{equation}\label{eq:L2-2} \begin{aligned}
				\E^n_{i-1}[(\Delta^n_i x)^2]& \leq C\Den,& \E^n_{i-1}[(\Delta^n_i c)^2]&\leq C\Den^{2H},\\
				\E^n_{i-1}[(\Delta^n_i \rho)^2]&\leq C\Den^{2H_\rho},& \E^n_{i-1}[(\Delta^n_i \eps)^2]&\leq   C\Den^{ 2\varpi}.
			\end{aligned}
		\end{equation}
		\item[(iii)] Under both Assumptions~\ref{ass:H0-2} and \ref{ass:H1-2}, 
		\begin{equation}\label{eq:vp-conv} \begin{split}
				\vp(n)&=	\sup_{\om\in\Om}\sup_{t\in[0,\infty)}\sup_{u\in\calu} \Den\lvert\vp(u/\sqrt{\Den})_t\rvert =o'(1),\\
				\phi(n)&=	\sup_{\om\in\Om}\sup_{t\in[0,\infty)}\sup_{u\in\calu} \Den^{1/2}\lvert\phi(u/\sqrt{\Den})_t\rvert =o'(1).
			\end{split}
		\end{equation}
		\item[(iv)] Let $\Psi$ be the characteristic function of $\Delta\chi_1=\chi_1-\chi_0$. Then, for all $u,u'\in\R$,
		\begin{equation}\label{eq:psi} \begin{split}
				\lvert\Psi(u)-1\rvert&\leq \tfrac12 u^2 \E[( \Delta\chi_1)^2],\\
				\lvert \Psi(u)-\Psi(u')\rvert&\leq (\lvert u(u-u')\rvert+\tfrac12(u-u')^2)\E[(\Delta\chi_1)^2].
			\end{split}
		\end{equation}
	\end{enumerate}
\end{lemma}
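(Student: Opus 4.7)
The four parts of the lemma can be addressed independently; I would tackle them in order (iv), (i), (iii), (ii) of increasing technical weight. Part (iv) is elementary. Using $\E[\Delta\chi_1]=0$, write $\Psi(u)-1 = \E[e^{iu\Delta\chi_1}-1-iu\Delta\chi_1]$ and apply $\lvert e^{ix}-1-ix\rvert \leq x^2/2$. For the difference, expand
\[
e^{iu\Delta\chi_1}-e^{iu'\Delta\chi_1} = e^{iu\Delta\chi_1}\bigl[1-e^{-i(u-u')\Delta\chi_1}+i(u-u')\Delta\chi_1\bigr] - e^{iu\Delta\chi_1}\,i(u-u')\Delta\chi_1,
\]
bound the bracket in modulus by $\tfrac12(u-u')^2(\Delta\chi_1)^2$, and rewrite $\E[e^{iu\Delta\chi_1}\Delta\chi_1] = \E[(e^{iu\Delta\chi_1}-1)\Delta\chi_1]$ using $\E[\Delta\chi_1]=0$; the latter is bounded in modulus by $\lvert u\rvert\E[(\Delta\chi_1)^2]$.

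For part (i), decompose $\Delta^n_i x$, $\Delta^n_i c$ and $\Delta^n_i \rho$ according to \eqref{eq:x0}, \eqref{eq:c} and \eqref{eq:pi}. Each of the four components---drift, two Brownian integrals, compensated small jumps, and large jumps---contributes $O(\Den)$ in conditional second moment: the drift by boundedness of the $\al$-coefficients, the Brownian parts by It\^o's isometry combined with boundedness of the $\si$-coefficients, the compensated jumps via $\E^n_{i-1}\bigl[(\iint g\,d(\mu-\nu))^2\bigr]=\E^n_{i-1}\bigl[\iint g^2\,\la(dz)ds\bigr]$ together with $\ga^2+(\ga^c)^2+(\ga^\rho)^2\leq 3J$ and $\int J\,d\la\leq K$, and the large jumps via $\lvert\Ga\rvert\leq J$. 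The noise bound follows from $\Delta^n_i\eps=\Den^\varpi(\rho_{i\Den}\chi_i-\rho_{(i-1)\Den}\chi_{i-1})$, boundedness of $\rho$, $\E[\chi_i^2]=1$, and independence of $\chi$ from $\calf_\infty$. The main step is the bound on $\Den^2\E^n_{i-1}[(\Delta^n_i\vp(u/\sqrt{\Den}))^2]$: applying the same decomposition to \eqref{eq:vp} with coefficients evaluated at $u/\sqrt{\Den}$ and multiplying through by $\Den$ yields drift $O(\Den^2)$ by \eqref{eq:prop2-2}, Brownian parts $O(\Den\,C_1(\Den))$ by \eqref{eq:prop3-2}, compensated jumps $O(\Den\,C_2(\Den))$ by \eqref{eq:prop4}, and large jumps $O(\Den\,C_3(\Den)^2)$ by \eqref{eq:prop7-2} together with $J\leq K$ and the identity $\E[(\iint g\,\mu)^2] = \iint g^2\la(dz)ds + (\iint g\la(dz)ds)^2$. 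Setting $h_1(\Den) = \Den + C_1(\Den) + C_2(\Den) + C_3(\Den)^2$ gives the claim; under \ref{ass:H0-2}(iv) all four quantities decay like $\Den^\vareps$, so $h_1(\Den) = o'(1)$.

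Part (iii) rests on the interpolation $\lvert e^{ix}-1-ix\rvert\leq 3\lvert x\rvert^{2-\vareps}$ for $\vareps\in(0,1)$, obtained by a case split on $\lvert x\rvert\leq 1$ versus $\lvert x\rvert>1$; substituting into the definition of $\vp$,
\[
\Den\lvert\vp(u/\sqrt{\Den})_t\rvert \leq 3\lvert u\rvert^{2-\vareps}\Den^{\vareps/2}\int_E \lvert\ga(t,z)\rvert^{2-\vareps}\la(dz)\leq C\Den^{\vareps/2},
\]
where the integral is finite by the $\vareps$-strengthened condition \eqref{eq:prop1-2-noise} together with $\int J\,d\la\leq K$. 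This handles the case $\theta_0=0$, in which $o'(1)$ requires polynomial decay and the strengthened integrability of $\ga$ is available via \ref{ass:H0-2}(iv) or \ref{ass:H1-2}(ii); for $\theta_0>0$, $o'(1)=o(1)$ and dominated convergence with envelope $\lvert u\rvert^2 J(z)/2$ suffices. The bound for $\phi(n)$ is analogous, using $\lvert e^{ix}-1\rvert\leq 2\lvert x\rvert^{1-\vareps}$ together with $\int\lvert\Ga\rvert^{1-\vareps}\,\la(dz)\leq K$.

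Finally, for part (ii), decompose $\Delta^n_i v = A_1+A_2+A_3$, where $A_1$ is the stochastic integral over $[0,(i-1)\Den]$ against the kernel increment $g(i\Den-s)-g((i-1)\Den-s)$, $A_2$ is the integral over $[(i-1)\Den,i\Den]$ against $g(i\Den-s)$, and $A_3=\Delta^n_i\wt v$. Having absorbed $g_0\equiv 0$ per \ref{ass:H1-2}(i), the standard fractional computation $\int_0^\infty[g(r+\Den)-g(r)_+]^2\,dr = O(\Den^{2H})$ combined with boundedness of $\si^v$ and $\ov\si^v$ gives $\E[A_1^2]+\E[A_2^2]\leq C\Den^{2H}$, while $\E[A_3^2]\leq C\Den^{2H}$ by \eqref{eq:prop2-alt-2}; the bound on $\Delta^n_i c$ then follows from $f'$ being bounded, and the analogous argument for $\rho=F(w)$ uses \eqref{eq:prop2-alt-2} and \eqref{eq:prop5-alt-2}. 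The hardest part will be tracking the scaling $u\mapsto u/\sqrt{\Den}$ in the $\vp$-decomposition, where the individual coefficients blow up but Assumption~\ref{ass:H0-2} is designed so that multiplication by $\Den$ restores integrability; a secondary nuisance is insisting on polynomial rather than qualitative decay, which forces the $\vareps$-strengthened versions of the integrability conditions.
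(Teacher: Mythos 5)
Your proposal follows essentially the same route as the paper's proof: the semimartingale decomposition with It\^o isometry and the bounds $|\ga|^2+|\ga^c|^2+|\ga^\rho|^2+|\Ga|\le J$ for part (i), the scaling argument through \eqref{eq:prop2-2}--\eqref{eq:prop7-2} for the $\vp$-term (which the paper only sketches, and which you flesh out correctly, including the choice $h_1(\Den)=\Den+C_1(\Den)+C_2(\Den)+C_3(\Den)^2$), dominated convergence with envelope $\tfrac12(\sup_{u\in\calu}u^2)J(z)$ respectively the interpolation $|e^{ix}-1-ix|\le C|x|^{2-\vareps}$ for part (iii), and the fractional-kernel computation $\int_0^\infty[s^{H-1/2}-(s-1)_+^{H-1/2}]^2ds<\infty$ plus \eqref{eq:prop2-alt-2} and the mean-value theorem for part (ii).

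Two small slips, neither fatal. In part (iv) your algebraic split has the wrong sign: with $w=(u-u')\Delta\chi_1$ your bracket is $1-e^{-iw}+iw=2iw+O(w^2)$, which is \emph{not} bounded by $\tfrac12 w^2$; you need $e^{iu\Delta\chi_1}-e^{iu'\Delta\chi_1}=e^{iu\Delta\chi_1}\bigl[1-e^{-iw}-iw\bigr]+e^{iu\Delta\chi_1}\,iw$, after which the bracket is $O(w^2)$ and the linear term is handled, as you say, by centering of $\Delta\chi_1$ and $|e^{iu\Delta\chi_1}-1|\le|u\Delta\chi_1|$ --- exactly the paper's argument. Also, the identity $\E[(\iint g\,\mu)^2]=\iint g^2\,\la(dz)ds+(\iint g\,\la(dz)ds)^2$ holds only for deterministic $g$; for the predictable integrands $\Ga$, $\Ga^\vp$ you should instead split $\iint g\,\mu=\iint g\,(\mu-\nu)+\iint g\,\nu$ and bound the two pieces, which gives the same order of magnitude.
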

\begin{proof} Writing
	\begin{equation}\label{eq:x'} 
		x''_t = \int_0^t \al_s ds+\iint_0^t \Ga(s,z)\la(dz)ds,\qquad x'_t=x_t-x''_t,
	\end{equation}
	we have $\E^n_{i-1}[(\Delta^n_i x)^2]^{1/2}\leq \E^n_{i-1}[(\Delta^n_i x')^2]^{1/2}+\E^n_{i-1}[(\Delta^n_i x'')^2]^{1/2}$. Since $\lvert \al_t\rvert\leq K$ and $\lvert \Ga(t,z)\rvert\leq J(z)$, we have $\E^n_{i-1}[(\Delta^n_i x'')^2]^{1/2}\leq (K+\int_E J(z)\la(dz))\Den\leq 2K\Den$. Similarly, because $\si^2_t\leq C$ and $\lvert \ga(t,z)\rvert^2\leq J(z)$, It\^o's isometry gives $\E^n_{i-1}[(\Delta^n_i x')^2]\leq  (C+\int_E J(z)\la(dz))\Den\leq C\Den$, showing the bound on $\Delta^n_i x$ in both \eqref{eq:L2} and \eqref{eq:L2-2}. And because $\rho_t\leq K$, we have $\E^n_{i-1}[(\Delta^n_i \eps)^2] \leq 4K^2\Den^{2\varpi}$, which is the last inequality in both \eqref{eq:L2} and \eqref{eq:L2-2}. Under Assumption~\ref{ass:H0-2}, the processes $c$ and $\rho$ (if $\varpi\leq\frac34$) are   It\^o semimartingales with bounded coefficients, so the bounds on $\Delta^n_i c$ and $\Delta^n_i\rho$ in \eqref{eq:L2} follow by applying the bound for $\Delta^n_i x$ to $x\in\{c,\rho\}$. %Since $c^\per$ is $\frac12$-H\"older continuous by assumption, we also obtain the bound on $\Delta^n_i \si^2$. 
	A similar argument, combined with (\ref{eq:prop2-2}) and (\ref{eq:prop3-2})--(\ref{eq:prop7-2}), yields the bound on $\Delta^n_i \vp(u/\sqrt{\Den})$. Under Assumption~\ref{ass:H1-2}, the derivative $f'$ is bounded by some constant $C$, so the mean-value theorem implies that  $\E^n_{i-1}[(\Delta^n_i c)^2] \leq C^2\E^n_{i-1}[(\Delta^n_i v)^2]\leq 2C^2\E^n_{i-1}[(\Delta^n_i \wt v)^2 + (\Delta^n_i  \ov v)^2]$, where $\ov v_t=v_t-v_0-\wt v_t$. By (\ref{eq:prop2-alt-2}), we have $\E^n_{i-1}[(\Delta^n_i \wt v)^2]\leq \Den^{2H}(h(\Den))^2\leq C\Den^{2H}$. Since $\si^v$ and $\ov\si^v$ are uniformly bounded by $K$ and $g_0\equiv0$, It\^o's isometry shows that
	\begin{equation}\label{eq:rough-mom}\begin{split}
			\E^n_{i-1}[(\Delta^n_i \ov v)^2]	& = \int_0^{i\Den} [g(i\Den-s)-g((i-1)\Den-s)]^2[(\si^v_s)^2+(\ov\si^v_s)^2] ds\\
			&\leq \frac{2K^2}{K_H^2}\int_0^{i\Den} [(i\Den-s)_+^{H-1/2}-((i-1)\Den-s)_+^{H-1/2}]^2 ds\\
			%&=\frac{2K^2}{K_H^2}\int_0^{i\Den} [s^{H-1/2}-(s-\Den)_+^{H-1/2}]^2 ds\\
			&= \frac{2K^2\Den^{2H}}{K_H^2}\int_0^{i} [s^{H-1/2}-(s-1)_+^{H-1/2}]^2 ds\\
			&\leq\frac{2K^2\Den^{2H}}{K_H^2}\int_0^{\infty} [s^{H-1/2}-(s-1)_+^{H-1/2}]^2 ds.
	\end{split}\end{equation}
	The last integral is finite (in fact, equal to $K_H^2$ by Theorem 1.3.1 of \citet{Mishura08}), which yields the bound on $\Delta^n_i c$ in \eqref{eq:L2-2}. %(which in turn implies the bound on $\Delta^n_i \si^2$ because $c^\per$ is $H$-H\"older continuous). 
	The bound on $\Delta^n_i \rho$ in \eqref{eq:L2-2} follows analogously. 
	
	For the third part of the lemma,   dominated convergence   shows $\vp(n)\to0$ because  
	\begin{equation*}
		\vp(n)	\leq \int_E \sup_{\om\in\Om}\sup_{t\in[0,\infty)}\sup_{u\in\calu} \Den\lvert e^{iu\ga(t,z)/\sqrt{\Den}}-1-iu\ga(t,z)/\sqrt{\Den}\rvert \la(dz)
	\end{equation*}
	and the integrand tends to $0$ pointwise in $z$ and is bounded by $\frac12(\sup_{u\in\calu} u^2) J(z)$, which is integrable with respect to $\la$ and independent of $n$. If $\theta_0=0$, we can use the estimate $\lvert e^{ix}-1-ix\rvert \leq C\lvert x\rvert^{2-\vareps}$ and (\ref{eq:prop1-2-noise}) to upgrade the previous bound to
	$\vp(n)\leq C\Delta_n^{\varepsilon/2}\sup_{u\in\calu}\lvert u\rvert^{2-\vareps} \int_E J(z) \la(dz) = o'(1)$. 
	A similar argument shows $\phi(n)=o'(1)$.
	
	For the last part of the lemma, we   deduce the first inequality from the bound $\lvert e^{iux}-1-iux\rvert\leq \frac12 (ux)^2$ and the assumption that $\E[\Delta\chi_1]=0$. To get the second inequality, we bound
	\begin{align*}
		\lvert \Psi(u')-\Psi(u)\rvert 	&= \lvert \E[e^{iu\Delta\chi_1}(e^{i(u'-u)\Delta\chi_1}-1)]\rvert\\
		&\leq \lvert \E[ e^{iu\Delta\chi_1} (u'-u)\Delta\chi_1] \rvert + \tfrac12 (u'-u)^2\E[(\Delta\chi_1)^2].
	\end{align*}
	Since $\lvert e^{iu\Delta\chi_1}-1\rvert\leq \lvert u\Delta\chi_1\rvert$ and $\Delta\chi_1$ is centered, we obtain the desired inequality.
	%\begin{lemma}\label{lem:jumps} Under  Assumption~\ref{ass:H0-2}, 
	%	\[ \Den\E\biggl[\sup_{t\in[s,s+\Den]}\lvert\vp(u/\sqrt{\Den})_t-\Den\vp(u/\sqrt{\Den})_s\rvert^2\biggr] =O(\Den)\]
	%	and
	%	\[ \Den\E\biggl[\sup_{t\in[s,s+\Den]}\lvert\vp(u/\sqrt{\Den})_t-\Den\vp(u/\sqrt{\Den})_s\rvert\biggr] =o(\sqrt{\Den}),\]
	%	both	uniformly in $s\geq0$ and locally uniformly in $u$.
	%\end{lemma}
	%
	%\begin{proof} 
	%	Denote the left-hand side of \eqref{eq:prop3-2} by $C_\Delta$. Then Assumption~\ref{ass:H0-2} together with the Doob's inequality implies that 
	%	\[
	%	\E\biggl[ \sup_{t\in[s,s+\Den]}\lvert	\Den\vp(u_n)_t-\Den\vp(u_n)_s \rvert^2\biggr]^{1/2}	\leq K\Den +4\E[C_{\Den}^2]^{1/2}\sqrt{\Den}+4\sqrt{\Den}\biggl(\int_\R J(z) \la(dz)\biggr)^{1/2},
	%	\]
	%	which is $O(\sqrt{\Den})$.
	%	For the second statement, let $\vp''(u)_t=\iint_0^t \ga^\vp(u;s,z)(\mu-\nu)(ds,dz)$ and $\vp'(u)_t=\vp(u)_t-\vp''(u)_t$. Then, as seen in the proof of the first statement,
	%	\[
	%	\E\biggl[ \sup_{t\in[s,s+\Den]}\lvert	\Den\vp'(u_n)_t-\Den\vp'(u_n)_s \rvert^2\biggr]^{1/2}	\leq K\Den +4\E[C_{\Den}^2]^{1/2}\sqrt{\Den}=o(\sqrt{\Den}),
	%	\]
	%	while  Assumption~\ref{ass:H0-2} and Lemma~\ref{lem:o} yield
	%	\[  \E\biggl[\sup_{t\in[s,s+\Den]} \lvert	\Den\vp''(u_n)_t-\Den \vp''(u_n)_s \rvert\biggr]\leq\sqrt{\Den}\phi(\Den)\]
	%	for some function $\phi$ that is independent of $s$ and satisfies $\phi(t)\to0$ as $t\to0$. 
	%\end{proof}
\end{proof}

Next, we recall (\ref{eq:split}) and   introduce the notations $\calg_i=\si( \chi_j:j\leq i)$, $\calh^n_i = \calf_{i\Den}\vee\calg_i$ and   $\ov\E^n_i[X]=\E[X\mid \calh^n_i]$. The next lemma gathers estimates for the increments  $\Delta^n_i \wt L^{\mathrm{v}}(u)=\wt L^{n,\rm v}_j(u)-\wt L^{n,\rm v}_{j-1}(u)$, $\Delta^n_i \wt L^{\mathrm{b}}(u)=\wt L^{n,\rm b}_j(u)-\wt L^{n,\rm b}_{j-1}(u)$ and $\Delta^n_i \wt L^{\mathrm{s}}(u)=\wt L^{n,\rm s}_j(u)-\wt L^{n,\rm s}_{j-1}(u)$. 

\begin{lemma}\label{lem:bounded} Grant Assumption~\ref{ass:H0-2} and let $\calu\subseteq\R$ be a compact set. There are constants $C\in(0,\infty)$ and, for any $p\geq2$,  $C_p\in(0,\infty)$ such that the following holds for any $n\in\N$, $j=2,\dots,\lfloor T/(2p_n\Den)\rfloor$, $\ell\in\{0,1\}$ and   $\calh^n_{((2j-2)p_n-m-1)}$-measurable random  variable $U$ with values in $\calu$:
	\begin{enumerate}
		\item[(i)] Let $[j]^m_n=jp_n-m-1$. Then  
		\begin{equation}\label{eq:L'} 
			\ov	\E^n_{[2j-2]^m_n}\bigl[ \lvert \wt{L}_{2j-\ell}^{n,\mathrm{v}}(U)\rvert^p\bigr]\leq C_p/k_n^{p/2},\quad\ov \E^n_{[2j-2]^m_n}\bigl[ \lvert \Delta^n_{2j}\wt{L}^{\mathrm{v}}(U)\rvert^p \bigr]\leq C_p/k_n^{p/2}.
		\end{equation}
		\item[(ii)] For some function $h_2(t)$ satisfying $h_2(\Den)=o'(1)$, we have
		\begin{equation}\label{eq:L''} %\begin{aligned}
			%	\E^n_{(j-1)p_n}[\lvert \widehat{L}_j^{n,\mathrm{b}}(u)\rvert]\leq C\sqrt{\Den},\qquad 
			\ov\E^n_{[2j-2]^m_n}\bigl[ \lvert \wt{L}_{2j-\ell}^{n,\mathrm{b}}(U)\rvert^p\bigr]\leq C_p \Den^{p/2},\quad \ov\E^n_{[2j-2]^m_n}\bigl[ \lvert \Delta^n_{2j} \wt{L}^{\mathrm{b}}(U)\rvert^p\bigr]\leq C_p (\Den h_2(\Den))^{p/2}.
			%		 \E^n_{(j-1)p_n}\biggl[\sup_{u\in\calu}\lvert \Delta^n_j \widehat{L}^{\mathrm{b}}(u)\rvert\biggr]&\leq C\sqrt{\Den} h(\Den).
			%	\end{aligned}
		\end{equation}
		\item[(iii)] Let 
		$
		\Theta(u)_t=iu\int_0^t \al_r dr- \frac12 u^2\int_0^t\si^2_r dr + \int_0^t\vp(u)_r dr+\int_0^t \phi(u)_rdr$ and
		\begin{align*}
			\Delta^n_i \Theta(u)&=\Theta(u)_{i\Den}-\Theta(u)_{(i-1)\Den}, \\
			\Delta\Theta(u)^n_{i}&=iu\al_{i\Den}\Den -\tfrac12 u^2 \si^2_{i\Den}\Den+\Den\vp(u)_{i\Den} +\Den\phi(u)_{i\Den}.
		\end{align*}
		Then 
		\begin{equation}\label{eq:L'''-2} 
			\wt{L}_{2j-\ell}^{n,\mathrm{s}}(U) =\frac{1}{k_n}\sum_{i=(2j-\ell-1)p_n+1}^{(2j-\ell-1)p_n+k_n}e^{\Delta\Theta(U_n)^n_{i-1}}  \Psi(U_n\Den^{\varpi}\rho_{(i-1)\Den}),
		\end{equation}
		%where $\Psi$ is the characteristic function of $\chi_2-\chi_1$, 
		and
		\begin{equation}\label{eq:L'''-3} 
			\ov\E^n_{[2j-2]^m_n}\bigl[ \lvert \Delta^n_{2j}\wt{L}^{\mathrm{s}}(U)\rvert^2 \bigr]=\E^n_{[2j-2]^m_n}\bigl[ \lvert \Delta^n_{2j}\wt{L}^{\mathrm{s}}(U)\rvert^2 \bigr]\leq Cp_n\Den.
		\end{equation} 
	\end{enumerate}
\end{lemma}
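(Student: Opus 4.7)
The three parts will be proved separately, in the order (i), (iii), (ii).

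For part (i), the plan is to exploit that each summand $Z_i=e^{iU_n\Delta^n_i y}-\E^n_{i-1}[e^{iU_n\Delta^n_i y}]$, with $U_n=U/\Den^{\varpi\wedge1/2}$, is uniformly bounded by $2$ and $\E^n_{i-1}$-centered, and that $Z_i$ depends on the noise only through $\chi_i$ and $\chi_{i-1}$. Conditional on $\calf_\infty$, the sequence $(Z_i)$ is therefore a uniformly bounded, zero-mean, $(m+1)$-dependent sequence on the noise probability space. A standard moment inequality for $m$-dependent sums (equivalently, splitting the index range into $m+2$ arithmetic progressions of step $m+2$ so that within each progression the $Z_i$'s become independent given $\calf_\infty$, then applying the conditional BDG inequality within each progression and combining via Minkowski) yields $\E\bigl[\bigl\lvert\sum_i Z_i\bigr\rvert^p \,\bigm|\,\calf_\infty,\calh^n_{[2j-2]^m_n}\bigr]\leq C_p k_n^{p/2}$. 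Since the $\chi$'s involved in the block are independent of $\calh^n_{[2j-2]^m_n}$ under the separation condition $k_n\leq p_n-m-1$, averaging produces $\ov\E^n_{[2j-2]^m_n}[\lvert\wt L^{n,\mathrm{v}}_{2j-\ell}(U)\rvert^p]\leq C_p/k_n^{p/2}$; the increment bound follows by the same argument applied to the concatenated sum of $2k_n$ terms over blocks $2j-1$ and $2j$.

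For part (iii), I would compute the conditional characteristic function of $\Delta^n_{i,i-1}y$ directly. Because every coefficient in $\Delta^n_{i,i-1}y$ is frozen at $(i-1)\Den$ and $\chi$ is independent of $\calf_\infty$, conditioning on $\calf_{(i-1)\Den}$ factorises the characteristic function into (a) the drift/Gaussian piece $\exp(iU_n\al_{(i-1)\Den}\Den-\tfrac12U_n^2\si^2_{(i-1)\Den}\Den)$ arising from $\al_{(i-1)\Den}\Den+\si_{(i-1)\Den}\Delta^n_i W$, (b) the two L\'evy--Khintchine pieces $\exp(\Den\vp(U_n)_{(i-1)\Den})$ and $\exp(\Den\phi(U_n)_{(i-1)\Den})$ from the compensated and uncompensated frozen jumps, and (c) the noise factor $\Psi(U_n\Den^\varpi\rho_{(i-1)\Den})$ coming from $\Den^\varpi\rho_{(i-1)\Den}\Delta\chi_i$. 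Aggregating these yields \eqref{eq:L'''-2}. For the $L^2$-bound \eqref{eq:L'''-3}, I would view $\wt L^{n,\mathrm{s}}_{2j}(U)-\wt L^{n,\mathrm{s}}_{2j-1}(U)$ as a $k_n^{-1}$-weighted average of increments of a smooth bounded function of the coefficient processes $c_{(i-1)\Den}$, $\al_{(i-1)\Den}$, $\vp(U_n)_{(i-1)\Den}$, $\phi(U_n)_{(i-1)\Den}$ and $\rho_{(i-1)\Den}$. Lemma~\ref{lem:prelim} gives each of these coefficients $L^2$-increments of order $\sqrt{p_n\Den}$ over a window of length $p_n\Den$, and a mean-value expansion together with boundedness of the relevant derivatives (which uses $c\geq K^{-1}$ and, when $\varpi\leq\tfrac12$, $\rho\geq K^{-1}$) produces the claimed $Cp_n\Den$.

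For part (ii), I would apply It\^o's formula to $t\mapsto e^{iU_n(y_t-y_{(i-1)\Den})}$ on $[(i-1)\Den,i\Den]$ together with its frozen-coefficient counterpart, take $\E^n_{i-1}$, and rewrite each summand of $\wt L^{n,\mathrm{b}}_j(U)$ as an integral whose integrand is the product of a bounded complex exponential and a ``difference of generators'' built from $\al_s-\al_{(i-1)\Den}$, $\si^2_s-\si^2_{(i-1)\Den}$, $\vp(U_n)_s-\vp(U_n)_{(i-1)\Den}$, $\phi(U_n)_s-\phi(U_n)_{(i-1)\Den}$, together with a noise-related correction proportional to $\rho_s-\rho_{(i-1)\Den}$. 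Under Assumption~\ref{ass:H0-2}(iii)--(iv), each of these is controlled in conditional $L^p$ by the moduli $w(\Den)$, $C_1(\Den)^{1/2}$, $C_2(\Den)^{1/2}$, $C_3(\Den)$; integrating over $[(i-1)\Den,i\Den]$ and averaging over $k_n$ summands produces the first bound $C_p\Den^{p/2}$ with room to spare. For the increment bound $(\Den h_2(\Den))^{p/2}$, the improvement arises because the leading contributions to $\wt L^{n,\mathrm{b}}_{2j}$ and $\wt L^{n,\mathrm{b}}_{2j-1}$ agree up to coefficients evaluated at times separated by $O(p_n\Den)$, so their difference inherits an additional factor $h_2(\Den)$ that can be read off from the same set of moduli now evaluated at scale $p_n\Den$. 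The main obstacle will lie precisely here: one must navigate the collection of regularity conditions in Assumption~\ref{ass:H0-2}(iii)--(iv) separately for the five coefficients (drift, diffusion, small jumps, large jumps, noise) and isolate the dominant contribution in each regime of $\varpi$, in particular invoking the strengthened bound \eqref{eq:prop1-2-noise} in the very-noisy regime $\varpi\leq\tfrac12$ so that the price jumps do not destroy the rate.
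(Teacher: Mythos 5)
Your overall route is the paper's: part (iii) is obtained from the L\'evy--Khintchine factorization conditional on $\calf_{(i-1)\Den}$ together with mean-value/coefficient-increment bounds over a window of length $p_n\Den$, and part (ii) from a frozen-coefficient comparison giving the per-increment bias of order $\sqrt{\Den}$ plus the $L^p$-continuity of $\si$, $\si^c$, $\rho$, $\si^\rho$ (and smoothness of $\Psi$, $\wh\Psi$) at lag $p_n\Den$ for the extra factor $h_2(\Den)$, exactly as in the paper. The genuine problem is in part (i): conditional on $\calf_\infty$ the summands $Z_i=e^{iU_n\Delta^n_i y}-\E^n_{i-1}[e^{iU_n\Delta^n_i y}]$ are indeed $(m+1)$-dependent (each is a function of $(\chi_{i-1},\chi_i)$ plus an $\calf_\infty$-measurable constant), but they are \emph{not} zero-mean. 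The centering is taken under $\E^n_{i-1}=\E[\,\cdot\mid\calf_{(i-1)\Den}]$, which integrates out the Brownian and jump increments of $x$ as well as the noise; given $\calf_\infty$ the increment $\Delta^n_i x$ is fixed, so $\E[Z_i\mid\calf_\infty]=e^{iU_n\Delta^n_i x}\,\E[e^{iU_n\Delta^n_i\eps}\mid\calf_\infty]-\E^n_{i-1}[e^{iU_n\Delta^n_i y}]$, which is generically of order one. A conditional BDG (or Rosenthal) inequality applied given $\calf_\infty$ therefore has no centering to act on, and without it the progression sums could be of order $k_n$ rather than $\sqrt{k_n}$; the claimed $C_p/k_n^{p/2}$ does not follow from the argument as stated.

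The repair is short and is what the paper does: keep your splitting into $m+2$ arithmetic progressions of step $m+2$, but work with the enlarged filtration $\calh^n_i=\calf_{i\Den}\vee\si(\chi_j:j\le i)$. Since $(\Delta\chi_i)_{i\in\Z}$ is $(m+1)$-dependent and independent of $\calf_\infty$, one has $\ov\E^n_{i-m-2}[e^{iU_n\Delta^n_i y}]=\E^n_{i-m-2}[e^{iU_n\Delta^n_i y}]$ and hence $\ov\E^n_{i-m-2}[Z_i]=0$; along each progression the $Z_i$ are then bounded martingale differences with respect to $(\calh^n_\cdot)$, and BDG plus Minkowski give the bound, with the $\calh^n_{[2j-2]^m_n}$-measurability of $U$ and the gap $k_n\le p_n-m-1$ ensuring the conditioning time precedes the block, as you noted. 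Two smaller inaccuracies in your sketch of (ii)--(iii): Lemma~\ref{lem:prelim} does not give $L^2$-increments of order $\sqrt{p_n\Den}$ for $\al$ or for $\Den\phi(U_n)$ (those terms are handled via the modulus $w(\cdot)$ and the uniform bound $\phi(n)=o'(1)$, together with the small prefactor multiplying the drift), and the lower bounds $c_t\ge K^{-1}$, $\lvert\rho_t\rvert\ge K^{-1}$ are not needed for the derivative bounds in \eqref{eq:L'''-3} --- they matter only later, for the $\Lf(\cdot)$ denominators.
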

\begin{proof} The identity \eqref{eq:L'''-2} is a simple consequence of the Lévy--Khintchine formula and the independence of $(\chi_i)_{i\in\Z}$ from $\calf_\infty$.  
	For all other statements, there is no loss of generality to assume that $\calu=[0,u_\ast]$ for some $u_\ast\in(0,\infty)$. Let $Y^n(u)_t=e^{X^n(u)_t}$, $Y^{\prime n}(u)_t= e^{X^{\prime n}(u)_t}$, $\ov Y^n(u)_t=Y^{ n}(u)_tY^{\prime n}(u)_t$ and
	\begin{equation}\label{eq:XY}\begin{split}
			X^n(u)_t&=-\tfrac12u^2\Den^{(1-2\varpi)_+}\si^2_t+\Den\vp_t(u/\Den^{\varpi\wedge 1/2})+\wt X^n(u)_t\bone_{\{\varpi\leq \frac34\}}, \\
			X^{\prime n}(u)_t&=iu\Den^{(1-\varpi)\vee 1/2}\al_t +\Den\phi(u/\Den^{\varpi\wedge 1/2})_t+\wt X^n(u)_t\bone_{\{\varpi> \frac34\}},\\
			\wt X^n(u)_t&=\Log \Psi(u\Den^{(\varpi-1/2)_+}\rho_t).
	\end{split}\end{equation}
	By \eqref{eq:L'''-2},  we can decompose
	\begin{align} 
		\Delta^n_{2j}\wt L^{\mathrm{s}}(U)&= \frac{1}{k_n}\sum_{i=(2j-1)p_n+1}^{(2j-1)p_n+k_n} (\ov Y^n(U)_{(i-1)\Den}-\ov Y^n(U)_{(i-p_n-1)\Den})\nonumber  \\
		&=\frac{1}{k_n}\sum_{i=(2j-1)p_n+1}^{(2j-1)p_n+k_n}   \Bigl\{Y^{n}(U)_{(i-1)\Den}(  Y^{\prime n}(U)_{(i-1)\Den}-  Y^{\prime n}(U)_{(i-p_n-1)\Den})\label{eq:Delta-Ls}\\ &\quad+   (  Y^{  n}(U)_{(i-1)\Den}-  Y^{  n}(U)_{(i-p_n-1)\Den})Y^{\prime n}(U)_{(i-1)\Den} \Bigr\}.\nonumber
	\end{align}
	Since $\rho$ takes the form (\ref{eq:pi}) if $\varpi\leq\frac34$,  the mean-value theorem combined with \eqref{eq:L2}  
	yields
	\begin{equation}\label{eq:aux22} 
		\E^n_{[2j-2]^m_n}[\lvert Y^n(U)_{(i-1)\Den}-Y^n(U)_{(i-p_n-1)\Den}  \rvert^2]\leq C%\theta_n^4
		p_n\Den.
	\end{equation}
	By assumption, $\al$ is continuous (if $\theta_0=0$, $\vareps$-H\"older continuous) in $L^2$, so by \eqref{eq:vp-conv} and \eqref{eq:psi},  
	\begin{equation}\label{eq:aux23} 
		\E^n_{[2j-2]^m_n}[\lvert Y^{\prime n}(U)_{(i-1)\Den}-Y^{\prime n}(U)_{(i-p_n-1)\Den}  \rvert^2]=o'(\Den)=o'(p_n\Den). %o(\theta_n^2 {\Den})=o(\theta_n^4p_n\Den),
	\end{equation}
	Because $Y^n$ and $Y^{\prime n}$ are bounded by $1$, we obtain \eqref{eq:L'''-3}.
	%Then for any of the quantities $\call \in\{\wh L^{n,\mathrm{v}}_j, \wh L^{n,\mathrm{b}}_j, \Delta^n_j \wh L^{\mathrm{v}},\Delta^n_j \wh L^{\mathrm{b}},\Delta^n_j \wh L^{\mathrm{s}}\}$, we have $\call(0) =0$ and therefore, $\sup_{u\in\calu} \lvert \call(u)\rvert\leq \int_0^{u_{\mathrm{max}}} \frac{d}{du}G(v) dv$. As a consequence, it suffices to establish moment bounds for $\frac{d}{du}G(u)$ that are  uniform in $u$.	
	
	For \eqref{eq:L'}, %observe that $\wh L^{n,\mathrm{v}}_j(0)=0$ and therefore, $\sup_{u\in\calu} \lvert \wh L^{n,\mathrm{v}}_j(u)\rvert\leq \int_0^{u_\ast}\lvert \frac{d}{du}\wh L^{n,\mathrm{v}}_j(u)\rvert du$. As a consequence, it suffices to establish a moment bound for $\frac{d}{du} \widehat{L}_j^{n,\mathrm{v}}(u)$ that is  uniform in $u$.
	%Now, $\frac{d}{du} \widehat{L}_j^{n,\mathrm{v}}(u) = k_n^{-1}\sum_{i=(j-1)p_n+1}^{(j-1)p_n+k_n} (\beta^n_i-\E^n_{i-1}[\beta^n_i])$ with $\beta^n_i=e^{iu_n\Delta^n_ix}i\Delta^n_i x/\sqrt{\Den}$, and the $i$th term in $\frac{d}{du} \widehat{L}_j^{n,\mathrm{v}}(u)$ is $\calf_{i\Den}$-measurable with zero conditional expectation given $\calf_{(i-1)\Den}$. Hence, 
	note that $\xi^n_i = e^{iU_n\Delta^n_i y}-\E^n_{i-1}[e^{iU_n\Delta^n_i y}]$ %,  $\xi^{\prime n}_i = \sum_{\ell=1}^\infty \ov \E^n_i[\xi^n_{i+\ell}]$ and $\ov \xi^n_i = \xi_i + \xi^{\prime n}_i-\xi^{\prime  n}_{i-1}$. 
	%Note that 
	%$
	%	\ov\E^n_i[e^{iu_n\Delta^n_{i+\ell} y}]	 = \ov\E^n_i[e^{iu_n\Delta^n_{i+\ell} x} \E[ e^{iu_n\Delta^n_{i+\ell} \eps} \mid \calf_\infty \vee \calg_i]]
	%$ and $\E^n_i[e^{iu_n\Delta^n_{i+\ell} y}]=\E^n_i[e^{iu_n\Delta^n_{i+\ell} x}\E[e^{iu_n\Delta^n_{i+\ell} \eps}\mid \calf_\infty]]=\ov\E^n_i[e^{iu_n\Delta^n_{i+\ell} x}\E[e^{iu_n\Delta^n_{i+\ell} \eps}\mid \calf_\infty]]$
	%and therefore,
	%	\begin{align*}
	%\E\Bigl[\bigl(	\ov \E^n_i[\xi^n_{i+\ell}]\bigr)^2\Bigr]^{1/2} 	&= \E\Bigl[\bigl(\ov \E^n_i[e^{iu_n\Delta^n_{i+\ell} y}]-\E^n_i[e^{iu_n\Delta^n_{i+\ell} y}] \bigr)^2\Bigr]^{1/2} 	\\
	%	&= \E\Bigl[\bigl(\ov\E^n_i[e^{iu_n\Delta^n_{i+\ell} x}( \E[ e^{iu_n\Delta^n_{i+\ell} \eps} \mid \calf_\infty \vee \calg_i]-\E[e^{iu_n\Delta^n_{i+\ell} \eps}\mid \calf_\infty])] \bigr)^2\Bigr]^{1/2} 	\\
	%		&\leq2\rho_\ell(\Delta\chi) \leq C\ell^{-\iota}
	%	\end{align*}
	%by Lemma~VIII.3.102(c) of \citet{JS03}. As $\iota>1$, this shows   $\E[(\xi^{\prime n}_i)^2]^{1/2}\leq C$ uniformly in $i$. Now, it is straightforward to verify that
	%\begin{equation}\label{eq:verify} 
	%	\ov \E^n_{i-1}[\ov \xi^n_i]=0\quad\text{and}\quad \wt{L}_{2j-\ell}^{n,\mathrm{v}}(U)=\frac{1}{k_n}\sum_{i=(2j-\ell-1)p_n+1}^{(2j-\ell-1)p_n+k_n} \ov\xi^n_i +\frac{\xi^{\prime n}_{(j-1)p_n}-\xi^{\prime n}_{(j-1)p_n+k_n}}{k_n}.
	%\end{equation}
	is $\calh^n_i$-measurable and because $(\Delta\chi_i)_{i\in\Z}$ is $(m+1)$-dependent and independent of $\calf_\infty$, we have $\ov\E^n_{i-m-2}[e^{iU_n\Delta^n_i y}]=\E^n_{i-m-2}[e^{iU_n\Delta^n_i y}]$. This shows that $\ov\E^n_{i-m-2}[\xi^n_i]=0$. Upon writing
	\[ \wt{L}_{2j-\ell}^{n,\mathrm{v}}(U) =\sum_{i=1}^{m+2} \wt{L}_{2j-\ell,i}^{n,\mathrm{v}}(U),\quad\wt{L}_{2j-\ell,i}^{n,\mathrm{v}}(U)= \frac1{k_n}\sum_{k\geq0} \xi^n_{(2j-\ell-1)p_n+i+k(m+2)}\bone_{\{i+k(m+2)\leq k_n\}}, \]
	we realize that $\wt{L}_{2j-\ell,i}^{n,\mathrm{v}}(U)$ is a martingale sum for each $i$, relative to the discrete-time filtration $(\calh^n_{(2j-\ell-1)p_n+i+k(m+2)}: k\geq-1)$. Also, $\wt{L}_{2j-\ell,i}^{n,\mathrm{v}}(U)$ sums over at most $k_n/(m+2)$ nonzero terms and $\lvert \xi^n_i\rvert\leq2$. Therefore, combining the Minkowski and the Burkholder--Davis--Gundy (BDG) inequalities, we obtain the first inequality in \eqref{eq:L'} from the estimates
	\begin{equation}\label{eq:aux10}\begin{split}
			&\E^n_{[2j-2]^m_n}[\lvert   \wt{L}_{2j-\ell}^{n,\mathrm{v}}(U)\rvert^p]^{1/p}=\ov\E^n_{[2j-2]^m_n}[\lvert   \wt{L}_{2j-\ell}^{n,\mathrm{v}}(U)\rvert^p]^{1/p}\leq \sum_{i=1}^{m+2}\ov\E^n_{[2j-2]^m_n}[\lvert   \wt{L}_{2j-\ell,i}^{n,\mathrm{v}}(U)\rvert^p]^{1/p}\\
			&\quad\leq C_pk_n^{-1}\sum_{i=1}^{m+2}\ov\E^n_{[2j-2]^m_n}\Biggl[\biggl(\sum_{k}\ov\E^n_{[2j-\ell-1]^m_n+i+k(m+2)-1}[\lvert\xi^n_{(2j-\ell-1)p_n+i+k(m+2)}\rvert^2 ] \biggr)^{p/2}\Biggr]^{1/p} \\
			&\quad\leq 2 C_p\sqrt{(m+2)/k_n},\end{split}
	\end{equation}
	where $\sum_k$ sums over all $k$ such that $i+k(m+2)\leq k_n$
	The second inequality in \eqref{eq:L'}   follows by the triangle inequality.
	
	Next, we decompose
	\begin{align} 
		\E^n_{i-1}[e^{iU_n\Delta^n_i y}-e^{iU_n\Delta^n_{i,i-1} y} ] &=\E^n_{i-1}[e^{iU_n\Delta^n_{i,i-1}x}(e^{iU_n\Delta^n_i \eps}-e^{iU_n\Delta^n_{i,i-1} \eps})] \nonumber\\
		&\quad+\E^n_{i-1}[e^{iU_n\Delta^n_{i,i-1}\eps}(e^{iU_n\Delta^n_i x}-e^{iU_n\Delta^n_{i,i-1} x})]\label{eq:aux15}  \\
		&\quad +\E^n_{i-1}[(e^{iU_n\Delta^n_i x}-e^{iU_n\Delta^n_{i,i-1} x})(e^{iU_n\Delta^n_i \eps}-e^{iU_n\Delta^n_{i,i-1} \eps})].\nonumber
	\end{align}
	By Lemma~\ref{lem:prelim} and the Cauchy--Schwarz inequality, the last term is $o'(\sqrt{\Den})$ and can therefore be neglected in the proof of \eqref{eq:L''}. Similarly,
	\begin{align*}
		&\E^n_{i-1}[e^{iU_n\Delta^n_{i,i-1}x}(e^{iU_n\Delta^n_i \eps}-e^{iU_n\Delta^n_{i,i-1} \eps})]\\
		&\quad=iU_n\E^n_{i-1}[e^{iU_n\Delta^n_{i,i-1}x}e^{iU_n\Delta^n_{i,i-1} \eps}(\Delta^n_i \eps-\Delta^n_{i,i-1} \eps)] + O(\Den\bone_{\{\varpi\leq \frac34\}}+\Den^{ (2\varpi-1)}\bone_{\{\varpi>\frac34\}}) \\
		&\quad=iU \Den^{(\varpi-1/2)_+}\E^n_{i-1}[e^{iU_n\Delta^n_{i,i-1}x}e^{iU_n\Delta^n_{i,i-1} \eps}(\rho_{i\Den}-\rho_{(i-1)\Den})\chi_{i}]+o'(\sqrt{\Den}).
	\end{align*}
	Since $(\chi_i)_{i\in\Z}$ is independent of $\calf_\infty$, we obtain (using the notation $\wh\Psi(u)=\E[e^{iu\Delta\chi_1}\chi_1]$)
	\begin{equation}\label{eq:aux18}\begin{split}
			&\E^n_{i-1}[e^{iU_n\Delta^n_{i,i-1}x}(e^{iU_n\Delta^n_i \eps}-e^{iU_n\Delta^n_{i,i-1} \eps})]\\
			&\quad=iU \Den^{(\varpi-1/2)_+}\wh\Psi(U\Den^{(\varpi-1/2)_+}\rho_{(i-1)\Den})\E^n_{i-1}[e^{iU_n\Delta^n_{i,i-1}x}\Delta^n_i\rho]+o'(\sqrt{\Den}),
	\end{split}\end{equation}
	where  the $o'(\sqrt{\Den})$-term is independent of $\om$. If $\varpi> \frac34$,   the mean-value theorem implies the bound $\lvert \Re\wh\Psi(U\Den^{(\varpi-1/2)_+}\rho_{(i-1)\Den})-\Re\wh\Psi(0)\rvert \leq \lvert \frac d{du} \Re \wh\Psi(\wh \rho_n) \rvert U\Den^{\varpi-1/2}\lvert \rho_{(i-1)\Den}\rvert\leq u_\ast K \lvert \E[\chi_1\Delta\chi_1]\rvert \Den^{\varpi-1/2}$ for some intermediate value $\wh\rho_n$ and an analogous bound for the imaginary part. Since $\wh \Psi(0) = \E[\chi_1]=0$ and we have another factor $\Den^{\varpi-1/2}$ in front of $\wh\Psi$ in \eqref{eq:aux18}, it follows that \eqref{eq:aux18} is $o'(\sqrt{\Den})$  if $\varpi>\frac34$. If $\varpi \leq\frac34$, the last line in \eqref{eq:aux18} is $O(\sqrt{\Den})$ because of   \eqref{eq:L2}.  Therefore, any additional modification that yields an extra $o'(1)$-term can be absorbed into the $o'(\sqrt{\Den})$-bin in \eqref{eq:aux18}. There are two cases: If $\frac12<\varpi\leq \frac34$, then the $\Den^{\varpi-1/2}$-factor in front of $\wh\Psi$ renders \eqref{eq:aux18} an $o'(\sqrt{\Den})$-term. If $\varpi\leq \frac12$, we note that  $\lvert \al^\rho\rvert\leq K$ and $\Ga^\rho \equiv0$, so by Lemma~\ref{lem:o},  
	$
	\lvert\E^n_{i-1}[e^{iU_n\Delta^n_{i,i-1}x}\Delta^n_i \rho]-\E^n_{i-1} [e^{iU_n\Delta^n_{i,i-1}x}\int_{(i-1)\Den}^{i\Den} (\si_s^\rho dW_s +\wt \si_s^\rho d\wt W_s) ] \rvert	  \leq K\Den + \sqrt{\Den}h_3(\Den)
	$
	almost surely, for some $h_3$ that satisfies $h_3(\Den)=o'(1)$. In addition, because $\wt W$ is independent of $W$ and $w(\Den),\vp(n),\phi(n)=o'(1)$, we have
	\begin{align*}
		&\E^n_{i-1}\biggl[e^{iU_n\Delta^n_{i,i-1}x}\int_{(i-1)\Den}^{i\Den} (\si_s^\rho dW_s +\wt \si_s^\rho d\wt W_s)\biggr]	=\E^n_{i-1}\biggl[e^{iU_n\Delta^n_{i,i-1}x}\int_{(i-1)\Den}^{i\Den} \si_s^\rho dW_s\biggr] \\
		&\quad=\si^\rho_{(i-1)\Den}\E^n_{i-1} [e^{iU_n\Delta^n_{i,i-1}x}\Delta^n_i W]+O(\sqrt{\Den}w(\Den))\\
		&\quad=\si^\rho_{(i-1)\Den}e^{iU_n\al_{(i-1)\Den}\Den+\Den\vp(U_n)+\Den\phi(U_n)}\E^n_{i-1} [e^{iU_n\si_{(i-1)\Den}\Delta^n_i W}\Delta^n_i W] + o'(\sqrt{\Den})\\
		&\quad=-i^{-1}e^{-\frac12 U^2\Den^{1-2\varpi}\si^2_{(i-1)\Den}}U\si_{(i-1)\Den}\si^\rho_{(i-1)\Den}\Den^{1-\varpi}+ o'(\sqrt{\Den}),
	\end{align*}
	where the last step follows from the fact that $U_n=U/\Den^\varpi$ when $\varpi\leq\frac12$ and $\E[e^{iuX}X]=i^{-1}\frac{d}{du} \E[e^{iuX}]$. Since the last line  above is $o'(\sqrt{\Den})$ except when $\varpi=\frac12$,  we have shown that 
	\begin{equation}\label{eq:aux14}\begin{split}
			&\E^n_{i-1}[e^{iU_n\Delta^n_{i,i-1}x}(e^{iU_n\Delta^n_i \eps}-e^{iU_n\Delta^n_{i,i-1} \eps})]\\
			&\quad=-U^2e^{-\frac12 U^2\si^2_{(i-1)\Den}} \wh\Psi(U\rho_{(i-1)\Den})\si_{(i-1)\Den}\si^\rho_{(i-1)\Den}\sqrt{\Den}\bone_{\{\varpi=\frac12\}}+o'(\sqrt{\Den}).\!\!\!
	\end{split}\end{equation}
	
	We pause here and move to the second term on the right-hand side of \eqref{eq:aux15}, that is,
	\[\E^n_{i-1}[e^{iU_n\Delta^n_{i,i-1}\eps}(e^{iU_n\Delta^n_i x}-e^{iU_n\Delta^n_{i,i-1} x})]=\Psi(U_n\Den^{\varpi}\rho_{(i-1)\Den})\E^n_{i-1}[e^{iU_n\Delta^n_i x}-e^{iU_n\Delta^n_{i,i-1} x}].\]
	Writing
	$\La(u)_t=iu\int_0^t \si_r dW_r+\int_0^t\int_E (e^{iu(\ga(r,z)+\Ga(r,z))}-1)(\mu-\nu)(dr,dz)$ and
	using It\^o's formula (see   \citet[Theorem I.4.57]{JS03}), one can verify that for any fixed $u$ and $s$, the process $Z(u,s)_{t}=e^{iu(x_t-x_s)-(\Theta(u)_t-\Theta(u)_s)}$
	satisfies the stochastic differential equation $dZ(u,s)_t = Z(u,s)_{t-} d\La(u)_t$ with $Z(u,s)_s=1$.
	In particular, since $\La(u)$ is a martingale, so is $Z(u,s)$.
	Combining this with the Lévy--Khintchine formula and using the notation $Z^n_i(u)_t = Z(u,(i-1)\Den)_t$, we obtain  
	\begin{align}\nonumber
		\E^n_{i-1}[e^{iU_n\Delta^n_i x}-e^{iU_n\Delta^n_{i,i-1} x} ] &=\E^n_{i-1}[e^{\Delta^n_i \Theta(U_n)}Z^n_i(U_n)_{i\Den}] -e^{\Delta\Theta(U_n)^n_{i-1}} \\
		& =\E^n_{i-1}[(e^{\Delta^n_i \Theta(U_n)}-e^{\Delta\Theta(U_n)^n_{i-1}})Z^n_i(U_n)_{i\Den}] \label{eq:aux11}\\
		&=e^{\Delta\Theta(U_n)^n_{i-1}}\E^n_{i-1}[(e^{\Delta^n_i \Theta(U_n)-\Delta\Theta(U_n)^n_{i-1}}-1)Z^n_i(U_n)_{i\Den}]. 
		%&\quad=A^n_i(u)+e^{\Delta\Theta(u_n)^n_{i-1}}\E^n_{i-1}\Bigl[\bigl(e^{\Delta^n_i \Theta(u_n)-\Delta\Theta(u_n)^n_{i-1}}-1-(\Delta^n_i \Theta(u_n)-\Delta\Theta(u_n)^n_{i-1})\bigr)\\
		%&\qquad\qquad\qquad\times Z(u_n,(i-1)\Den)_{i\Den} \Bigr],
		\nonumber
	\end{align}
	%	where 
	%	for $i\in[(2j-\ell-1)p_n+1,(2j-\ell-1)p_n+k_n]$.
	As a consequence of our assumptions on $\al$, $\si^2$, $\ga$ and $\Ga$ and the elementary inequalities $\lvert U_n\rvert\leq u_\ast/\sqrt{\Den}$, $\lvert e^{ix}-1\rvert \leq \lvert x\rvert$ and $\lvert e^{ix}-1-ix\rvert\leq \frac12x^2$, we have
	\begin{align*}
		&	\lvert \Delta^n_i \Theta(U_n)\rvert\vee\lvert\Delta \Theta(U_n)^n_{i-1}\rvert\\	&\quad\leq u_\ast \sqrt{\Den} K+\frac12 u_\ast^2K + \frac12u_\ast^2\int_E J(z)\la(dz)+\lvert U\rvert\sqrt{\Den} \int_E J(z)\la(dz) \\
		&	\quad	\leq 2K (u_\ast +\tfrac12 u_\ast^2 ).	
	\end{align*}
	Denote the last term by $M$. Then   $\lvert e^{\Delta \Theta(U_n)^n_{i-1}}\rvert\leq e^M$ and $\lvert Z^n_i(U_n)_{i\Den}\rvert =e^{-\Re \Delta^n_i\Theta(U_n)} \leq e^M$, so  %the first inequality in \eqref{eq:L''} follows from
	\begin{equation}\label{eq:aux17}\begin{split}
			&\Bigl\lvert	\E^n_{i-1}[e^{iU_n\Delta^n_i x}-e^{iU_n\Delta^n_{i,i-1} x} ]-e^{\Delta\Theta(U_n)^n_{i-1}}\E^n_{i-1}[(\Delta^n_i \Theta(U_n)-\Delta\Theta(U_n)^n_{i-1}) Z^n_i(U_n)_{i\Den}]\Bigr\rvert \\
			&\quad\leq \tfrac12e^{2M}\E^n_{i-1}[\lvert \Delta^n_i \Theta(U_n)-\Delta\Theta(U_n)^n_{i-1}\rvert^2].
	\end{split}\end{equation}
	%$
	%		%\E^n_{(j-1)p_n}\biggl[
	%	\bigl	\lvert \E^n_{i-1}[e^{iU_n\Delta^n_i x}-e^{iU_n\Delta^n_{i,i-1} x} ]\bigr\rvert%\biggr] 
	%		\leq e^{2M}%\E^n_{(j-1)p_n}\biggl[
	%		\E^n_{i-1}[\lvert \Delta^n_i \Theta(U_n)-\Delta \Theta(U_n)^n_{i-1}\rvert]%\biggr]%\leq C {\Den}
	%$.
	
	Defining % $\cali^n_i=[(i-1)\Den,i\Den]$, 	
	\begin{equation}\label{eq:xi}\begin{split}
			\xi^{n,i}_1&=-\frac{U^2_n}2\int_{(i-1)\Den}^{i\Den}  \int_{(i-1)\Den}^{s} \si^c_rdW_rds,\\ \xi^{n,i}_2&=-\frac{U^2_n}2\int_{(i-1)\Den}^{i\Den}\int_{(i-1)\Den}^s \ov\si^c_rd\ov W_rds,\end{split}
	\end{equation}
	and recalling $\phi(n)$ from \eqref{eq:vp-conv}, we have
	\begin{equation}\label{eq:4parts}\begin{split}
			& \E^n_{i-1}[\lvert \Delta^n_i \Theta(U_n)-\Delta\Theta(U_n)^n_{i-1}-\xi^{n,i}_1-\xi^{n,i}_2\rvert] \\
			&\quad\leq \frac{u_\ast}{\sqrt{\Den}}\int_{(i-1)\Den}^{i\Den} \E^n_{i-1}[ \lvert \al_s-\al_{(i-1)\Den}\rvert]ds +2\sqrt{\Den}\phi(n)\\
			&  \quad  +\Den \sup_{s\in[(i-1)\Den,i\Den]}  \E^n_{i-1}[\lvert \vp(U_n)_s-\vp(U_n)_{(i-1)\Den}\rvert]+\frac12u_\ast^2  \int_{(i-1)\Den}^{i\Den}  \E^n_{i-1}[\lvert \al^c_r\rvert] dr\\
			&   \quad +\frac12u_\ast^2  \E\biggl[\sup_{s\in[(i-1)\Den,i\Den]}\biggl\lvert \iint_{(i-1)\Den}^s \ga^c(r,z)(\mu-\nu)(dr,dz)\biggr\lvert\biggr].
	\end{split}\end{equation}
	The first term on the right-hand side is bounded by $u_\ast \sqrt{\Delta_n}w(\Delta_n)$, where  $w(\Delta_n)=o'(1)$. For the second term, note that $\phi(n)=o'(1)$ by \eqref{eq:vp-conv}. 
	The third  term  is bounded by
	\begin{equation}\label{eq:aux3}\begin{split}
			&  \sup_{s\in[0,\infty)} \Biggl(\Den^2\E^n_{i-1}[\lvert\al^\vp(U_n)_s\rvert] + \Den^{3/2}\E^n_{i-1}[\si^\vp(U_n)_s^2+\ov\si^\vp(U_n)_s^2]^{1/2}\\ % {\sqrt{\Den}}  \int_0^{u_\ast}\sup_{s\in\cali^n_i} \biggl\lvert \int_{(i-1)\Den}^s(\tfrac{d}{du}\si^\vp(U_n)_r dW_r\\
			&\qquad  +\Den^{3/2}\sup_{z\in E} \bigl(\E^n_{i-1}[\ga^\vp(U_n;s,z)^2]/J(z)\bigr)^{1/2}\biggl(\int_E J(z)\la(dz)\biggr)^{1/2} \\
			&\qquad + \Den^2\sup_{z\in E} \lvert \Ga^\vp(U_n;s,z)/J(z)\rvert \int_E J(z)\la(dz)\Biggr)\\
			&\quad\leq K\Den +\sqrt{C_1(\Delta_n)\Den}+\sqrt{KC_2(\Den)\Den} + KC_3(\Den)\Den
			%	 &\quad\quad\quad+\tfrac{d}{du}\ov\si^\vp(U_n)_r d\ov W_r+\tfrac{d}{du}\wt \si^\vp(U_n)_r d\wt W_r+\tfrac{d}{du}\wh\si^\vp(U_n)_r d\wh W_r\biggr\rvert du \\
			% &\quad\quad+ \sqrt{\Den}\int_0^{u_\ast} \sup_{s\in\cali^n_i} \biggl\lvert  \int_{(i-1)\Den}^s \tfrac{d}{du}\ga^\vp(U_n;r,z)(\mu-\nu)(dr,dz)\biggr\rvert du,
	\end{split}\end{equation}
	due to (\ref{eq:prop2-2}) and  (\ref{eq:prop3-2})--(\ref{eq:prop7-2}). Thanks to the bound $\lvert \al^c_r\rvert \leq K$ and Lemma~\ref{lem:o}, the last two terms in \eqref{eq:4parts} are bounded by $C\Den$ and $C\sqrt{\Den}h_3(\Den)$,  respectively. 
	Altogether, we have shown that the right-hand side of \eqref{eq:4parts} is $o'(\sqrt{\Den})$. In a similar fashion, one can further show that the right-hand side of \eqref{eq:aux17} is $O({\Den})$, which implies
	\begin{align*}
		&\E^n_{i-1}[e^{iU_n\Delta^n_{i,i-1}\eps}(e^{iU_n\Delta^n_i x}-e^{iU_n\Delta^n_{i,i-1} x})]\\
		&\quad=\Psi(U_n\Den^{\varpi}\rho_{(i-1)\Den})e^{\Delta\Theta(U_n)^n_{i-1}}\E^n_{i-1}[(\xi^{n,i}_1+\xi^{n,i}_2)Z^n_i(U_n )_{i\Den}] + o'(\sqrt{\Den})\\
		&\quad=\Psi(U_n\Den^{\varpi}\rho_{(i-1)\Den})e^{\Delta\Theta(U_n)^n_{i-1}}\E^n_{i-1}[\xi^{n,i}_1 Z^n_i(U_n )_{i\Den}] + o'(\sqrt{\Den}),
	\end{align*}
	almost surely. The last step holds because $Z^n_i(U_n )_t$ is a martingale driven by $W$ and $\mu-\nu$ (without any component driven by $\ov W$), so $Z^n_i(U_n )_t$ and $\xi^{n,i}_2$ are uncorrelated conditionally on $\calf_{(i-1)\Den}$. Since $\si^c$ satisfies (\ref{eq:prop0-2}), we further have
	\begin{align*}
		&\E^n_{i-1}[\xi^{n,i}_1 Z^n_i(U_n )_{i\Den}]\\	
		&~=-\frac{U_n^2}2\si_{(i-1)\Den}^c\E^n_{i-1}\biggl[ \int_{(i-1)\Den}^{i\Den} \int_{(i-1)\Den}^s dW_sds  Z^n_i(U_n )_{i\Den}\biggr] +o'(\sqrt{\Den})\\
		&	~ =-\frac{U_n^2}{2}\si^c_{(i-1)\Den}  \E^n_{i-1}\biggl[\int_{(i-1)\Den}^{i\Den} (i\Den-r) dW_r\int_{(i-1)\Den}^{i\Den} Z^n_i(U_n )_{r-}d\La(U_n)_r  \biggr] +o'(\sqrt{\Den}) \\
		& ~=-\frac{iU_n^3}2\si^c_{(i-1)\Den}  \E^n_{i-1}\biggl[\int_{(i-1)\Den}^{i\Den} (i\Den-r) dW_r\int_{(i-1)\Den}^{i\Den} Z^n_i(U_n )_r\si_rdW_r  \biggr]+o'(\sqrt{\Den})\\
		& ~=-\frac{iU_n^3}2\si_{(i-1)\Den}\si^c_{(i-1)\Den}  \int_{(i-1)\Den}^{i\Den} (i\Den-r)   \E^n_{i-1}[Z^n_i(U_n )_r]dr   +o'(\sqrt{\Den})\\
		&~=-\frac14iU^3\Den^{(3/2-3\varpi)_+}\si_{(i-1)\Den}\si^c_{(i-1)\Den}\sqrt{\Den}+o'(\sqrt{\Den}).
	\end{align*}
	Recalling \eqref{eq:aux15} and \eqref{eq:aux14}, we arrive at 
	\begin{equation}\label{eq:aux16}\begin{split} 
			&\E^n_{i-1}[e^{iU_n\Delta^n_i y}-e^{iU_n\Delta^n_{i,i-1}y}]	\\
			& \quad=-U^2e^{-\frac12 U^2\si_{(i-1)\Den}^2} \wh\Psi(U\rho_{(i-1)\Den})\si_{(i-1)\Den}\si^\rho_{(i-1)\Den}\sqrt{\Den}\bone_{\{\varpi=\frac12\}} \\
			&\qquad-\frac{iU^3}4e^{-\frac12U^2\si^2_{(i-1)\Den}} \Psi(U\Den^{\varpi-1/2}\rho_{(i-1)\Den})\si_{(i-1)\Den}\si^c_{(i-1)\Den}\sqrt{\Den}\bone_{\{\varpi\geq\frac12\}}\\
			&\qquad+o'(\sqrt{\Den}),
	\end{split}\end{equation}
	where the $o'$-term does not depend on $i$ or $\om$.
	This   yields the first estimate in \eqref{eq:L''}. 
	
	To prove the second estimate, denote the right-hand side of \eqref{eq:aux16} (without the $o'$-term) by $A^n_i$ and note that $\Delta^n_{2j}\wt L^{\rm b}(U)=
	\frac1{k_n}\sum_{i=(2j-1)p_n+1}^{(2j-1)p_n+k_n} (A^n_i-A^n_{i-p_n})+o'(\sqrt{\Den})$ by what we have shown so far. If $\varpi>\frac34$, \eqref{eq:psi} implies   $\lvert\Psi(U\Den^{\varpi-1/2}\rho_{(i-1)\Den})-\Psi (U\Den^{\varpi-1/2}\rho_{(i-p_n-1)\Den})\rvert\leq C \Den^{2\varpi-1}=o'(\sqrt{\Den})$.
	%	\begin{equation}\label{eq:Psi} 
	%		\lvert\Psi(u) -1\rvert \leq \tfrac12 \E[(\chi_2-\chi_1)^2] u^2,
	%	\end{equation} 
	Because $\si$, $\si^c$ and, if $\varpi\leq \frac34$, $\rho$ and $\si^\rho$   are continuous (if $\theta_0=0$, at least $\vareps$-H\"older continuous) in $L^p$ and both $\Psi$ and $\wh\Psi$ are differentiable (as $\chi$ has moments of all orders), it follows that $\E^n_{[2j-2]^m_n}[\lvert A^n_i-A^n_{i-p_n}\rvert^p]^{1/p}=o'(\sqrt{\Den})$, proving the second estimate in \eqref{eq:L''}.
\end{proof}

\begin{lemma}\label{lem:o}
	Let $X_t=\iint_0^t \ga^X(s,z)(\mu-\nu)(ds,dz)$, where $\ga^X$ is a predictable function and satisfies $\lvert \ga^X(s,z)\rvert^{2-\vareps\bone_{\{\theta_0=0\}}} \leq J(z)$ for all $s\geq0$, $z\in E$ and some measurable nonnegative function $J(z)$ with $\int_E J(z)\la(dz)<\infty$. Then, for any $p\in[1,2)$,
	\[ \E\biggl[\sup_{t\in[s,s+\Den]} \lvert X_t-X_s\rvert^p\biggr]^{1/p} \leq \sqrt{\Den}h_3(\Den), \]
	where $h_3$ does not depend on  $\ga^X$ or $s$ and satisfies $h_3(\Den)=o'(1)$.
\end{lemma}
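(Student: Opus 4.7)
I would combine the Burkholder--Davis--Gundy (BDG) inequality with a truncation of $\ga^X$ according to the size of $J$. For a threshold $\delta>0$ to be chosen later, set $\ga^X_1(s,z)=\ga^X(s,z)\bone_{\{J(z)\leq \delta\}}$ and $\ga^X_2=\ga^X-\ga^X_1$, giving rise to two compensated purely discontinuous local martingales $X=X^1+X^2$ with disjoint jumps (well-defined because $\int_{\{J\leq \delta\}}(\ga^X)^2\la(dz)<\infty$ while $\la(J>\delta)<\infty$). Applying BDG and then the elementary inequality $(a+b)^{p/2}\leq a^{p/2}+b^{p/2}$ (valid since $p/2\leq 1$) to the quadratic variation reduces the problem to bounding
\[
\E\biggl[\biggl(\iint_s^{s+\Den}(\ga^X_i)^2\mu(dr,dz)\biggr)^{p/2}\biggr],\qquad i=1,2.
\]

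For the small-jump part I would use Jensen's inequality for the concave function $x\mapsto x^{p/2}$ together with the compensation formula, obtaining the $L^2$-type estimate $(\Den \int_E (\ga^X)^2 \bone_{\{J\leq \delta\}} \la(dz))^{p/2}$. For the large-jump part, since the quadratic variation is an atomic sum over jumps, I would instead use $(\sum_i a_i)^{p/2}\leq \sum_i a_i^{p/2}$, producing $\Den \int_E \lvert\ga^X\rvert^p \bone_{\{J>\delta\}}\la(dz)$. In the $\theta_0>0$ case, $\lvert\ga^X\rvert^2\leq J$ bounds the first integral by $A(\delta)=\int_{\{J\leq \delta\}}J\la(dz)$, which tends to $0$ as $\delta\to 0$ by dominated convergence, and bounds the second by $\delta^{p/2-1}\int_E J\la(dz)$ (since $p/2-1<0$ and $J>\delta$ on the integration set). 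In the $\theta_0=0$ case, the strengthened bound $\lvert\ga^X\rvert^{2-\vareps}\leq J$ yields polynomial decay rates $\delta^{\vareps/(2-\vareps)}$ and $\delta^{(p-2+\vareps)/(2-\vareps)}$ for the two integrals, respectively.

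Combining the two pieces produces an overall bound of the form $C\sqrt{\Den}\bigl[r_1(\delta)+(\Den/\delta)^{1/p-1/2}r_2(\delta)\bigr]$, where $r_1$ and $r_2$ depend only on $J$, $\vareps$ and $p$. Taking $\delta=\delta_n\to 0$ with $\Den/\delta_n\to 0$ makes both terms vanish: for instance $\delta_n=\sqrt{\Den}$ when $\theta_0>0$ (giving $h_3(\Den)\to 0$ via dominated convergence for $r_1$), and $\delta_n=\Den^\beta$ with $\beta>0$ sufficiently small when $\theta_0=0$ (giving polynomial decay). The uniformity in $\om$, $\ga^X$ and $s$ is automatic since the constants appearing only involve $p$, $\vareps$ and $\int_E J\la(dz)$. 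The main technical obstacle is the $\theta_0=0$ case: the two integrals depend on $\delta$ with opposite monotonicity, so one must verify that the window of admissible $\beta$ is non-empty for the values of $p$ of interest; this reduces to $p<2-\vareps$, which holds in all applications of the lemma within the paper.
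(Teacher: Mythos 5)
Your proposal is correct and takes essentially the same route as the paper's proof: BDG, truncation of the jumps according to the size of $J$, Jensen's inequality for the small-jump part, $p/2$-subadditivity over the atoms of the jump measure for the large-jump part, and dominated convergence; the only cosmetic difference is that you keep a free threshold $\delta_n$ and optimize it, whereas the paper fixes the cut-off at $\delta=\Den$ and controls the tail via H\"older together with $t\,\la(J(z)>t)\to0$. Your closing restriction $p<2-\vareps$ in the $\theta_0=0$ case mirrors the paper's own reduction (it assumes $qp/2<1$ without loss of generality), so it does not constitute a gap.
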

\begin{proof} Let $q=1$ if $\theta_0>0$ and $q=\frac{2}{2-\varepsilon}$ if $\theta_0=0$. There is no loss of generality to assume that $qp/2 < 1$.
	By the BDG inequality, 
	\begin{align*}
		&	\E\biggl[\sup_{t\in[s,s+\Den]} \lvert X_t-X_s\rvert^p\biggr] \leq	C\E\biggl[\biggl(\iint_{(i-1)\Den}^{i\Den} (\ga^X(s,z))^2 \mu(ds,dz)\biggr)^{p/2}\biggr]\\
		&\quad\leq C\Biggl\{\E\biggl[\biggl(\iint_{(i-1)\Den}^{i\Den} J(z)^q\bone_{\{J(z)\leq \Den\}} \mu(ds,dz)\biggr)^{p/2}\biggr]\\
		&\qquad\qquad\qquad\qquad\qquad+\E\biggl[\biggl(\iint_{(i-1)\Den}^{i\Den} J(z)^q\bone_{\{J(z)> \Den\}} \mu(ds,dz)\biggr)^{p/2}\biggr]\Biggr\}\\
		&\quad\leq C\Biggl\{\Den^{p/2}\biggl(\int_E J(z)^q\bone_{\{J(z)\leq \Den\}}\la(dz)\biggr)^{p/2}+\Den\int_E J(z)^{qp/2}\bone_{\{J(z)>\Den\}}\la(dz)\Biggr\}.
	\end{align*}
	For the last step, we applied Jensen's inequality to the first expectation and the bound $(a+b)^{p/2}\leq a^{p/2}+b^{p/2}$ to the second. Concerning the first term, note that $J(z)^q\leq \Den^{q-1}J(z)$ if $J(z)\leq \Den$. Moreover, since $J$ is integrable, $h_{31}(t)=\int_E J(z)\bone_{\{J(z)\leq t\}}\la(dz)$ satisfies $h_{31}(t)\to0$ as $t\to0$. Concerning the second term, we use H\"older's inequality to bound
	\begin{equation*}
		\Den^{1-qp/2}\int_E J(z)^{qp/2}\bone_{\{J(z)>\Den\}}\la(dz)	\leq\biggl(\int_E J(z)\la(dz)\biggr)^{qp/2} (\Den\la(J(z)>\Den))^{1-qp/2}.
	\end{equation*}
	Since $J$ is integrable and $t\bone_{\{J(z)>t\}}\leq t(J(z)/t)=J(z)$, the dominated convergence theorem implies  that $h_{32}(t)=t\la(J(z)>t)$ satisfies $h_{32}(t)\to0$ as $t\to0$. The lemma is proved by choosing $h_3(t)=C^{1/p}t^{(q-1)/2}[\sqrt{h_{31}(t)}+C^{1/p}(\int_E J(z)\la(dz))^{q/2}(h_{32}(t))^{1/p-q/2}]$. 
\end{proof}

The moment estimates derived in the Lemma~\ref{lem:bounded} translate into   pathwise bounds for the variance and bias terms $\wt L^{n,\mathrm{v}}_j(u)$ and $\wt L^{n,\mathrm{b}}_j(u)$ that hold  with high probability. 

\begin{lemma}\label{lem:Om} Under Assumptions~\ref{ass:H0-2} and \ref{ass:U-2}, if we are 
	given a compact set $\calu$, we have $\P(\Om_n)\to1$, where $\Om_n=\Om_n^{(1)}\cap\Om_n^{(2)}$ and
	%$$\Om_n=\biggl\{\om\in\Om: \biggl\lvert\frac{\Delta^n_j\wh L(u)}{ \wh L^n_{j-1}(u)}\biggr\rvert\leq \frac12 \text{ for all } u\in\calu \text{ and } j=2,\dots,\lfloor n/2p_n\rfloor\biggr\}.$$
	\begin{align*}
		\Om_n^{(1)}&= \Bigl\{ \lvert  \wt L^{n,\mathrm{v}}_{2j-\ell}(\wt u^n_j)\rvert\vee \lvert  \wt L^{n,\mathrm{b}}_{2j}(\wt u^n_j)\rvert  \leq \Den^{1/9}   \text{ for all }  \ell\in\{0,1\} \text{ and } j=1,\dots,\lfloor T/(2p_n\Den)\rfloor\Bigr\},\\
		\Om_n^{(2)}&=\Bigl\{ \theta_n/\sqrt{\eta_0^+} \leq \wt  u^n_j  \leq   \theta_n/\sqrt{\eta_0^-} \text{ for all }   j=1,\dots,\lfloor T/(2p_n\Den) \rfloor\Bigr\}.
	\end{align*}
	In particular, there are deterministic constants $C_1,C_2\in(0,\infty)$ such that for sufficiently large $n$, we have the following bounds on $\Om_n$ for all $j=1,\dots,\lfloor T/(2p_n\Den)\rfloor$:
	\begin{equation}\label{eq:as-bounds} \begin{split}
			C_1&\leq \lvert\wt L^n_{2j}(\wt u^n_j)\rvert,\lvert\wt L^n_{2j-1}(\wt u^n_j)\rvert,\lvert\wt L^{n,\mathrm{s}}_{2j}(\wt u^n_j)\rvert,\lvert\wt L^{n,\mathrm{s}}_{2j-1}(\wt u^n_j)\rvert\leq C_2,\\
			C_1\theta_n^2&\leq   \log\lvert\wt L^{n,\mathrm{s}}_{2j}(\wt u^n_j)\rvert^{-1},   \log\lvert\wt L^{n,\mathrm{s}}_{2j-1}(\wt u^n_j)\rvert^{-1} \leq C_2\theta_n^2.
	\end{split}	\end{equation}
\end{lemma}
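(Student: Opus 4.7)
The plan is to prove the lemma in three layers: (a) $\P(\Om_n^{(2)}) \to 1$ is immediate from \eqref{eq:U-3}, since $\wt u^n_j = \theta_n/\sqrt{\wt\eta^n_j}$ maps the bounds on $\wt\eta^n_j$ to the claimed bounds on $\wt u^n_j$; (b) $\P(\Om_n^{(1)}) \to 1$ follows from Markov's inequality applied to the moment bounds from Lemma~\ref{lem:bounded}; and (c) the pathwise bounds \eqref{eq:as-bounds} are verified on $\Om_n$. For step (b), since $\theta_n \to \theta_0 \in [0,\infty)$, on $\Om_n^{(2)}$ the variables $\wt u^n_j$ lie in a fixed compact set $\calu \subset \R$ for large $n$; after truncating $\wt u^n_j$ to $\calu$ (which agrees with it on $\Om_n^{(2)}$), Assumption~\ref{ass:U-noise}(i) makes $\wt u^n_j$ a $\calh^n_{[2j-2]^m_n}$-measurable random variable, so Lemma~\ref{lem:bounded}(i)--(ii) gives the conditional estimates $\ov\E^n_{[2j-2]^m_n}[|\wt L^{n,\mathrm{v}}_{2j-\ell}(\wt u^n_j)|^p] \leq C_p k_n^{-p/2}$ and $\ov\E^n_{[2j-2]^m_n}[|\wt L^{n,\mathrm{b}}_{2j}(\wt u^n_j)|^p] \leq C_p \Den^{p/2}$. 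A Markov inequality with threshold $\Den^{1/9}$, a union bound over the $O((p_n\Den)^{-1})$ blocks, and the rate conditions \eqref{eq:rates} (giving $k_n^{-p/2} \leq \Den^{p(1/2-\iota)}$ for any $\iota > 0$) then yield $\P(\Om_n^{(1),c}) \to 0$ as soon as $p$ is chosen large enough.

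For step (c), I decompose $\wt L^n_{2j-\ell}(\wt u^n_j) = \wt L^{n,\mathrm{s}}_{2j-\ell}(\wt u^n_j) + \wt L^{n,\mathrm{b}}_{2j-\ell}(\wt u^n_j) + \wt L^{n,\mathrm{v}}_{2j-\ell}(\wt u^n_j)$; on $\Om_n$ the last two terms are uniformly $O(\Den^{1/9}) = o(1)$, so it suffices to control $\wt L^{n,\mathrm{s}}_{2j-\ell}(\wt u^n_j)$. Using the explicit formula \eqref{eq:L'''-2}, I write $\wt L^{n,\mathrm{s}} = k_n^{-1}\sum_i r_i$ with $r_i = e^{\Delta\Theta(U_n)^n_{i-1}} \Psi(U_n\Den^\varpi \rho_{(i-1)\Den})$ and $U_n = \wt u^n_j/\Den^{\varpi\wedge 1/2}$. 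The uniform bounds from Assumption~\ref{ass:H0-2} together with \eqref{eq:vp-conv} imply that, on $\Om_n^{(2)}$, $\log|r_i|$ is of order $\theta_n^2$ with fixed multiplicative constants depending only on $K$; in particular $|r_i| \in [c_1, c_2]$ for fixed $c_1, c_2 > 0$. Moreover, because $\E[\Delta\chi_1] = 0$, one has $\arg\Psi(u) = O(u^3)$ as $u \to 0$, and combining this with $U_n\Den \to 0$ and the smallness of $\Den\Im\vp(U_n)$ and $\Den\Im\phi(U_n)$ shows that $\arg(r_i)$ tends to zero uniformly in $i$, $j$, $\ell$ and $\om \in \Om_n^{(2)}$. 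Consequently $|\wt L^{n,\mathrm{s}}| \geq \Re \wt L^{n,\mathrm{s}} \geq \tfrac12 c_1$ for large $n$, while $|\wt L^{n,\mathrm{s}}| \leq k_n^{-1}\sum|r_i| \leq c_2$. A log-sum-exp estimate comparing $|\wt L^{n,\mathrm{s}}|$ with $k_n^{-1}\sum|r_i|$ then yields $\log|\wt L^{n,\mathrm{s}}|^{-1} \asymp \theta_n^2$, finishing the proof.

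The main obstacle lies in step (c), specifically in securing the $\asymp \theta_n^2$ lower bound on $\log|\wt L^{n,\mathrm{s}}|^{-1}$ in the degenerate regime $\theta_n \to 0$. Here a case analysis in $\varpi$ is unavoidable because the dominant contribution to $\log|r_i|$ switches between the diffusive piece $-\tfrac12 U_n^2\si^2_{(i-1)\Den}\Den$ and the noise piece $\log|\Psi(U_n\Den^\varpi\rho_{(i-1)\Den})|$: when $\varpi > \tfrac12$ the diffusive piece is of order $\theta_n^2$ and the strict lower bound $c \geq K^{-1}$ of \ref{ass:H0-2}(i) is used; when $\varpi < \tfrac12$ the diffusive piece is negligible and the lower bound instead comes from the noise piece, using the bound $|\rho| \geq K^{-1}$ which \ref{ass:H0-2}(i) imposes precisely in this regime; the borderline $\varpi = \tfrac12$ combines both. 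In addition, the error $o'(\sqrt{\Den})$ arising from the drift, $\vp$ and $\phi$ contributions has to be of smaller order than $\theta_n^2$, which is ensured by the rate condition $\theta_n^4 \gg k_n\sqrt{\Den}$ from \eqref{eq:rates}.
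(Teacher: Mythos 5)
Your proposal is correct and takes essentially the same route as the paper's proof: \eqref{eq:U-3} gives $\P(\Om_n^{(2)})\to1$, Markov's inequality with the conditional moment bounds of Lemma~\ref{lem:bounded} (using the $\calh^n_{[2j-2]^m_n}$-measurability of $\wt u^n_j$ and a union bound over the $O((p_n\Den)^{-1})$ blocks) gives $\Om_n^{(1)}$, and the bounds \eqref{eq:as-bounds} are obtained from the explicit representation \eqref{eq:L'''-2} with exactly the paper's case analysis in $\varpi$ (lower bound from $c\geq K^{-1}$ when $\varpi\geq\frac12$, from $|\rho|\geq K^{-1}$ and $\log|\Psi(u)|\approx-\frac12u^2\E[(\Delta\chi_1)^2]$ when $\varpi\leq\frac12$), with the rate condition $k_n\sqrt{\Den}/\theta_n^4\to0$ ensuring the drift/$\vp$/$\phi$ remainders are $o(\theta_n^2)$. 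The only cosmetic point is that bounding $|\wt L^n_{2j-1}(\wt u^n_j)|$ also uses the bias term at block $2j-1$, which is not in $\Om_n^{(1)}$ as stated but is uniformly $O(\sqrt{\Den})$ pathwise, so nothing is lost.
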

\begin{proof}
	Note that $\P(\Om_n^{(2)})\to1$ by (\ref{eq:U-3}). On $\Om_n^{(2)}$, we have $ \wt u^n_j 	\leq \ov \theta/\sqrt{\eta_0^-}$, where $\ov \theta = \sup_{n\in\N} \theta_n$. 
	Therefore, by Lemma~\ref{lem:bounded} (note that $\wt u^n_j$ is $\calh^n_{[2j-2]^m_n}$-measurable) and Markov's inequality, we have
	$
	\sum_{j=1}^{\lfloor T/(2p_n\Den)\rfloor}\P ( \lvert  \wt L^{n,\mathrm{v}}_{2j-\ell}(\wt u^n_j)\rvert \vee \lvert  \wt L^{n,\mathrm{b}}_{2j-\ell}(\wt u^n_j)\rvert >\Den^{1/9}\text{ for } \ell\in\{0,1\} ) \leq C\Den^{-4/9}\lfloor T/(2p_n\Den)\rfloor k_n^{-2}$,
	%	and 
	%	\begin{align*}
	%		\sum_{j=2}^{\lfloor T/(2p_n\Den)\rfloor}\P(\lvert u^n_j - u_0/\sqrt{c_{(2j-2)p_n\Den}}\bigr\rvert > a_n) \leq C\lfloor n/p_n\rfloor a_n^{-3}k^{-3/2}_n.
	%	\end{align*}
	which implies $\P((\Om_n^{(1)})^c\cap\Om_n^{(2)})\to0$	 by (\ref{eq:rates}). This yields the first statement of the lemma. %by the definition of $\Om_n$ and the assumption that $K^{-1}\leq c_t\leq K$ for all $t$,
	%	\[  \lvert u^n_j\rvert \in \biggl[ \frac{\lvert u_0\rvert}{\sqrt{c_{(2j-2)p_n\Den}}}\pm a_n\biggr] \subseteq \Bigl[\lvert u_0\rvert/\sqrt{K}-a_n,\lvert u_0\rvert \sqrt{K}+a_n\Bigr], \]
	%	which yields the second set of inequalities in \eqref{eq:as-bounds}. 
	For the first set of bounds in \eqref{eq:as-bounds}, observe from \eqref{eq:vp-conv} that 
	\begin{align*}
		% \frac14 C_1^2 K^{-1}\leq  \frac12 (u^n_j)^2 K^{-1}-\vp(n) \leq
		\Re (\Delta\Theta(\wt u^n_j/\Den^{\varpi\wedge 1/2})^n_{i-1})  &	\leq  \tfrac12 (\wt u^n_j)^2 K + \vp(n) +\phi(n)\leq   C_2^2 K,\\
		\lvert \Im(\Delta\Theta(\wt u^n_j/\Den^{\varpi\wedge 1/2})^n_{i-1})\rvert 	&\leq Ku^n_j\sqrt{\Den}+\vp(n) + \phi(n)=o(1)
	\end{align*}
	for large $n$. Hence,   $  \Re e^{\Delta\Theta(\wt u^n_j/\Den^{\varpi\wedge 1/2})^n_{i-1}}  = e^{\Re(\Delta\Theta(\wt u^n_j/\Den^{\varpi\wedge 1/2})^n_{i-1})} \cos\Im(\Delta\Theta(\wt u^n_j/\Den^{\varpi\wedge 1/2}) )$ and\linebreak $\lvert \Im e^{\Delta\Theta(\wt u^n_j/\Den^{\varpi\wedge 1/2})^n_{i-1}}\rvert = e^{\Re(\Delta\Theta(\wt u^n_j/\Den^{\varpi\wedge 1/2})^n_{i-1})}\lvert\sin \Im(\Delta\Theta(\wt u^n_j/\Den^{\varpi\wedge 1/2})^n_{i-1})\rvert$ satisfy
	\begin{equation*}
		\tfrac12e^{-C_2^2K}\leq	  \Re e^{\Delta\Theta(\wt u^n_j/\Den^{\varpi\wedge 1/2})^n_{i-1}} \leq 1,\quad
		\lvert \Im e^{\Delta\Theta(\wt u^n_j/\Den^{\varpi\wedge 1/2})^n_{i-1}}\rvert\leq KC_2\sqrt{\Den}+\vp(n)+\phi(n)
	\end{equation*}
	if $n$ is   large. Moreover, $\lvert \wt u^n_j \Den^{(\varpi-1/2)_+}\rho_{(i-1)\Den}\rvert \leq K \Den^{(\varpi-1/2)_+}\theta_n/\sqrt{\eta_0^-} \to 0$ on $\Om_n^{(2)}$, which implies $\Psi( \wt u^n_j \Den^{(\varpi-1/2)_+}\rho_{(i-1)\Den})\to1$. By \eqref{eq:L'''-2}, this gives $\frac14e^{-C_2^2K}\leq\lvert\wt L^{n,\mathrm{s}}_{2j}(\wt u^n_j)\rvert\leq 1$, which in turn shows $\frac18e^{-C_2^2K}\leq\lvert\wt L^{n}_{2j}(\wt u^n_j)\rvert\leq 1$ on $\Om_n$.
	The bounds for $\lvert\wt L^n_{2j-1}(\wt u^n_j)\rvert$ and $\lvert\wt L^{n,\rm s}_{2j-1}(\wt u^n_j)\rvert$ can be derived in the same way. For the second set of inequalities in \eqref{eq:as-bounds}, note that 
	\begin{align*}
		\wt L^{n,\mathrm{s}}_{2j}(\wt u^n_j)&=k_n^{-1}\sum_{i=(2j-\ell-1)p_n+1}^{(2j-\ell-1)p_n+k_n}\ov Y^n(\wt u^n_j)_{(i-1)\Den}\\
		&=k_n^{-1}\sum_{i=(2j-\ell-1)p_n+1}^{(2j-\ell-1)p_n+k_n}  Y^n(\wt u^n_j)_{(i-1)\Den} + O(\sqrt{\Den})
	\end{align*} by  \eqref{eq:XY}. As $   X^n(\wt u^n_j)_{(i-1)\Den} = -\frac12 (\wt u^n_j)^2  c_{(i-1)\Den}\bone_{\{\varpi\geq\frac12\}}+\Log \Psi(\wt u^n_j \rho_{(i-1)\Den} )\bone_{\{\varpi\leq \frac12\}}+o(\theta_n^2)$, we can use \eqref{eq:psi} to obtain    $ \frac14(\theta_n^2/\eta_0^+)[  K^{-1}\bone_{\{\varpi\geq\frac12\}}+ K^{-2}\E[(\Delta\chi_1)^2]\bone_{\{\varpi\leq \frac12\}}]\leq \lvert X^n(\wt u^n_j)_{(i-1)\Den}\rvert\leq (\theta_n^2/\eta_0^-)[  K^{-1}\bone_{\{\varpi\geq\frac12\}}+ K^{-2}\E[(\Delta\chi_1)^2]\bone_{\{\varpi\leq \frac12\}}]$, which completes the proof of \eqref{eq:as-bounds}.
\end{proof}

\subsection{Proof of Technical Results from Appendix~\ref{sec:proof}}

\begin{proof}[Proof of Lemma~\ref{lem:linearize}] Expanding the product, we can decompose $V^n-\wt V^n$ into the sum of four differences, all of which can be treated similarly. Therefore, we only detail the proof that $V^{\prime n}-\wt V^{\prime n}\stackrel{\P}{\longrightarrow}0$, where 
	\begin{equation}\label{eq:Vn-prime}\begin{split}
			V^{\prime n}&=\frac{ k_n }{\sqrt{\lvert\calj_n\rvert }}\sum_{j\in\calj_n} \Delta^n_{2j} \wt \cf( \wt{u}^n_j)  \Delta^n_{2j-2} \wt \cf(\wt u^n_{j-1}),\\	\wt V^{\prime n}&=\frac{k_n}{\sqrt{\lvert \calj_n\rvert}}\sum_{j\in\calj_n}\Re\biggl\{\frac{ \Delta^n_{2j-2} \wt L^{\mathrm{v}} (\wt u^n_{j-1})}{\Lf(\call^n_{j-1}(\wt u^n_{j-1}))}\biggr\}\Re\biggl\{\frac{ \Delta^n_{2j} \wt L^{\mathrm{v}} (\wt u^n_{j})}{\Lf(\call^n_j(\wt u^n_j))}\biggr\}.
	\end{split}\end{equation}
	To this end, 
	we further introduce
	\begin{equation}\label{eq:V-2} 
		\ov V^{\prime n}=\frac{k_n}{\sqrt{\lvert \calj_n\rvert}}\sum_{j\in\calj_n}\Delta^n_{2j} \ov \cf( \wt{u}^n_j) \Delta^n_{2j-2} \ov \cf(\wt u^n_{j-1}),
	\end{equation}
	where  $\Delta^n_j \ov \cf(u)=\ov \cf^n_j(u)-\ov \cf^n_{j-1}(u)$ and
	\begin{equation}\label{eq:cbar}\begin{split}
			\ov \cf^n_j(u)&=\log\log \lvert \wt L^{n,\mathrm{s}}_j(u)\rvert^{-1} + \Re \biggl\{ \frac{\wt L^{n,\mathrm{v}}_j(u)+\wt L^{n,\mathrm{b}}_j(u)}{  \Lf(\wt L^{n,\mathrm{s}}_j(u)) }\biggr\}\\
			&\quad-\frac12\Re\biggl\{\frac{(\wt L^{n,\mathrm{v}}_j(u)+\wt L^{n,\mathrm{b}}_j(u))^2}{\wt L^{n,\mathrm{s}}_j(u)\Lf(\wt L^{n,\mathrm{s}}_j(u))} \biggr\}-\frac12\biggl(\Re\biggl \{ \frac{\wt L^{n,\mathrm{v}}_j(u)+\wt L^{n,\mathrm{b}}_j(u)}{\Lf(\wt L^{n,\mathrm{s}}_j(u)) }\biggr\}\biggr)^2.
			% \ov c^n_j(u)&=\Re ( \Log \wt L^{n,\mathrm{s}}_j(u) )+ \Re\biggl(\frac{\wt L^{n,\mathrm{v}}_j(u)+\wt L^{n,\mathrm{b}}_j(u)}{\wt L^{n,\mathrm{s}}_j(u)}\biggr)-\frac12\Re\biggl(\frac{\wt L^{n,\mathrm{v}}_j(u)+\wt L^{n,\mathrm{b}}_j(u)}{\wt L^{n,\mathrm{s}}_j(u)}\biggr)^2.
	\end{split}\end{equation}
	
	%and $\Delta^n_{j}\ov{c}(u)=\ov{c}_{j}^n(u)-\ov{c}_{j-1}^n(u)$. 
	Recalling the set $\Om_n$ from Lemma~\ref{lem:Om},
	we decompose
	$  V^{\prime n}	=\ov V^{\prime n}+B^n_1+B^n_2 + B^n_3$,
	where 
	\begin{align*}
		B^n_1&= \frac{k_n}{\sqrt{\lvert \calj_n\rvert}}\sum_{j\in\calj_n}(\Delta^n_{2j} \wt \cf( \wt{u}^n_j)-\Delta^n_{2j} \ov \cf( \wt{u}^n_j))  \Delta^n_{2j-2} \wt \cf(\wt u^n_{j-1})\bone_{\Om_n}, \\
		B^n_2	&=\frac{k_n}{\sqrt{\lvert \calj_n\rvert}}\sum_{j\in\calj_n}\Delta^n_{2j} \ov \cf( \wt{u}^n_j) ( \Delta^n_{2j-2} \wt \cf(\wt u^n_{j-1})- \Delta^n_{2j-2} \ov \cf(\wt u^n_{j-1}))\bone_{\Om_n}
	\end{align*}
	and $B^n_3=(  V^{\prime n}-\ov V^{\prime n})\bone_{\Om_n^c}$.  By Lemma~\ref{lem:Om}, we have $  B^n_3 \to 0$ in probability. For the other two terms,  we may assume that we are on the set $\Om_n$. In fact, we will tacitly do so for the remainder of this section and write  $\E[X\bone_{\Om_n}]$ for $\E[X]$. Also, we will often use the fact that $o'(1)/\theta_n^4\to0$ by (\ref{eq:rates}). On $\Om_n$, we have $\lvert\Lf( \wt L^{n,\rm s}_{2j-\ell}(\wt u^n_j))\rvert\geq C\theta_n^2$ and $\lvert  \wt  L^{n,\mathrm{v}}_{2j-\ell}(\wt u^n_j)\rvert\vee\lvert  \wt  L^{n,\mathrm{b}}_{2j-\ell}(\wt u^n_j)\rvert\leq \Den^{1/9}$ for all  $\ell\in\{0,1\}$ and $j=1,\dots, \lfloor T/(2p_n\Den)\rfloor$. So if $n$ is large, we have $\lvert (\wt  L^{n,\mathrm{v}}_{2j-\ell}(\wt u^n_j)+\wt  L^{n,\mathrm{b}}_{2j-\ell}(\wt u^n_j))/\Lf(\wt  L^{n,\mathrm{s}}_{2j-\ell}(\wt u^n_j))\rvert \leq \frac12$. Upon realizing that $\ov\cf^n_j(u)$ is a second-order expansion of $\wt \cf^n_j(u)=\log\log \lvert \wt L^n_j(u)\rvert^{-1}=\log \Re \Log \wt L^n_j(u)^{-1}$ around $\wt L^{n,\rm s}_j(u)$, we can use Lemma~\ref{lem:bounded} to derive
	\begin{equation}\label{eq:c-expand}\begin{split}
			\ov	\E^n_{[2j-2]^m_n}	\bigl[\lvert\wt \cf^n_{2j-\ell}(\wt u^n_j)-\ov \cf^n_{2j-\ell}(\wt u^n_j)\rvert^p\bigr]&\leq C\ov\E^n_{[2j-2]^m_n}\Biggl[\biggl\lvert\frac{\wt  L^{n,\mathrm{v}}_{2j-\ell}(\wt u^n_j)+\wt  L^{n,\mathrm{b}}_{2j-\ell}(\wt u^n_j)}{\Lf(\wt  L^{n,\mathrm{s}}_{2j-\ell}(\wt u^n_j))}\biggr\rvert^{3p}\Biggr] \\
			&\leq C(k_n^{-3p/2}+\Den^{3p/2})/\theta_n^{6p}\leq Ck_n^{-3p/2}/\theta_n^{6p}.
		\end{split}
	\end{equation}
	Similarly,   for large enough $n$ and because $p_n\Den\leq 1/k_n$ by (\ref{eq:rates}), we have
	\[ \ov\E^n_{[2j-2]^m_n}\bigl[ \lvert\Delta^n_{2j} \wt \cf(\wt u^n_j)\rvert^2\bigr] \leq \ov\E^n_{[2j-2]^m_n}\biggl[ \biggl(\frac{\lvert\Delta^n_{2j} \wt  L(\wt u^n_j)\rvert}{\textstyle\bigwedge_{\ell=0,1} \lvert \Lf( \wt  L^n_{2j-\ell}(\wt u^n_j))\rvert }\biggr)^2\biggr]\leq Ck_n^{-1}/\theta_n^4. \]
	Analogously, $\ov\E^n_{[2j-2]^m_n}\bigl[ \lvert\Delta^n_{2j} \ov \cf(\wt u^n_j)\rvert^2\bigr] \leq Ck_n^{-1}/\theta_n^4$.  
	Hence, by the Cauchy--Schwarz inequality, we have
	$\E [ \lvert B^n_1\rvert ] \vee \E [\lvert B^n_2 \rvert ]\leq C\times k_n\sqrt{ p_n\Den} \times    1/ (p_n\Den)   \times k_n^{-1/2}/\theta_n^2\times k_n^{-3/2}/\theta_n^6\leq C/\sqrt{k_n^3\Den\theta_n^{16}}$, which  converges to $0$ by (\ref{eq:rates}).
	%The $p$th moment  $\E_{(j-1)p_n}[\cdot]$ of the supremum over $u\in\calu$ of \eqref{eq:c-expand} is bounded by $C(k_n^{-3p/2}+\Den^{3p/2})\leq Ck_n^{-3p/2}$ (see Lemma~\ref{lem:bounded}). In addition, 
	
	Next, 	we want to replace $ \Delta^n_{2j} \ov \cf(\wt u^n_j)$   by a simpler expression $ \Delta^n_{2j}\ov \cf(\wt u^n_j)_{\rm{s}}$ ($\rm{s}$ for ``simplified''), to be determined in several steps in the following, and accordingly replace $\ov V^{\prime n}$ by 
	\begin{equation}\label{eq:Vbar-s} 
		\ov V^{\prime n}_{\rm s}= \frac{k_n}{\sqrt{\lvert \calj_n\rvert}}\sum_{j\in\calj_n} \Delta^n_{2j}\ov{\cf}( \wt{u}^n_j )_{\rm s} \Delta^n_{2j-2}\ov{\cf}( \wt{u}^n_{j-1} )_{\rm s},
	\end{equation} 
	%where  $ \un\Delta^n_j \ov \cf(u,u')_{\rm{s}}= \Delta^n_j \ov \cf(u)_{\rm{s}}- \Delta^n_{j-\dsone} \ov \cf(u')_{\rm{s}}$.
	Clearly,
	\begin{align*}
		\ov V^{\prime n}-\ov V^{\prime n}_{\rm s}&=\frac{k_n}{\sqrt{\lvert \calj_n\rvert}}\sum_{j\in\calj_n}\Bigl[( \Delta^n_{2j}\ov{\cf}( \wt{u}^n_j )- \Delta^n_{2j}\ov{\cf}( \wt{u}^n_j )_{\rm s}) \Delta^n_{2j-2}\ov{\cf}( \wt{u}^n_{j-1} )\\
		&\quad+  \Delta^n_{2j}\ov{\cf}( \wt{u}^n_j )_{\rm s} ( \Delta^n_{2j-2}\ov{\cf}( \wt{u}^n_{j-1} )- \Delta^n_{2j-2}\ov{\cf}( \wt{u}^n_{j-1} )_{\rm s})\Bigr],
	\end{align*}
	where $ \Delta^n_{2j-2}\ov{\cf}( \wt{u}^n_{j-1} )$ is and $ \Delta^n_{2j-2}\ov{\cf}( \wt{u}^n_{j-1} )_{\rm{s}}$ will be chosen as $\calh^n_{(2j-3)p_n+k_n}$-measurable. 
	Since $\ov\E^n_{[2j-2]^m_n}[ \lvert \Delta^n_{2j}\ov{\cf}( \wt{u}^n_j )\rvert]=O(k_n^{-1/2}/\theta_n^2)$  and $(2j-3)p_n+k_n\leq [2j-2]^m_n$ by (\ref{eq:rates}), 
	we only have to make sure   $\Delta^n_{2j}\ov \cf(\wt u^n_j)_{\rm{s}}$  is chosen such that $\ov\E^n_{[2j-2]^m_n}[ \lvert \Delta^n_{2j}\ov{\cf}( \wt{u}^n_j )_{\rm s}\rvert]\leq Ck_n^{-1/2}/\theta_n^2$ on the one hand  and $\ov\E^n_{[2j-2]^m_n}[\lvert \Delta^n_{2j}\ov{\cf}( \wt{u}^n_j )- \Delta^n_{2j}\ov{\cf}( \wt{u}^n_j )_{\rm{s}}\rvert]=o(\sqrt{\Den}\theta_n^2)$ on the other hand. Then the last display will  converge  to $0$ in $L^1$.
	
	To get the first version of $\Delta^n_{2j}\ov \cf(\wt u^n_j)_{\rm{s}}$, we expand the increment $ \Delta^n_{2j}\ov \cf(\wt u^n_j)$ using \eqref{eq:cbar} and omit all terms that are $o(\sqrt{\Den}\theta_n^2)$. Simplifying terms, we arrive at $	\Delta^n_{2j}\ov \cf(\wt u^n_j)_{\rm{s}}=  \Delta^n_{2j}\ov \cf(\wt u^n_j)^{\rm I}_{\rm s}+\Delta^n_{2j}\ov \cf(\wt u^n_j)^{\rm II}_{\rm s}-\Delta^n_{2j}\ov \cf(\wt u^n_j)^{\rm III}_{\rm s}-\Delta^n_{2j}\ov \cf(\wt u^n_j)^{\rm IV}_{\rm s}-\Delta^n_{2j}\ov \cf(\wt u^n_j)^{\rm V}_{\rm s}$, where
	\begin{align*}
		\Delta^n_{j}\ov \cf(u)_{\rm{s}}^{\rm I}&=\Re\Biggl\{\frac{\Delta^n_{j}\wt L^{\mathrm{s}}(u)}{\Lf(\wt L^{n,\mathrm{s}}_{j-1}(u))}\Biggr\}, \qquad\Delta^n_{j}\ov \cf(u)_{\rm{s}}^{\rm II}=  \Re\Biggl\{\frac{\Delta^n_{j} \wt L^{\mathrm{v}}(u)}{\Lf(\wt L^{n,\mathrm{s}}_{j-1}(u))}\Biggr\},\\
		\Delta^n_{j}\ov \cf(u)_{\rm{s}}^{\rm III}&= \Re\Biggl\{ \frac{(\wt L^{n,\rm v}_{j}(u))^2-(\wt L^{n,\rm v}_{j-1}(u))^2}{2\wt L^{n,\mathrm{s}}_{{j}-1}(u)\Lf(\wt L^{n,\mathrm{s}}_{j-1}(u))}\Biggr\}, \qquad\Delta^n_{j}\ov \cf(u)_{\rm{s}}^{\rm IV}=\Re\Biggl\{ \frac{\Delta^n_{j}\wt L^{\mathrm{s}}(u)\wt L^{n,\rm v}_{j}(u)}{(\Lf(\wt L^{n,\mathrm{s}}_{j-1}(u)))^2}\Biggr\}, \\
		\Delta^n_{j}\ov \cf(u)_{\rm{s}}^{\rm V}&= \frac12\biggl[\biggl( \Re\biggl\{\frac{\wt L^{n,\mathrm{v}}_{j}(u)}{\Lf(\wt L^{n,\mathrm{s}}_{j-1}(u))}\biggr\}\biggr)^2-\biggl(\Re\biggl\{\frac{\wt L^{n,\mathrm{v}}_{j-1}(u)}{\Lf(\wt L^{n,\mathrm{s}}_{j-1}(u))}\biggr\}\biggr)^2\biggr] .
	\end{align*}

	%But we can go further: Since $\mathrm{III}(b) \leq \frac12$ on $\Om_n$ for large $n$, Lemma~\ref{lem:bounded} actually shows that we can drop $\wt  L^{n,\mathrm{b}}_{2j}(\wt u^n_j)$ in $\mathrm{IV}$ and replace $\mathrm{III}(b)$ by $1$. Similarly, we can remove $\Delta^n_{2j}\wt  L^{\rm b}(\wt u^n_j)$ from $\mathrm{II}$ and both $\wt  L^{n,\mathrm{b}}_{2j}(\wt u^n_j)$ and $\wt  L^{n,\mathrm{b}}_{2j-1}(\wt u^n_j)$ from $\mathrm{III}(a)$. We  can therefore choose
	%	\begin{align*}
	%		&\Delta^n_{2j} \ov c(\wt u^n_j)_{\rm{s}}\\
	%		&\quad	= \Re\Biggl\{\underbrace{\frac{\Delta^n_{2j} \wt L^{\mathrm{s}}(\wt u^n_j)}{\wt L^{n,\mathrm{s}}_{{2j}-1}(\wt u^n_j)}}_{\mathrm{I}}+\underbrace{\frac{\Delta^n_{2j} \wt L^{\mathrm{v}}(\wt u^n_j)}{\wt L^{n,\mathrm{s}}_{{2j}-1}(\wt u^n_j)}}_{\mathrm{II}'}\underbrace{\biggl(1-\frac{\wt L^{n,\mathrm{v}}_{{2j}-1}(\wt u^n_j)}{2\wt L^{n,\mathrm{s}}_{{2j}-1}(\wt u^n_j)}-\frac{\wt L^{n,\mathrm{v}}_{{2j}}(\wt u^n_j)}{2\wt L^{n,\mathrm{s}}_{{2j}}(\wt u^n_j)}\biggr)}_{\mathrm{III}'} -\underbrace{\frac{\wt L^{n,\mathrm{v}}_{{2j}}(\wt u^n_j)\Delta^n_{2j} \wt L^{\mathrm{s}}(\wt u^n_j)}{\wt L^{n,\mathrm{s}}_{{2j}-1}(\wt u^n_j)\wt L^{n,\mathrm{s}}_{{2j}}(\wt u^n_j)}}_{\mathrm{IV}'}\Biggr\}. 
	%	\end{align*}
	
	We start by investigating the contribution of   $\Delta^n_{2j}\ov \cf(\wt u^n_j)_{\rm{s}}^{\rm I}$.  Our goal is to show that 
	$
	D^n_1=\frac{k_n}{\sqrt{\lvert \calj_n\rvert}}\sum_{j\in\calj_n}  \Delta^n_{2j-2}\ov\cf (\wt u^n_{j-1} )_{\rm s} \Delta^n_{2j}\ov \cf(\wt u^n_j)_{\rm{s}}^{\rm I} 
	$
	is negligible. To this end, it suffices to show that
	\begin{equation*}
		\wt D^n_1=\frac{k_n}{\sqrt{\lvert \calj_n\rvert}}\sum_{j\in\calj_n}  \Delta^n_{2j-2}\ov\cf (\wt u^n_{j-1} )_{\rm s} \ov \E^n_{(2j-2)p_n}[\Delta^n_{2j}\ov \cf(\wt u^n_j)_{\rm{s}}^{\rm I}]
	\end{equation*}
	vanishes asymptotically. Indeed, since $  \Delta^n_{2j-2}\ov\cf (\wt u^n_{j-1} )_{\rm s}$ is $\calh^n_{(2j-2)p_n}$-measurable, the $j$th term that appears in the difference $D^n_1-\wt D^n_1$ will be $\calh^n_{2jp_n}$-measurable with a zero $\calh^n_{(2j-2)p_n}$-conditional expectation. Therefore, $D^n_1-\wt D^n_1$ is a martingale sum and  consequently, we have $\E[\lvert D^n_1-\wt D^n_1\rvert^2]= O(k_n^2p_n\Den\times 1/(p_n\Den)\times k_n^{-1}/\theta_n^4\times p_n\Den/\theta_n^4)=O(k_n^2\Den/\theta_n^8)=o(1)$ by (\ref{eq:rates}). Next, consider the term $ \ov \E^n_{(2j-2)p_n}[\Delta^n_{2j}\ov \cf(\wt u^n_j)_{\rm{s}}^{\rm I}]=   \E^n_{(2j-2)p_n}[\Delta^n_{2j}\ov \cf(\wt u^n_j)_{\rm{s}}^{\rm I}]$. Recalling \eqref{eq:XY}, we can use \eqref{eq:vp-conv} and \eqref{eq:psi} to show that 
	\begin{equation}\label{eq:aux25} 
		1-Y^{\prime n}(\wt u^n_j)_{(i-1)\Den}=O(\sqrt{\Den}).
	\end{equation}
	Moreover, $Y^n$ is $\frac12$-H\"older continuous in squared mean, so we have $\ov \E^n_{(2j-2)p_n}[\lvert\wt L^{n,\rm s}_{2j-1}(\wt u^n_j) - Y^n(\wt u^n_j)_{(2j-2)p_n\Den}\rvert^2]\leq Cp_n\Den$. Therefore, instead of $\wt D^n_1$, % in the limit as $n\to\infty$, we can replace $\wt L^{n,\rm s}_{2j-1}(\wt u^n_j)$ in  $\Delta^n_{2j}\ov \cf(\wt u^n_j)_{\rm{s}}^{\rm I}$ by $Y^n(\wt u^n_j)_{(2j-2)p_n\Den}$.
	it is enough to consider
	\begin{align*}
		\wh D^n_1&=\frac{k_n}{\sqrt{\lvert \calj_n\rvert}}\sum_{j\in\calj_n}  \Delta^n_{2j-2}\ov\cf (\wt u^n_{j-1} )_{\rm s} \Re\biggl\{\frac{  \E^n_{(2j-2)p_n} [\Delta^n_{2j}\wt L^{\mathrm{s}}(\wt u^n_j)]}{\Lf(Y^n(\wt u^n_j)_{(2j-2)p_n\Den})}\biggr\}.
	\end{align*}
	Note that $X^n(u)$ from \eqref{eq:XY} is an It\^o semimartingale with bounded coefficients (uniformly in sufficiently small $u$ such that $\Psi(uK)\geq\frac12$). Thus,
	by \eqref{eq:Delta-Ls}, \eqref{eq:aux23} and a first-order expansion, 
	%so the $L^2$-norm of the last term in the previous display is $O(\sqrt{p_n\Den^2})$, which is negligible by (\ref{eq:rates}). Because $\al$ is continuous in $L^2$ and we have \eqref{eq:vp-conv},  the $L^2$-norm of the first term in the previous display is  also $o(1)$. As a result, only the second term remains, which thanks to the mean-value theorem and the semimartingale property of $Y^n(u)$ is given by
	\begin{align*}
		\E^n_{(2j-2)p_n} [\Delta^n_{2j} \wt L^{\mathrm{s}}(\wt u^n_j)]&	= \frac{Y^n(\wt u^n_j)_{(2j-2)p_n\Den}}{k_n}\sum_{i=(2j-1)p_n+1}^{(2j-1)p_n+k_n} \E^n_{(2j-2)p_n} [(X^n(\wt u^n_j)_{(i-1)\Den}\\
		&\quad -X^n(\wt u^n_j)_{(i-p_n-1)\Den})] +O_p(p_n\Den)  =  O_p(p_n\Den),
	\end{align*}
	where $O_p(p_n\Den)$ signifies a term whose $\calf_{(2j-2)p_n\Den}$-conditional $L^1$-norm is $O(p_n\Den)=o'(\sqrt{\Den})$, uniformly in $\om$. This shows that $\wh D^n_1$ is negligible.

	We proceed to $\Delta^n_{2j-2} \ov \cf(\wt   u^n_{j-1})_{\rm{s}}^{\rm I}$ and consider
	\begin{equation}\label{eq:aux2} 
		\frac{k_n}{\sqrt{\lvert \calj_n\rvert}}\sum_{j\in\calj_n} \Re\biggl\{ \frac{\Delta^n_{2j-2} \wt L^{\mathrm{s}}(\wt   u^n_{j-1})}{\Lf(\wt L^{n,\mathrm{s}}_{2j-3}(\wt   u^n_{j-1}) )}\biggr\}\Delta^n_{2j}\ov{\cf} (\wt u^n_j)_{\rm{s}}.
	\end{equation}
	Revisiting the proof of \eqref{eq:L'''-3}, one can actually show   $\E^n_{[2j-2]^m_n}[\lvert\Delta^n_{2j} \wt L^{\mathrm{s}}(\wt   u^n_{j})\rvert^2]\leq Cp_n\Den\theta_n^2$.\footnote{This is because we can improve the right-hand side of \eqref{eq:aux22} to $Cp_n\Den\theta_n^2$ if $U=\wt u^n_j$.}
	Since $\E[\lvert\Delta^n_{2j}\ov{\cf} (\wt u^n_j)_{\rm{s}}^{\rm IV}\rvert^2]^{1/2}\leq C\sqrt{p_n\Den/(k_n\theta_n^8)}\leq C\sqrt{\Den}/\theta_n^4$, its contribution to \eqref{eq:aux2} is
	a term whose $L^1$-norm is   $O( k_n\sqrt{p_n\Den}\times  1/(p_n\Den)\times\sqrt{p_n\Den}\times \sqrt{ \Den }/\theta_n^4)=O(\sqrt{k_n^2\Den/\theta_n^8})$, which is negligible by (\ref{eq:rates}). Since  $\Delta^n_{2j}\ov \cf(\wt u^n_j)_{\rm{s}}^{\rm I}$ can be omitted  as we have shown, we can replace $\Delta^n_{2j}\ov \cf(\wt u^n_j)_{\rm{s}}$ in \eqref{eq:aux2} by $\Delta^n_{2j}\ov \cf(\wt u^n_j)_{\rm{s}}^{\rm II}+\Delta^n_{2j}\ov \cf(\wt u^n_j)_{\rm{s}}^{\rm III}$. As above, because $Y^n(u)$ is an It\^o semimartingale with bounded coefficients (for $u$ sufficiently small) and $1-Y^{\prime n}(\wt u^n_j)_{(i-1)\Den}=O(\sqrt{\Den})$, we have $\E^n_{[2k-2]^m_n}[ \lvert\wt L^{n,\mathrm{s}}_{2k-\ell}(\wt u^n_k) - Y^n(\wt u^n_k)_{[2k-2]^m_n\Den}\rvert^2]^{1/2}\leq C\sqrt{p_n\Den}$ for $\ell\in\{0,1\}$ and $k\in\{j-1,j\}$. As a consequence, instead of \eqref{eq:aux2}, it is sufficient to further analyze
	\begin{equation}\label{eq:aux4} \begin{split} 
			&\frac{k_n}{\sqrt{\lvert \calj_n\rvert}}\sum_{j\in\calj_n}\Re\biggl\{\frac{\Delta^n_{2j-2} \wt L^{\mathrm{s}}(\wt u^n_{j-1})}{\Lf(Y^n(\wt u^n_{j-1})_{[2j-4]^m_n\Den})}\biggr\}\\
			&\quad \times \Biggl(\Re\biggl\{\frac{\Delta^n_{2j} \wt L^{\mathrm{v}}(\wt u^n_j)}{\call(Y^n(\wt u^n_j)_{[2j-2]^m_n\Den})}  +\frac{(\wt L^{n,\mathrm{v}}_{2j-1}(\wt u^n_j))^2-(\wt L^{n,\mathrm{v}}_{2j}(\wt u^n_j))^2}{2Y^n(\wt u^n_{j})_{[2j-2]^m_n\Den}\Lf(Y^n(\wt u^n_{j})_{[2j-2]^m_n\Den})}\biggr\}\\
			&\quad     +\frac12\biggl[\biggl(\Re\biggl\{\frac{\wt L^{n,\rm v}_{2j}(\wt u^n_j)}{\Lf(Y^n(\wt u^n_{j})_{[2j-2]^m_n\Den})}\biggr\}\biggr)^2-\biggl(\Re\biggl\{\frac{\wt L^{n,\rm v}_{2j-1}(\wt u^n_j)}{\Lf(Y^n(\wt u^n_{j})_{[2j-2]^m_n\Den})}\biggr\}\biggr)^2\biggr]\Biggr). \end{split}
	\end{equation}
	Since  $\ov\E^n_{[2j-2]^m_n\Den}[ \Delta^n_{2j}\wt L^{\mathrm{v}}(\wt u^n_j)]=0$, %\\
	%\E^n_{(2j-2)p_n}[\Delta^n_{2j-2}\wt L^{\mathrm{s}}(u)\wt L^{n,\mathrm{v}}_{2j-1}(u)\wt L^{n,\mathrm{v}}_{2j}(u)]&=\E^n_{(2j-2)p_n}[\Delta^n_{2j-2}\wt L^{\mathrm{s}}(u)\wt L^{n,\mathrm{v}}_{2j-1}(u)\E^n_{(2j-1)p_n}[\wt L^{n,\mathrm{v}}_{2j}(u)]]=0, 
	a martingale argument (along the lines of the analysis of $D^n_1$) reduces \eqref{eq:aux4} to 
	\begin{equation}\begin{split}
			&	\frac{k_n}{\sqrt{\lvert \calj_n\rvert}}\sum_{j\in\calj_n}\Re\biggl\{\frac{\Delta^n_{2j-2} \wt L^{\mathrm{s}}(\wt u^n_{j-1})}{\Lf(Y^n(\wt u^n_{j-1})_{[2j-4]^m_n\Den})}\biggr\} \Biggl(\Re\biggl\{ \frac{\ov\E^n_{[2j-2]^m_n}[ (\wt L^{n,\mathrm{v}}_{2j-1}(\wt u^n_j))^2-(\wt L^{n,\mathrm{v}}_{2j}(\wt u^n_j))^2]}{2Y^n(\wt u^n_{j})_{[2j-2]^m_n\Den}\Lf(Y^n(\wt u^n_{j})_{[2j-2]^m_n\Den})}\biggr\}\\
			&\quad +\frac12\ov\E^n_{[2j-2]^m_n}\biggl[\biggl(\Re\biggl\{\frac{\wt L^{n,\rm v}_{2j}(\wt u^n_j)}{\Lf(Y^n(\wt u^n_{j})_{[2j-2]^m_n\Den})}\biggr\}\biggr)^2-\biggl(\Re\biggl\{\frac{\wt L^{n,\rm v}_{2j-1}(\wt u^n_j)}{\Lf(Y^n(\wt u^n_{j})_{[2j-2]^m_n\Den})}\biggr\}\biggr)^2\biggr]\Biggr).\end{split}\label{eq:aux5}
	\end{equation}
	Now, recall the notation $u_n=u/\Den^{\varpi\wedge 1/2}$ and observe that for $\ell\in\{0,1\}$,
	\begin{equation}\label{eq:aux9}\begin{split}
			&\ov	\E^n_{[2j-2]^m_n}[(\wt L^{n,\mathrm{v}}_{2j-\ell}(\wt u^n_j))^2]	=\frac1{k_n^2} \sum_{i=(2j-\ell-1)p_n+1}^{(2j-\ell-1)p_n+k_n} \ov\E^n_{[2j-2]^m_n}\Bigl[ \bigl( e^{i(\wt u^n_j)_n\Delta^n_i y } - \E^n_{i-1}[e^{i(\wt u^n_j)_n\Delta^n_i y}]\bigr)^2 \Bigr]\\
			&\quad+\frac2{k_n^2} \sum_{i'<i} \ov\E^n_{[2j-2]^m_n}\Bigl[ \bigl( e^{i(\wt u^n_j)_n\Delta^n_i y } - \E^n_{i-1}[e^{i(\wt u^n_j)_n\Delta^n_i y}]\bigr)  \bigl( e^{i(\wt u^n_j)_n\Delta^n_{i'} y } - \E^n_{i-1}[e^{i(\wt u^n_j)_n\Delta^n_{i'} y}]\bigr)\Bigr],
		\end{split}
	\end{equation}
	where the second sum is taken over all $(2j-\ell-1)p_n+1\leq i'<i\leq (2j-\ell-1)p_n+k_n$. The covariance term in \eqref{eq:aux9} is zero whenever $\lvert i-i'\rvert\geq m+2$.
	%	Recalling the definition of $Z(u,s)_t$ from the proof of Lemma~\ref{lem:bounded}, 
	We want to replace $e^{i(\wt u^n_j)_n\Delta^n_i y} = e^{\Delta^n_i \Theta((\wt u^n_j)_n)}Z^n_i((\wt u^n_j)_n)_{i\Den}e^{i(\wt u^n_j)_n\Delta^n_i \eps}$ everywhere in \eqref{eq:aux9} by   $\wh \xi^{n,i}_j=e^{\Delta \Theta((\wt u^n_j)_n)^n_{[2j-2]^m_n}}Z^n_i((\wt u^n_j)_n)_{i\Den} \times e^{i(\wt u^n_j)_n\Delta^n_{i,[2j-2]^m_n} \eps}$, and similarly for the terms indexed by $i'$. As 
	\begin{equation}\label{eq:aux27}\begin{split}
			\ov	\E^n_{[2j-2]^m_n}\Bigl[\bigl\lvert ( e^{\Delta^n_i \Theta((\wt u^n_j)_n)}-e^{\Delta \Theta((\wt u^n_j)_n)^n_{[2j-2]^m_n}})Z^n_i((\wt u^n_j)_n)_{i\Den}e^{i(\wt u^n_j)_n\Delta^n_i \eps}\bigr\rvert^2\Bigr] &\leq C {p_n\Den}, \\
			\ov	\E^n_{[2j-2]^m_n}\Bigl[\bigl\lvert e^{\Delta \Theta((\wt u^n_j)_n)^n_{[2j-2]^m_n}}Z^n_i((\wt u^n_j)_n)_{i\Den}(e^{i(\wt u^n_j)_n\Delta^n_i \eps}-e^{i(\wt u^n_j)_n\Delta^n_{i,[2j-2]^m_n} \eps})\bigr\rvert^2\Bigr] &\leq C {p_n\Den}
	\end{split}\end{equation} with some $C$ that is uniform in $\om$ and $j$,\footnote{If $\varpi\leq \frac34$, the second bound follows from the assumption that  $\rho$ is an It\^o semimartingale with bounded coefficients. If $\varpi>\frac34$, we use the fact that $u_n\Den^{\varpi} = \Den^{\varpi-1/2}=o(\sqrt{p_n\Den})$ by (\ref{eq:rates}).}  we have
	\begin{equation}\label{eq:aux6-2}\begin{split}
			&\ov	\E^n_{[2j-2]^m_n}[(\wt L^{n,\mathrm{v}}_{2j-\ell}(\wt u^n_j))^2] =	\frac1{k_n^2} \sum_{i=(2j-\ell-1)p_n+1}^{(2j-\ell-1)p_n+k_n} \ov\E^n_{[2j-2]^m_n}\Bigl[ \bigl( \wh \xi^{n,i}_j-\E^n_{i-1}[\wh \xi^{n,i}_j]\bigr)^2\Bigr]\\
			&\quad+\frac{2}{k_n^2} \sum_{i'<i}\ov  \E^n_{[2j-2]^m_n}\Bigl[ \bigl( \wh \xi^{n,i}_j-\E^n_{i-1}[\wh \xi^{n,i}_j]\bigr)\bigl( \wh \xi^{n,i'}_j-\E^n_{i-1}[\wh \xi^{n,i'}_j]\bigr)\Bigr]+O(\sqrt{\Den/k_n}).
	\end{split}\end{equation}
	Since $\E^n_{i-1}[\wh\xi^{n,i}_j] = e^{\Delta \Theta((\wt u^n_j)_n)^n_{[2j-2]^m_n}}\Psi(\wt u^n_j \Den^{(\varpi-1/2)_+} \rho_{[2j-2]^m_n\Den})$, we have
	\begin{equation}\label{eq:aux6}\begin{split}
			&\ov\E^n_{[2j-2]^m_n} [  ( \wh \xi^{n,i}_j-\E^n_{i-1}[\wh \xi^{n,i}_j] )^2 ] \\
			& \quad =e^{2\Delta \Theta((\wt u^n_j)_n)^n_{[2j-2]^m_n}}\ov\E^n_{[2j-2]^m_n} [ e^{2i(\wt u^n_j)_n\Delta^n_{i,[2j-2]^m_n} \eps}( Z^n_i((\wt u^n_j)_n)_{i\Den}-1)^2]\\
			& \qquad+e^{2\Delta \Theta((\wt u^n_j)_n)^n_{[2j-2]^m_n}}\ov\E^n_{[2j-2]^m_n}[(e^{i(\wt u^n_j)_n\Delta^n_{i,[2j-2]^m_n} \eps}-\Psi(\wt u^n_j \Den^{(\varpi-1/2)_+} \rho_{[2j-2]^m_n\Den}))^2]\\
			& \qquad+e^{2\Delta \Theta((\wt u^n_j)_n)^n_{[2j-2]^m_n}}\ov\E^n_{[2j-2]^m_n}[e^{i(\wt u^n_j)_n\Delta^n_{i,[2j-2]^m_n} \eps}( Z^n_i((\wt u^n_j)_n)_{i\Den}-1)\\
			&\qquad \qquad\qquad\qquad\qquad\times(e^{i(\wt u^n_j)_n\Delta^n_{i,[2j-2]^m_n} \eps}-\Psi(\wt u^n_j \Den^{(\varpi-1/2)_+} \rho_{[2j-2]^m_n\Den}))].
	\end{split} \end{equation}
	Recall that $Z(u,s)_t=e^{iu(x_t-x_s)-(\Theta(u)_t-\Theta(u)_s)}$ is a martingale. By    taking conditional expectation with respect to $\calf_{[2j-2]^m_n\Den}\vee \calg_\infty$ and using the fact that $\ov\E^n_{[2j-2]^m_n}[Z^n_i((\wt u^n_j)_n)_{i\Den}]= \E^n_{[2j-2]^m_n}[Z^n_i((\wt u^n_j)_n)_{i\Den}]=1$, one can show that the third term is   zero, while the first one   on the right-hand side of \eqref{eq:aux6} is equal to $e^{2\Delta \Theta((\wt u^n_j)_n)^n_{[2j-2]^m_n}}\Psi(2\wt u^n_j \Den^{(\varpi-1/2)_+} \rho_{[2j-2]^m_n\Den})$ times
	%\begin{align*}
	%	&\E^n_{(2j-2)p_n} [ ( Z^n_i((\wt u^n_j)_n)_{i\Den}-1)^2]\\
	%	&\quad=	\E^n_{(2j-2)p_n} [( Z^n_i((\wt u^n_j)_n)_{i\Den})^2 +1 -2   Z(u_n,(i-1)\Den)_{i\Den}]\\
	%	&\quad=	\E^n_{(2j-2)p_n} [e^{-2\Re(\Delta^n_i\Theta((\wt u^n_j)_n))}]+1 -2 \E^n_{(\ell-1)p_n} [ Z^n_i((\wt u^n_j)_n)_{i\Den}]\\
	%	&\quad=\E^n_{(2j-2)p_n} [e^{-2\Re(\Delta^n_i\Theta((\wt u^n_j)_n))}]-1.
	%\end{align*}
	\begin{equation}\label{eq:aux26}\begin{split}
			&  \E^n_{[2j-2]^m_n} [  ( Z^n_i((\wt u^n_j)_n)_{i\Den}-1)^2]=	\E^n_{[2j-2]^m_n} [( Z^n_i((\wt u^n_j)_n)_{i\Den})^2 +1 -2   Z^n_i((\wt u^n_j)_n)_{i\Den}]\\
			&  \quad= 	\E^n_{[2j-2]^m_n} [e^{2i(\wt u^n_j)_n\Delta^n_i x -2\Delta^n_i\Theta((\wt u^n_j)_n)}]+1 -2  \E^n_{[2j-2]^m_n} [ Z^n_i((\wt u^n_j)_n)_{i\Den}]\\
			& \quad=e^{-2\Delta\Theta((\wt u^n_j)_n)^n_{[2j-2]^m_n}} \E^n_{[2j-2]^m_n} [e^{2i(\wt u^n_j)_n\Delta^n_i x}]-1 + O(\sqrt{p_n\Den})\\
			&  \quad=e^{-2\Delta\Theta((\wt u^n_j)_n)^n_{[2j-2]^m_n}} \E^n_{[2j-2]^m_n} [e^{\Delta^n_i \Theta(2(\wt u^n_j)_n)}Z^n_i(2(\wt u^n_j)_n)_{i\Den}]-1 + O(\sqrt{p_n\Den})\\
			& \quad=e^{\Delta\Theta(2(\wt u^n_j)_n)^n_{[2j-2]^m_n}-2\Delta\Theta((\wt u^n_j)_n)^n_{[2j-2]^m_n}}-1+ O(\sqrt{p_n\Den}),
	\end{split} \end{equation}
	where the $O$-term is uniform in $\om$ and $j$. Note that the last line (apart from the $O(\sqrt{p_n\Den})$-term) is independent of $i$ (and hence, of $\ell$), and so is the second term on the right-hand side of  \eqref{eq:aux6} since $\chi$ is a stationary sequence. A similar argument shows that the same is true for the nondiagonal terms in \eqref{eq:aux9}, which means that
	%
	%\begin{equation}\label{eq:aux6-0}\begin{split}
	%&e^{-2\Delta \Theta((\wt u^n_j)_n)^n_{(2j-2)p_n}}\E^n_{(2j-2)p_n}[(\wt L^{n,\mathrm{v}}_{2j-\ell}(\wt u^n_j))^2]\\
	%&\quad=\frac{  1}{k_n^2} \sum_{i=(2j-\ell-1)p_n+1}^{(2j-\ell-1)p_n+k_n}  \E^n_{(2j-2)p_n} [ e^{2i(\wt u^n_j)_n\Delta^n_{i,(2j-2)p_n} \eps}( Z^n_i((\wt u^n_j)_n)_{i\Den}-1)^2]\\
	%&\qquad+\frac{  1}{k_n^2} \sum_{i=(2j-\ell-1)p_n+1}^{(2j-\ell-1)p_n+k_n}\E^n_{(2j-2)p_n}[(e^{i(\wt u^n_j)_n\Delta^n_{i,(2j-2)p_n} \eps}-\Psi(\wt u^n_j \Den^{(\varpi-1/2)_+} \rho_{(2j-2)p_n\Den}))^2]\\
	%&\qquad+\frac{ 1}{k_n^2} \sum_{i=(2j-\ell-1)p_n+1}^{(2j-\ell-1)p_n+k_n}  \E^n_{(2j-2)p_n}[e^{i(\wt u^n_j)_n\Delta^n_{i,(2j-2)p_n} \eps}( Z^n_i((\wt u^n_j)_n)_{i\Den}-1)\\
	%&\qquad\qquad\qquad\times(e^{i(\wt u^n_j)_n\Delta^n_{i,(2j-2)p_n} \eps}-\Psi(\wt u^n_j \Den^{(\varpi-1/2)_+} \rho_{(2j-2)p_n\Den}))]+O(\sqrt{\Den/k_n}),
	%\end{split}\end{equation}
	%	where the $O$-term is uniform in $\om$ and $j$. 
	%	
	%	 Therefore,
	%%	
	%%	Therefore,
	%%	\[ \E^n_{(2j-2)p_n}[(\wt L^{n,\mathrm{v}}_{2j-\ell}(\wt u^n_j))^2]=\frac{e^{\Delta\Theta(2(\wt u^n_j)_n)^n_{(2j-2)p_n}}-e^{2 (\Delta \Theta((\wt u^n_j)_n)^n_{(2j-2)p_n})}}{k_n} + O(\sqrt{\Den/k_n}) \]
	%%	and, because the last line does not depend on $\ell$, %Inserting this in \eqref{eq:aux5}, we conclude that 
	\begin{equation}\label{eq:diff} 
		\sup_{\om\in\Om}\sup_{j=2,\dots,\lfloor T/(2p_n\Den)\rfloor}\Bigl\lvert\ov\E^n_{[2j-2]^m_n}\Bigl[(\wt L^{n,\mathrm{v}}_{2j-1}(\wt u^n_j))^2-(\wt L^{n,\mathrm{v}}_{2j}(\wt u^n_j))^2\Bigr]\Bigr\rvert= O(\sqrt{\Den/k_n}).
	\end{equation}
	A similar argument proves that
	\begin{equation}\label{eq:diff-2}\begin{split}
			&\sup_{\om\in\Om}\sup_{j=2,\dots,\lfloor T/(2p_n\Den)\rfloor}\biggl\lvert\ov\E^n_{[2j-2]^m_n}\biggl[\biggl(\Re\biggl\{\frac{\wt L^{n,\rm v}_{2j}(\wt u^n_j)}{\Lf(Y^n(\wt u^n_{j})_{[2j-2]^m_n\Den})}\biggr\}\biggr)^2\\
			&	\qquad\qquad\qquad\qquad\qquad-\biggl(\Re\biggl\{\frac{\wt L^{n,\rm v}_{2j-1}(\wt u^n_j)}{\Lf(Y^n(\wt u^n_{j})_{[2j-2]^m_n\Den})}\biggr\}\biggr)^2\biggr]\biggr\rvert = O(\sqrt{\Den/k_n}/\theta_n^4). \end{split}
	\end{equation}
	Therefore, the $L^1$-norm of \eqref{eq:aux5} is $O(\sqrt{k_n\Den/\theta_n^8})$, which  
	shows that $\Delta^n_{2j-2} \ov \cf(\wt u^n_{j-1})_s^{\rm I}$ has no asymptotic contribution. In other words,  we have $\E[  \lvert \ov V^{\prime n} -\ov V^{\prime n}_{\rm s}\rvert]\to0$ where $\ov V_{\rm s}^n$ is given by \eqref{eq:Vbar-s} and $	\Delta^n_{2j} \ov \cf(\wt u^n_j)_{\rm{s}}=	\Delta^n_{2j} \ov \cf(\wt u^n_j)_{\rm{s}}^{\rm II}-	\Delta^n_{2j} \ov \cf(\wt u^n_j)_{\rm{s}}^{\rm III}-	\Delta^n_{2j} \ov \cf(\wt u^n_j)_{\rm{s}}^{\rm IV}-\Delta^n_{2j} \ov \cf(\wt u^n_j)_{\rm{s}}^{\rm V}$.
	%	\begin{equation*}
	%		\Delta^n_{2j} \ov c(\wt u^n_j)_{\rm{s}}	= \Re\Biggl\{\underbrace{\frac{\Delta^n_{2j} \wt L^{\mathrm{v}}(\wt u^n_j)}{\wt L^{n,\mathrm{s}}_{{2j}-1}(\wt u^n_j)}}_{\mathrm{II}'}\underbrace{\biggl(1-\frac{\wt L^{n,\mathrm{v}}_{{2j}-1}(\wt u^n_j)}{2\wt L^{n,\mathrm{s}}_{{2j}-1}(\wt u^n_j)}-\frac{\wt L^{n,\mathrm{v}}_{2j}(\wt u^n_j)}{2\wt L^{n,\mathrm{s}}_{2j}(\wt u^n_j)}\biggr)}_{\mathrm{III}'} -\underbrace{\frac{\wt L^{n,\mathrm{v}}_{2j}(\wt u^n_j)\Delta^n_{2j} \wt L^{\mathrm{s}}(\wt u^n_j)}{\wt L^{n,\mathrm{s}}_{{2j}-1}(\wt u^n_j)\wt L^{n,\mathrm{s}}_{2j}(\wt u^n_j)}}_{\mathrm{IV}'}\Biggr\}. 
	%	\end{equation*}
	
	Our next goal is to show that $\Delta^n_{2j}\ov\cf(\wt u^n_j)^{\rm IV}_{\rm s}$ and $\Delta^n_{2j-2}\ov\cf(\wt u^n_{j-1})^{\rm IV}_{\rm s}$ can also be omitted, that is, 
	\begin{align*}
		&\frac{k_n}{\sqrt{\lvert \calj_n\rvert}}\sum_{j\in\calj_n} \Re\Biggl\{
		\frac{\wt L^{n,\mathrm{v}}_{2j-2}(\wt u^n_j)\Delta^n_{2j-2} \wt L^{\mathrm{s}}(\wt u^n_{j-1})}{(\Lf(\wt L^{n,\mathrm{s}}_{2j-3}(\wt u^n_j)))^2} \Biggr\}(\Delta^n_{2j} \ov\cf(\wt u^n_j)^{\rm II}_{\rm s} +\Delta^n_{2j} \ov\cf(\wt u^n_j)^{\rm III}_{\rm s} +\Delta^n_{2j} \ov\cf(\wt u^n_j)^{\rm V}_{\rm s} )\\
		&\quad+\frac{k_n}{\sqrt{\lvert \calj_n\rvert}}\sum_{j\in\calj_n}%\frac{\Delta^n_{2j-2} \wt L^{\mathrm{v}}(u)}{\wt L^{n,\mathrm{s}}_{2j-3}(u)} \biggl(1-\frac{\wt L^{n,\mathrm{v}}_{2j-3}(u)}{2\wt L^{n,\mathrm{s}}_{2j-3}(u)}-\frac{\wt L^{n,\mathrm{v}}_{2j-2}(u)}{2\wt L^{n,\mathrm{s}}_{2j-2}(u)}\biggr)
		\Delta^n_{2j-2}\ov \cf(\wt u^n_{j-1})_{\rm s}\Re\Biggl\{
		\frac{\wt L^{n,\mathrm{v}}_{2j}(\wt u^n_j)\Delta^n_{2j} \wt L^{\mathrm{s}}(\wt u^n_j)}{(\Lf(\wt L^{n,\mathrm{s}}_{2j-1}(\wt u^n_j)))^2} \Biggr\}
	\end{align*}
	is asymptotically negligible. By Lemma~\ref{lem:bounded}, one can readily see that $\Delta^n_{2j} \ov\cf(\wt u^n_j)^{\rm III}_{\rm s} +\Delta^n_{2j} \ov\cf(\wt u^n_j)^{\rm V}_{\rm s}$ do not contribute in the limit as $n\to\infty$. Furthermore, similarly to how we obtained \eqref{eq:aux4},
	we can replace $\wt L^{n,\mathrm{s}}_{2k-\ell}(\wt u^n_k)$ (for $\ell\in\{0,1\}$ and $k\in\{j-1,j\}$) by $Y^n(\wt u^n_k)_{[2k-2]^m_n\Den}$. A martingale argument further allows us to take $\ov\E^n_{[2j-2]^m_n}[\cdot]$ inside the sum over $j$, which also eliminates $\Delta^n_{2j} \ov\cf(\wt u^n_j)^{\rm II}_{\rm s}$. Hence, only
	\begin{align*}
		%	&\sqrt{\frac{2k_n^2p_n}{n\theta_n^8}}\sum_{j\in\calj_n} \Re\Biggl[ \frac{\wt L^{n,\mathrm{v}}_{2j-2}(\wt u^n_{j-1})\Delta^n_{2j-2} \wt L^{\mathrm{s}}(\wt u^n_{j-1})}{(Y^n(\wt u^n_{j-1})_{[2j-4]^m_n\Den})^2}\Biggr]\Re\Biggl[\frac{\ov\E^n_{[2j-2]^m_n}\bigl[ ( \wt L^{n,\mathrm{v}}_{2j-1}(\wt u^n_j))^2-( \wt L^{n,\mathrm{v}}_{2j}(\wt u^n_j))^2\bigr]}{2( Y^n(\wt u^n_j)_{[2j-2]^m_n\Den})^2}\Biggr]  \\
		\frac{k_n}{\sqrt{\lvert \calj_n\rvert}}\sum_{j\in\calj_n}\Delta^n_{2j-2}\ov{\cf} (\wt u^n_{j-1})_{\rm{s}}\Re \Biggl\{
		\frac{\ov\E^n_{[2j-2]^m_n}[\wt L^{n,\mathrm{v}}_{2j}(\wt u^n_j)\Delta^n_{2j} \wt L^{\mathrm{s}}(\wt u^n_j)]}{(\Lf(Y^n(\wt u^n_j)_{[2j-2]^m_n\Den})^2} \Biggr\}  
	\end{align*}
	has to be studied further. Here, we only need to keep the dominating part of $\Delta^n_{2j-2}\ov \cf(\wt u^n_{j-1})_{\rm s}$ (all higher-order terms only have   negligible contributions). %and $\Delta^n_{2j}\wt L^{\mathrm{s}}_1(\wt u^n_j)$ instead of $\Delta^n_{2j}\wt L^{\mathrm{s}}(\wt u^n_j)$ by Lemma~\ref{lem:bounded}. 
	Thus,  we need to analyze  
	\begin{equation}\label{eq:aux7} 
		\frac{k_n}{\sqrt{\lvert \calj_n\rvert}}\sum_{j\in\calj_n} \Re\Biggl\{ \frac{\Delta^n_{2j-2}\wt L^{\mathrm{v}}(\wt u^n_{j-1})}{\Lf(Y^n(\wt u^n_{j-1})_{[2j-4]^m_n\Den})}\Biggr\}\Re \Biggl\{
		\frac{\ov\E^n_{[2j-2]^m_n}[\wt L^{n,\mathrm{v}}_{2j}(\wt u^n_j)\Delta^n_{2j} \wt L^{\mathrm{s}}(\wt u^n_j)]}{(\Lf(Y^n(\wt u^n_j)_{[2j-2]^m_n\Den})^2} \Biggr\}  .
	\end{equation}
	We decompose $	\Delta^n_j \wt L^{\mathrm{s}}(u)=	\Delta^n_j \wt L^{\mathrm{s}}_1(u)+	\Delta^n_j \wt L^{\mathrm{s}}_2(u)+	\Delta^n_j \wt L^{\mathrm{s}}_3(u)$, where
	\begin{align*}
		\Delta^n_j \wt L^{\mathrm{s}}_1(u)&=\frac{1}{k_n}\sum_{i=(j-1)p_n+1}^{(j-1)p_n+k_n}e^{\Den\vp(u_n)_{(i-1)\Den}}\Bigl\{e^{-\frac12 u^2_nc_{(i-1)\Den}\Den+(\Log \Psi(u_n\Den^\varpi\rho_{(i-1)\Den}))\bone_{\{\varpi\leq \frac34\}}}\\
		&\qquad\qquad  \qquad\qquad -e^{-\frac12 u^2_n c_{(i-p_n-1)\Den}\Den+(\Log \Psi(u_n\Den^{\varpi} \rho_{(i-p_n-1)\Den}))\bone_{\{\varpi\leq \frac34\}}}\Bigr\}, \\
		\Delta^n_j \wt	L^{\mathrm{s}}_2(u)&=\frac{1}{k_n}\sum_{i=(j-1)p_n+1}^{(j-1)p_n+k_n}e^{-\frac12 u^2_n c_{(i-p_n-1)\Den}\Den+(\Log \Psi(u_n\Den^\varpi \rho_{(i-p_n-1)\Den}))\bone_{\{\varpi\leq \frac34\}}}\\
		&\qquad\qquad\qquad  \qquad\qquad\qquad\qquad\times(e^{\Den\vp(u_n)_{(i-1)\Den}}-e^{\Den\vp(u_n)_{(i-p_n-1)\Den}}),\\
		\Delta^n_j \wt	L^{\mathrm{s}}_3(u)&=\Delta^n_j \wt	L^{\mathrm{s}}(u)-\Delta^n_j \wt	L^{\mathrm{s}}_1(u)-\Delta^n_j \wt	L^{\mathrm{s}}_2(u).
	\end{align*}
	Since $\Delta^n_j \wt L^{\rm s}_3(u)$ contains the contributions of drift, finite variation jumps and noise (if $\varpi>\frac34$), we have $\E^n_{(2j-2)p_n}[\lvert\Delta^n_{2j}\wt L^{\rm s}_3(\wt u^n_j)\rvert^2]^{1/2}=O(\sqrt{\Den})$ by \eqref{eq:psi}. By \eqref{eq:L2}, we further have the bound $\E^n_{(2j-2)p_n}[\lvert\Delta^n_{2j}\wt L^{\rm s}_2(\wt u^n_j)\rvert^2]^{1/2}=o'(\sqrt{p_n\Den})$. Therefore, in \eqref{eq:aux7}, it suffices to keep  $\Delta^n_{2j}\wt L^{\rm s}_1(\wt u^n_j)$ instead of $\Delta^n_{2j}\wt L^{\rm s}(\wt u^n_j)$.
	Writing $\call^n(u;c,\rho)=e^{-\frac12 u^2_n \Den c+(\Log \Psi(u_n\Den^\varpi\rho))\bone_{\{\varpi\leq 3/4\}}}$, we have that $\Delta^n_{2j}\wt L^{\mathrm{s}}_1(\wt u^n_j)$---up to an error whose $\E^n_{(2j-2)p_n}[(\cdot)^2]^{1/2}$-norm is $o'(\sqrt{p_n\Den})$---is equal to 
	\[
	\frac1{k_n} \sum_{i=(2j-1)p_n+1}^{(2j-1)p_n+k_n} \Bigl(\call^n(\wt u^n_j; c_{(i-1)\Den},\rho_{(i-1)\Den})-\call^n(\wt u^n_j; c_{(i-p_n-1)\Den},\rho_{(i-p_n-1)\Den})\Bigr)\]
	or
	\begin{align*}
		&\frac{1}{k_n} \sum_{i=(2j-1)p_n+1}^{(2j-1)p_n+k_n} \biggl(   \int_{(i-p_n-1)\Den}^{(i-1)\Den} \tfrac{\partial}{\partial c} \call^n(\wt u^n_j;c_s,\rho_s) (\si_s^cdW_s + \ov \si^c_sd\ov W_s ) \\
		&\quad+ \int_{(i-p_n-1)\Den}^{(i-1)\Den} \tfrac{\partial}{\partial \rho} \call^n(\wt u^n_j;c_s,\rho_s) (\si_s^\rho dW_s + \wt \si^\rho_sd\wt W_s )\\
		&\quad +  \iint_{(i-p_n-1)\Den}^{(i-1)\Den} \Bigl(\call^n(\wt u^n_j;c_{s-}+\ga^c(s,z),\rho_{s-}+\ga^\rho(s,z))\\
		&\qquad\qquad\qquad\qquad\qquad\qquad-\call^n(\wt u^n_j;c_{s-} ,\rho_{s-} )  \Bigr)(\mu-\nu)(ds,dz) \biggr),
	\end{align*}
	where the second step follows from It\^o's formula and by ignoring all drift terms of order $o'_p(\sqrt{p_n\Den})$. At the same time, because $\Den^{\varpi-1/2}=o'(\sqrt{p_n\Den})$ by (\ref{eq:rates}) for all $\varpi>\frac34$ and $c$, $\Den\vp(u_n)$ and $\rho$ (if $\varpi\leq \frac34$) are It\^o semimartingales with bounded coefficients, we have 
	$
	e^{i(\wt u^n_j)_n \Delta^n_i y} = e^{\Delta^n_i\Theta((\wt u^n_j)_n)+i(\wt u^n_j)_n\Delta^n_i \eps}Z^n_i((\wt u^n_j)_n)_{i\Den} 
	=e^{\Delta\ov\Theta((\wt u^n_j)_n)^n_{(2j-2)p_n}+i(\wt u^n_j)_n \Delta^n_{i,(2j-2)p_n}\eps}\times Z^n_i((\wt u^n_j)_n)_{i\Den} +o'_p(\sqrt{p_n\Den})$,
	where $\Delta\ov \Theta(u)^n_i = -\frac12 u^2c_{i\Den}\Den + \Den\vp(u)_{i\Den}$. Hence,
	\begin{align*}
		\wt L^{n,\mathrm{v}}_{2j}(\wt u^n_j)&= \frac1{k_n} \sum_{i=(2j-1)p_n+1}^{(2j-1)p_n+k_n} \Bigl\{ e^{\Delta\Theta((\wt u^n_j)_n)^n_{[2j-2]^m_n}+i(\wt u^n_j)_n \Delta^n_{i,[2j-2]^m_n}\eps} (Z^n_i((\wt u^n_j)_n)_{i\Den}-1)\\
		&\quad+e^{\Delta\Theta((\wt u^n_j)_n)^n_{[2j-2]^m_n}}(e^{i(\wt u^n_j)_n \Delta^n_{i,[2j-2]^m_n}\eps} -\Psi((\wt u^n_j)_n\Den^\varpi \rho_{[2j-2]^m_n\Den}))\Bigr\} \\
		&\quad+o'_p(\sqrt{p_n\Den}).%\\
		%		&=\frac1{k_n} \sum_{i=(2j-1)p_n+1}^{(2j-1)p_n+k_n} e^{\Delta\Theta((\wt u^n_j)_n)^n_{(2j-2)p_n}+i(\wt u^n_j)_n \Delta^n_{i,(2j-2)p_n}\eps} \int_{(i-1)\Den}^{i\Den} Z^n_i((\wt u^n_j)_n)_{s}d\La((\wt u^n_j)_n)_s \\
		%		&\quad+o_p(\sqrt{p_n\Den}).
	\end{align*}
	By conditioning on $\calf_\infty\vee\calg_{[2j-2]^m_n}$, it is easy to see that the second part does not contribute to $\ov\E^n_{[2j-2]^m_n}[\wt L^{n,\mathrm{v}}_{2j}(\wt u^n_j)\Delta^n_{2j} \wt L^{\mathrm{s}}_1(\wt u^n_j)]$. By definition,  we have $Z^n_i((\wt u^n_j)_n)_{i\Den}-1=\int_{(i-1)\Den}^{i\Den} Z^n_i((\wt u^n_j)_n)_{s}d\La((\wt u^n_j)_n)_s$, where   $$\La(u)_t =iu\int_0^t \si_r dW_r + \iint_0^t (e^{iu(\ga(r,z)+\Ga(r,z))}-1) (\mu-\nu)(dr,dz),$$  so a straightforward computation shows that  %up to an $o'_p(\sqrt{p_n\Den})$-error, the term  is equal to
	\begin{equation}\label{eq:aux19}\begin{split}
			&	\ov\E^n_{[2j-2]^m_n}[\wt L^{n,\mathrm{v}}_{2j}(\wt u^n_j)\Delta^n_{2j} \wt L^{\mathrm{s}}_1(\wt u^n_j)]\\
			&\quad=\frac{1}{k_n^2} \sum_{i=(2j-1)p_n+1}^{(2j-1)p_n+k_n} q^n_ie^{\Delta\Theta((\wt u^n_j)_n)^n_{[2j-2]^m_n}}\Psi((\wt u^n_j)_n \Den^\varpi\rho_{[2j-2]^m_n\Den})\\
			& \qquad \times\biggl( \iint_{(i-1)\Den}^{i\Den} \E^n_{[2j-2]^m_n}\Bigl[ (\cdots)Z^n_i((\wt u^n_j)_n)_s (e^{i(\wt u^n_j)_n(\ga(s,z)+\Ga(s,z))}-1) \Bigr]\la(dz)ds\\
			&\qquad+ \int_{(i-1)\Den}^{i\Den} i(\wt u^n_j)_n\E^n_{[2j-2]^m_n}\Bigl[  Z^n_i((\wt u^n_j)_n)_s\si_s\bigl(\tfrac{\partial}{\partial c} \call^n(\wt u^n_j;c_s,\rho_s)  \si_s^c\\
			&\qquad\qquad\qquad\qquad\qquad\qquad\qquad+ \tfrac{\partial}{\partial \rho} \call^n(\wt u^n_j;c_s,\rho_s)  \si_s^\rho\bigr)\Bigr] ds\biggr)+o'_p(\sqrt{p_n\Den}),
	\end{split} \end{equation}
	where $q^n_i$ are integers satisfying $0\leq q^n_i \leq p_n$ and $(\cdots)$ stands for the integrand of the $(\mu-\nu)$-integral in the expansion of $\Delta^n_{2j}\wt L^{\rm s}_1(\wt u^n_j)$ above. After a moment of thought, one realizes that $\lvert (\cdots)\rvert \leq CJ(z)$, where $C$ does not depend on $n$, $\om$, $i$ or $j$. Since $\lvert Z^n_i((\wt u^n_j)_n)_s\rvert\leq C$ (as we have shown right below \eqref{eq:aux11}), it follows by dominated convergence that the $\la(dz)ds$-integral in the display above is $o'(\sqrt{\Den})$. Since the $ds$-integral in \eqref{eq:aux19}  is of order $\Den^{1-(\varpi\wedge1/2)}$, we only need to further consider the case where $\varpi\geq\frac12$. Then the $ds$-integral is of size $O(\sqrt{\Den})$, which implies that we are free to make any additional modification of order $o'(1)$. Because of the $L^2$-continuity properties of $\si$ and $\si^c$, and of $\rho$ and $\si^\rho$  if $\varpi\leq \frac34$, and because $ \E^n_{[2j-2]^m_n}[Z^n_i((\wt u^n_j)_n)_s] = 1$, it suffices, instead of \eqref{eq:aux7}, to consider 
	\begin{equation}\label{eq:aux24} 
		\frac{k_n}{\sqrt{\lvert \calj_n\rvert}}\sum_{j\in\calj_n} \Re\Biggl\{ \frac{\Delta^n_{2j-2}\wt L^{\mathrm{v}}(\wt u^n_{j-1})}{\Lf(Y^n(\wt u^n_{j-1})_{[2j-4]^m_n\Den})}\Biggr\}\Re\Biggl\{\frac{ \beta^n_j}{(\Lf(Y^n(\wt u^n_j)_{[2j-2]^m_n\Den}))^2}\Biggr\}
	\end{equation}
	where
	\begin{align*}
		\beta^n_j&=i\wt u^n_j \sqrt{\Den}\si_{[2j-4]^m_n\Den}(\tfrac{\partial}{\partial c} \call^n(\wt u^n_j;c_{[2j-4]^m_n\Den},\rho_{[2j-4]^m_n\Den})  \si_{[2j-4]^m_n\Den}^c\\
		&\quad+  \tfrac{\partial}{\partial \rho} \call^n(\wt u^n_j;c_{[2j-4]^m_n\Den},\rho_{[2j-4]^m_n\Den})  \si_{[2j-4]^m_n\Den}^\rho ).
	\end{align*}
	In fact, we   can also replace   $Y^n(\wt u^n_j)_{[2j-2]^m_n\Den}$ in \eqref{eq:aux24} first by $$\exp(-\frac12(\wt u^n_j)^2c_{[2j-2]^m_n\Den}\Den^{(1-2\varpi)_+})\Psi(\wt u^n_j \Den^{(\varpi-1/2)_+}\rho_{[2j-2]^m_n\Den})$$ and then, in a second step, by  $$\exp(-\frac12(\wt u^n_j)^2c_{[2j-4]^m_n\Den}\Den^{(1-2\varpi)_+})\Psi(\wt u^n_j \Den^{(\varpi-1/2)_+}\rho_{[2j-4]^m_n\Den}).$$ Thanks to (\ref{eq:U-1'}) and (\ref{eq:eta-cont}),  we can also substitute $\wt u^n_{j-1}$ for $\wt u^n_j$ in the last expression and in the definition of $\beta^n_j$. After all these modifications, the $j$th term in \eqref{eq:aux24} will have a zero $\calh^n_{[2j-4]^m_n}$-conditional expectation, so a martingale argument shows that \eqref{eq:aux24} is asymptotically negligible. Hence, in \eqref{eq:Vbar-s}, we can choose $
	\Delta^n_{2j} \ov \cf(\wt u^n_j)_{\rm{s}}	= \Delta^n_{2j} \ov \cf(\wt u^n_j)_{\rm{s}}^{\rm II}-\Delta^n_{2j} \ov \cf(\wt u^n_j)_{\rm{s}}^{\rm III}-\Delta^n_{2j} \ov \cf(\wt u^n_j)_{\rm{s}}^{\rm V}$. 
	
	%Since we used $u_0(n)\to0$ in the last step, the last part of the argument above concerning the $ds$-integral in \eqref{eq:aux19} does not cover the noise-free case. If there is no noise, we have $\Phi^n (u;c,\rho)\equiv\Phi^n (u;c) = e^{-\frac12 u^2 c}$ and it is easy to check that the $ds$-integral in \eqref{eq:aux19} is of order $O(\sqrt{\Den})$. This means that we are free to make any additional change that leads to an additional $o(1)$-term. Since $\si$ and $\si^c$ are continuous in $L^2$ and $\E^n_{(2j-2)p_n} [  Z^n_i((\wt u^n_j)_n)_s]=1$, the $ds$-integral in \eqref{eq:aux19} is equal to 
	%\begin{align*}
	%&	-\frac12\Den^{-1/2}\int_{(i-1)\Den}^{i\Den} i(\wt u^n_j)^3\E^n_{(2j-2)p_n}\Bigl[  Z^n_i((\wt u^n_j)_n)_se^{-\frac12 (\wt u^n_j)^2c_s}\si_s \si_s^c\Bigr] ds  \\
	%	&\quad=\frac12 i(\wt u^n_j)^3 e^{-\frac12 (\wt u^n_j)^2c_{(2j-2)p_n\Den}}\si_{(2j-2)p_n\Den}\si^c_{(2j-2)p_n\Den} \sqrt{\Den}+o_p(\sqrt{\Den}).
	%\end{align*}
	%If we insert the right-hand side into \eqref{eq:aux7}, neglecting the $o_p(\sqrt{\Den})$-term and further approximating $Y^n(\wt u^n_j)_{(2j-2)p_n\Den}=e^{-\frac12 (\wt u^n_j)^2c_{(2j-2)p_n\Den}}+o(1)$, we realize that the resulting term is purely imaginary and is removed by taking the real part. Therefore, both with and without noise, we can choose $\Delta^n_{2j}\ov c(\wt u^n_j)_{\rm s}$ according to \eqref{eq:aux20}.
	%
	
	The next reduction is to remove $\Delta^n_{2j-2\ell} \ov \cf(\wt u^n_{j-\ell})_{\rm{s}}^{\rm III}+\Delta^n_{2j-2\ell} \ov \cf(\wt u^n_{j-\ell})_{\rm{s}}^{\rm V}$ ($\ell\in\{0,1\}$), for which we have to show that 
	\begin{align*}
		&\frac{k_n}{\sqrt{\lvert \calj_n\rvert}}\sum_{j\in\calj_n} (\Delta^n_{2j-2 } \ov \cf(\wt u^n_{j-1})_{\rm{s}}^{\rm III}+\Delta^n_{2j-2 } \ov \cf(\wt u^n_{j-1})_{\rm{s}}^{\rm V})\Re\Biggl\{  \frac{\Delta^n_{2j} \wt L^{\mathrm{v}}(\wt u^n_j)}{\Lf(\wt L^{n,\mathrm{s}}_{2j-1}(\wt u^n_j))}\Biggr\}\\
		&\quad+\frac{k_n}{\sqrt{\lvert \calj_n\rvert}}\sum_{j\in\calj_n} %\frac{\Delta^n_{2j-2} \wt L^{\mathrm{v}}(u)}{\wt L^{n,\mathrm{s}}_{2j-3}(u)} \biggl(1-\frac{\wt L^{n,\mathrm{v}}_{2j-3}(u)}{2\wt L^{n,\mathrm{s}}_{2j-3}(u)}-\frac{\wt L^{n,\mathrm{v}}_{2j-2}(u)}{2\wt L^{n,\mathrm{s}}_{2j-2}(u)}\biggr)
		\Delta^n_{2j-2}\ov \cf(\wt u^n_{j-1})_{\rm s}(\Delta^n_{2j} \ov \cf(\wt u^n_{j})_{\rm{s}}^{\rm III}+\Delta^n_{2j} \ov \cf(\wt u^n_{j})_{\rm{s}}^{\rm V})
	\end{align*}
	is asymptotically negligible. As before, we can replace     $\wt L^{n,\mathrm{s}}_{2k-\ell}(\wt u^n_k)$ by $Y^n(\wt u^n_k)_{[2k-2]^m_n\Den}$ ($k\in\{j-1,j\}$, $\ell\in\{0,1\}$).
	By a martingale argument, we can further take conditional expectation with respect to $\calh^n_{[2j-2]^m_n}$, which eliminates the first part of the sum above, while it renders the second part negligible by \eqref{eq:diff} and \eqref{eq:diff-2}. This shows that we can choose 
	$
	\Delta^n_{2j} \ov c(\wt u^n_j)_{\rm{s}}	= 	\Delta^n_{2j} \ov c(\wt u^n_j)_{\rm{s}}^{\rm II}
	$.
	%$\Delta^n_{2j} \ov c(\wt u^n_j)_{\rm{s}}	= \Re \{ {\Delta^n_{2j} \wt L^{\mathrm{v}}(\wt u^n_j)}/{\wt L^{n,\mathrm{s}}_{2j-1}(\wt u^n_j)}   \}$. Because $
	%\wt L^{n,\mathrm{s}}_{2j-1}(\wt u^n_j)=Y^n(\wt u^n_j)_{(2j-2)p_n\Den} + o(1) = \exp(-\frac12 (\wt u^n_j)_n^2 c_{(2j-2)p_n\Den}\Den) +o(1)$ (recall that $u_0(n)\to0$ if there is noise),
	%we can actually choose
	
	Lastly, we want to show that we can replace $\ov V^{\prime n}_{\rm s}$ as defined in \eqref{eq:Vbar-s} (with $	\Delta^n_{2j} \ov c(\wt u^n_j)_{\rm{s}}	= 	\Delta^n_{2j} \ov c(\wt u^n_j)_{\rm{s}}^{\rm II}$) by $\wt V^{\prime n}$ from \eqref{eq:Vn-prime}, which will conclude the proof.
	We proceed in three steps by decomposing $\ov V^{\prime n}_{\rm s}-\wt V^{\prime n}=D^n_2 + D^n_3+D^n_4$, where (recall the definition of $\call^n_{j}(u)$ from the statement of the lemma)
	\begin{align*}
		D^n_2 	&=\ov V^{\prime n}_{\rm s}- \frac{k_n}{\sqrt{\lvert \calj_n\rvert}}\sum_{j\in\calj_n} \Re\Biggl\{\frac{ \Delta^n_{2j-2} \wt L^{\mathrm{v}} (\wt u^n_{j-1})}{\Lf(\ov Y^n(\wt u^n_{j-1})_{[2j-4]^m_n\Den})}\Biggr\} \Re\Biggl\{\frac{\Delta^n_{2j} \wt L^{\mathrm{v}} (\wt u^n_j)}{\Lf(\ov Y^n(\wt u^n_{j})_{[2j-2]^m_n\Den})}\Biggr\},\\
		D^n_3&=\frac{k_n}{\sqrt{\lvert \calj_n\rvert}}\sum_{j\in\calj_n} \Re\Biggl\{\frac{ \Delta^n_{2j-2} \wt L^{\mathrm{v}} (\wt u^n_{j-1})}{\Lf(\ov Y^n(\wt u^n_{j-1})_{[2j-4]^m_n\Den})}\Biggr\}\Re\Biggl\{\frac{\Delta^n_{2j} \wt L^{\mathrm{v}} (\wt u^n_{j})}{\Lf(\ov Y^n(\wt u^n_{j})_{[2j-2]^m_n\Den})} -\frac{\Delta^n_{2j} \wt L^{\mathrm{v}} (\wt u^n_{j})}{\Lf(\call^n_{j}(\wt u^n_{j}))}\Biggr\} ,\\
		D^n_4 	&= \frac{k_n}{\sqrt{\lvert \calj_n\rvert}}\sum_{j\in\calj_n}\Re\Biggl\{\frac{ \Delta^n_{2j} \wt L^{\mathrm{v}} (\wt u^n_j)}{\Lf(\call^n_{j}(\wt u^n_{j}))}\Biggr\}\Re\Biggl\{\frac{\Delta^n_{2j-2} \wt L^{\mathrm{v}} (\wt u^n_{j-1})}{\Lf(\ov Y^n(\wt u^n_{j-1})_{[2j-4]^m_n\Den})} -\frac{\Delta^n_{2j-2} \wt L^{\mathrm{v}} (\wt u^n_{j-1})}{\Lf(\call^n_{j-1}(\wt u^n_{j-1}))}\Biggr\} 
	\end{align*}
	and $\ov Y^n(u)_t$ was defined   before \eqref{eq:XY}.
	Because $D^n_3$ is  a martingale sum and we have $\lvert \ov Y^n(\wt u^n_{j})_{[2j-2]^m_n\Den}-\call^n_{j}(\wt u^n_{j})\rvert =o'(1)$ by \eqref{eq:vp-conv} and \eqref{eq:aux25},
	%\begin{align*}
	%&\E\biggl[\sup_{u,u'\in\calu} \lvert D^n_3(u,u')\rvert^2\biggr]^{1/2}	\leq \int_0^{u_\ast}\int_0^{u_\ast} \sup_{u,u'\in\calu}\E\biggl[\biggl \lvert \frac{d^2}{dudu'}D^n_3(u,u')\biggr\rvert^2\biggr]^{1/2} dudu'  \\
	%	&\quad\leq \sqrt{\frac{2k_n^2p_n}{n}}\Biggl(\sum_{j\in\calj_n} \E\Biggl[ \biggl\lvert\frac{d}{du'}\frac{\Re(\Delta^n_{2j-2} \wt L^{\mathrm{v}} (u'))}{e^{\Delta\Theta(u'_n)^n_{(2j-4)p_n}}}\biggr\rvert^2\\
	%	&\quad\quad\times \E^n_{(2j-2)p_n}\biggl[\biggl\lvert\frac{d}{du}\Bigl[\Re( \Delta^n_{2j} \wt L^{\mathrm{v}} (u))(e^{-\Delta\Theta(u_n)^n_{(2j-2)p_n}}-e^{-\Delta\Theta(u_n)^n_{(2j-4)p_n}})\Bigr]\biggr\rvert^2\biggr] \Biggr]\Biggr)^{1/2}.
	%\end{align*}
	%The conditional expectation  is bounded by $C\lvert e^{-\Delta\Theta(u_n)^n_{(2j-2)p_n}}-e^{-\Delta\Theta(u_n)^n_{(2j-4)p_n}}\rvert^2/k_n$, where $C$ is uniform in $\om$, $u'$ and $j$. Because $\lvert e^{-\Delta\Theta(u_n)^n_{(2j-2)p_n}}\rvert, \lvert e^{-\Delta\Theta(u_n)^n_{(2j-4)p_n}}\rvert\leq 1$, we can replace the exponent $2$ in this bound by some $p<2$, so that an additional first-order expansion gives the estimate $C\lvert \Delta\Theta(u_n)^n_{(2j-2)p_n}-\Delta\Theta(u_n)^n_{(2j-4)p_n}\rvert^p/k_n$ for the conditional expectation in the last display. Applying H\"older's inequality and  Lemma~\ref{lem:bounded}, we arrive at the estimate
	%\begin{align*}
	%\E\biggl[\sup_{u,u'\in\calu} \lvert D^n_3(u,u')\rvert^2\biggr]^{1/2}	\leq C\sqrt{\frac{k_n^2p_n}{n}}\times \sqrt{n/p_n} \times \sqrt{1/k_n}\times (p_n\Den)^{p/4} \times \sqrt{1/k_n},
	%\end{align*}
	%which tends to $0$ if we choose $p\in(1,2)$. 
	we readily obtain $\E[\lvert D^n_3\rvert^2]\to0$. The same argument shows that $\E[\lvert D^n_4\rvert^2]\to0$.
	Regarding $D^n_2$, we use the expansion $1/\Lf(z)=1/\Lf(z_0)-( (z-z_0)\log \lvert z_0\rvert+z_0\Re\{ (z-z_0)/z_0\})/(\Lf(z_0))^2 + O(\lvert z-z_0\rvert^2/(\lvert \Lf(z)\rvert\wedge \lvert\Lf(z_0)\rvert)^3)$ to bound
	\begin{align*} 
		&\ov\E^n_{[2j-2]^m_n}\Biggl[ \biggl\lvert\frac{1}{\Lf(\wt L^{n,\mathrm{s}}_{2j-1}(\wt u^n_j))}-\frac{1}{\Lf(\ov Y^n(\wt u^n_{j})_{[2j-2]^m_n\Den})}\\
		&\quad+\frac{\log \lvert\ov Y^n(\wt u^n_{j})_{[2j-2]^m_n\Den}\rvert }{(\Lf(\ov Y^n(\wt u^n_{j})_{[2j-2]^m_n\Den}))^2k_n} \sum_{i=(2j-2)p_n+1}^{(2j-2)p_n+k_n} \Bigl(\ov Y^n(\wt u^n_{j})_{(i-1)\Den}-\ov Y^n(\wt u^n_{j})_{[2j-2]^m_n\Den}\Bigr) 
		\\
		&\quad+\frac{  1 }{\Lf(\ov Y^n(\wt u^n_{j})_{[2j-2]^m_n\Den})k_n} \sum_{i=(2j-2)p_n+1}^{(2j-2)p_n+k_n} \Re\biggl\{\frac{\ov Y^n(\wt u^n_{j})_{(i-1)\Den}-\ov Y^n(\wt u^n_{j})_{[2j-2]^m_n\Den}}{\Lf(\ov Y^n(\wt u^n_{j})_{[2j-2]^m_n\Den})}\biggr\}\biggr\rvert\Biggr]  \end{align*}
	by ${Cp_n\Den}/{\theta_n^6}$.
	Since we have $  \ov Y^n(\wt u^n_{j})_{[2j-2]^m_n\Den}=\call^n_{j}(\wt u^n_{j})+o'(1)$ and   %$\ov Y^n(\wt u^n_{j})_{(2j-2)p_n\Den}$ in the second line by $\exp(-\frac12(\wt u^n_j)^2c_{(2j-2)p_n\Den}\Den^{(1-2\varpi)_+})$ and 
	$\ov Y^n(\wt u^n_{j})_{(i-1)\Den}-\ov Y^n(\wt u^n_{j})_{[2j-2]^m_n\Den}=\psi^{n,j}_i+o'(\sqrt{p_n\Den})$, where  %first by $$\exp(-\tfrac12(\wt u^n_j)^2c_{(2j-2)p_n\Den}\Den^{(1-2\varpi)_+})\Bigl(\exp(-\tfrac12(\wt u^n_j)^2(c_{(i-1)\Den}-c_{(2j-2)p_n\Den})\Den^{(1-2\varpi)_+})-1\Bigr)$$ and then by $-\frac12(\wt u^n_j)^2\exp(-\tfrac12(\wt u^n_j)^2c_{(2j-2)p_n\Den}\Den^{(1-2\varpi)_+})(c_{(i-1)\Den}-c_{(2j-2)p_n\Den})\Den^{(1-2\varpi)_+}$
	$
	\psi^{n,j}_i=\tfrac{\partial}{\partial c} \call^n(\wt u^n_j;c_{[2j-2]^m_n\Den},\rho_{[2j-2]^m_n\Den})  \linebreak (c_{(i-1)\Den}-c_{[2j-2]^m_n\Den})	 + \tfrac{\partial}{\partial \rho} \call^n(\wt u^n_j;c_{[2j-2]^m_n\Den},\rho_{[2j-2]^m_n\Den})  (\rho_{(i-1)\Den}-\rho_{[2j-2]^m_n\Den})$,
	the previous bound implies 
	that  instead of $D^n_2$, it suffices to study
	\begin{align*}
		D^n_{21}&=	\frac{k_n}{\sqrt{\lvert \calj_n\rvert}} \sum_{j\in\calj_n}\Re\Biggl\{\frac{ \Delta^n_{2j-2} \wt L^{\mathrm{v}} (\wt u^n_{j-1})}{\Lf(\wt L^{n,\mathrm{s}}_{2j-3}(\wt u^n_{j-1}))}\Biggr\}\Biggl[\Re\Biggl\{ \frac{\Delta^n_{2j} \wt L^{\mathrm{v}} (\wt u^n_j)\log \lvert \call^n_{j}(\wt u^n_{j})\rvert}{(\Lf( \call^n_{j}(\wt u^n_{j})))^2k_n} \sum_{i=(2j-2)p_n+1}^{(2j-2)p_n+k_n} \psi^{n,j}_i\Biggr\}\\
		&\quad +\Re\Biggl\{ \frac{\Delta^n_{2j} \wt L^{\mathrm{v}} (\wt u^n_j)   }{\Lf( \call^n_{j}(\wt u^n_{j})) } \Biggr\}\frac1{k_n}\sum_{i=(2j-2)p_n+1}^{(2j-2)p_n+k_n} \Re\biggl\{\frac{\psi^{n,j}_i}{\Lf(\call^n_{j}(\wt u^n_{j}))}\biggr\}\Biggr]
	\end{align*}
	and 
	\begin{align*}
		D^n_{22}&=	\frac{k_n}{\sqrt{\lvert \calj_n\rvert}} \sum_{j\in\calj_n}\Biggl[\Re\Biggl\{\frac{\Delta^n_{2j-2} \wt L^{\mathrm{v}} (\wt u^n_{j-1})\log \lvert \call^n_{j-1}(\wt u^n_{j-1})\rvert}{(\Lf( \call^n_{j-1}(\wt u^n_{j-1})))^2k_n}\sum_{i=(2j-4)p_n+1}^{(2j-4)p_n+k_n} \psi^{n,j-1}_i\Biggr\}\\
		&\quad +\Re\Biggl\{ \frac{\Delta^n_{2j-2} \wt L^{\mathrm{v}} (\wt u^n_{j-1})   }{\Lf( \call^n_{j-1}(\wt u^n_{j-1}))} \Biggr\}\frac1{k_n}\sum_{i=(2j-4)p_n+1}^{(2j-4)p_n+k_n} \Re\biggl\{\frac{\psi^{n,j-1}_i}{\Lf(\call^n_{j-1}(\wt u^n_{j-1}))}\biggr\}\Biggr]\\
		&\quad\times\Re\Biggl\{ \frac{\Delta^n_{2j} \wt L^{\mathrm{v}} (\wt u^n_j)}{\Lf(\ov Y^n(\wt u^n_{j})_{[2j-2]^m_n\Den})}\Biggr\}.
	\end{align*}
	Since the $j$th term in $D^n_{22}$ has a zero $\calh^n_{[2j-2]^m_n}$-conditional expectation, we can argue as in the analysis of $D^n_3$ to show that $D^n_{22}$ converges to $0$ in $L^2$. For $D^n_{21}$, by the same argument, we can take conditional expectation with respect to $\calh^n_{[2j-2]^m_n}$ for the $j$th term, which  can be computed similarly to \eqref{eq:aux7}. The final result after a tedious computation is that $D^n_{21}$ vanishes asymptotically.
\end{proof}

\begin{proof}[Proof of Lemma~\ref{lem:CLT}] %We first prove finite-dimensional convergence, and to simplify the exposition, we only consider  a single pair of $u,u'\in\calu$. The extension of the following argument to a finite number of values $u,u'\in\calu$ is straightforward. 
	Let $\zeta^{n,\rm v}_{j} = \Re \{ { \Delta^n_{2j} \wt L^{\mathrm{v}} (\wt u^n_{j})}/{\Lf(\call^n_j(\wt u^n_j))} \}$. Then we can decompose $\wt V^n = \wt V^{n}_1+ \wt V^{n}_2 - \wt V^{n}_3-\wt V^{n}_4$, where
	\begin{align*}
		\wt V^{n}_1&=\frac{k_n}{\sqrt{\lvert \calj_n\rvert}}\sum_{j\in\calj_n}\zeta^{n,\rm v}_{j-1} \zeta^{n,\rm v}_{j}, & 
		\wt V^{n}_2&	=\frac{k_n}{\sqrt{\lvert \calj_n\rvert}}\sum_{j\in\calj_n}\zeta^{n,\rm v}_{j-1-\dsone/2} \zeta^{n,\rm v}_{j-\dsone/2}, \\
		\wt V^{n}_3&=	\frac{k_n}{\sqrt{\lvert \calj_n\rvert}}\sum_{j\in\calj_n}\zeta^{n,\rm v}_{j-1-\dsone/2} \zeta^{n,\rm v}_{j},&
		\wt V^{n}_4&=	\frac{k_n}{\sqrt{\lvert \calj_n\rvert}}\sum_{j\in\calj_n}\zeta^{n,\rm v}_{j-\dsone/2} \zeta^{n,\rm v}_{j-1}
	\end{align*}
	In each of the five sums,
	the $j$th term is $\calh^n_{(2j-1)p_n+k_n }$-measurable with a zero $\calh^n_{[2j-2]^m_n }$-conditional expectation. Therefore, $(\wt V^n_1,\dots,\wt V^n_4)$ is a four-dimensional martingale array and   we can use Theorem~2.2.15 in \citet{JP12} to prove its convergence, from which  (\ref{eq:lim}) can be easily deduced.  Verifying the assumptions of Theorem~2.2.15 in \citet{JP12} is straightforward, so we only derive the asymptotic covariance and leave the remaining conditions to the reader. A moment's thought reveals that the limits of $\wt V^n_1,\dots,\wt V^n_4$ are $\calf_\infty$-conditionally independent, so all we are left to show is that their $\calf_\infty$-conditional variances are  given, respectively, by $\frac{1}{T}\int_{I_T} q_t^2dt$, $\frac{1}{T}\int_{I_T} q_{t-1}^2dt$ and, for both $\wt V^n_3$ and $\wt V^n_4$, by $\frac{1}{T}\int_{I_T} q_tq_{t-1} dt$. As the proof is almost identical, we only determine the $\calf_\infty$-conditional variance of  $\wt V^n_1$, which we denote by 
	$Q^{(1)}$ and  is given by the limit as $n\to\infty$ of 
	\begin{equation}\label{eq:Q} 
		Q_n=	\frac{k_n^2}{ \lvert \calj_n\rvert}\sum_{j\in\calj_n} \ov\E^n_{[2j-2]^m_n}[(\zeta^{n,\rm v}_{j-1} \zeta^{n,\rm v}_{j})^2] =\frac{k_n^2}{ \lvert \calj_n\rvert}\sum_{j\in\calj_n}  (\zeta^{n,\rm v}_{j-1}  )^2\ov\E^n_{[2j-2]^m_n} [ ( \zeta^{n,\rm v}_{j})^2 ].\end{equation}
	By a martingale argument,\footnote{The difference between \eqref{eq:Q} and \eqref{eq:Q2} can be split into two martingale sums, one summing over even and one summing over odd values of $j$.} this is asymptotically equivalent to 
	\begin{equation}\label{eq:Q2} 
		\frac{k_n^2}{ \lvert \calj_n\rvert}\sum_{j\in\calj_n} \ov\E^n_{[2j-4]^m_n} [ (\zeta^{n,\rm v}_{j-1}  )^2\ov\E^n_{[2j-2]^m_n} [ ( \zeta^{n,\rm v}_{j})^2 ]].
	\end{equation}
	
	To determine the limit of \eqref{eq:Q2}, we start with   the following a priori estimate, which follows from the bounds in \eqref{eq:aux10}:\footnote{Note that one can replace $e^{iU_n\Delta^n_i y}$ in $\xi^n_i$ by $1-e^{iU_n\Delta^n_i y}$ without changing anything, where $U=\wt u^n_j$ in our case. Then the claim follows from the bound $\ov\E^n_{[2j-2]^m_n}[\lvert 1-e^{i(\wt u^n_j)_n\Delta^n_i y}\rvert^2]\leq (\wt u^n_j)^2_n\ov \E^n_{[2j-2]^m_n}[(\Delta^n_i y)^2]\leq C(\wt u^n_j)^2\leq C\theta_n^2$.}  
	\begin{equation}\label{eq:Lv-2} 
		\ov\E^n_{[2j-2]^m_n}[\lvert \Delta^n_{2j}\wt L^{\rm v}(\wt u^n_j)\rvert^2]= \E^n_{[2j-2]^m_n}[\lvert \Delta^n_{2j}\wt L^{\rm v}(\wt u^n_j)\rvert^2] \leq C \theta_n^2/k_n, 
	\end{equation}
	where $C$ does not depend on $\om$ or $j$. 
	Moreover, $\wt L^{n,\mathrm{v}}_{2j}(\wt u^n_j)$ and $\wt L^{n,\mathrm{v}}_{2j-1}(\wt u^n_j)$ are  uncorrelated and $\call^n_j(\wt u^n_j)$ is $\calh^n_{[2j-2]^m_n}$-measurable, so
	\begin{equation*}
		\ov\E^n_{[2j-2]^m_n} [ (  \zeta^{n,\rm v}_{j})^2 ]=\ov\E^n_{[2j-2]^m_n}\biggl[\biggl( \Re\biggl\{\frac{ \wt L^{n,\mathrm{v}}_{2j} (  \wt u^n_j)}{\Lf(\call^n_j(\wt u^n_j))}\biggr\}\biggr)^2+\biggl( \Re\biggl\{\frac{  \wt L^{n,\mathrm{v}}_{2j-1} (  \wt u^n_j)}{\Lf(\call^n_j(\wt u^n_j))}\biggr\}\biggr)^2\biggr]. %\\
		%&\quad=\frac1{k_n^2} \sum_{i=(2j-1)p_n+1}^{(2j-1)p_n+k_n} \E^n_{(2j-2)p_n}\Bigl[ \bigl( \cos (\wt u^n_j {\Delta^n_i x}/{\sqrt{\Den}} ) - \E^n_{i-1}[\cos (\wt u^n_j {\Delta^n_i x}/{\sqrt{\Den}} )]\bigr)^2\Bigr]\\
		%&\qquad+\frac1{k_n^2} \sum_{i=(2j-2)p_n+1}^{(2j-2)p_n+k_n} \E^n_{(2j-2)p_n}\Bigl[ \bigl( \cos (\wt u^n_j {\Delta^n_i x}/{\sqrt{\Den}} ) - \E^n_{i-1}[\cos (\wt u^n_j {\Delta^n_i x}/{\sqrt{\Den}} )]\bigr)^2\Bigr].
	\end{equation*}
	For $\ell\in\{0,1\}$, we have the identity
	\begin{align}\nonumber
		\biggl( \Re\biggl\{\frac{ \wt L^{n,\mathrm{v}}_{2j-\ell} (  \wt u^n_j)}{\Lf(\call^n_j(\wt u^n_j))}\biggr\}\biggr)^2	&=\frac{(\Re\wt L^{n,\mathrm{v}}_{2j-\ell} (  \wt u^n_j) )^2(\Re \Lf(\call^n_j(\wt u^n_j)))^2}{\lvert \Lf(\call^n_j(\wt u^n_j))\rvert^4}+ \frac{(\Im\wt L^{n,\mathrm{v}}_{2j-\ell} (  \wt u^n_j) )^2(\Im \Lf(\call^n_j(\wt u^n_j)))^2}{\lvert\Lf( \call^n_j(\wt u^n_j))\rvert^4}\\
		& \quad+\frac{2(\Re\wt L^{n,\mathrm{v}}_{2j-\ell} (  \wt u^n_j) )(\Re \Lf(\call^n_j(\wt u^n_j)))(\Im\wt L^{n,\mathrm{v}}_{2j-\ell} (  \wt u^n_j) )(\Im \Lf(\call^n_j(\wt u^n_j)))}{\lvert \Lf(\call^n_j(\wt u^n_j))\rvert^4}.
		\label{eq:3terms}\end{align}
	Since $\lvert \cos x-1 \rvert \leq  \frac12 x^2$ and $\lvert \sin x  \rvert \leq  x$, one can argue as in \eqref{eq:aux10} to derive the bounds
	\begin{equation}\label{eq:bounds}
		\ov\E^n_{[2j-2]^m_n} [ \lvert\Re \wt L^{n,\mathrm{v}}_{2j-\ell} (  \wt u^n_j) \rvert^2 ]\leq C\theta_n^4/k_n,\qquad \ov\E^n_{[2j-2]^m_n} [ \lvert\Im \wt L^{n,\mathrm{v}}_{2j-\ell} (  \wt u^n_j) \rvert^2 ]\leq C\theta_n^2/k_n.
	\end{equation}
	For $\Lf(\call^n_j(\wt u^n_j))$, there are $C,C_1,C_2>0$ such that
	\begin{equation}\label{eq:bounds-2} 
		C_1\theta_n^2\leq \lvert \Re \Lf(\call^n_j(\wt u^n_j))\rvert, \lvert\Lf(\call^n_j(\wt u^n_j))\rvert\leq 
		C_2\theta_n^2,\quad \lvert\Im\Lf(\call^n_j(\wt u^n_j))\rvert \leq C \theta_n^5\Den^{3(\varpi-1/2)_+}, 
	\end{equation}  
	where the first set of bounds follows  as in Lemma~\ref{lem:Om} and  the last  bound holds because $\lvert e^{ix}-1-ix+\frac12 x^2\rvert\leq C\lvert x\rvert^3$ and $\E[\Delta\chi_1]=0$. In conjunction with \eqref{eq:Lv-2}, these bounds show that the second term on the right-hand side of \eqref{eq:3terms} is negligible for computing the limit of $Q_n$. Regarding the last term in \eqref{eq:3terms}, note that 	
	\eqref{eq:bounds} and the Cauchy--Schwarz inequality imply $\ov\E^n_{[2j-2]^m_n} [(\Re\wt L^{n,\mathrm{v}}_{2j-\ell} (  \wt u^n_j))(\Im\wt L^{n,\mathrm{v}}_{2j-\ell} (  \wt u^n_j)) ]\leq C\theta_n^3/k_n$. Therefore, $\ov\E^n_{[2j-2]^m_n}$ of the last term in \eqref{eq:3terms} is bounded by $C\theta_n^2/k_n$ and does not contribute to $Q^{(1)}$.
	Thus, $Q_n$ is asymptotically equivalent to 
	\begin{align*}
		&	\frac{k_n^2}{ \lvert \calj_n\rvert}\sum_{j\in\calj_n}\ov\E^n_{[2j-4]^m_n}\Biggl[\frac{[(\Re\wt L^{n,\mathrm{v}}_{2j-2} (  \wt u^n_{j-1}) )^2+(\Re\wt L^{n,\mathrm{v}}_{2j-3} (  \wt u^n_{j-1}) )^2](\Re \Lf(\call^n_{j-1}(\wt u^n_{j-1})))^2}{\lvert \Lf(\call^n_{j-1}(\wt u^n_{j-1}))\rvert^4}\\
		&\qquad\qquad\qquad  \qquad\times\E^n_{[2j-2]^m_n}\biggl[\frac{[(\Re\wt L^{n,\mathrm{v}}_{2j} (  \wt u^n_j) )^2+(\Re\wt L^{n,\mathrm{v}}_{2j-1} (  \wt u^n_j) )^2](\Re\Lf( \call^n_j(\wt u^n_j)))^2}{\lvert\Lf( \call^n_j(\wt u^n_j))\rvert^4}\biggr]\Biggr].
	\end{align*}
	In fact, by \eqref{eq:bounds} and \eqref{eq:bounds-2}, the previous display is $O_p(1)$, so we are allowed to make any further $o(1)$-modification. For example,  we can   ignore the imaginary part of $\call^n_{j-\ell}(\wt u^n_{j-\ell})$ ($\ell=0,1$) and replace $\call^n_{j}(\wt u^n_{j})$ by $\call^n_{j-1}(\wt u^n_{j-1})$. %$c_{(2j-2)p_n\Den}$ by $c_{(2j-4)p_n\Den}$, $\rho_{(2j-2)p_n\Den}$ by $\rho_{(2j-4)p_n\Den}$ and $\wt u^n_j$ by $\wt u^n_{j-1}$. 
	This shows that $Q_n$ is asymptotically equivalent to
	\begin{equation}\label{eq:aux29}\begin{split}
			&	\frac{k_n^2}{ \lvert \calj_n\rvert}\sum_{j\in\calj_n}%\frac{e^{2(\wt u^n_{j-1})^2 c_{(2j-4)p_n\Den}\Den^{(1-2\varpi)_+}}}{(\Re \Psi(\wt u^n_{j-1} \rho_{(2j-4)p_n\Den}\Den^{(\varpi-1/2)_+}))^4}\\
			\frac{1}{(\Re\Lf(\call^n_{j-1}(\wt u^n_{j-1})))^4} \ov\E^n_{[2j-4]^m_n}\Bigl[ \bigl[(\Re\wt L^{n,\mathrm{v}}_{2j-2} (  \wt u^n_{j-1}) )^2+(\Re\wt L^{n,\mathrm{v}}_{2j-3} (  \wt u^n_{j-1}) )^2\bigr] \\
			&\qquad\qquad\qquad\qquad\qquad  \qquad\qquad \times\ov\E^n_{[2j-2]^m_n} [ (\Re\wt L^{n,\mathrm{v}}_{2j} (  \wt u^n_j) )^2+(\Re\wt L^{n,\mathrm{v}}_{2j-1} (  \wt u^n_j) )^2 ]\Bigr].
	\end{split}\end{equation}
	Regarding the denominator, we have
	\begin{equation}\label{eq:aux30} \begin{split}
			\Re\Lf(\call^n_{j-1}(\wt u^n_{j-1}))&=-\tfrac12(\wt u^n_j)^2(c_{[2j-2]^m_n\Den}\bone_{\{\varpi\geq\frac12\}}+\rho^2_{[2j-2]^m_n\Den}\E[(\Delta\chi_1)^2]\bone_{\{\varpi\leq\frac12\}})\\
			&\quad\times\exp(-\tfrac12(\theta_0^2/\wt\eta^n_j)^2c_{[2j-2]^m_n\Den})+o(\theta_n^2).\end{split}
	\end{equation}
	Moreover, since $(\chi_i)_{i\in\Z}$ is $m$-dependent and $\cos(x)\cos(y)=\frac12(\cos(x+y)+\cos(x-y))$,  
	\begin{equation}\label{eq:aux28}\begin{split}
			&\ov\E^n_{[2j-2]^m_n} [ (\Re\wt L^{n,\mathrm{v}}_{2j-\ell} (  \wt u^n_j) )^2]= \E^n_{[2j-2]^m_n} [ (\Re\wt L^{n,\mathrm{v}}_{2j-\ell} (  \wt u^n_j) )^2]\\
			&  \quad =\frac1{k_n^2}\sum_{i,i'} \E^n_{[2j-2]^m_n} \Bigl[ \cos((\wt u^n_j)_n\Delta^n_iy)\cos((\wt u^n_j)_n\Delta^n_{i'}y)\\
			&\qquad\qquad\qquad\qquad\qquad\qquad-\E^n_{i-1}[\cos((\wt u^n_j)_n\Delta^n_iy)]\E^n_{i'-1}[\cos((\wt u^n_j)_n\Delta^n_{i'}y)] \Bigr]\\
			&   \quad=\frac1{2k_n^2}\sum_{i,i'}  \E^n_{[2j-2]^m_n} \Bigl[ \E^n_{i\wedge i'-1}[\cos((\wt u^n_j)_n(\Delta^n_i y+\Delta^n_{i'}y))]\\
			&\qquad+ \E^n_{i\wedge i'-1}[\cos((\wt u^n_j)_n(\Delta^n_i y-\Delta^n_{i'}y))] -2\E^n_{i-1}[\cos((\wt u^n_j)_n\Delta^n_i y) ]\E^n_{i'-1}[\cos((\wt u^n_j)_n\Delta^n_{i'} y) ]\Bigr]
	\end{split}\end{equation}
	for $\ell\in\{0,1\}$,
	where the sum ranges over all $i,i'\in\{(2j-\ell-1)p_n+1,\dots,(2j-\ell-1)p_n+k_n\}$ such that $i-i'\in\{-m-1,\dots,m+1\}$.
	Using \eqref{eq:aux27} for the second step, \eqref{eq:vp-conv} for the fourth one and (\ref{eq:U-1'}) and (\ref{eq:eta-cont}) for the last one, we have
	\begin{align*}
		&\E^n_{i-1}[\cos((\wt u^n_j)_n\Delta^n_i y) ]	=\Re\E^n_{i-1}[e^{i(\wt u^n_j)_n\Delta^n_i y} ]\\
		&\quad=\Re\E^n_{i-1}[e^{\Delta \Theta((\wt u^n_j)_n)^n_{[2j-2]^m_n}}Z^n_i((\wt u^n_j)_n)_{i\Den}e^{i(\wt u^n_j)_n\Delta^n_{i,[2j-2]^m_n} \eps}]+O(p_n\Den) \\
		&\quad=\Re\E^n_{i-1}[e^{\Delta \Theta((\wt u^n_j)_n)^n_{[2j-2]^m_n}}Z^n_i((\wt u^n_j)_n)_{i\Den}\Psi((\wt u^n_j)_n\Den^\varpi\rho_{[2j-2]^m_n\Den})]+O(p_n\Den) \\
		&\quad=\exp(-\tfrac12(\wt u^n_j)^2c_{[2j-2]^m_n\Den}\Den^{(1-2\varpi)_+})\Re \Psi(\wt u^n_j \Den^{(\varpi-1/2)_+}\rho_{[2j-2]^m_n\Den})+o'(1)\\
		&\quad=\exp(-\tfrac12(\wt u^n_{j-1})^2c_{[2j-4]^m_n\Den}\Den^{(1-2\varpi)_+})\Re \Psi(\wt u^n_{j-1} \Den^{(\varpi-1/2)_+}\rho_{[2j-4]^m_n\Den})+o'(1).
	\end{align*}
	Similarly, with the notations $\Psi^\pm_r(u)=\E[e^{iu(\Delta\chi_{r+1}\pm\Delta\chi_1)}]$ and   $r=i-i'$, we can deduce
	\begin{align*}
		\E^n_{i\wedge i'-1}[\cos((\wt u^n_j)_n(\Delta^n_i y\pm\Delta^n_{i'}y))] &=\exp(- (1\pm\bone_{\{r=0\}})(\wt u^n_{j-1})^2c_{[2j-4]^m_n\Den}\Den^{(1-2\varpi)_+}) \\
		&\quad\times \Re \Psi^\pm_r(\wt u^n_{j-1} \Den^{(\varpi-1/2)_+}\rho_{[2j-4]^m_n\Den})+o'(1).
	\end{align*}
	Now, if $\varpi>\frac12$, then the contributions of $\Psi$ and $\Psi^\pm$ are $o'(1)$ and only the terms where $r=0$ remain, %$\E^n_{i-1}[\cos((\wt u^n_j)_n\Delta^n_i y) ]=\exp(-\tfrac12(\wt u^n_{j-1})^2c_{[2j-4]^m_n\Den})+o'(1)$, 
	which via \eqref{eq:aux28} shows that for $\ell\in\{0,1\}$,
	\begin{align*}
		&	\ov\E^n_{[2j-2\ell-2]^m_n} [ (\Re\wt L^{n,\mathrm{v}}_{2j-2\ell} (  \wt u^n_{j-\ell}) )^2+(\Re\wt L^{n,\mathrm{v}}_{2j-2\ell-1} (  \wt u^n_{j-\ell}) )^2 ]\\
		&\quad=\frac{(1-\exp(-(\wt u^n_{j-1})^2c_{[2j-4]^m_n\Den}))^2}{k_n}  + o'(1).
	\end{align*}
	
	If $\varpi<\frac12$, the contributions of $\Psi$ and $\Psi^\pm$ dominate asymptotically. Moreover, we have $\theta_n\to0$ by assumption, so   expanding $\cos(x)=1-\frac12 x^2+\frac1{24}x^4+O(x^6)$, we get
	\begin{align*}
		&\E^n_{i-1}[\cos((\wt u^n_j)_n\Delta^n_i y) ]=\Re \Psi(\wt u^n_{j-1} \rho_{[2j-4]^m_n\Den})+o'(1) \\
		&\quad=1-\tfrac12(\wt u^n_{j-1})^2\rho_{[2j-4]^m_n\Den}^2 \E[(\Delta\chi_1)^2] + \tfrac1{24}(\wt u^n_{j-1})^4\rho_{[2j-4]^m_n\Den}^4\E[(\Delta\chi_1)^4]+O(\theta_n^6)
	\end{align*}
	and 
	\begin{align*}
		\E^n_{i\wedge i'-1}[\cos((\wt u^n_j)_n(\Delta^n_i y\pm\Delta^n_{i'}y))]  &= \Re \Psi^\pm_r(\wt u^n_{j-1}  \rho_{[2j-4]^m_n\Den})+o'(1)\\
		& =1-\tfrac12(\wt u^n_{j-1})^2\rho_{[2j-4]^m_n\Den}^2 \E[(\Delta\chi_{r+1}\pm\Delta\chi_1)^2] \\
		&\quad+ \tfrac1{24}(\wt u^n_{j-1})^4\rho_{[2j-4]^m_n\Den}^4\E[(\Delta\chi_{r+1}\pm\Delta\chi_1)^4]+O(\theta_n^6),
	\end{align*}
	which inserted in \eqref{eq:aux28} and after simplifications yields  % the constant and the second-order term in $\wt u^n_{j-1}$ cancel out. Hence, 
	\begin{align*}
		&\ov\E^n_{[2j-2\ell-2]^m_n} [ (\Re\wt L^{n,\mathrm{v}}_{2j-2\ell} (  \wt u^n_{j-\ell}) )^2+(\Re\wt L^{n,\mathrm{v}}_{2j-2\ell-1} (  \wt u^n_{j-\ell}) )^2 ]\\
		&\quad=\frac{(\wt u^n_{j-1})^4}{2k_n}\rho^4_{[2j-4]^m_n\Den}\biggl(\Var((\Delta \chi_1)^2)+2\sum_{r=1}^{m+1}\Cov((\Delta\chi_{r+1})^2,(\Delta\chi_1)^2) \biggr)+ O(\theta_n^6).
	\end{align*}

	If $\varpi=\frac12$,   we also have $\theta_n\to0$, so combining the previous results, we obtain
	\begin{align*}
		\E^n_{i-1}[\cos((\wt u^n_j)_n\Delta^n_i y) ]&=1-\tfrac12(  \wt u^n_{j-1} )^2 (c_{[2j-4]^m_n\Den}+\rho^2_{[2j-4]^m_n\Den}\E[(\Delta\chi_1)^2])\\
		& \quad +\tfrac1{24}(  \wt u^n_{j-1} )^4(3c_{[2j-4]^m_n\Den}^2 +6c_{(2j-4)p_n\Den}\rho^2_{[2j-4]^m_n\Den}\E[(\Delta\chi_1)^2]\\
		&\quad+\rho^4_{[2j-4]^m_n\Den}\E[(\Delta\chi_1)^4]) +O(\theta_n^6)
	\end{align*}
	and
	\begin{align*}
		&\E^n_{i\wedge i'-1}[\cos((\wt u^n_j)_n(\Delta^n_i y\pm\Delta^n_{i'}y))]\\
		& =1-(  \wt u^n_{j-1} )^2\bigl((1\pm\bone_{\{r=0\}}) c_{[2j-4]^m_n\Den}+\tfrac12\rho^2_{[2j-4]^m_n\Den}\E[(\Delta\chi_{r+1}\pm\Delta\chi_1)^2]\bigr)\\
		&\quad +(  \wt u^n_{j-1} )^4\bigl(\tfrac12(1\pm\bone_{\{r=0\}})^2c_{[2j-4]^m_n\Den}^2 +\tfrac1{24}\rho^4_{[2j-4]^m_n\Den}\E[(\Delta\chi_{r+1}\pm\Delta\chi_1)^4]\\
		&\quad+\tfrac12(1\pm\bone_{\{r=0\}})c_{[2j-4]^m_n\Den}\rho^2_{[2j-4]^m_n\Den}\E[(\Delta\chi_{r+1}\pm\Delta\chi_1)^2]\bigr) +O(\theta_n^6).
	\end{align*}
	Thus, if $\varpi=\frac12$, we have
	\begin{align*}
		&\ov\E^n_{[2j-2\ell-2]^m_n} [ (\Re\wt L^{n,\mathrm{v}}_{2j-2\ell} (  \wt u^n_{j-\ell}) )^2+(\Re\wt L^{n,\mathrm{v}}_{2j-2\ell-1} (  \wt u^n_{j-\ell}) )^2 ]\\
		&\quad=\frac{(\wt u^n_{j-1})^4}{2k_n}\Biggl(2c^2_{[2j-4]^m_n\Den}+4c_{[2j-4]^m_n\Den}\rho^2_{[2j-4]^m_n\Den}\E[(\Delta\chi_1)^2]\\
		&\qquad   +\rho^4_{[2j-4]^m_n\Den}\biggl(\Var((\Delta \chi_1)^2)+2\sum_{r=1}^m\Cov((\Delta\chi_{r+1})^2,(\Delta\chi_1)^2)\biggr)\Biggr) + O(\theta_n^6).
	\end{align*}
	Inserting the obtained expansions  in \eqref{eq:aux29} and noting that $\E[\lvert (\wt u^n_{j-1})^2-\theta_n^2/\eta_{(2j-4)p_n\Den}\rvert]\leq  \theta_n^2(K\vee \eta_0^{-1})^2\E[\lvert \eta^n_j-\eta_{(2j-2)p_n\Den}\rvert] =o(\theta_n^2)$ by (\ref{eq:eta-cont}) and $\log \lvert \Psi(u)\rvert = \Re \Log \Psi(u)=\Re \Log (1-\frac12 u^2 \E[(\Delta\chi_1)^2]+O(u^3))=-\frac12u^2\E[(\Delta\chi_1)^2]+O(u^3)$, one can use classical Riemann approximation results  to show  $Q^{(1)}=\frac{1}{T}\int_{I_T} q_t^2 dt$  (by distinguishing the cases $\varpi>\frac12$, $\varpi=\frac12$ and $\varpi<\frac12$, and in the first case, further whether $\theta_0=0$ or not).
\end{proof}

\begin{proof}[Proof of Lemma~\ref{lem:denom}] 
	We decompose $W^n= W^{\prime n}+W^{\prime \prime n}$, where
	\begin{align*}
		W^{\prime n}&= \frac{ k_n^2 }{\lvert\calj_n\rvert }\sum_{j\in\calj_n}[(\Delta^n_{2j} \wt \cf( \wt{u}^n_{j}) )^2 +(\Delta^n_{2j-\dsone} \wt \cf( \wt{u}^n_{j-\dsone/2}) )^2]\\
		&\quad\times[(\Delta^n_{2j-2} \wt \cf( \wt{u}^n_{j-1}) )^2 + (\Delta^n_{2j-2-\dsone} \wt \cf( \wt{u}^n_{j-1-\dsone/2}) )^2],\\
		W^{\prime\prime n}&=\frac{ 2k_n^2 }{\lvert\calj_n\rvert }\sum_{j\in\calj_n}\Bigl\{2\Delta^n_{2j} \wt \cf( \wt{u}^n_{j}) \Delta^n_{2j-2} \wt \cf( \wt{u}^n_{j-1}) \Delta^n_{2j-\dsone} \wt \cf( \wt{u}^n_{j-\dsone/2}) \Delta^n_{2j-2-\dsone} \wt \cf( \wt{u}^n_{j-1-\dsone/2}) \\
		&\quad  -[(\Delta^n_{2j} \wt \cf( \wt{u}^n_{j}) )^2+(\Delta^n_{2j-\dsone} \wt \cf( \wt{u}^n_{j-\dsone/2}) )^2]\Delta^n_{2j-2} \wt \cf( \wt{u}^n_{j-1}) \Delta^n_{2j-2-\dsone} \wt \cf( \wt{u}^n_{j-1-\dsone/2}) \\
		&\quad  -[(\Delta^n_{2j-2} \wt \cf( \wt{u}^n_{j-1}) )^2+(\Delta^n_{2j-2-\dsone} \wt \cf( \wt{u}^n_{j-1-\dsone/2}) )^2]\Delta^n_{2j} \wt \cf( \wt{u}^n_{j}) \Delta^n_{2j-\dsone} \wt \cf( \wt{u}^n_{j-\dsone/2})  \Bigr\}.
	\end{align*}
	As seen in the proof of Lemma~\ref{lem:linearize}, we have $\ov\E^n_{[2j-2]^m_n}[\Delta^n_{2j} \wt \cf( \wt{u}^n_{j})]=\ov\E^n_{[2j-2]^m_n}[\Delta^n_{2j} \ov \cf( \wt{u}^n_{j})^{\rm I}_{\rm s}+\Delta^n_{2j} \ov \cf( \wt{u}^n_{j})^{\rm II}_{\rm s}]+o'(1/\sqrt{k_n})=o'(1/\sqrt{k_n})$. Therefore, invoking a martingale argument, we can apply $\ov\E^n_{[2j-2]^m_n}$ to the $j$th term defining $W^{\prime\prime n}$, after which   the only expression left to be analyzed is\linebreak $
	-\frac{ 2k_n^2 }{\lvert\calj_n\rvert }\sum_{j\in\calj_n}\ov\E^n_{[2j-2]^m_n}[(\Delta^n_{2j} \wt \cf( \wt{u}^n_{j}) )^2]\Delta^n_{2j-2} \wt \cf( \wt{u}^n_{j-1}) \Delta^n_{2j-2-\dsone} \wt \cf( \wt{u}^n_{j-1-\dsone/2}) 
	$. Similarly to how we eliminated $\Delta^n_{2j-2}\ov \cf(\wt u^n_{j-1})^{\rm I}_{\rm s}$ in the paragraph following \eqref{eq:aux2}, one can verify that this term converges in probability to $0$. 
	
	Therefore, only $W^{\prime n}$ contributes asymptotically.
	By our analysis of $\Delta^n_{2j} \wt{\cf}(\wt u^n_j)$ in the proof of Lemma~\ref{lem:linearize}, we have
	\begin{equation}\label{eq:aux31}
		W^{\prime n}= \frac{ k_n^2 }{\lvert\calj_n\rvert }\sum_{j\in\calj_n} [ ( \zeta^{n,\rm v}_{j})^2+ ( \zeta^{n,\rm v}_{j-\dsone/2})^2 ] [ ( \zeta^{n,\rm v}_{j-1})^2+ ( \zeta^{n,\rm v}_{j-1-\dsone/2})^2 ]+ o_p(1).
	\end{equation}
	Analogously to the decomposition of $\wt V^n$ at the beginning of the proof of Lemma~\ref{lem:CLT}, we can split the last line (without the $o_p(1)$-term) into four terms, say, $W^{\prime n}_1,\dots,W^{\prime n}_4$, each of which is asymptotically equivalent to the $\calf_\infty$-conditional variance of the corresponding $\wt V^n_i$-term. To see this, consider $W^{\prime n}_1$ for example, which is given by 
	$
	W^{\prime n}_1= \frac{ k_n^2 }{\lvert\calj_n\rvert }\sum_{j\in\calj_n} ( \zeta^{n,\rm v}_{j-1} \zeta^{n,\rm v}_{j})^2$.
	As usual, by a martingale argument, we can take $\ov\E^n_{[2j-4]^m_n}$-conditional expectation, which turns the previous line exactly into \eqref{eq:Q2}. This shows that $W^{\prime n}_1$ converges in probability to $Q^{(1)}$. Repeating this   argument for $W^{\prime n}_2$, $W^{\prime n}_3$ and $W^{\prime n}_4$ completes the proof of the lemma.
\end{proof}

\begin{proof}[Proof of Lemma~\ref{lem:V-alt}]
	Throughout this proof, we use $O_p(a_n)$ to signify a term whose $L^2$-norm is bounded by $Ca_n$, where $C$ is a constant   independent of $\om$, $i$ and $j$. The notations $o_p(a_n)$ and $o'_p(a_n)$ are used similarly. Also, recall that  $H_\rho=0$ by convention if $\varpi>\frac12+\frac12 H$.
	
	Our analysis of $\wt L^{n,\mathrm{v}}_{2j-\ell}(U)$, $\ell=0,1$, in the proof of Lemma~\ref{lem:bounded} was independent of whether we are under \ref{ass:H0-2} or \ref{ass:H1-2}. In particular, we still have \eqref{eq:L'}.
	Regarding $\Delta^n_{2j}\wt  L^{\mathrm{b}}(\wt u^n_j)$, we argue as in \eqref{eq:aux15} %and \eqref{eq:aux11} 
	and use \eqref{eq:L2-2} and (\ref{eq:prop4-alt-2}) to obtain
	\begin{align*}
		\E^n_{i-1}[e^{i(\wt u^n_j)_n\Delta^n_i y}-e^{i(\wt u^n_j)_n\Delta^n_{i,i-1} y } ]& =\E^n_{i-1}[e^{i(\wt u^n_j)_n\Delta^n_{i,i-1}x}(e^{i(\wt u^n_j)_n\Delta^n_i \eps}-e^{i(\wt u^n_j)_n\Delta^n_{i,i-1} \eps})] \\
		&\quad+\E^n_{i-1}[e^{i(\wt u^n_j)_n\Delta^n_{i,i-1}\eps}(e^{i(\wt u^n_j)_n\Delta^n_i x}-e^{i(\wt u^n_j)_n\Delta^n_{i,i-1} x})] \\
		&\quad+o'_p(\Den^{H}).
	\end{align*}
	%where $H_\ast = H\wedge (H_\rho+(2\pi-1)_+)$ if $\varpi\leq \frac12 +\frac12 H$ and $H_\ast = H$ otherwise. 
	Because $\E^n_{i-1}[e^{i(\wt u^n_j)_n\Delta^n_{i,i-1}x}(e^{i(\wt u^n_j)_n\Delta^n_i \eps}-e^{i(\wt u^n_j)_n\Delta^n_{i,i-1} \eps})]=O_p(\Den^{H_\rho+(2\varpi-1)_+})$ (cf.\ \eqref{eq:aux18}) and\linebreak
	%If $\varpi\leq \frac12$,   %the decomposition $\Delta^n_i w= \Delta^n_{i\mid i-1} w+\int_{(i-1)\Den}^{i\Den} G(i\Den-s)(\si_s^wdW_s+\wt \si^w_sd\wt W_s)$, where $\Delta^n_{i\mid i-1} w= \int_0^{(i-1)\Den} [G(i\Den-s)-G((i-1)\Den-s)](\si_s^wdW_s+\wt \si^w_sd\wt W_s)$, together with
	% then
	%\eqref{eq:prop2-alt-2}, \eqref{eq:vp-conv} and  the identity $\int_{(i-1)\Den}^{i\Den}G(i\Den-s) ds =\Den^{H_\rho+1/2}/[K_H(H_\delta+\frac12)] $ yield
	% \begin{align*}
	%		&\E^n_{i-1}[e^{i(\wt u^n_j)_n\Delta^n_{i,i-1}x}(e^{i(\wt u^n_j)_n\Delta^n_i \eps}-e^{i(\wt u^n_j)_n\Delta^n_{i,i-1} \eps})] \\
	%	&\quad= i\wt u^n_j\wh \Psi(\wt u^n_j \rho_{(i-1)\Den})F'(w_{(i-1)\Den})\E^n_{i-1}[e^{i(\wt u^n_j)_n \si_{(i-1)\Den}\Delta^n_i W}\Delta^n_i w]+o'(\Den^{H_\rho})\\
	%	&\quad= i\wt u^n_j\wh \Psi(\wt u^n_j \rho_{(i-1)\Den})F'(w_{(i-1)\Den})\exp(-\tfrac12 (\wt u^n_j)^2 c_{(i-1)\Den}\Den^{1-2\varpi})\\
	%	&\qquad\qquad\qquad\qquad\times \int_0^{(i-1)\Den} [G(i\Den-s)-G((i-1)\Den-s)](\si_s^wdW_s+\wt \si^w_sd\wt W_s)\\
	%	&\qquad-\frac{(\wt u^n_j)^2\Den^{H_\rho+1/2-\varpi}}{K_H(H_\delta+\frac12)}\wh \Psi(\wt u^n_j \rho_{(i-1)\Den})F'(w_{(i-1)\Den})\si_{(i-1)\Den}\si_{(i-1)\Den}^w+o'(\Den^{H_\rho}).
	%\end{align*} 
	%Let us denote the last two terms by $E^{n,1}_i$ and $E^{\prime n,1}_i$. Since $E^{\prime n,1}_i-E^{\prime n,1}_{i-p_n}=o'(\Den^{H_\rho})$ by \eqref{eq:prop5-alt-2} and the H\"older properties (in squared mean) of $\si$ and $\rho$, the contribution of $E^{\prime n,1}_i$ to $\Delta^n_{2j}\wt  L^{\mathrm{b}}(\wt u^n_j)$ is negligible and it suffices to keep $E^{n,1}_i$. 
	$\E^n_{i-1}[e^{i(\wt u^n_j)_n\Delta^n_{i,i-1}\eps}(e^{i(\wt u^n_j)_n\Delta^n_i x}-e^{i(\wt u^n_j)_n\Delta^n_{i,i-1} x})]=O_p(\Den^{H+(1-2\varpi)_+})$, we conclude that  
	$
	\Delta^n_{2j}\wt  L^{\mathrm{b}}(\wt u^n_j) %=\frac{1}{k_n}\sum_{i=(j-1)p_n+1}^{(j-1)p_n+k_n}  \Bigl\{(E^{n,1}_i-E^{n,1}_{i-p_n})\bone_{\{\varpi\leq \frac12\}} +(E^{n,2}_i-E^{n,2}_{i-p_n})\bone_{\{\varpi\geq \frac12\}}  \Bigr\} + 
	=O_p(\Den^{H_\ast})$ with $H_\ast = (H+(1-2\varpi)_+)\wedge (H_\rho+(2\varpi-1)_+)$.
	
	By the Lévy--Khintchine formula, the mean-value theorem and \eqref{eq:vp-conv} and (\ref{eq:prop4-alt-2}), we further have that
	\begin{align*}
		&\Delta^n_{2j} \wt  L^{\mathrm{s}}(\wt u^n_j)%&=\frac{1}{k_n}\sum_{i=(j-1)p_n+1}^{(j-1)p_n+k_n}  \Bigl\{  e^{-\frac12(\wt u^n_j)^2 \si_{(i-1)\Den}^2 +\Den \vp(\wt u^n_j/\sqrt{\Den})_{(i-1)\Den}}\\
		%&\qquad\qquad\qquad-e^{-\frac12(\wt u^n_j)^2 \si_{(i-p_n-1)\Den}^2 +\Den \vp(\wt u^n_j/\sqrt{\Den})_{(i-p_n-1)\Den}} \Bigr\}+o'_p(\Den^H)\\
		=\frac{1}{k_n}\sum_{i=(j-1)p_n+1}^{(j-1)p_n+k_n}  \Bigl\{  e^{-\frac12(\wt u^n_j)^2_n c_{(i-1)\Den}\Den}\Psi((\wt u^n_j)_n\Den^\varpi\rho_{(i-1)\Den})\\
		&\qquad -e^{-\frac12(\wt u^n_j)^2_n c_{(i-p_n-1)\Den}\Den }\Psi((\wt u^n_j)_n\Den^\varpi\rho_{(i-p_n-1)\Den}) \Bigr\}+o'_p((p_n\Den)^{H}\Den^{(1-2\varpi)_+}).
	\end{align*}
	Since the last display is $O_p(\pi_n\theta_n^2)$ by \eqref{eq:psi}, $1/\sqrt{k_n}=o'((p_n\Den)^H)=o'(\pi_n)$ by (\ref{eq:rates}), $\Den^{H_\ast}=o'(\pi_n)$ and $\log \lvert \wt L^{n,\rm s}_{2j-1}(\wt u^n_j)\rvert^{-1} \geq C\theta_n^2$ by \eqref{eq:as-bounds}, we conclude that %on an event of probability converging to $1$, 
	\[ \Delta^n_{2j} \wt \cf(\wt u^n_j)=   \Re \biggl\{\frac{\Delta^n_{2j} \wt  L^{\mathrm{s}}(\wt u^n_j)}{\Lf(\wt  L^n_{2j-1}(\wt u^n_j))}\biggr\} + o'_p(\pi_n)=   \Re \biggl\{\frac{\Delta^n_{2j} \wt  L^{\mathrm{s}}(\wt u^n_j)}{\Lf(\call^n_j(\wt u^n_j))}\biggr\} + o'_p(\pi_n),\]
	where the second step follows from $\wt  L^n_{2j-1}(\wt u^n_j)=\call^n_j(\wt u^n_j)+o'_p(1)$ (recall the definition after (\ref{eq:Vn})).
	Then $V^{n,\rm{alt}}=V^{n,\rm{alt}}_1+V^{n,\rm{alt}}_2-V^{n,\rm{alt}}_3-V^{n,\rm{alt}}_4+o_p(1)$, where
	\begin{align*}
		V^{n,\rm{alt}}_1&=\frac{1}{ {\lvert \calj_n\rvert}\pi_n^2}\sum_{j\in\calj_n}\zeta^{n,\rm s}_{j-1} \zeta^{n,\rm s}_{j}, & 
		V^{n,\rm{alt}}_2&	=\frac{1}{ {\lvert \calj_n\rvert}\pi_n^2}\sum_{j\in\calj_n}\zeta^{n,\rm s}_{j-1-\dsone/2} \zeta^{n,\rm s}_{j-\dsone/2}, \\
		V^{n,\rm{alt}}_3&=	\frac{1}{ {\lvert \calj_n\rvert}\pi_n^2}\sum_{j\in\calj_n}\zeta^{n,\rm s}_{j-1-\dsone/2} \zeta^{n,\rm s}_{j},&
		V^{n,\rm{alt}}_4&=	\frac{1}{ {\lvert \calj_n\rvert}\pi_n^2}\sum_{j\in\calj_n}\zeta^{n,\rm s}_{j-\dsone/2} \zeta^{n,\rm s}_{j-1}
	\end{align*}
	and $ \zeta^{n,\rm s}_{j}=\Re  \{ {\Delta^n_{2j} \wt  L^{\mathrm{s}}(\wt u^n_j)}/{\Lf(\call^n_j(\wt u^n_j))} \} $.
	
	Next, with the notation $p^n_i=i-p_n-1$, %we define $\call^n(u;v,w)= e^{-\frac12 u_n^2 \Den f(v)+(\Log \Psi(u_n\Den^\varpi F(w)))\bone_{\{\varpi\leq 1/2+H/2\}}}$ and 
	a first-order expansion shows that  $\Delta^n_{2j}\wt  L^{\mathrm{s}}(\wt u^n_j)$ is given by\footnote{The second term is set to  zero if $\varpi> \frac12+\frac12 H$.}
	\begin{align*}
		&-\frac{(\wt u^n_j)^2\Den^{(1-2\varpi)_+}}{2k_n}\sum_{i=(2j-1)p_n+1}^{(2j-1)p_n+k_n} e^{-\frac12 (\wt u^n_j)^2_nc_{p^n_i\Den}\Den}\Psi((\wt u^n_j)_n\Den^\varpi \rho_{p^n_i\Den})f'(v_{p^n_i\Den})\\
		&\qquad  \times\int_0^{(i-1)\Den} [g ((i-1)\Den-s)-g( p^n_i\Den-s  )]  (\si^v_sdW_s+\ov\si^v_sd\ov W_s)\\
		&+ \frac{\wt u^n_j\Den^{( \varpi-1/2)_+}}{k_n}\sum_{i=(2j-1)p_n+1}^{(2j-1)p_n+k_n} e^{-\frac12 (\wt u^n_j)^2_nc_{p^n_i\Den}\Den}\Psi'((\wt u^n_j)_n\Den^\varpi \rho_{p^n_i\Den})F'(w_{p^n_i\Den})\\
		&\qquad\times\int_0^{(i-1)\Den} [G((i-1)\Den-s)-G( p^n_i\Den-s  )]  (\si^w_sdW_s+\ov\si^w_sd\ov W_s+\wh\si^w_sd\wh W_s)
		%	&=-\frac{u^2}{2k_n} F(v_{((j-2)p_n-1)\Den}) \sum_{i=(j-1)p_n+1}^{(j-1)p_n+k_n} \int_0^{(i-1)\Den} [g ((i-1)\Den-s)\\
		%&\quad-g( (i-p_n-1)\Den-s  )] (\si^v_{((j-2)p_n-1)\Den}dW_s+\ov\si^v_{((j-2)p_n-1)\Den}d\ov W_s+o_p(\Den^H),
	\end{align*}
	plus an $o'_p(\pi_n)$-error. Note that  we have
	$
	\Psi((\wt u^n_j)_n\Den^\varpi \rho_{p^n_i\Den})=1+ O(\Den^{(2\varpi-1)_+}\theta_n^2)$ and $
	\Psi'((\wt u^n_j)_n\Den^\varpi \rho_{p^n_i\Den})=-\wt u^n_j \Den^{(\varpi-1/2)_+} \rho_{p^n_i\Den} \E[(\Delta\chi_1)^2]+ O(\Den^{(2\varpi-1)_+}\theta_n^2)$. 
	%	where $F(v)=F(u,v)=e^{-u^2f(v)/2}f'(v)$. 
	Besides, given an   increasing integer sequence  $\la_n$ such that $\la_n^{-1}=o'(1)$, we have  %since   $g_0\equiv0$ by assumption, we have 
	\begin{align*}
		&\E\Biggl[\biggl(\int_0^{((i-1)\Den-\la_n\Den)_+} [g(s-r)-g((i-1)\Den-r)](\si^v_rdW_r+\ov\si^v_rd\ov W_r)\biggr)^2\Biggr]	 \\
		&\quad \leq 2K^2K_H^{-2} \int_0^{((i-1)\Den-\la_n\Den)_+} [(s-r)^{H-1/2}-((i-1)\Den-r)^{H-1/2}]^2 dr\\
		&\quad  \leq 2K^2K_H^{-2} \int_{\la_n\Den}^\infty [(r+\Den)^{H-1/2}-r^{H-1/2}]^2 dr\\
		&\quad=2K^2K_H^{-2} \Den^{2H}\int_{\la_n}^\infty [(u+1)^{H-1/2}-u^{H-1/2}]^2 du=o'(\Den^{2H})
	\end{align*}
	for all $s\in((i-1)\Den,i\Den]$,
	since $u\mapsto[(u+1)^{H-1/2}-u^{H-1/2}]^2$ is integrable. The same type of estimate applies to the integral involving $G$. Together with \eqref{eq:aux30} (which continues to hold under the alternative hypothesis), it follows that 
	$
	\zeta^{n,\rm s}_{j}=\La^n_{2j} +o_p(\pi_n)$, where $	\La^n_j =	\La^{n,1}_j \bone_{\{\La>0\}}+	\La^{n,2}_j \bone_{\{1-\La>0\}}$ and 
	\begin{align*}
		\La^{n,1}_j
		&=\frac{\Den^{(1-2\varpi)_+}f'(v_{(j-\la_n-1)p_n\Den})}{k_n \bigl(c_{(j-\la_n-1)p_n\Den}\bone_{\{\varpi\geq\frac12\}}+\rho^2_{(j-\la_n-1)p_n\Den}\E[(\Delta\chi_1)^2]\bone_{\{\varpi\leq\frac12\}}\bigr)}\\
		&\qquad \times\sum_{i=(j-1)p_n+1}^{(j-1)p_n+k_n}  \int_{(i-\la_np_n-1)\Den}^{(i-1)\Den} [g ((i-1)\Den-s)\\
		&\qquad \qquad\qquad-g( (i-p_n-1)\Den-s  )]  (\si^v_{(j-\la_n-1)p_n\Den}dW_s+\ov\si^v_{(j-\la_n-1)p_n\Den}d\ov W_s),\\
		\La^{n,2}_j
		&=\frac{2\Den^{(2\varpi-1)_+}\E[(\Delta\chi_1)^2]\rho_{(j-\la_n-1)p_n\Den} F'(w_{(j-\la_n-1)p_n\Den})}{k_n \bigl(c_{(j-\la_n-1)p_n\Den}\bone_{\{\varpi\geq\frac12\}}+\rho^2_{(j-\la_n-1)p_n\Den}\E[(\Delta\chi_1)^2]\bone_{\{\varpi\leq\frac12\}}\bigr)}\\
		&\qquad \times\sum_{i=(j-1)p_n+1}^{(j-1)p_n+k_n}  \int_{(i-\la_np_n-1)\Den}^{(i-1)\Den} [G ((i-1)\Den-s)-G( (i-p_n-1)\Den-s  )] \\
		&\qquad \qquad\qquad\times (\si^w_{(j-\la_n-1)p_n\Den}dW_s+\ov\si^w_{(j-\la_n-1)p_n\Den}d\ov W_s+\wh\si^w_{(j-\la_n-1)p_n\Den}d\wh W_s).
	\end{align*}
	This shows that $  V^{n,\rm{alt}}_1	=M^n_1+A^n_1+o_p(1)$,
	where $
	M^n_1=\frac{1}{ {\lvert \calj_n\rvert}\pi_n^2}\sum_{j\in\calj_n}  \{\La^n_{2j} \La^n_{2j-2} -\E[\La^n_{2j} \La^n_{2j-2} \mid \calf_{(2j-\la_n-3)p_n\Den}] \}$ and 
	$
	A^n_1 	=\frac{1}{ {\lvert \calj_n\rvert}\pi_n^2}\sum_{j\in\calj_n}\E[\La^n_{2j} \La^n_{2j-2} \mid \calf_{(2j-\la_n-3)p_n\Den}]$.
	The $j$th term in $M^n_1$ is $\calf_{2jp_n\Den}$-measurable with a zero $\calf_{(2j-3-\la_n)p_n\Den}$-conditional expectation by construction. Therefore, if $j$ and $j'$ are at least  $(3+\la_n)/2$ apart, the two corresponding terms are uncorrelated. Thus, a second moment analysis shows that $M^n_1$ is negligible. On the other hand, $A^n_1$ converges: after shifting the coefficients $c$, $\rho$, $v$, $\si^v$, $\ov \si^v$, $\rho$, $w$, $\si^w$, $\ov\si^w$ and $\wh \si^w$ that appear in $\La^n_{2j}$ from $(2j-\la_n-1)p_n\Den$ to $(2j-\la_n-3)p_n\Den$, %and  replacing $\wt u^n_j$ and $\wt u^n_{j-1}$ by $\theta_n/\sqrt{\eta_{(2j-\la_n-3)p_n\Den}}$,  
	the resulting term is conditionally Gaussian given $\calf_{(2j-\la_n-3)p_n\Den}$. Therefore, a tedious but entirely straightforward  calculation shows that $A^n_1\stackrel{\P}{\longrightarrow} \frac1{T}(C_{\kappa,H}\bone_{\{\La\in\{\frac12,1\}\}}+C_{\kappa,H_\rho}\bone_{\{\La=0\}}) \int_{I_T} A(t)dt$, where $A(t)$ is defined after (\ref{eq:VW}) and
	\begin{align*}
		C_{\kappa,H}&=  K_H^{-2}  \int_{0}^{1}\int_0^1\int_0^\infty[(\tfrac{v-w}{\kappa}+r+2)^{H-1/2}-(\tfrac{v-w}{\kappa}+r+1)^{H-1/2}]\\
		&\qquad\qquad\qquad\qquad\qquad\qquad\qquad\qquad\qquad\times[r^{H-1/2}-(r-1)_+^{H-1/2}] drdwdv.
	\end{align*}
	Similarly, $V^{n,\rm{alt}}_2\stackrel{\P}{\longrightarrow} \frac1{T} (C_{\kappa,H}\bone_{\{\La\in\{\frac12,1\}\}}+C_{\kappa,H_\rho}\bone_{\{\La=0\}})\int_{I_T} A(t-1)dt$. And because $\zeta^{n,\rm s}_{j-\ell_1}$ and $\zeta^{n,\rm s}_{j-\ell_2-\dsone/2}$ are asymptotically uncorrelated for  $\ell_1,\ell_2\in\{0,1\}$, we   have $V^{n,\rm{alt}}_3+V^{n,\rm{alt}}_4\stackrel{\P}{\longrightarrow}0$, which completes the proof of the first convergence in (\ref{eq:VW}).
	
	To see that $C_{\kappa,H}<0$, we change variables from $r$ to $1-s$ to get
	\begin{align*}
		C_{\kappa,H} 	&=K_H^{-2}\int_0^1\int_0^1\int_\R [(\tfrac{v-w}{\kappa}+3-s)_+^{H-1/2}-(\tfrac{v-w}\kappa +2-s)^{H-1/2}_+]\\
		&\qquad\qquad\qquad\qquad\qquad\qquad\times[(1-s)^{H-1/2}_+-(-s)_+^{H-1/2}] ds dwdv  \\
		&=\int_0^1\int_0^1 \E[(B^H_{\frac{v-w}{\kappa}+3}-B^H_{\frac{v-w}{\kappa}+2})(B^H_1-B^H_0)] dvdw,
	\end{align*}
	where $B^H$ is a standard fractional Brownian motion with Hurst parameter $H$ (see  \citet[Theorem~1.3.1]{Mishura08} for the last equation).
	Since $\lvert (v-w)/\kappa\rvert \leq 1$, the expectation above is the covariance of two non-overlapping increments of fractional Brownian motion with a Hurst index $H<\frac12$, and such a covariance is known to be negative.  This shows $C_{\kappa,H}<0$. %and $A^n$ converges in probability to a strictly negative limit. %By our choice of $p_n$ in (\ref{eq:rates}), we can only have $\La\neq 0$  if $H=H_\rho$ and $\varpi=\frac12$. In this case, we have $\La=\frac12$ and the right-hand side of \eqref{eq:A-alt} is equal to
	%	\begin{align*}
	%		&\frac{C_{\kappa,H,H}}{4T}\int_0^T (A_1(t)+2A_{12}(t)+A_2(t)) dt=\frac{C_{\kappa,H,H}}{4T}\int_0^T \Bigl[(\tfrac12 f'(v_t)\si^v_t+\E[(\Delta\chi_1)^2]\rho_t F'(w_t)\si^w_t)^2\\
	%		&\qquad\qquad\qquad\qquad\qquad+(\tfrac12 f'(v_t)\si^v_t+\E[(\Delta\chi_1)^2]\rho_t F'(w_t)\si^w_t)^2+(\E[(\Delta\chi_1)^2]\rho_tF'(w_t)\wh\si^w_t)^2\Bigr] dt,
	%	\end{align*} 
	%where the integral is bounded away from zero by assumption.
	
	Finally, let us determine the limit of $W^{n,\rm{alt}}$ in (\ref{eq:W2}). Similarly to the proof of Lemma~\ref{lem:denom} (cf.\ \eqref{eq:aux31}), we have $W^{n,\rm{alt}}=W^{\prime n,\rm{alt}}+W^{\prime\prime n,\rm{alt}}+o_p(1)$, where
	\begin{align*}
		W^{\prime n,\rm{alt}}&=\frac{1 }{\pi_n^4\lvert\calj_n\rvert }\sum_{j\in\calj_n} [ ( \zeta^{n,\rm s}_{j})^2+ ( \zeta^{n,\rm s}_{j-\dsone/2})^2 ] [ ( \zeta^{n,\rm s}_{j-1})^2+ ( \zeta^{n,\rm s}_{j-1-\dsone/2})^2 ]\\
		W^{\prime\prime n,\rm{alt}}&=\frac{2 }{\pi_n^4\lvert\calj_n\rvert }\sum_{j\in\calj_n} \Bigl\{ 2 \zeta^{n,\rm s}_{j} \zeta^{n,\rm s}_{j-1} \zeta^{n,\rm s}_{j-\dsone/2} \zeta^{n,\rm s}_{j-1-\dsone/2} \\
		&\quad  - [( \zeta^{n,\rm s}_{j})^2+( \zeta^{n,\rm s}_{j-\dsone/2})^2 ] \zeta^{n,\rm s}_{j-1} \zeta^{n,\rm s}_{j-1-\dsone/2}-  [ ( \zeta^{n,\rm s}_{j-1})^2+ ( \zeta^{n,\rm s}_{j-1-\dsone/2})^2 ] \zeta^{n,\rm s}_{j} \zeta^{n,\rm s}_{j-\dsone/2}\Bigr\}.
	\end{align*}
	Note that $(\zeta^{n,\rm s}_{j-1},\zeta^{n,\rm s}_{j})$ and $(\zeta^{n,\rm s}_{j-1-\dsone/2},\zeta^{n,\rm s}_{j-\dsone/2})$ are asymptotically uncorrelated. Together with the fact that $\E[X^2Y^2]=\E[X^2]\E[Y^2]+2\E[X_1X_2]^2$ and $\E[X^2Y]=0$ for a centered bivariate Gaussian random vector $(X,Y)$, we deduce, similarly to the analysis of $A^n_1$ above, that 
	$W^{\prime n,\rm{alt}} \stackrel{\P}{\longrightarrow} \frac{2}{T}\int_{I_T} A(t)A(t-1)dt+ \frac{1}{T}(\ov C_{\kappa,H}\bone_{\{\La\in\{\frac12,1\}\}}+\ov C_{\kappa,H_\rho}\bone_{\{\La=0\}})\int_{I_T} (B(t)+B(t-1))dt$,
	where $B(t)$ is defined after (\ref{eq:VW}) and $\ov C_{\kappa,H}=2(C_{\kappa,H})^2+(C'_{\kappa,H})^2$ with
	\begin{align*}
		C'_{\kappa,H}&=K_H^{-2}\int_0^1\int_0^1\int_0^\infty [(\tfrac{  v-w }{\kappa}+r)^{H-1/2}-(\tfrac{  v-w }{\kappa}+r-1)^{H-1/2}]\\
		&\qquad\qquad\qquad\qquad\qquad\qquad\qquad\times [r^{H-1/2}-(r-1)_+^{H-1/2}] drdwdv 
	\end{align*}
	and
	$W^{\prime\prime n,\rm{alt}} \stackrel{\P}{\longrightarrow} \frac{4}{T}(C_{\kappa,H}\bone_{\{\La\in\{\frac12,1\}\}}+C_{\kappa,H_\rho}\bone_{\{\La=0\}})^2\int_{I_T} A(t)A(t-1) dt$. 
\end{proof}

\end{appendix}

\bibliographystyle{abbrvnat}
\bibliography{rough}

\end{document}